\title{The Loewner--Kufarev Energy and Foliations by Weil--Petersson Quasicircles}
\let\OLDthebibliography\thebibliography
\renewcommand\thebibliography[1]{
  \OLDthebibliography{#1}
  \setlength{\parskip}{0pt}
  \setlength{\itemsep}{0pt plus 0.3ex}
}
\numberwithin{equation}{section}
\numberwithin{figure}{section}
\theoremstyle{plain}
\newtheorem{thm}{\protect\theoremname}
  \theoremstyle{plain}
  \newtheorem{lemma}[thm]{\protect\lemmaname}
    \newtheorem{prop}[thm]{\protect\propname}
\newtheorem{df}[thm]{Definition}
\numberwithin{thm}{section}
\newtheorem{cor}[thm]{Corollary}
\theoremstyle{remark}
\newtheorem*{rem}{Remark}
\providecommand{\propname}{Proposition}
\providecommand{\lemmaname}{Lemma}
\providecommand{\theoremname}{Theorem}
\renewcommand{\Im}{\imag}
\renewcommand{\Re}{\real}
\newcommand{\DD}{\mathbb{D}}
\newcommand{\CC}{\mathbb{C}}
\newcommand{\SLE}{\operatorname{SLE}}
\let \le \leqslant
\let \leq \leqslant
\let \ge \geqslant
\let \epsilon \varepsilon
\let \z \zeta
\newcommand{\abs}[1]{\left\lvert #1 \right \rvert}
\newcommand{\brac}[1]{\left \langle #1 \right \rangle}
\newcommand{\norm}[1]{\| #1 \|}
\newcommand{\mc}[1]{\mathcal{#1}}
\newcommand{\m}[1]{\mathbb{#1}}
\renewcommand\Re{\operatorname{Re}}
\renewcommand\Im{\operatorname{Im}}
\def\SLE{\operatorname{SLE}}
\def\mob{\text{M\"ob}}
\def\Diff{\operatorname{Diff}}
\def\QS{\operatorname{QS}}
\def\a{\alpha}
\def\b{\beta}
\def\g{\gamma}
\def\d{\delta}
\def\D{\Delta}
\def\z{\zeta}
\def\t{\theta}
\def\i{\iota}
\def\k{\kappa}
\def\s{\sigma}
\def\O{\Omega}
\def\vare{\varepsilon}
\def\Chat{\hat{\m{C}}}
\def\eps{\varepsilon}
\def\SLE{\operatorname{SLE}}
\def\dd{\mathrm{d}}
\newcommand{\ad}[1]{\overline{#1}}
 \def \M {\mc M}
 \def \1{\mathbf{1}}
\def\Ker{\operatorname{Ker}}
\def\Id{\operatorname{Id}}
 \author{Fredrik Viklund\thanks{KTH Royal Institute of Technology, Sweden, email: \url{frejo@kth.se} } \, and Yilin Wang\thanks{Institut des Hautes \'Etudes Scientifiques, France, email: \url{yilin@ihes.fr}}}
 \date{\today}
\begin{document}

\maketitle

\begin{abstract}
     We study foliations by chord-arc Jordan curves of the twice punctured Riemann sphere $\mathbb C \smallsetminus \{0\}$ using the Loewner--Kufarev equation. We associate to such a foliation a function on the plane that describes the ``local winding'' along each leaf. Our main theorem is that this function has finite Dirichlet energy if and only if the Loewner driving measure $\rho$ has finite Loewner--Kufarev energy, defined by $$S(\rho) = \frac{1}{2}\iint_{S^1 \times \mathbb{R}} \nu_t'(\theta)^2  \, d \theta d t$$ whenever $\rho$ is of the form $\nu_t(\theta)^2 d \theta d t$, and set to $\infty$ otherwise. Moreover, if either of these two energies is finite they are equal up to a constant factor, and in this case, the foliation leaves are Weil--Petersson quasicircles.
    
    This duality between energies has several consequences. The first is that the Loewner--Kufarev energy is reversible, that is, invariant under inversion and time-reversal of the foliation. Furthermore, the Loewner energy of a Jordan curve can be expressed using the minimal Loewner--Kufarev energy of those measures that generate the curve as a leaf. This provides a new and quantitative characterization of Weil--Petersson quasicircles. Finally, we consider conformal distortion of the foliation and show that the Loewner--Kufarev energy satisfies an exact transformation law involving the Schwarzian derivative.  The proof of our main theorem uses an isometry between the Dirichlet energy space on the unit disc and $L^2(2\rho)$ that we construct using  Hadamard's variational formula expressed by means of the Loewner--Kufarev equation.  
    Our results are related to $\kappa$-parameter duality and large deviations of Schramm-Loewner evolutions coupled with Gaussian random fields.

\end{abstract}
\tableofcontents
\newpage

\section{Introduction}
 Methods involving monotone evolution of simply connected domains in the  plane have been successfully applied to a range of problems in complex analysis and beyond. In recent years notably in the context of probabilistic lattice models, random conformal geometry, and physical growth processes \cite{carleson-makarov, Schramm_LERW_UST, LSW_exponent,Schramm:ICM,Smi:ICM, MilShe2016}.
 The Riemann mapping theorem and the notion of Carath\'eodory convergence allow to encode the dynamics by a continuous family of conformal maps from a reference domain, such as the unit disk $\m D$, onto the evolving domains. Motivated by the Bieberbach conjecture, Loewner proved in 1923 \cite{Loewner23} that the family of maps generated by progressively slitting a domain along a simple curve satisfies a  differential equation with a real-valued control function. Loewner's equation was ultimately instrumental in its resolution by de Branges about sixty years later \cite{debranges}. This method was extended by Kufarev \cite{Kufarev} and further developed by Pommerenke \cite{Pom1965} to cover general evolution families beyond slitted domains. In this case, the dynamics is described by the Loewner--Kufarev equation which is controlled by a family of measures. 
It is a fundamental problem to relate geometric properties of the evolving domains to analytic properties of the family of driving measures.

In this paper, we study the Loewner--Kufarev equation driven by members of the class of measures having finite \emph{Loewner--Kufarev energy}, a quantity that arises as a large deviation rate function for Schramm-Loewner evolutions, SLE$_\kappa$ when $\kappa \to \infty$ \cite{APW}. We show that the Loewner--Kufarev energy
     can be considered dual to the Loewner energy of a single Jordan curve, a quantity that also appears in the context of large deviations for SLE$_\kappa$, but this time when $\kappa \to 0+$  \cite{W1,RW,W2}. (Our methods and conclusions are entirely deterministic, however, and no results from probability are needed in this paper.)  
     More precisely, the boundaries of the evolving domains are Weil--Petersson quasicircles (equivalent to having finite Loewner energy) that foliate the twice punctured Riemann sphere $\mathbb C \smallsetminus \{0\}$. We prove that Loewner--Kufarev energy, now viewed as attached to the foliation, equals the Dirichlet energy of a geometric object --- the winding function. 
     
     This energy duality  leads to a complete characterization of measures generating Weil--Petersson quasicircles. 
     We are not aware of any other   case where there is an equivalent description of the generated non-smooth Jordan curve interfaces in terms of the driving measure. 
     As further consequences of energy duality, we will see that the induced 
    foliation and its energy
    exhibit several remarkable features and symmetries.

    Besides the introduction and analysis of this natural class of non-smooth foliations,
    and the implications to the theory of planar growth processes, another purpose of this paper is to provide large deviation counterparts to recent results from the theory of SLE processes and Liouville quantum gravity, including SLE duality and the mating-of-trees theorem. We will discuss links to random conformal geometry and open problems related to further connections 
    at the end of the paper.  
    
\subsection{Loewner energy and Weil--Petersson quasicircles} \label{intro_wp} 
Let $\g$ be a Jordan curve in $\m C$ which
separates $0$ from $\infty$
and write $D$ and  $D^*$ for the bounded and unbounded component, respectively, and set $\m D^* = \m C \smallsetminus \ad{\m D}$. If $f: \m D \to D$ and $h: \m D^* \to D^*$ are conformal maps fixing $0$ and $\infty$ respectively, then the M\"obius invariant \emph{Loewner energy} of $\gamma$ can be defined as
\begin{equation}\label{eq:loop_LE_def}
    I^L(\gamma) =  \mc D_{\m D} (\log \abs{f'}) + \mc D_{\m D^*} (\log \abs{h'})+4 \log \abs{f'(0)/h'(\infty)},
\end{equation}
where \[\mc{D}_{D}(u)=\frac{1}{\pi}\int_D  |\nabla u|^2 \dd A\] is the Dirichlet integral and $h'(\infty) = \lim_{z \to \infty} h'(z)$. Here and below we write $\dd A$ for two-dimensional Lebesgue measure. The quantity $I^L$ was originally introduced  in a different form \cite{W1, RW}, and the identity \eqref{eq:loop_LE_def} was established in \cite{W2}\footnote{The right-hand side of \eqref{eq:loop_LE_def} was introduced in \cite{TT06} and there referred to as the universal Liouville action (up to a constant factor $\pi$). Since we will discuss connections to random conformal geometry, we choose to use the term Loewner energy.}. See also Section~\ref{sec:further}.  

A Jordan curve $\g$ has finite Loewner energy if and only if it belongs to the set of Weil--Petersson quasicircles \cite{TT06}, a class of non-smooth chord-arc Jordan curves that has a number of equivalent characterizations from various different perspectives, see, e.g.,  
\cite{Cui,TT06,Shen_WP_1,Shen_Grunsky,W3,bishop-WP, johansson2021strong}. One way to characterize Weil--Petersson quasicircles is to say that their welding homeomorphisms $h^{-1} \circ f|_{S^1}$ (more precisely, the equivalence class modulo left-action by the group of M\"obius transformations preserving $S^1$) belong to the Weil--Petersson Teichm\"uller space $T_0(1)$, defined as the completion of $\mob(S^1) \backslash \Diff^\infty(S^1)$ using its unique homogeneous K\"ahler metric. Weil--Petersson Teichm\"uller space appears in 
several different contexts 
\cite{bowick1987holomorphic, Witten, nag1991non,Hong-Rajeev, pekonen, RSS_CFT, 
sharon20062d}, and carries an infinite dimensional K\"ahler-Einstein manifold structure. In fact, the Loewner energy itself is a K\"ahler potential for the Weil--Petersson metric on $T_0(1)$, see \cite{TT06} for definitions and a thorough discussion.

\subsection{Loewner--Kufarev energy}
Let $\mc N$ be the set of Borel measures $\rho$ on the cylinder $S^1 \times \mathbb{R}$ with the property that
$\rho(S^1 \times I)$ equals $|I|$ for any interval $I$.
Each $\rho \in \mc N$ can be disintegrated into a measurable family of probability measures $(\rho_t)_{t \in \m R}$ on $S^1$.

Let $(D_t)_{t \in \m R}$ be a family of simply connected domains such that $ 0 \in D_t \subset D_s$ for all $s \le t$.
We assume that for each $t$, the conformal radius of $D_t$ at $0$ equals $e^{-t}$ which implies $\bigcup_{t\in \m R} D_t = \m C$.  Let $(f_t: \m D \to D_t)_{t \in \m R}$ be the associated  family of conformal maps normalized so that $f_t(0)=0$ and $f'_t(0) = e^{-t}$. (Here and below we write $'$ for $\partial_z$.) 
By a result of Pommerenke \cite{Pom1965},  there exists 
$\rho\in \mc N$ such that $(f_t)_{t \ge s}$ satisfies the \emph{Loewner--Kufarev equation} 
$$    \partial_t f_t(z) = -z f'_t(z)H_t(z),  \qquad  H_t(z) = \int_{S^1} \frac{e^{i\t}+z}{e^{i\t}-z} \dd \rho_t(\t).
$$
 The Herglotz integral $H_t$ is holomorphic in $\m D$ with positive real part. The equation is interpreted in the sense that $t \mapsto f_t(z)$ is absolutely continuous.
Conversely, for any $\rho \in \mc N$ the monotone family of simply connected domains $(D_t)_{t \in \m R}$ and the corresponding family of conformal maps $(f_t)_{t \in \m R}$ can be recovered via the Loewner--Kufarev equation. We say that $(f_t)_{t \in \m R}$ is the \emph{Loewner chain} driven by $\rho$.
It is sometimes convenient to work with the family of inverse maps $(g_t=f_t^{-1} : D_t \to \m D)_{t \in \m R}$, referred to as the \emph{uniformizing Loewner chain}. See Sections~\ref{sect:Loewner--Kufarev-Equation} and \ref{subsec:whole_plane_Loewner_chain} for more details.

        In order to provide intuition, let us discuss a few examples.
    \begin{itemize}[itemsep= -2pt, topsep= -1pt]
    \item The classical definition of Loewner chain \cite{Loewner23} is the one generated by a continuous \emph{driving function} $t \mapsto W_t \in S^1$. This corresponds to $\rho_t$ being the Dirac measure at $W_t$. When $t\mapsto W_t$ is sufficiently regular, the complement of $\bigcap_{t\in \m R} D_t$ is a simple curve in $\mathbb{C}$ connecting $\infty$ with $0$. The Schramm-Loewner evolution, $\SLE_\k$, is the Loewner chain obtained when the driving function is chosen as $W_t=e^{i\sqrt \k B_{t}}$ where $(B_t)$ is one-dimensional Brownian motion on the real line. 
        \item  If $\rho_t = (2 \pi)^{-1}\dd \t$ for all $t \in \m R$, where $\dd \t$ denotes the Lebesgue measure on $S^1$, then $(D_t = e^{-t} \m D)_{t \in \m R}$ is the family of concentric disks centered at $0$.
        
        \item If $\rho \ll \dd t \times \dd \t$ with smooth density, then writing $\rho_t$ as $\rho_t(\t) \dd \t$, we have that the normal velocity (in the direction into $D_t$) of the interface $\partial D_t$ at the point $f_t(e^{i\t})$ equals $2 \pi |f'_t(e^{i\t})|\rho_t(\t)$. See \eqref{eq:normal}.  
        \item (Becker's condition) If there exists $k \in [0,1)$  such that $H_t$ takes values in the same compact set $\{z \in \m C \colon |z - 1| \le k|z +1|\}$ for a.e. $t \ge 0$ and (say) $\rho_t$ is uniform for all $t < 0$, then the interfaces $(\partial D_t)_{t \ge 0}$ are all $k$-quasicircles \cite{becker1972}. However, not all $k$-quasicircles can be generated in this way \cite[Thm.\,3]{GP}.
    \end{itemize}
 
For $\rho \in \mc N$,  the \emph{Loewner--Kufarev energy}  $S(\rho)$ introduced in \cite{APW} is
 defined by \[S(\rho) = \int_{\mathbb{R}} L(\rho_t) \,\dd t, \qquad \textrm{where}  \quad L(\rho_t) = \frac{1}{2}\int_{S^1}\nu_t'(\t)^2 \, \dd \t,\] 
 whenever $\dd \rho_t(\t)  =  \nu_t^2(\t) \dd \t$  
 and 
 $L(\rho_t)$
 is set to $\infty$ otherwise.
   Notice that for fixed $t$, $L(\rho_t)$ is the Dirichlet energy of $\nu_t$ on $S^1$, which is finite for a relatively regular measure. 
    We refer to measures with finite Loewner--Kufarev energy simply as \emph{finite energy measures}. We will discuss how the Loewner--Kufarev energy relates to conformally invariant random systems in Section~\ref{sec:further}. 

\subsection{Main results} \label{sec:main_results}
We call a monotone and continuous\footnote{Monotone is meant in the following sense: if $s < t$ then $D_t \subset D_s$ where $D_t$ is the bounded connected component of $\m C \smallsetminus \g_t$.
Continuity is 
understood with respect to the supremum norm modulo increasing reparametrization.} family $(\gamma_t)_{t \in \m R}$ of chord-arc Jordan curves whose union covers $\m C\smallsetminus \{0\}$ (or $\ad{\m D}\smallsetminus \{0\}$) a \emph{foliation}, see Section~\ref{sect:winding-function}. Individual curves in a foliation are called \emph{leaves}. Our definition allows more than one leaf  passing through a given point $z \in \m C \smallsetminus \{0\}$. If the family of interfaces $(\g_t  = \partial D_t)_{t\in \m R}$ arising from the Loewner--Kufarev equation driven by the measure $\rho$ form a foliation, we say that $\rho$ \emph{generates a foliation}.

\begin{thm}[Weil--Petersson leaves]\label{thm:WP-leaf}
   If $S(\rho) < \infty$, then $\rho$ generates a foliation of $\m C  \smallsetminus \{0\}$ in which every leaf is a Weil--Petersson quasicircle.
\end{thm}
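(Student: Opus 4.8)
The plan is to show two things when $S(\rho)<\infty$: first that the interfaces $\gamma_t = \partial D_t$ actually form a foliation (i.e. that each leaf is a chord-arc Jordan curve and that the family is continuous and covers $\mathbb C\smallsetminus\{0\}$), and second that each individual leaf $\gamma_t$ is a Weil-Petersson quasicircle, which by \cite{TT06} is equivalent to $I^L(\gamma_t)<\infty$. The key structural input is that a single leaf $\gamma_t$ is welded from the two conformal maps $f_t\colon \mathbb D\to D_t$ and a conformal map $h_t\colon \mathbb D^*\to D_t^*$ onto the unbounded complementary component. The map $f_t$ is directly produced by running the Loewner--Kufarev equation forward from time $-\infty$ to time $t$, while $h_t$ should be obtained by running the equation "from $t$ to $+\infty$" and extracting the behaviour near $\infty$; concretely, the uniformizing chain $g_s = f_s^{-1}$ for $s\ge t$, suitably renormalized, converges to a conformal map of $D_t^*$.

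First I would set up a Hadamard-type variational formula: differentiating $\log|f_t'(z)|$ (equivalently the Green's function / conformal radius) in $t$ along the Loewner--Kufarev flow expresses $\partial_t \log|f_t'|$ in terms of the Herglotz kernel and hence in terms of $\nu_t$. Integrating this in $t$ and using Cauchy--Schwarz together with the hypothesis $\int_{\mathbb R} L(\rho_t)\,\dd t = S(\rho) < \infty$ should give a quantitative bound on $\mathcal D_{\mathbb D}(\log|f_t'|)$ in terms of $S(\rho)$; the point is that the contribution of the driving measure to the Dirichlet energy of $\log|f_t'|$ is controlled by the square of its $H^1$-type norm, which is exactly what $L(\rho_t)$ measures. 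The same argument applied to the outer chain controls $\mathcal D_{\mathbb D^*}(\log|h_t'|)$, and the conformal-radius normalization handles the $\log|f_t'(0)/h_t'(\infty)|$ term. Combining these estimates with \eqref{eq:loop_LE_def} yields $I^L(\gamma_t)<\infty$ for every $t$, hence $\gamma_t$ is a Weil-Petersson quasicircle, hence in particular a chord-arc Jordan curve; the monotonicity and the conformal-radius normalization $f_t'(0)=e^{-t}$ force $\bigcup_t D_t = \mathbb C$ and $\bigcap_t D_t = \{0\}$, giving the covering and continuity properties of a foliation.

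The main obstacle I expect is the analysis of the outer maps $h_t$ and the uniform control of $I^L(\gamma_t)$ as $t$ varies — in particular, making sense of and estimating the "backward" Loewner--Kufarev evolution past time $t$ so that the unbounded component $D_t^*$ is uniformized in a way compatible with a Hadamard formula, and checking that the resulting Dirichlet-energy bounds are genuinely finite (not merely formally so) when only $S(\rho)<\infty$ is assumed. A secondary technical point is regularity: a priori $\rho$ of finite energy only gives $\nu_t \in H^1(S^1)$ for a.e. $t$, so one must argue that the leaves $\gamma_t$ are well-defined Jordan curves for every $t$ (not just a.e.), which should follow from continuity of $t\mapsto f_t$ in a strong enough topology together with the fact that a locally uniform limit of Weil-Petersson quasicircles with uniformly bounded energy is again a Weil-Petersson quasicircle. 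I would handle this by first proving the energy bound for a dense set of times and then passing to the limit using lower semicontinuity of the Loewner energy.
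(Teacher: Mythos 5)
There are two genuine gaps in your plan, and both concern steps the paper is specifically structured to avoid.

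First, the outer maps $h_t$ are not accessible the way you describe. For $s\ge t$ the uniformizing maps $g_s=f_s^{-1}$ are defined on the domains $D_s\subset D_t$, which \emph{shrink to the origin}; no renormalization of them converges to a conformal map of the unbounded component $D_t^*$. That component is swept by the leaves $\gamma_s$ with $s<t$, and uniformizing it requires the inverted, time-reversed evolution driven by the measure $\tilde\rho$ of Theorem~\ref{thm:main_rev} --- for which no tractable formula in terms of $\rho$ is known, and whose energy is controlled only \emph{via} the reversibility theorem, itself a consequence of the main duality rather than an input. The paper sidesteps $h_t$ entirely: by Lemma~\ref{thm_TT_equiv_T01}, finiteness of the one-sided quantity $\mc D_{\m D}(\log|f_t'|)$ already implies $I^L(\gamma_t)<\infty$, \emph{provided} $\gamma_t$ is a Jordan curve. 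The actual bound $\mc D_{\m D}(\arg f_t')\le 32\,S_{[0,t]}(\rho)+4t$ is obtained not from a direct Hadamard formula for $\log|f_t'|$ but from weak energy duality for smooth piecewise-constant measures (an explicit computation of the winding function), the Grunsky inequality to pass from $\vartheta[f_t]$ to $\arg f_t'$, and an approximation argument using convexity of $L$ and lower semicontinuity of the Dirichlet energy.

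Second, you have misidentified the obstacle to the Jordan-curve property. It is not an almost-every-$t$ regularity issue: the problem is that $\mc D_{\m D}(\log f')<\infty$ does \emph{not} imply $f(\m D)$ is a Jordan domain (there are self-touching boundaries with finite energy), so the energy bound alone, even at every $t$, does not yield quasicircles, and lower semicontinuity along a dense set of times cannot repair this. The paper's resolution is quantitative: there is a universal $\delta>0$ such that $\mc D_{\m D}(\log f')<\delta$ forces $f(\m D)$ to be a quasidisk (via a Schwarzian bound and the Ahlfors--Weill criterion, Lemma~\ref{lem:TT_bounded_S}); one then chooses times $t_0<t_1<\cdots$ so that each increment of $32\,S+4\,\Delta t$ is below $\delta$, applies the criterion on each increment, and propagates the Jordan property by induction using the domain Markov property. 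The passage from the disk to $\m C\smallsetminus\{0\}$ is then done by applying the disk result to $\rho^{(s_0)}$ for $s_0$ small enough that $D_t^{(s_0)}\subset\tfrac12\m D$ and pushing forward by the conformal map $f_{s_0}$. Without these two ingredients your argument does not close.
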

See Corollary~\ref{cor:S_finite_foliation_WP} 
and Section~\ref{subsec:whole_plane}. Theorem~\ref{thm:WP-leaf} shows that any $\rho$ with $S(\rho) < \infty$ gives rise to a dynamical process in $T_0(1)$, see Section~\ref{sec:further}. Moreover, we can view $S(\rho)$ as the energy of the generated foliation. Later in Theorem~\ref{thm:main-jordan-curve} we prove that all Weil--Petersson quasicircles can be generated by a finite energy measure after proper normalization.

The next result is our main theorem. 
It shows that the foliation energy can, in fact, be expressed using a geometrically defined quantity, without reference to the Loewner chain. 

To state the result, we associate to a foliation $(\g_t)_{t \in \m R}$ a real-valued function $\varphi$ as follows. Given the leaf $\g_t$, let $g_t$ be the conformal map  of the bounded component of $\hat{\m C} \smallsetminus \g_t$ onto $\m D$, fixing $0$ and with positive derivative there. Given $z$ at which $\g_t$ has a tangent, we define $\varphi(z)$ to be the non-tangential limit at $z$ of the function 
$w\mapsto \arg [w g'_{t}(w)/g_t(w)]$. (We choose the continuous branch that equals $0$ at $0$.) This defines $\varphi$ arclength-a.e. on $\g_t$. Monotonicity of $(\g_t)_{t \in \m R}$ implies that there is no ambiguity in the definition of $\varphi(z)$ if $z \in \g_t \cap \g_s$. See Section~\ref{sect:winding-function} for more details. Modulo $2\pi$,  $\varphi(z)$ equals the difference of the argument of the tangent of $\gamma_{t}$ at $z$ and that of the tangent to the circle centered at $0$ passing through $z$. See Figure~\ref{fig:intro_fol}. We call $\varphi$ the \emph{winding function} associated with a foliation $(\g_t)_{t \in \m R}$. 
The simplest example of winding function is when the measure $\rho$ has zero energy, namely when $\rho_t$ is the uniform probability measure on $S^1$ for a.e. $t \in \m R$. In this case,  the associated foliation is the family of concentric circles centered at $0$, and the winding function is identically $0$. We discuss additional examples in Section~\ref{sect:examples}. 

\begin{figure}
    \centering
    \includegraphics[width=.4\textwidth]{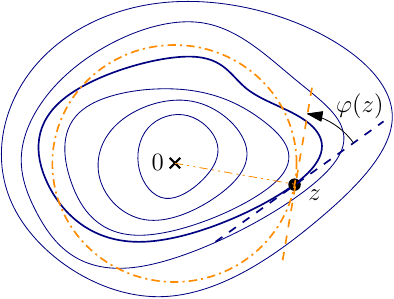}
    \caption{Illustration of the winding function $\varphi$.}
    \label{fig:intro_fol}
\end{figure}

In the present non-smooth foliation setting, a function defined on each leaf arclength-a.e.\ is not necessarily defined Lebesgue-a.e.\ in $\m C$, see Section~\ref{sect:winding-function}. However, we prove that 
if it is possible to extend the winding function $\varphi$ to an element in $W^{1,2}_{\mathrm{loc}}$, then the extension is unique. (This is one main reason why we choose to work in the setting of chord-arc curves.) See Proposition~\ref{prop:unique_extension}.
 Statements about the Dirichlet energy of $\varphi$ will be understood in terms of this extension whose existence is implicitly part of any such statement.

\begin{thm}[Energy duality]\label{thm:main0} Assume that $\rho \in \mc N$ generates a foliation and
   let $\varphi$ be the associated winding function on $\m C$. Then $\mc D_{\m C} (\varphi) <\infty$  if and only if $S(\rho) <\infty$ and 
   \[\mc D_{\m C} (\varphi) = 16 \, S(\rho).
 \]
   \end{thm}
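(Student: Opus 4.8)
The plan is to establish the identity $\mc D_{\m C}(\varphi) = 16\, S(\rho)$ by computing the Dirichlet energy of $\varphi$ slice by slice along the foliation and recognizing the resulting integral as (a multiple of) the Loewner-Kufarev energy. First I would set up coordinates adapted to the foliation: using the uniformizing Loewner chain $(g_t : D_t \to \m D)$, the leaf $\gamma_t$ is the image under $f_t = g_t^{-1}$ of the unit circle, and the ``time'' coordinate $t$ together with the angular coordinate $\theta$ on $S^1$ gives a (measurable, generically bijective) parametrization of $\m C \smallsetminus \{0\}$. The key analytic input is a Hadamard-type variational formula: differentiating the Loewner-Kufarev equation shows how $\log f_t'$, and hence the harmonic function $\arg(w g_t'(w)/g_t(w))$ whose boundary values define $\varphi$, varies with $t$. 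Concretely, writing $p_t(z) = \log(z g_t'(z)/g_t(z))$ (suitably branched) on $D_t$, one gets from $\partial_t f_t = -z f_t' H_t$ an evolution equation for $p_t \circ f_t$ on $\m D$ driven by $H_t$ and its derivative; the imaginary part of the boundary values of $p_t$ is exactly $\varphi$ restricted to $\gamma_t$.

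Next I would express $\mc D_{\m C}(\varphi)$ as an integral over the cylinder $S^1 \times \m R$. The change of variables $z = f_t(e^{i\theta})$ turns $\frac1\pi \int_{\m C} |\nabla \varphi|^2\, dA$ into an integral over $S^1 \times \m R$ of a density built from the tangential and transverse derivatives of $\varphi$ along and across the leaves; the Jacobian factors coming from $|f_t'|$ and from $\partial_t f_t$ should, by the Loewner-Kufarev equation, combine cleanly. The crucial cancellation is that the transverse derivative of $\varphi$ is governed by $H_t$ while the relevant combination that survives in the energy density is precisely $\nu_t'(\theta)^2$, where $\dd\rho_t = \nu_t^2\, d\theta$ (when $\rho$ has finite energy); this is where the factor $16$ and the identification with $S(\rho) = \frac12 \int \nu_t'(\theta)^2\, d\theta\, dt$ emerges. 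In effect one wants to show the integrand reduces, after integrating out one direction using harmonicity of $p_t$ on each slice (an $L^2(S^1)$-orthogonality/Parseval computation), to $8\, \nu_t'(\theta)^2$.

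For the equivalence of finiteness of the two energies, I would argue both directions quantitatively: if $S(\rho) < \infty$, then by Theorem~\ref{thm:WP-leaf} the leaves are Weil-Petersson quasicircles, $\varphi$ has a well-defined $W^{1,2}_{\mathrm{loc}}$ extension by Proposition~\ref{prop:unique_extension}, and the slice computation above gives $\mc D_{\m C}(\varphi) = 16 S(\rho) < \infty$; conversely, if $\mc D_{\m C}(\varphi) < \infty$, the same change-of-variables computation run in reverse forces $\dd\rho_t$ to be absolutely continuous with $\nu_t \in W^{1,2}(S^1)$ for a.e.\ $t$ and $\int \|\nu_t'\|_{L^2}^2\, dt < \infty$, i.e.\ $S(\rho) < \infty$. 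The main obstacle I anticipate is making the change of variables and the slice-wise integration by parts rigorous in the non-smooth chord-arc setting: a priori $f_t$ need not be smooth up to $S^1$, $\varphi$ is only defined arclength-a.e.\ on each leaf, and one must justify that the extension furnished by Proposition~\ref{prop:unique_extension} actually agrees, in the Sobolev sense, with the object obtained by the variational formula, controlling boundary terms and exchanging $\partial_t$ with the trace operation. This is presumably where the ``isometry between the Dirichlet energy space and $L^2(2\rho)$'' advertised in the abstract does the heavy lifting, and I would expect the bulk of the work to be in establishing that isometry carefully via an approximation argument (e.g.\ by smooth Loewner chains, or by truncating and mollifying $\rho$) and then passing to the limit in the energy identity.
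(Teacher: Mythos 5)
Your heuristic is the right one --- the transverse derivative of $\varphi$ across the leaves is $-2\nu_t'/\nu_t$ times the normal speed, and integrating $(2\nu_t'/\nu_t)^2$ against $2\rho$ gives $8\int\nu_t'^2\,\dd\t\,\dd t=16\,S(\rho)$ --- but the proof as proposed has genuine gaps. The central one is that the change of variables $z=f_t(e^{i\t})$ to foliation coordinates on $S^1\times\m R$ is simply not available in the generality of the theorem: the foliation is only continuous and not strictly monotone (distinct leaves may share arcs, so the map $(\t,t)\mapsto f_t(e^{i\t})$ is not injective and its ``Jacobian'' degenerates), the leaves are merely chord-arc (not $C^1$, not even locally Lipschitz graphs), and $\varphi$ is only defined arclength-a.e.\ on each leaf, which does not determine it Lebesgue-a.e.\ in the plane. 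This is precisely why the paper never performs this change of variables. Instead it expresses Hadamard's variational formula for the Green's function through the Loewner-Kufarev equation, builds from it the isometry $\iota:\mc E_0(\m D)\to L^2(2\rho)$ with explicit inverse $\varkappa[u](w)=2\pi\int_0^{\tau(w)}P_{\m D}[u_t\rho_t](g_t(w))\,\dd t$, and then proves the \emph{identity} $\varphi=\varkappa[-2\nu_t'/\nu_t]$, from which $\mc D(\varphi)=16\,S(\rho)$ is immediate. Your fallback --- ``establish the isometry by approximation and pass to the limit in the energy identity'' --- does not close this gap: the approximation by smooth piecewise-constant measures in the paper only yields one-sided bounds via lower semicontinuity of the Dirichlet energy (enough to show the leaves are Weil--Petersson, not enough for the exact equality), and the slice-wise Parseval computation is carried out in the paper only for $\rho\in\mc N_+^\infty$, where everything is smooth.

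The converse direction is also underspecified. ``Running the computation in reverse'' presupposes that $\rho_t$ is absolutely continuous with an a.e.-differentiable density, which is exactly what must be proved: a priori $\rho_t$ is an arbitrary probability measure. The paper gets this from $\iota[\varphi]\in L^2(2\rho)$ via a Hardy-space bootstrap ($\alpha_t=2\pi P_{\m D}[u_t\rho_t]\in\mathfrak h^1$, hence $H_t'\in\mc H^p$ for $p<1$, hence $H_t\in\mc H^q$ for all $q<\infty$, hence $\dd\rho_t=\nu_t^2\,\dd\t$ with $\nu_t^2\in L^q$, then a second pass showing $H_t$ is absolutely continuous on $S^1$), and only then identifies $u_t\nu_t^2=-2\nu_t'\nu_t$ a.e. Finally, you do not address the passage from the disk statement to the whole-plane one, which in the paper requires the orthogonal decomposition $\mc D_{D_s}(\varphi)=16\,S_{[s,\infty)}(\rho)+\mc D_{\m D}(\vartheta[f_s])$ together with a separate limiting argument bounding $\mc D_{\m D}(\vartheta[f_s])$ by $16\int_{-\infty}^sL(\rho_t)\,\dd t$; this is not automatic from a global ``slice over $S^1\times\m R$'' picture.
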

   The proof of Theorem~\ref{thm:main0} is completed in Section~\ref{subsec:whole_plane}. 
 \begin{rem}
  The factor $16$ in Theorem~\ref{thm:main0} is consistent with the SLE duality relation $\k \leftrightarrow 16/\k$  \cite{Zhan_duality, Dub_duality}. See Section~\ref{sec:further}.
 \end{rem}

\subsection{Consequences of energy duality}
Theorem~\ref{thm:main0} has several implications. The first is the reversibility of the Loewner--Kufarev energy. Consider $\rho \in \mc N$ and the corresponding evolution family of domains $(D_t)_{t \in \m R}$. Applying $z\mapsto 1/z$ to the complementary domains $\hat{\m C} \smallsetminus D_t$, we obtain an evolution family of domains $(\tilde D_t)_{t \in \m R}$ upon time-reversal and reparametrization, which may be described by the Loewner--Kufarev equation with an associated driving measure $\tilde \rho$. While there is no known simple description of $\tilde \rho$ in terms of $\rho$, energy duality implies remarkably that the Loewner--Kufarev energy is invariant under this transformation. 
\begin{thm}[Energy reversibility]  \label{thm:main_rev}
We have $S (\rho) = S(\tilde \rho).$
\end{thm}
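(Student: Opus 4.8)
The plan is to derive reversibility from the energy duality of Theorem~\ref{thm:main0} combined with the conformal invariance of the Dirichlet integral. Write $\iota\colon z\mapsto 1/z$ for the inversion. First one reduces to the case $S(\rho)<\infty$: the construction $\rho\mapsto\tilde\rho$ is, in the relevant regime, an involution, since inverting the complementary domains a second time and undoing the time-reversal and reparametrization returns the original chain. Hence if $S(\rho)=\infty$ but one had $S(\tilde\rho)<\infty$, applying the finite-energy case below to $\tilde\rho$ — whose associated inverted chain is $\rho$ — would give $S(\rho)=S(\tilde\rho)<\infty$, a contradiction; so $S(\tilde\rho)=\infty=S(\rho)$ in that case.

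So assume $S(\rho)<\infty$. By Theorem~\ref{thm:WP-leaf}, $\rho$ generates a foliation $(\gamma_t)_{t\in\m R}$ of $\m C\smallsetminus\{0\}$ whose leaves are Weil-Petersson quasicircles; in particular every $D_t$ is a Jordan domain with $D_t^*=\hat{\m C}\smallsetminus\ad{D_t}$ simply connected. The inverted family $\bigl(\iota(D_t^*)\bigr)_{t\in\m R}$, reparametrized by conformal radius at $0$ and time-reversed, is the Loewner chain driven by $\tilde\rho$, and its leaves are the curves $\iota(\gamma_t)$. Since $\iota$ is a M\"obius transformation of $\hat{\m C}$ mapping $\m C\smallsetminus\{0\}$ bijectively onto itself and bi-Lipschitz on every annulus $\{r\le|z|\le R\}$, the family $\bigl(\iota(\gamma_t)\bigr)_{t\in\m R}$ is again a foliation of $\m C\smallsetminus\{0\}$, and each $\iota(\gamma_t)$ is a Weil-Petersson quasicircle because that class is M\"obius invariant. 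Thus $\tilde\rho$ generates a foliation, and Theorem~\ref{thm:main0} applies to it as well.

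The crux is to relate the winding function $\tilde\varphi$ of the inverted foliation to the winding function $\varphi$ of $(\gamma_t)_{t\in\m R}$. I would argue from the geometric description of $\varphi$ as, modulo $2\pi$, the difference between the argument of the tangent to the leaf and the argument of the tangent to the circle centered at $0$ through the point. Differentiating $\iota\circ\gamma$ shows that the leaf-tangent argument at $\iota(z)$ equals the leaf-tangent argument of $\gamma$ at $z$ plus $\pi-2\arg z$, while the circle-tangent argument changes from $\pi/2+\arg z$ to $\pi/2-\arg z$; one must also keep track of the fact that $\iota$ interchanges the bounded and unbounded complementary components, so that the leaf of the inverted foliation inherits the orientation of $\partial D_t^*$ rather than of $\partial D_t$, contributing a further $\pi$. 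Combining these contributions gives $\tilde\varphi(\iota(z))=\varphi(z)$ modulo $2\pi$; upgrading this to an identity of the genuine (unwrapped, $W^{1,2}_{\mathrm{loc}}$) winding functions — using the uniqueness of the $W^{1,2}_{\mathrm{loc}}$ extension (Proposition~\ref{prop:unique_extension}) together with the normalizations pinning the branches — yields
\[
\tilde\varphi=\varphi\circ\iota+c
\]
on $\m C\smallsetminus\{0\}$ for a constant $c$, and in particular $\tilde\varphi$ admits a $W^{1,2}_{\mathrm{loc}}$ extension precisely because $\varphi$ does.

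Finally, since the Dirichlet integral over $\m C$ equals that over $\hat{\m C}$ (a point being negligible) and is invariant under the conformal automorphism $\iota$ as well as under the addition of a constant, we obtain $\mc D_{\m C}(\tilde\varphi)=\mc D_{\m C}(\varphi\circ\iota)=\mc D_{\hat{\m C}}(\varphi)=\mc D_{\m C}(\varphi)$. Applying Theorem~\ref{thm:main0} to $\tilde\rho$ and to $\rho$ then gives $16\,S(\tilde\rho)=\mc D_{\m C}(\tilde\varphi)=\mc D_{\m C}(\varphi)=16\,S(\rho)$, that is $S(\rho)=S(\tilde\rho)$. I expect the main obstacle to be the winding-function transformation rule of the third paragraph: one must correctly combine the effect of $\iota$ on tangent directions with the interchange of inside and outside — so that the interior uniformizing maps of one foliation become, conjugated by $\iota$, the exterior uniformizing maps of the other — and check that no genuine (non-integer multiple of $2\pi$) discrepancy nor a sign change appears. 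Everything else is routine given Theorems~\ref{thm:WP-leaf} and~\ref{thm:main0} and the conformal invariance of the Dirichlet energy.
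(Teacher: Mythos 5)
Your proposal is correct and follows essentially the same route as the paper: reduce to the identity $\tilde\varphi\circ j=\varphi$ between winding functions (up to branch), then conclude from the conformal invariance of the Dirichlet integral together with the energy duality of Theorem~\ref{thm:main0}, which, being an if-and-only-if, handles the infinite-energy case as well. The only cosmetic difference is in pinning down the $2\pi\mathbb{Z}$-ambiguity: the paper deforms both winding functions to $0$ along the equipotentials $f_t(rS^1)$ to get $\tilde\varphi\circ j=\varphi$ exactly, whereas you settle for an additive constant, which is equally sufficient since constants do not affect the Dirichlet energy.
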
 
See Theorem~\ref{thm:energy_rev}. 
Notice that although the Loewner--Kufarev energy is preserved, the reversal transformation redistributes the energy in a highly non-local manner: for instance, suppose that $\rho_t$ is the uniform measure when 
$t \notin [0,1]$. Then $L(\rho_t)$ is zero 
when $t \notin [0,1]$, yet it is not hard to see that the corresponding reversed foliation may have non-zero $L(\tilde \rho_t)$ for a.e. $t \le 0$ if $D_1 \neq e^{-1} \m D$.

By Theorem~\ref{thm:WP-leaf}, every leaf in the foliation generated by a measure with $S(\rho)< \infty$ satisfies $I^L(\g) < \infty$. 
Conversely, the next result shows that if $I^L(\g) < \infty$, then $\g$ can always be realized as a leaf in a foliation generated by Loewner evolution driven by a measure with $S(\rho) < \infty$ and $I^L(\g)$ is controlled by $S(\rho)$. We obtain a new and quantitative characterization of Weil--Petersson quasicircles.

\begin{thm}[Characterization of Weil--Petersson quasicircles]
\label{thm:main-jordan-curve}  
A Jordan curve $\g$ separating $0$ from $
  \infty$ is a Weil--Petersson quasicircle if and only if $\g$ can be realized as a leaf in the foliation generated by a measure $\rho$ with $S(\rho) < \infty$. Moreover, the Loewner energy of $\gamma$ satisfies the identity
  \[I^L(\g)  = 16 \inf_{\rho} S(\rho) + 2 \log |f'(0)/h'(\infty)|,\]
  where the infimum, which is attained, is taken over all $\rho \in \mc N$ such that $\g$ is a leaf of the generated foliation. The minimal energy foliation is the one obtained by equipotentials on both sides of $\gamma$.
\end{thm}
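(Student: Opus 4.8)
The plan combines the energy duality of Theorem~\ref{thm:main0} with the Dirichlet principle. One implication is immediate: if $\gamma$ is a leaf of the foliation generated by some $\rho$ with $S(\rho)<\infty$, then $\gamma$ is a Weil-Petersson quasicircle by Theorem~\ref{thm:WP-leaf}. For the converse and the identity, the first observation is that the trace $\varphi|_\gamma$ of the winding function on $\gamma$ is \emph{intrinsic}: whenever $\gamma=\gamma_{t_0}$ is a leaf, the normalized uniformizing map of the bounded component of $\hat{\m C}\smallsetminus\gamma$ is $f^{-1}$ (normalizing $f'(0)>0$), so $\varphi|_\gamma$ depends only on $\gamma$. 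Hence for any $\rho\in\mc N$ with $S(\rho)<\infty$ realizing $\gamma$ as a leaf, its winding function $\varphi_\rho$ (the unique $W^{1,2}_{\mathrm{loc}}$ extension, Proposition~\ref{prop:unique_extension}) carries the fixed boundary data $\varphi|_\gamma$ on $\gamma$, and by Theorem~\ref{thm:main0} together with the Dirichlet principle applied separately in $D$ and $D^*$,
\[ 16\,S(\rho)=\mc D_{\m C}(\varphi_\rho)=\mc D_D(\varphi_\rho)+\mc D_{D^*}(\varphi_\rho)\ \ge\ \mc D_D(\varphi_D)+\mc D_{D^*}(\varphi_{D^*}), \]
where $\varphi_D,\varphi_{D^*}$ denote the bounded harmonic extensions of $\varphi|_\gamma$ (on $D^*$ one checks the boundary term at $\infty$ vanishes, $\varphi_\rho$ having a limit there). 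This gives the lower bound $16\inf_\rho S(\rho)\ge\mc D_D(\varphi_D)+\mc D_{D^*}(\varphi_{D^*})$.

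To see this bound is attained — which also yields the existence half of the characterization — I would exhibit a foliation with $\gamma$ as a leaf whose winding function is harmonic on both sides. Normalize so that $t_0=-\log f'(0)$; for $t\ge t_0$ set $\gamma_t:=f(e^{-(t-t_0)}S^1)$, which is the leaf family of the Loewner-Kufarev equation with $H_t\equiv 1$ (i.e. $\rho_t$ uniform), and for $t\le t_0$ run the mirror construction on the Weil-Petersson quasicircle $1/\gamma$ — shrink a round disk inside $1/\gamma$ and transport back by $z\mapsto 1/z$ — parametrized so that the conformal radius of the bounded component equals $e^{-t}$. Since $\gamma$ and $1/\gamma$ are chord-arc, $f$ and $h$ extend continuously to the boundary and the two halves glue continuously at $t_0$ into a monotone continuous family covering $\m C\smallsetminus\{0\}$, hence generated by some $\rho\in\mc N$ (Pommerenke \cite{Pom1965}). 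Computing with $f_t(\zeta)=f(e^{-(t-t_0)}\zeta)$ shows that on each interior leaf the defining quantity of the winding function equals $\arg(\zeta f'(\zeta)/f(\zeta))$ transported by $f$ (up to sign), so $\varphi_\rho|_D=\varphi_D$ is harmonic and $\mc D_D(\varphi_D)=\mc D_{\m D}(\log\abs{\zeta f'(\zeta)/f(\zeta)})$; by covariance of the winding function under inversion (the mechanism behind Theorem~\ref{thm:main_rev}), the analogous formula holds on $D^*$ with $\mc D_{D^*}(\varphi_{D^*})=\mc D_{\m D^*}(\log\abs{\zeta h'(\zeta)/h(\zeta)})$. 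Both are finite because $\gamma$ is a Weil-Petersson quasicircle: $\mc D_{\m D}(\log\abs{f'})$ and $\mc D_{\m D^*}(\log\abs{h'})$ are finite exactly when $I^L(\gamma)<\infty$ by \eqref{eq:loop_LE_def}, whereas $\mc D_{\m D}(\log\abs{f/\zeta})$ and $\mc D_{\m D^*}(\log\abs{h/\zeta})$ are finite for every Jordan curve (e.g. $(\log(f/\zeta))'=f'/f-1/\zeta$ is holomorphic on $\m D$, bounded near $0$, and $f'$ is square-integrable near $S^1$ since $D$ is bounded). Therefore $\mc D_{\m C}(\varphi_\rho)<\infty$, so $S(\rho)<\infty$ by Theorem~\ref{thm:main0}, and $16\,S(\rho)=\mc D_D(\varphi_D)+\mc D_{D^*}(\varphi_{D^*})$ matches the lower bound; in particular the infimum is attained.

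It remains to identify $\mc D_D(\varphi_D)+\mc D_{D^*}(\varphi_{D^*})$ with $I^L(\gamma)-2\log\abs{f'(0)/h'(\infty)}$. By the previous paragraph and \eqref{eq:loop_LE_def}, this is equivalent to
\[ \mc D_{\m D}\!\big(\log\abs{\zeta f'(\zeta)/f(\zeta)}\big)+\mc D_{\m D^*}\!\big(\log\abs{\zeta h'(\zeta)/h(\zeta)}\big)=\mc D_{\m D}(\log\abs{f'})+\mc D_{\m D^*}(\log\abs{h'})+2\log\abs{f'(0)/h'(\infty)}, \]
i.e., after expanding the squares, that the cross-terms between $\log\abs{f'}$ and $\log\abs{f/\zeta}$ on $\m D$ and between $\log\abs{h'}$ and $\log\abs{h/\zeta}$ on $\m D^*$ combine with the diagonal terms $\mc D_{\m D}(\log\abs{f/\zeta})$ and $\mc D_{\m D^*}(\log\abs{h/\zeta})$ to yield exactly twice $\log\abs{f'(0)/h'(\infty)}$, the log-ratio of the inner and outer conformal radii. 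I would prove this by Green's theorem across $S^1$, writing the Dirichlet integrals as boundary integrals and using that $\log\abs{f/\zeta}$ and $\log\abs{h/\zeta}$ are harmonic with values $\log\abs{f'(0)}$ at $0$ and $\log\abs{h'(\infty)}$ at $\infty$; alternatively this is an area/Grunsky-type identity for the universal Liouville action \cite{TT06}. This identity is the main obstacle: it is the single point where the precise structure of $I^L$ — the coefficient $4$ and the conformal-radius correction — must be matched by direct computation, the rest being soft (duality, the Dirichlet principle, and one explicit comparison foliation).
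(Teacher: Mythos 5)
Your argument is correct in outline and shares its backbone with the paper's: the extremal competitor you construct (equipotentials on both sides of $\g$, glued at $t_0=-\log f'(0)$ and continued outside via $z\mapsto 1/z$) is exactly the measure $\rho^\g$ of Section~\ref{subsec:jordan_curve}, and the final identity you need is precisely Lemma~\ref{lem:Grunsky}. Where you genuinely diverge is the minimality step. The paper shows $S(\rho^\g)\le S(\rho)$ by truncation plus energy reversibility: replacing $\rho_t$ by the uniform measure after the time at which $\g$ appears can only decrease the energy (equipotentials cost nothing), and the same operation on the other side of $\g$ is handled by Theorem~\ref{thm:energy_rev}. You instead observe that the trace $\varphi|_\g$ is intrinsic to $\g$ (it is $\vartheta[g_{t_0}]$ for the normalized map of the bounded component, independent of the foliation) and invoke the Dirichlet principle in $D$ and $D^*$ separately; combined with Theorem~\ref{thm:main0} and the fact that the harmonic part of a finite-energy function on a chord-arc domain is determined by its Jonsson--Wallin trace (the ingredient already used in Proposition~\ref{prop:unique_extension}), this gives the same lower bound and makes it transparent \emph{why} the harmonic-winding foliation is optimal. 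This is a nice alternative that avoids reversibility altogether, at the cost of some care with the orthogonal decomposition on the unbounded side $D^*$. The one step you assert rather than prove --- that the cross-terms and the $\mc D_{\m D}(\log|f/\zeta|)$, $\mc D_{\m D^*}(\log|h/\zeta|)$ terms collapse to $2\log|f'(0)/h'(\infty)|$ --- is exactly Lemma~\ref{lem:Grunsky}; note the paper does not prove it by Green's theorem across $S^1$ (which would require boundary-regularity or approximation arguments on a general Weil--Petersson quasicircle) but by the cleaner trick of computing $I^L(1/\g)=I^L(\g)$ and invoking the equality case of the Grunsky inequality, Lemma~\ref{lem:Grunsky_inequality}. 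If you fill in that identity (by either route), your proof is complete.
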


See Theorem~\ref{thm:dual_Jordan_curve} and Corollary~\ref{cor:WP-characterization}. 
By equipotential we mean the image of a circle 
centered at $0$ under the Riemann map from $\m D$ to a component of $\m C \smallsetminus \g$ fixing $0$ or taking $0$ to $\infty$. In this case, the winding function is harmonic in $\m C \smallsetminus \g$, see Section~\ref{subsec:jordan_curve}. This minimum is zero if and only if $\g$ is a circle centered at $0$, whereas $I^L(\g)$ is zero for all circles. This explains the presence of the derivative terms.
Corollary~\ref{cor:WP-LE-bound} of Theorem~\ref{thm:main-jordan-curve} shows that the Loewner energies of the leaves in a foliation generated by $\rho$ with $S(\rho) < \infty$ are uniformly bounded by $16 \, S(\rho)$.

We also prove the following identity that simultaneously expresses the interplay between Dirichlet energies under ``welding'' and ``flow-line'' operations along a \emph{bounded} curve  in a similar spirit as \cite[Cor.\,1.6]{VW1}. In our setup, given a chord-arc curve $\gamma$ separating $0$ and $\infty$, we define a winding function on $\gamma$  arclength-a.e.\ as above. We say that a Weil--Petersson quasicircle $\g$ is \emph{compatible} with $\varphi \in W^{1,2}_{\textrm{loc}}$, if the winding function of $\g$ coincides with the trace $\varphi|_{\g}$ arclength-a.e.

\begin{prop}[Complex identity] \label{prop:complex_id}
Let $\psi$ be a complex valued function on $\m C$ with $\mc D_{\m C}(\psi) = \mc D_{\m C}(\Re \psi) +  \mc D_{\m C}(\Im \psi)<\infty$ and $\g$ a Weil--Petersson quasicircle separating $0$ from $\infty$ compatible with $\Im \psi$. 
Let 
\begin{equation}\label{eq:complex_transform}
    \zeta (z): = \psi \circ f (z) + \log \frac{f'(z) z}{ f(z)}; \quad \xi(z) : = \psi \circ h(z) + \log \frac{h'(z) z}{ h(z)}.
\end{equation}
Then
$\mc D_{\m C} (\psi) = \mc D_{\m D} (\zeta) + \mc D_{\m D^*} (\xi)$. 
\end{prop}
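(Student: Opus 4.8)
The plan is to reduce the statement to the two real identities already established in \cite{VW1} (the ``welding'' and ``flow-line'' identities, Thm.\,1.1 and Thm.\,1.4 there), applied separately to the real and imaginary parts of $\psi$, and then to recombine. Write $u = \Re \psi$ and $v = \Im \psi$, so $\mc D_{\m C}(\psi) = \mc D_{\m C}(u) + \mc D_{\m C}(v)$ by hypothesis. Correspondingly decompose $\z = \Re \z + i \Im \z$ and $\xi = \Re \xi + i\Im \xi$. Since $\log (f'(z)z/f(z))$ is holomorphic on $\m D$ (it is $0$ at $0$ and $f$ is univalent, so $f'z/f$ is non-vanishing and the branch is well defined), its real part is $\log |f' z/f|$ and its imaginary part is the winding $\arg(f'z/f)$, which is exactly the quantity whose non-tangential boundary values define the winding function of $\g = f(S^1)$; similarly on the $h$-side. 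Thus $\Re \z = u\circ f + \log|f'z/f|$, $\Im \z = v \circ f + \arg(f'z/f)$, and likewise for $\xi$.

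The next step is to invoke the first identity of \cite{VW1} for the harmonic-type/welding part and the second for the flow-line part. Concretely, I expect that applying \cite[Thm.\,1.1]{VW1} (welding) to the pair $(u$ restricted to the two sides of $\g)$ together with the conformal maps $f, h$ yields
$\mc D_{\m C}(u) = \mc D_{\m D}(u\circ f + \log|f'z/f|) + \mc D_{\m D^*}(u\circ h + \log|h'z/h|) = \mc D_{\m D}(\Re\z) + \mc D_{\m D^*}(\Re\xi)$, using that $\g$ is a Weil-Petersson quasicircle (finite Loewner energy) so the relevant Dirichlet integrals of $\log|f'|$, $\log|h'|$ are finite and the additive derivative corrections telescope. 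Then applying \cite[Thm.\,1.4]{VW1} (flow-line) to $v$, which is compatible with $\g$ in the sense defined just above the proposition --- i.e.\ the trace $v|_\g$ agrees arclength-a.e.\ with the winding function of $\g$, so that $v\circ f + \arg(f'z/f)$ and $v\circ h + \arg(h'z/h)$ have matching boundary traces across $S^1$ after welding --- yields $\mc D_{\m C}(v) = \mc D_{\m D}(\Im\z) + \mc D_{\m D^*}(\Im\xi)$. Adding the two identities and using $\mc D = \mc D(\Re\,\cdot\,) + \mc D(\Im\,\cdot\,)$ for $\z$ and $\xi$ as well as for $\psi$ gives the claim.

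The main obstacle, and the point requiring care, is verifying that the hypotheses of the two cited theorems are met in the precise form needed and that the compatibility condition on $\Im\psi$ is exactly what makes the flow-line identity applicable --- in particular that the boundary traces of $\Im\z$ and $\Im\xi$ glue correctly under the welding homeomorphism $h^{-1}\circ f$, with no hidden additive constant or winding discrepancy. A secondary technical point is the well-definedness and integrability of the logarithmic terms: one must check that $\log(f'z/f)$ and $\log(h'z/h)$ extend to the chosen single-valued branches vanishing at $0$ and $\infty$ (no monodromy, since $f, h$ are univalent and fix the respective punctures), and that $\mc D_{\m D}(\log|f'z/f|)<\infty$ etc., which follows from $I^L(\g)<\infty$ via \eqref{eq:loop_LE_def} and the fact that $\log|z|$ is harmonic with finite Dirichlet energy on annular pieces away from $0$; the cross terms between $u\circ f$ (resp.\ $v\circ f$) and the logarithmic corrections are handled exactly as in the scalar identities of \cite{VW1}. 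Once these verifications are in place the proof is a one-line addition, so essentially all the work is bookkeeping to match notation and hypotheses with \cite{VW1}.
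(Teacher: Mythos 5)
Your overall strategy --- split $\psi$ into real and imaginary parts, handle one by a ``welding''-type identity and the other by a ``flow-line''-type identity, and add --- is the right way to think about the result, and the Stokes computation you defer to \cite{VW1} is indeed how the cross terms are handled. But the two displayed identities you propose to add are each false, and the missing piece is precisely the content of the proposition. Test your welding claim with $\Re \psi \equiv 0$: it asserts $0=\mc D_{\m D}(\log|f'z/f|)+\mc D_{\m D^*}(\log|h'z/h|)$, yet since $\log(f'z/f)$ is holomorphic its real and imaginary parts are harmonic conjugates, so the right-hand side equals $\mc D_{\m D}(\vartheta[f])+\mc D_{\m D^*}(\vartheta[h])=I^L(\g)-2\log|f'(0)/h'(\infty)|$ (see the proof of Theorem~\ref{thm:dual_Jordan_curve} and Lemma~\ref{lem:Grunsky}), which is strictly positive unless $\g$ is a circle centered at $0$. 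Your flow-line claim fails by the same amount with the opposite sign: compatibility gives $\vartheta[f]=-\Im\psi^{h}\circ f$, where $\Im\psi=\Im\psi^0+\Im\psi^{h}$ is the decomposition \eqref{eq:orthogonal} relative to $\m C\smallsetminus\g$, hence $\Im\zeta=\Im\psi^0\circ f$, $\Im\xi=\Im\psi^0\circ h$, and $\mc D_{\m D}(\Im\zeta)+\mc D_{\m D^*}(\Im\xi)=\mc D_{\m C}(\Im\psi)-\mc D_{\m C}(\Im\psi^{h})$, with $\mc D_{\m C}(\Im\psi^{h})$ again equal to $I^L(\g)-2\log|f'(0)/h'(\infty)|$. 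The proposition holds only because these two discrepancies cancel, and exhibiting that cancellation --- via the harmonic conjugacy of $\log|f'z/f|$ and $\vartheta[f]$ together with the compatibility identity $\vartheta[f]=-\Im\psi^h\circ f$ --- is what a proof must supply; as written, your argument asserts two false intermediate statements.

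A secondary problem is that \cite[Thm.\,1.1, Thm.\,1.4]{VW1} are stated in a different normalization (corrections $\log|f'|$ and $\arg f'$, compatibility relative to a fixed direction rather than to circles centered at $0$) and each carries a correction term $\pm I^L(\g)$ on the right-hand side, so they cannot be quoted verbatim for the radial quantities appearing here. The paper accordingly does not cite them: it writes $\zeta=u+i\,\Im\psi^0\circ f$ and $\xi=v+i\,\Im\psi^0\circ h$ with $u=\Re\psi\circ f+\log|f'z/f|$, $v=\Re\psi\circ h+\log|h'z/h|$, expands the Dirichlet integrals, converts $\mc D_{\m D}(\log|f'z/f|)+\mc D_{\m D^*}(\log|h'z/h|)$ into $\mc D_{\m C}(\Im\psi^{h})$ by conjugacy so that it recombines with $\mc D_{\m C}(\Im\psi^0)$ to give $\mc D_{\m C}(\Im\psi)$, and then shows the remaining cross terms $\int\langle\nabla(\Re\psi\circ f),\nabla\log|f'z/f|\rangle\,\dd A$ (and the $h$-side analogue) vanish by a Green--Stokes computation, first for smooth $\g$ and then by approximation. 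If you insert the $\pm\bigl(I^L(\g)-2\log|f'(0)/h'(\infty)|\bigr)$ corrections into your two identities and prove them in this radial normalization, your argument becomes a reorganization of the paper's proof.
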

See Section~\ref{sec:complex}.

Finally, we study the change of the Loewner--Kufarev energy under conformal transformation of the foliation. We consider the following setup. Let $\rho \in \mc N$ be a measure such that  $\rho_t$ is the uniform measure 
for $t < 0$ and
$$S_{[0,1]} (\rho) : = \int_0^1 L(\rho_t)\,\dd t < \infty.$$ 
 (The choice of the upper bound  
 $T = 1$ is only for notational simplicity and the result is easily generalized to other bounded time intervals.)

We have $D_0 = \m D$ and write $K_t = \ad{ \m D} \smallsetminus D_t$.
Let $\psi$ be a conformal map from a neighborhood $U$ of $K_1$ in $\ad {\m D}$ to a neighborhood $\tilde U$ of $\tilde K_1$, another compact hull in $\m D$, 
such that $\psi (K_1) = \tilde K_1$. The family of compact hulls $(\tilde  K_t: = \psi (K_t))_{t \in [0,1]}$ can be generated by a measure $\tilde \rho$ with the associated uniformizing Loewner chain $\tilde g_t: \m D \smallsetminus \tilde K_t \to \m D$.

\begin{thm}[Conformal distortion] \label{thm:conformal-distortion} 
We have the formulas
\[
L(\tilde \rho_t) - L( \rho_t) = \frac{1}{4}  \int_{S^1}  e^{2i\t} \mc S \psi_t (e^{i\t}) \rho_t (\t) \,\dd\t + \frac{1}{8}\left(|\tilde \rho_t| -  | \rho_t| \right)
\]
and
\begin{align*}
   S_{[0,1]} (\tilde \rho) - S_{[0,1]} (\rho) 
   = \frac{1}{4} \int_0^1  \int_{S^1}  e^{2i\t} \mc S \psi_t (e^{i\t}) \rho_t (\t) \,\dd\t\dd t + \frac{1}{8}\left(\log \tilde g_1'(0) - \log g_1'(0) \right),
\end{align*} 
  where $\psi_t  = \tilde g_t \circ \psi \circ g_t^{-1}$, $\mc S \psi = (\psi''/\psi')' - (\psi''/\psi')^2/2$ is the Schwarzian derivative, and $|\tilde \rho_t| = \tilde \rho_t(S^1)$.
\end{thm}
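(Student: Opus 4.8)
The plan is to derive the second (integrated) formula from the first (infinitesimal) one by integrating in $t$ and applying the Loewner-Kufarev equation to identify $\int_0^1 (|\tilde\rho_t| - |\rho_t|)\,\dd t$ with $\log \tilde g_1'(0) - \log g_1'(0)$; indeed, differentiating $\log g_t'(0)$ along the Loewner-Kufarev flow and using $H_t(0) = \rho_t(S^1) = |\rho_t|$ gives $\partial_t \log g_t'(0) = |\rho_t|$ (up to sign/normalization conventions fixed earlier), and similarly for $\tilde g_t$, so the second formula is an immediate consequence of the first after integration. Hence the real content is the infinitesimal identity, and I would focus the proof there.

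For the infinitesimal formula, the key object is the conjugated map $\psi_t = \tilde g_t \circ \psi \circ g_t^{-1}$, which is conformal on a neighborhood of $S^1$ (shrinking as $t$ grows but still containing $S^1$ for $t \le 1$, since $K_t \subset \mathring{\m D}$). First I would write down the Loewner-Kufarev equations for both $g_t$ and $\tilde g_t$ and differentiate the relation $\tilde g_t = \psi_t \circ g_t \circ \psi^{-1}$ (equivalently $\tilde g_t \circ \psi = \psi_t \circ g_t$) in $t$. Matching the $\partial_t$ terms produces a relation between the Herglotz integrals $\tilde H_t$ and $H_t$ transported by $\psi_t$; concretely one expects something of the form $\tilde H_t(\psi_t(z)) = H_t(z)\cdot (\text{conformal factor from }\psi_t) + (\text{drift from }\partial_t \psi_t)$. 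Evaluating the real parts of these Herglotz integrals as the Poisson integrals of $\rho_t$ and $\tilde\rho_t$ respectively, and taking boundary values on $S^1$, should express $\tilde\rho_t$ in terms of $\rho_t$ composed with $\psi_t|_{S^1}$ together with a correction governed by the Schwarzian. The appearance of $\mc S\psi_t$ is natural here: it is precisely the term measuring the failure of $\log|\psi_t'|$ (a harmonic-type quantity) to be compatible with the second-order structure, and the factor $e^{2i\t}$ reflects that $\mc S\psi_t(e^{i\t})$ transforms as a quadratic differential on $S^1$.

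Once the transformation rule for $\rho_t \mapsto \tilde\rho_t$ under $\psi_t|_{S^1}$ is in hand, the computation of $L(\tilde\rho_t) - L(\rho_t)$ is a change-of-variables calculation for the Dirichlet energy $\frac12\int_{S^1} (\nu_t')^2\,\dd\t$ on the circle. Writing $\tilde\nu_t^2 = (\nu_t \circ \psi_t)^2 \cdot |\psi_t'| \cdot (\text{stuff})$ plus the Schwarzian correction, I would expand $(\tilde\nu_t')^2$, integrate, and use the standard fact that the Dirichlet energy on $S^1$ changes under a diffeomorphism by a Schwarzian-type cocycle — this is essentially the same mechanism by which the universal Liouville action / Loewner energy picks up Schwarzian terms under conformal change, so I would look to mirror the identities in \cite{TT06} or the earlier parts of this paper. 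The main obstacle will be bookkeeping: carefully tracking the several $\log$-derivative terms, the branch choices, and the precise constants ($1/4$, $1/8$), and confirming that all the ``lower-order'' boundary terms either cancel or assemble exactly into $\frac18(|\tilde\rho_t| - |\rho_t|)$. A secondary technical point is justifying the differentiation in $t$ of $\psi_t$ and the validity of taking boundary values on $S^1$ — this should follow from the regularity of finite-energy Loewner chains established earlier (the hulls stay away from $S^1$ and the conformal maps extend across it), but it needs to be invoked explicitly so that all the manipulations are rigorous rather than formal.
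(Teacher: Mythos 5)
Your proposal follows essentially the same route as the paper: establish the pointwise transformation rule for the density under $\psi_t$, compute $L(\tilde\rho_t)-L(\rho_t)$ by a change of variables on $S^1$ in which the Schwarzian emerges from the second-order terms, and obtain the integrated formula by using $\partial_t\log\tilde g_t'(0)=|\tilde\rho_t|$. One clarification on the step you leave vague: the transformation rule carries \emph{no} Schwarzian correction — comparing the two Loewner flows (the paper does this by computing the normal velocity of the interface in two ways; your plan of differentiating $\tilde g_t\circ\psi=\psi_t\circ g_t$ in $t$ works equally well, since the drift term $\partial_t\psi_t/\psi_t$ has vanishing real part on $S^1$) yields exactly $\tilde\nu_t^2(\t_t)=|\psi_t'(e^{i\t})|\,\nu_t^2(\t)$, i.e.\ the density transforms as a weight-one density and nothing more. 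The Schwarzian and the $\frac18(|\tilde\rho_t|-|\rho_t|)$ term both arise only afterwards, from expanding $(\tilde\nu_t')^2$ under this change of variables and integrating by parts; with that adjustment your outline matches the paper's proof.
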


The proof is given in Section~\ref{subsec:variation_LK}. Theorem~\ref{thm:conformal-distortion} is related in spirit to conformal restriction formulas \cite{LSW_CR_chordal}. 
We will connect the expressions in Theorem~\ref{thm:conformal-distortion} to Brownian loop measures (see \cite{LSW_CR_chordal,LW2004loopsoup}) in a forthcoming paper with Lawler.

\subsection{Core argument for energy duality (Theorem~\ref{thm:main0})}\label{sect:core-argument}

We first derive a version of Theorem~\ref{thm:main0} for the unit disk. Let $\mc N_+$ be defined analogously to $\mc N$ but considering measures on $S^1 \times \m R_+$ and we define the corresponding Loewner--Kufarev energy by $S_+(\rho) = \int_0^{\infty} L(\rho_t) \, \dd t$.
The Loewner--Kufarev equation with the initial condition $f_0 (z) = z$ generates a Loewner chain $(f_t: \m D \to D_t)_{t \ge 0}$, where $(D_t)_{t\ge 0}$ is a monotone family of simply connected domains in $\m D = D_0$. It can be viewed as a special case of the whole-plane Loewner chain by extending the measure $\rho$ to a measure on $S^1 \times \m R$, where $\rho_t$ is the uniform probability measure on $S^1$ when $t < 0$, which implies $D_t = e^{-t} \m D$ for all $t \le 0$.

   \begin{thm}[Disk energy duality]\label{thm:main}
   Assume that $\rho \in \mc N_+$ generates a foliation
    of $\ad{\m D} \smallsetminus \{ 0\}$ and
   let $\varphi$ be the associated winding function.
   Then $\mc D_{\m D}(\varphi) < \infty$ if and only if $S_+(\rho) <\infty$ and $\mc D_{\m D} (\varphi) = 16 \, S_+(\rho).$
\end{thm}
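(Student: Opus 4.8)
The plan is to set up an exact identity relating the Dirichlet energy of the winding function to an integral over the driving measure, by differentiating along the Loewner-Kufarev flow. Concretely, for each $t$ let $g_t : D_t \to \m D$ be the uniformizing map and consider the function $u_t(z) := \log\bigl(z g_t'(z)/g_t(z)\bigr)$, holomorphic on $D_t$ (with the branch vanishing at $0$); its imaginary part is the ``local winding'' of the equipotential through $z$, so that $\varphi$ restricted to the leaf $\g_t$ is the boundary trace of $\Im u_t$. The key computation is a Hadamard-type variational formula: differentiating $u_t$ in $t$ using $\partial_t g_t(z) = g_t(z) H_t(g_t(z))$ (the Loewner-Kufarev equation for the inverse maps), one gets an explicit expression for $\partial_t u_t$ in terms of $H_t$ and hence in terms of $\rho_t$. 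Integrating $|\nabla \varphi|^2$ over the annular region swept between times $t$ and $t+\dd t$ and comparing with $L(\rho_t)\,\dd t$ should produce the pointwise-in-time identity, after which integrating over $t \in \m R_+$ yields $\mc D_{\m D}(\varphi) = 16\,S_+(\rho)$.

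To organize this rigorously I would use the coarea-type decomposition of $\m D \smallsetminus\{0\}$ into leaves (valid because $\rho$ generates a foliation) and express the Dirichlet integral $\mc D_{\m D}(\varphi)$ as an integral over $t$ of a one-dimensional energy along $\g_t$ plus a transverse contribution; the foliation parametrization $z \mapsto (t, \t)$ via $g_t$ provides the change of variables. The heart of the matter is identifying, for a.e.\ $t$, the integrand with a multiple of $L(\rho_t) = \frac12\int_{S^1}\nu_t'(\t)^2\,\dd\t$ when $\dd\rho_t = \nu_t^2\,\dd\t$. Here the Herglotz representation $H_t(z) = \int \frac{e^{i\t}+z}{e^{i\t}-z}\,\dd\rho_t(\t)$ is used to write $\Re H_t$ on $S^1$ as (a constant times) $\nu_t^2$ in the Poisson-kernel sense, and $\Im H_t$ as its conjugate function; the Dirichlet energy of $\nu_t$ then appears naturally from the $H^{1/2}(S^1)$ seminorm of $\nu_t^2$ transported through the half-integer-derivative structure of the harmonic conjugate. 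One should also verify the equivalence of finiteness: $\mc D_{\m D}(\varphi) < \infty \Leftrightarrow S_+(\rho) < \infty$, which follows from the identity once one knows (from Proposition~\ref{prop:unique_extension}) that the $W^{1,2}_{\mathrm{loc}}$ extension of $\varphi$, if it exists, is unique, and conversely that finite $S_+(\rho)$ forces $\varphi$ to lie in $W^{1,2}$ — this last direction presumably reuses Theorem~\ref{thm:WP-leaf} to control the geometry of the leaves.

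The main obstacle I anticipate is the low regularity: the maps $f_t$ need not be smooth up to the boundary, the leaves are merely chord-arc, and $\nu_t$ is only $L^2$ with $L^2$-derivative for a.e.\ $t$ (with no uniform control), so all the ``differentiation along the flow'' must be justified in an integrated / absolutely continuous sense rather than pointwise, and Fubini-type interchanges between the $t$-integral and the spatial integral need care. I would handle this by first establishing everything for smooth, compactly-supported-in-time, strictly-positive $\nu_t$ (where $f_t$ extends analytically past $S^1$ and all manipulations are classical), obtaining the exact identity $\mc D_{\m D}(\varphi) = 16\,S_+(\rho)$ there, and then passing to the limit: approximate a general finite-energy $\rho$ by such smooth measures with $S_+(\rho_n) \to S_+(\rho)$, use lower semicontinuity of Dirichlet energy together with convergence of the Loewner chains (Carathéodory convergence of $D_t$, hence of $g_t$ and of $\varphi_n \to \varphi$ in a suitable sense) to get one inequality, and the explicit identity plus uniqueness of the extension to get the reverse. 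The limiting argument, ensuring that the winding functions converge in the right topology and that no energy is lost or gained in the limit, is where the real work lies; the one-parameter exact formula itself, once the Hadamard computation is in hand, should be comparatively mechanical.
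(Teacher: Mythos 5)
Your forward-direction sketch in the smooth case is essentially right and matches the paper's ``weak energy duality'': the key pointwise identity is that the inward normal derivative of the winding function at $e^{i\t}$ equals $-2\nu_t'(\t)/\nu_t(\t)$, obtained from $\Im\bigl(zH_t'(z)\bigr)$ via the Loewner--Kufarev equation. But your central mechanism for the general case --- a coarea/polar-coordinates decomposition of $\mc D_{\m D}(\varphi)$ into leafwise and transverse contributions --- fails at chord-arc regularity. The foliation is only continuous and not strictly monotone in $t$ (distinct leaves may share boundary points, the ``annulus swept between $t$ and $t+\dd t$'' can degenerate), the leaves need not be locally Lipschitz graphs, and a function defined arclength-a.e.\ on every leaf need not even be defined Lebesgue-a.e.\ in $\m D$. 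The paper replaces this change of variables by a genuinely different device: Hadamard's formula for the Green's function along the flow, $-\partial_t G_{D_t}(z,w)=\int P_{\m D}(g_t(z),e^{i\t})P_{\m D}(g_t(w),e^{i\t})\,\dd\rho_t(\t)$, yields a bijective isometry $\iota:\mc E_0(\m D)\to L^2(2\rho)$ with explicit inverse $\varkappa$, and the identity $\mc D_{\m D}(\varphi)=16\,S_+(\rho)$ is then read off from $\varphi=\varkappa[-2\nu_t'/\nu_t]$ and the isometry --- no coarea formula and no leafwise energy are ever needed. Relatedly, your approximation step only delivers one inequality: lower semicontinuity of the Dirichlet energy gives $\mc D_{\m D}(\varphi)\le 16\,S_+(\rho)$, but ``the explicit identity plus uniqueness of the extension'' does not produce the reverse inequality, since you have no upper semicontinuity and no exact identity valid at the limiting regularity.

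The more serious gap is the converse direction, $\mc D_{\m D}(\varphi)<\infty\Rightarrow S_+(\rho)<\infty$, which your proposal treats circularly (``finite $S_+(\rho)$ forces $\varphi$ to lie in $W^{1,2}$'' is the other implication). Here one knows only that $\rho$ generates a foliation and that $\varphi$ has a finite-energy extension; one does \emph{not} know that $\rho_t$ is absolutely continuous, let alone that its density has an a.e.\ derivative, so the quantity $L(\rho_t)$ is not yet meaningful and no smooth approximation of $\rho$ can be set up. The paper's argument is a Hardy-space bootstrap that your outline has no substitute for: from $\iota[\varphi]\in L^2(2\rho)$ one gets $\alpha_t=\Im(zH_t')=2\pi P_{\m D}[u_t\rho_t]\in\mathfrak h^1$, hence $H_t'\in\mc H^p$ for $p<1$, hence $H_t\in\mc H^p$ for all $p<\infty$, hence $\dd\rho_t=\nu_t^2\,\dd\t$ with $\nu_t^2\in L^p$; feeding this back gives $H_t'\in\mc H^{2-\eps}$, absolute continuity of $\t\mapsto\nu_t^2(\t)$, and finally $u_t\nu_t^2=-2\nu_t'\nu_t$ a.e., from which $S_+(\rho)=\tfrac1{16}\|u\|^2_{L^2(2\rho)}<\infty$. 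Without some such regularity-recovery mechanism your proof of the ``only if'' half does not get off the ground.
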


The proof of Theorem~\ref{thm:main} is completed in Section~\ref{sec:disk_duality}.

The starting point of the proof is Hadamard's classical formula for the variation of the Dirichlet Green's function. We express Hadamard's formula using the Loewner--Kufarev equation: 
\[
-\partial_t G_{D_t}(z,w) = \int_{S^1} P_{\m D}(g_t(z), e^{i\t})P_{\m D}(g_t(w), e^{i\t}) \dd \rho_t(\t),
\]
where $P_{\m D}$ is the Poisson kernel for $\m D$ and $\rho \in \mc N_+$,
 see Lemma~\ref{loewner-hadamard} and also \cite[Thm.\,4]{Roth-schippers} for a similar result with additional smoothness assumptions. 
 The Sobolev space $\mc E_0 (\m D) = W^{1,2}_0 (\m D)$ is a Hilbert space when endowed with the Dirichlet inner product.
 Hadamard's formula and the orthogonal decomposition with respect to the Dirichlet inner product  along the Loewner evolution lead  to a correspondence between  $\mc E_0(\m D)$ and $L^2$-integrable functions on the cylinder $S^1 \times \m R_+$. 
More precisely, setting $L^2(2\rho) : = L^2(S^1 \times \m R_+, 2\rho)$, we define an operator
\begin{equation}\label{june24}
\iota: C_c^\infty(\m D) \to L^2(2\rho), \qquad \phi \mapsto \frac{1}{2\pi}\int_{\m D} \D ( \phi \circ f_t) (z) P_{\m D} (z, e^{i\t}) \,\dd  A(z).
\end{equation}

We prove the following theorem which is an important step in the proof of our main result and which we also believe to be of independent interest.

\begin{thm}[Foliation disintegration isometry]\label{thm:intro_isom}
  If $\rho \in \mc N_+$ generates a foliation of $\ad{\m D} \smallsetminus\{0\}$, then \eqref{june24} extends to a bijective isometry $\i : \mc E_0(\m D) \to L^2(2\rho)$ with the inverse mapping $\varkappa: L^2(2 \rho) \to \mc E_0 (\m D)$
\[
      \varkappa [u] (w) = 2 \pi \int_0^{\tau(w)} P_{\m D}[u_t\rho_t](g_t(w)) \, \dd t, \qquad u_t(\cdot) := u(\cdot, t). 
  \]
\end{thm}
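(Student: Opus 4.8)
The plan is to establish the isometry property first on the dense subspace $C_c^\infty(\m D)$ and then extend by continuity. The key input is the Loewner--Hadamard formula (Lemma~\ref{loewner-hadamard}): writing $\phi \in C_c^\infty(\m D)$ in terms of its Laplacian via the Green's function, $\phi(z) = -\int_{\m D} \D\phi(w)\, G_{\m D}(z,w)\, \dd A(w)/(2\pi)$, we differentiate the Dirichlet energy $\mc D_{\m D}(\phi)$ along the flow. The Dirichlet integral can be written in terms of $\D\phi$ against $G_{\m D}$, and pulling back by $f_t$ one obtains $\mc D_{\m D}(\phi\circ f_t)$; the $t$-derivative of this quantity is, by Hadamard's formula in the form stated in the excerpt, an integral of $\iota(\phi)_t^2$ against $2\rho_t$. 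Concretely, I expect to show
\[
-\partial_t \,\mc D_{\m D}(\phi \circ f_t) = 2\int_{S^1} \iota(\phi)_t(\t)^2 \,\dd\rho_t(\t),
\]
where $\iota(\phi)_t(\t)$ is the integrand in \eqref{june24}. Integrating from $0$ to $\infty$ and using that $\phi\circ f_t$ has Dirichlet energy tending to $0$ (since the support is eventually swept out, or the domains $D_t$ exhaust $\m D$ appropriately) gives $\mc D_{\m D}(\phi) = \|\iota\phi\|_{L^2(2\rho)}^2$. This is the isometry on $C_c^\infty$, and since $C_c^\infty(\m D)$ is dense in $\mc E_0(\m D)=W^{1,2}_0(\m D)$ in the Dirichlet norm, $\iota$ extends uniquely to an isometric embedding $\mc E_0(\m D)\to L^2(2\rho)$.

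The second half is surjectivity, for which I would verify directly that $\varkappa$ as defined is a bounded left inverse and that its range is dense, or better, that $\varkappa\circ\iota = \Id$ and $\iota\circ\varkappa = \Id$. To check $\iota\circ\varkappa=\Id$ on a dense class of $u\in L^2(2\rho)$ (say $u_t$ smooth, compactly supported in $t$), one computes: $\varkappa[u](w) = 2\pi\int_0^{\tau(w)} P_{\m D}[u_t\rho_t](g_t(w))\,\dd t$ is harmonic-in-disguise on each leaf; applying the Laplacian, pulling back by $f_s$, and integrating against the Poisson kernel should recover $u_s$, using that $\partial_s$ of $P_{\m D}(g_t(w), e^{i\t})$ is governed by the Loewner equation and that $P_{\m D}[\cdot\,\rho_s]$ composed with $g_s$ produces a delta-like contribution at time $s$. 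Equivalently, one shows $\varkappa$ is isometric onto its image using the same Hadamard identity run in reverse, and that $\varkappa[\iota\phi]=\phi$ by an integration-by-parts/Fubini argument matching the two formulas. Since $\iota$ is already an isometry, exhibiting any bounded right inverse forces it to be onto.

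The main obstacle, and where the foliation hypothesis is essential, is the justification of the pointwise-a.e.\ and $L^2$ manipulations in the non-smooth setting: the leaves $\g_t$ are only chord-arc, the winding function $\varphi$ lives a priori only arclength-a.e.\ on leaves, and one must know that the flow $(f_t)$ and its boundary behavior are regular enough that (i) $\D(\phi\circ f_t)$ is integrable against $P_{\m D}$, (ii) the differentiation under the integral sign and the Fubini swaps are licit, and (iii) the boundary term at $t=\infty$ genuinely vanishes. The foliation property — that the leaves sweep out $\ad{\m D}\smallsetminus\{0\}$ without collapsing — is what guarantees $\tau(w)<\infty$ a.e., lets us use Proposition~\ref{prop:unique_extension} to make sense of $\varphi$ as a genuine $W^{1,2}_{\mathrm{loc}}$ function, and ensures the change of variables is measure-theoretically sound. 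I would handle this by first proving everything for $\rho$ with smooth density (where all maps are smooth up to the boundary), obtaining the isometry with explicit constants, and then passing to the limit using that finite-energy measures can be approximated in a way that controls $S_+(\rho)$ and hence, by the already-established isometry, the Dirichlet energies uniformly.
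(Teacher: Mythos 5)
Your skeleton for the isometry half (Green's-function representation of the Dirichlet energy, the Loewner--Hadamard formula, Fubini, then density of $C_c^\infty(\m D)$) is the paper's argument, but the differential identity you propose is wrong as stated. By conformal invariance $\mc D_{\m D}(\phi\circ f_t)=\mc D_{D_t}(\phi)$, whose $t$-derivative is a boundary flux of $|\nabla\phi|^2$ across $\g_t$ weighted by the normal velocity --- not $2\int_{S^1}\iota[\phi]_t^2\,\dd\rho_t$. The quantity actually governed by Hadamard's formula is the energy of the \emph{zero-trace part} $\phi_t^0$ of $\phi\circ f_t$ (equivalently, of the orthogonal projection of $\phi$ onto $\mc E_0(D_t)$), because it is this quantity that equals $\frac{1}{2\pi^2}\iint G_{D_t}(z,w)\,\D\phi(z)\,\D\phi(w)\,\dd A(z)\,\dd A(w)$. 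Differentiating \emph{that} via Lemma~\ref{loewner-hadamard} yields $\mc D_{\m D}(\phi)-\mc D_{\m D}(\phi_T^0)=\|\iota[\phi]\mathbf 1_{S^1\times[0,T]}\|^2_{L^2(2\rho)}$ (the paper's Lemma~\ref{lem:disintegration_general}), and the boundary term vanishes since $\mc D_{\m D}(\phi_T^0)\le\mc D_{D_T}(\phi)\to 0$ as $D_T$ shrinks to $\{0\}$. Note also that the place where the foliation hypothesis enters this half is the absolute continuity of $t\mapsto G_t(z,w)$ up to $\tau$ (second part of Lemma~\ref{loewner-hadamard}); Proposition~\ref{prop:unique_extension} and the winding function, which you invoke, play no role in this theorem.

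The surjectivity half is where your plan is thinnest. Verifying $\iota\circ\varkappa=\Id$ by ``applying the Laplacian and recovering a delta at time $s$'' is hard to execute here: the theorem assumes only that $\rho$ generates a foliation, so $\rho_t$ may be singular and smoothness of $u$ in $(\t,t)$ buys nothing for $P_{\m D}[u_t\rho_t]$, while $\varkappa[u]$ is a priori only in $W^{1,2}$. The paper instead defines $\varkappa[u]$ abstractly as the Riesz representative of $\phi\mapsto\langle u,\iota[\phi]\rangle_{L^2(2\rho)}$, which gives $\varkappa\circ\iota=\Id$ for free and reduces surjectivity of the isometry $\iota$ to $\Ker\varkappa=\{0\}$; the kernel is killed by showing that $\varkappa[u\mathbf 1_{S^1\times[0,T)}]$ and $\varkappa[u\mathbf 1_{S^1\times[T,\infty)}]$ realize the orthogonal decomposition relative to $D_T$ and differentiating in $T$ to get $P_{\m D}[u_T\rho_T]\equiv 0$, hence $u_T\rho_T=0$ as a measure. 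A separate Fubini--Weyl argument then identifies $\varkappa$ with the explicit integral formula. Finally, your closing strategy --- prove everything for smooth densities and pass to the limit while controlling $S_+(\rho)$ --- cannot deliver the statement: there is no finite-energy hypothesis in this theorem, and the target space $L^2(2\rho)$ itself changes under approximation of $\rho$. No regularization is needed; all Fubini interchanges are legitimate for general $\rho$ because the kernels are nonnegative and the Green's function singularity is integrable.
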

See~Lemma~\ref{lem:disintegration_general},  Proposition~\ref{prop:kappa_formula}, and Theorem~\ref{thm:bi_isometry}.  
We show that $P_{\m D} [u_t \rho_t]$ is in the harmonic Hardy space $ \mathfrak h^1$ on $\m D$, 
and Theorem~\ref{thm:intro_isom} can be interpreted as a disintegration of finite Dirichlet energy functions into  $\mathfrak h^1$ functions. 
This implies the formula
\begin{align}\label{eq:intro_isom_T}
\phi_t^0 \circ g_t =  \varkappa \big[\iota [\phi] \mathbf{1}_{S^1 \times [t,\infty)}\big]
\end{align}
where $\phi_t^0 \in \mc E_0(\m D)$ is the zero-trace part of the function $\phi_t=\phi \circ f_t$ (so that $\phi_t - \phi_t^0$ is harmonic). See Corollary~\ref{cor:ortho_decomp_formula}.

\begin{rem} If $\rho$ is smooth and $t$ is fixed, the function $\theta \mapsto \iota[\phi](\t,t)$ can be interpreted as the inward pointing normal derivative in $\m D$ at $e^{i\t}$ applied to $\phi_t^0$. 
   The foliation disintegration isometry is closely related to the ``Hadamard operators'' considered in \cite{Haakan_GFF} in a $C^2$-smooth and strictly monotone setting,  see Section~\ref{sec:hadamard} for further discussion.
   Here, we consider chord-arc foliations which in general have leaves that are not $C^1$, and are not even locally Lipschitz graphs. Moreover, $t\mapsto \g_t$ is only continuous and monotone but  
   not strictly monotone. 
  Theorem~\ref{thm:intro_isom} allows us to work under 
  weak regularity assumptions and this level of generality is needed in order to include Weil--Petersson quasicircles and to obtain 
  the equivalence 
  in Theorem~\ref{thm:main0} and Theorem~\ref{thm:main}. 
  \end{rem}

  \begin{rem} By considering the Gaussian measures associated to the Hilbert spaces $\mc E_0(\m D)$ and $L^2(2\rho)$, Theorem~\ref{thm:intro_isom} immediately entails a decomposition of the Dirichlet Gaussian free field on $\m D$ into white-noise on the cylinder $S^1 \times \m R_+$ weighted by $2\rho$,
  generalizing the main result of \cite{Haakan_GFF}.
\end{rem}

The next step is to prove that $\rho \in \mathcal{N}_+$ with $S_+(\rho) < \infty$ generates a foliation so that Theorem~\ref{thm:intro_isom} can be applied. For this, we first derive a preliminary, weak energy duality result: under the 
assumption that $\rho$ is piecewise constant in time and  $\rho_t$ 
is strictly positive and smooth, we prove in Proposition~\ref{prop:weak_duality} that the winding function is defined, continuous  and piecewise smooth in $\overline{\m D}$. By essentially explicit computation, the identity $16 S_+(\rho) = \mathcal{D}_{\m D}(\varphi)$ follows. This result combined with an approximation argument is then used to prove that $\rho$ with $S_+(\rho)<\infty$ generates a foliation by Weil--Petersson quasicircles, see  Corollary~\ref{cor:S_finite_foliation_WP}. 

At this point the proof of Theorem~\ref{thm:WP-leaf} is completed in the case when $\rho \in \mc N_+$. 
  Then in Section~\ref{sec:disk_duality} we prove disk energy duality in full generality.  
  From the work in Section~\ref{sect:weak-WP} we know that $\rho$ with $S_+(\rho)< \infty$ generates a foliation. Using the inverse operator $\varkappa$ we prove that the winding function $\varphi=-\varkappa[2 \nu_t'/\nu_t]$ from which it follows immediately that $\varphi \in \mc E_0 (\m D)$ and that $\mc D_{\m D}(\varphi) = 16\, S_+(\rho)$.

  The final step assumes that $\rho \in \mc{N}_+$ generates a foliation whose winding function can be extended so that $\mc D_{\m D}(\varphi) < \infty$, and we want to show that $S_+(\rho) < \infty$. To point to the difficulty, at this stage we do not know that $\rho_t$ is absolutely continuous. However, applying  Theorem~\ref{thm:intro_isom}, we have that $\i[\varphi]\in L^2(2\rho)$.
Using the integrability information we deduce that $H'_t$ is in the Hardy space $\mathcal{H}^1$ and this implies that $\rho_t$ is absolutely continuous and that the density is differentiable a.e. It then follows that $S_+(\rho) < \infty$ using Theorem~\ref{thm:intro_isom}.

\bigskip 

{\bf Structure of the paper.} We recall basic definitions
in Section~\ref{sec:prelim}. The foliation disintegration isometry, Theorem~\ref{thm:intro_isom},
is established in Section~\ref{sec:hadamard}. In Section~\ref{sect:LK-energy} we define the Loewner--Kufarev energy, derive a few basic properties, and discuss examples of finite energy measures and associated foliations. 
Section~\ref{sect:weak-WP} is devoted to the proof of energy duality in the disk under strong regularity assumptions and here we obtain the important \emph{a priori} result that finite energy measures generate foliations whose leaves are Weil--Petersson quasicircles, see Corollary~\ref{cor:S_finite_foliation_WP}.
The general energy duality result for the unit disk, Theorem~\ref{thm:main}, is proved in Section~\ref{sec:disk_duality}.  We work in the set-up of the  Loewner--Kufarev equation in the unit disk in Sections~\ref{sec:prelim}-\ref{sec:disk_duality} and we introduce the whole-plane Loewner--Kufarev equation only in Section~\ref{sec:whole-plane}, where we prove whole-plane energy duality, see Theorems~\ref{thm:WP-leaf} and \ref{thm:main0}.
In Section~\ref{sec:application} we derive the consequences of energy duality, see Theorem~\ref{thm:main_rev}, \ref{thm:main-jordan-curve}, and Proposition~\ref{prop:complex_id}.  Section~\ref{subsec:variation_LK} is devoted to the proof of the conformal distortion formula, Theorem~\ref{thm:conformal-distortion}.  Section~\ref{sec:further} collects further comments, including a discussion of additional interpretations of our results as well as open problems.

\subsection*{Acknowledgments}
We are very grateful to H\aa kan Hedenmalm for pointing out the reference \cite{Haakan_GFF}, which helped us improve and simplify our argument considerably. We are also very thankful to an anonymous referee for providing many useful comments, and especially for suggesting and allowing us to use the current proof of Lemma~\ref{lem:matching_trace} which gives a stronger statement compared to our previous argument. We thank David Jerison and Greg Lawler for discussions and helpful input, Paul Laurain and Andrea Seppi for discussions on the implication to minimal surfaces, Yan Luo, Curt McMullen, Steffen Rohde, and Wendelin Werner for useful comments on an earlier version of our paper. F.V. acknowledges support by the Knut and Alice Wallenberg Foundation, the Swedish Research Council, and the Ruth and Nils-Erik Stenb\"ack Foundation.  Y.W. acknowledges support by NSF grant DMS-1953945.

\section{Preliminaries} \label{sec:prelim}
\subsection{Basic definitions}
For a bounded simply connected domain $D \subset \m C$, we write $G_D(z,w)$ for the Green's function associated to the positive Laplacian $\Delta = -(\partial_{xx} + \partial_{yy})$ with Dirichlet boundary condition, where $z = x + i y$. It is convenient to normalize the Green's function so that
$G_D(z,w)-\log|z-w|$ is harmonic in each variable, that is, so that $G_D = 2\pi \Delta^{-1}$.

For sufficiently regular domains $D$, the Poisson kernel is defined for $z \in D, \zeta \in \partial D$, by \[P_D(z,\z) :=\partial_{n(\zeta)} G_D(z,\zeta),\] where $\partial_{n(\z)}$ is the inward normal derivative at $\z$. If $D = \m D$ we have \[G_{\m D}(z,w) = -\log \left|\frac{z-w}{1-\overline{w}z}\right|, \qquad P_{\m D}(z,e^{i\t}) = \frac{1-|z|^2}{|z-e^{i\t}|^2}.\] 

If $\sigma$ is a finite measure on the unit circle $S^1$, identified with $[0,2\pi]_{/0\sim 2\pi}$, we write
\[
P_{\m D}[\sigma](z) = \frac{1}{2\pi}  \int_0^{2\pi} P_{\m D} (z,e^{i\t}) \,\dd \sigma(\t)
\]
for its Poisson integral. If $\dd \sigma = u \,\dd\t$ for $u \in L^1(S^1)$ we write simply $P_{\m D}[u](z)$. By Fatou's theorem, $P_{\m D}[u]$ has non-tangential limit $u$ Lebesgue-a.e.\ on $S^1$.

If $D$ is an open set, the Dirichlet energy of an almost everywhere defined  function $\phi : D \to \mathbb{R}$ is given by 
\[
\mc D_{D} (\phi) := \brac{\phi,\phi}_\nabla : = \frac{1}{\pi} \int_{D} |\nabla \phi|^2 \dd A(z)
\]
whenever $\phi$ has distributional first derivatives in $L^2(D)$. 
Let $W^{1,2}(D)$ be the Sobolev space of real-valued functions $\phi$ such that both $\phi$ and its weak first derivatives are in $L^2(D)$ with norm $\|\phi\|_{W^{1,2}(D)} = \|\phi\|_{L^2(D)} + \|\nabla \phi\|_{L^2(D)}$ and write $W^{1,2}_0(D)$ for the closure of smooth and compactly supported functions in $D$, $C_c^\infty(D)$, in $W^{1,2}(D)$. 

When $D$ is bounded, $\mc D_{D}^{1/2}$ is a norm equivalent to $\norm{\cdot}_{W^{1,2}}$ on $W^{1,2}_0(D)$ by the Poincar\'e inequality.
In this case, we write $\mc E_0(D)$ for $W^{1,2}_0(D)$ equipped with the norm $\mc D_{D}^{1/2}$ which is a Hilbert space endowed with the Dirichlet inner product $\brac{\cdot, \cdot}_{\nabla}$. Every $\phi$ with $\mc D_{\m C} (\phi) < \infty$ has a unique decomposition with respect to  $D$ 
    \begin{equation}\label{eq:orthogonal}
        \phi = \phi^0 + \phi^{h} \quad \text{and} \quad \brac{\phi^0, \phi^{h}}_\nabla =  0
   \end{equation}
   such that $\phi^0 \in \mc E_0(D)$ and $\phi^{h}$ with $\mc D_{\m C}(\phi^h)<\infty$ is harmonic in $D$, see, e.g., \cite[P.\,77]{adams} and  \cite[Appx.\,B]{VW1}.

   The Hardy space $\mc{H}^p$ (resp. harmonic Hardy space $\mathfrak h^p$) for $p > 0$ consists of functions $f$ holomorphic (resp. harmonic) in $\m D$ satisfying \[\sup_{0 \le r <1} \int_0^{2\pi}|f(re^{i\t})|^p \,\dd \t < \infty.\]

      Let $\gamma$ be a Jordan curve in $\m C$, and write $D$ and $D^*$ for the connected components of $\CC \smallsetminus \gamma$. Let $f$ be a conformal map from $\DD$ onto the bounded component $D$, and $h$ a conformal map from $\DD^*$ onto the unbounded component $D^*$ fixing~$\infty$. The Loewner energy of $\gamma$ is defined as 
       \begin{equation} \label{eq_disk_energy}
   I^L(\gamma) =  \mc D_{\m D} (\log \abs{f'}) + \mc D_{\m D^*} (\log \abs{h'})+4 \log \abs{f'(0)} - 4 \log \abs{h'(\infty)},
 \end{equation}
 where $h'(\infty):=\lim_{z\to \infty} h'(z) = \tilde h'(0)^{-1}$ and $\tilde h(z) := 1/h(1/z)$.
    The Loewner energy $I^L$ is finite if and only if either  Dirichlet integral on the right-hand side in \eqref{eq_disk_energy} is finite. We summarize this as a lemma.
    
\begin{lemma} [See {\cite[Thm.\,II.1.12]{TT06}}]
\label{thm_TT_equiv_T01}
Suppose $\gamma$ is a bounded Jordan curve. Then the following statements are equivalent:
\begin{enumerate}[itemsep= -2pt, topsep= -1pt]
\item $I^L (\gamma) < \infty;$
\item  $\mc D_{\m D} (\log \abs{f'}) = {\displaystyle \frac{1}{\pi}\int_{\DD} \left|\frac{f''(z)}{f'(z)}\right|^2  \dd  A(z)  < \infty;}$
\item $ \mc D_{\m D^*} (\log \abs{h'})  <\infty$.
\end{enumerate} 
\end{lemma}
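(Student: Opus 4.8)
The plan is to obtain the equality in statement (2) by a direct computation, and then to derive the three-way equivalence from the structure of the defining formula \eqref{eq_disk_energy} together with the standard characterizations of Weil-Petersson quasicircles.

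First I would record the pointwise identity underlying (2). Since $\mathbb D$ is simply connected and $f'$ has no zeros there, $\log f'$ admits a single-valued holomorphic branch on $\mathbb D$, so $\log\abs{f'} = \Re\log f'$ is harmonic. For any holomorphic function $F$ on $\mathbb D$ the Cauchy--Riemann equations give $\abs{\nabla \Re F}^2 = \abs{F'}^2$; applying this with $F = \log f'$, for which $F' = f''/f'$, yields $\abs{\nabla \log\abs{f'}}^2 = \abs{f''/f'}^2$ pointwise and hence
\[
\mc D_{\mathbb D}(\log\abs{f'}) = \frac1\pi \int_{\mathbb D} \abs{\frac{f''(z)}{f'(z)}}^2 \dd A(z),
\]
which is the first assertion in (2). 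The same remark applied to the exterior map shows that $\log\abs{h'}$ is harmonic on $\mathbb D^*$ and that $\mc D_{\mathbb D^*}(\log\abs{h'})$ likewise equals $\frac1\pi\int_{\mathbb D^*}\abs{h''/h'}^2\,\dd A$.

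Next, I would dispatch the equivalence of (1) with the conjunction of (2) and (3). Because $f$ and $h$ are conformal maps onto the two complementary Jordan domains, $f'(0)$ and $h'(\infty)$ are finite and nonzero, so the terms $4\log\abs{f'(0)}$ and $-4\log\abs{h'(\infty)}$ in \eqref{eq_disk_energy} are finite real numbers, while the two Dirichlet integrals are nonnegative. Consequently $I^L(\gamma)<\infty$ if and only if \emph{both} Dirichlet integrals are finite; equivalently, (1) implies each of (2) and (3), and (2) together with (3) implies (1). It therefore remains to show that finiteness of one of the two Dirichlet integrals forces finiteness of the other, i.e.\ that (2) $\Leftrightarrow$ (3).

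This is the substantive point, and I would deduce it from the theory of Weil-Petersson quasicircles rather than attempt a direct estimate transporting information from one side of $\gamma$ to the other. For a Jordan curve $\gamma$ with complementary components $D$ and $D^*$, each of the two conditions ``$\log f'$ belongs to the Dirichlet class of $\mathbb D$'' and ``$\log h'$ belongs to the Dirichlet class of $\mathbb D^*$'' is equivalent to $\gamma$ being a Weil-Petersson quasicircle; see \cite{Cui, TT06, Shen_WP_1, bishop-WP}. (In particular, a Jordan curve whose interior uniformizing map has $\log f'$ of finite Dirichlet energy is automatically of this class, so no further regularity of $\gamma$ needs to be assumed a priori.) Chaining the two characterizations gives (2) $\Leftrightarrow$ (3), completing the proof. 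I expect the genuine obstacle to lie precisely here: the implication is not visible from \eqref{eq_disk_energy} itself and relies on nontrivial function theory --- morally, the welding homeomorphism of a curve with $\log f'$ of finite Dirichlet energy is of Weil-Petersson class, and this in turn constrains the exterior map --- so the efficient route is to quote the established equivalences describing membership in $T_0(1)$ via either Dirichlet integral.
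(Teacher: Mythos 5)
Your proposal is correct and matches what the paper does: the paper states this lemma without proof, deferring the substantive equivalence (2) $\Leftrightarrow$ (3) to the cited literature on Weil-Petersson quasicircles (\cite{TT06,W2} and related works), exactly as you do. The only additional content you supply — the Cauchy--Riemann identity $|\nabla\log|f'||^2=|f''/f'|^2$ and the observation that the derivative terms in \eqref{eq_disk_energy} are finite — is standard and correct, and you rightly identify that the whole difficulty is concentrated in transferring finiteness from one side of $\gamma$ to the other, which is precisely the nontrivial function-theoretic input the paper also takes from the references.
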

There are several additional equivalent ways to define the Loewner energy, and the class of Jordan curves with finite Loewner energy coincides with the class of Weil--Petersson quasicircles, see \cite{W2}. Henceforth we will refer to Jordan curves with finite Loewner energy as Weil--Petersson quasicircles.

\subsection{The Loewner--Kufarev equation}\label{sect:Loewner--Kufarev-Equation}
We consider first the version for the unit disc and then later in Section~\ref{sec:whole-plane} the whole-plane version. 
Let $\M (\O)$ (resp. $\M_1 (\O)$) be the space of Borel measures (resp. probability measures) on $\O$ endowed with the topology induced by weak convergence on compact subsets. 
Let
$$\mc N_+ = \{\rho\in \M (S^1 \times \m R_+): \rho(S^1\times I) = |I| \text{ for any interval } I \}.$$
Any $\rho \in \mc N_+$ can be disintegrated into a family of measures $(\rho_t)_{t \ge 0}$ measurable in $t$, such that $\rho (\dd \t \dd t)= \rho_t (\dd \t) \dd t$ and $\rho_t \in \mathcal M_1(S^1)$. The disintegration is unique in the sense that any two disintegrations $(\rho_t), ( \widetilde \rho_t)$ of $\rho$ must satisfy $\rho_t = \widetilde \rho_t$ for a.e. $t$.
See, e.g., 
\cite[Sec.\,70]{DM_prob_potential}. We will work with the Loewner--Kufarev equation driven by measures $\rho \in \mathcal{N}_+$ and we now review some of the relevant definitions and facts. See, e.g., \cite[Ch.\,8]{rosenblum} for proofs and a detailed discussion in this general setting.

Given $\rho \in \mathcal{N}_+$, consider the associated Herglotz integrals
\begin{equation}\label{def:herglotz}H_t (z) := H[\rho_t](z) =  \int_{0}^{2\pi} \frac{e^{i \theta} + z}{e^{i \theta} -z} \,\dd\rho_t (\t).\end{equation}

The mapping $z \mapsto H_t(z)$ is a holomorphic function in $\mathbb{D}$ with positive real part and $t \mapsto H_t(z)$ is measurable. In this setting, the Loewner--Kufarev partial differential equation for $\m D$ reads
\begin{equation} \label{eq:loewner-pde}
\partial_t f_t (z) =   -z f_t'(z) H_t(z), \quad f_0 (z) = z.
\end{equation}
The equation \eqref{eq:loewner-pde} is understood in the sense that
 $t \mapsto f_t (z)$ is absolutely continuous,
and \eqref{eq:loewner-pde} is required to hold for a.e. $t \ge 0$.  The exceptional set can be taken to be the same for all $z \in \m D$. Throughout the paper, all differential equations are interpreted in a similar manner.
The unique solution to \eqref{eq:loewner-pde} gives rise to
a family of conformal maps $(f_t)_{t \ge 0}$ fixing $0$ such that for each $t$, $f_t:\m D \to D_t = f_t(\m D)$ where if $s < t$, then $D_t \subset D_s$. In the present case where the disintegrated measures are probability measures we have $f'_t(0) = e^{-t}$. (Indeed, $\partial_t \log f_t'(0) = - H_t (0 ) = -|\rho_t| \equiv -1$.)
We refer to the family $(f_t)_{t \ge 0}$ as the \emph{Loewner chain} driven by $\rho$.  We call the family of compact sets $(K_t=\overline{\m D} \smallsetminus D_t)_{t  \ge 0}$ the \emph{hulls} associated to the Loewner chain.

A converse statement is also true. Consider a monotone family of simply connected domains $(D_t)_{t \ge 0}$ containing $0$: $D_0 = \m D$, $0\in D_t$ for all $t$, and if $s < t$ then $D_t \subset D_s$.  Let $f_t : \m D \to D_t$ be the conformal map normalized by $f_t(0)=0, f'_t(0)>0$. According to a theorem of Pommerenke \cite[Satz 4]{Pom1965} (see also \cite[Thm.\,6.2]{Pom_uni} and \cite{rosenblum}), if $t\to f_t'(0)$ is a decreasing homeomorphism of $\m R_+ \to (0,1]$, one can reparametrize $(D_t)_{t\ge 0}$ so that $f_t'(0) = e^{-t}$, and there exists a measurable family of holomorphic functions $(H_t)_{t \ge 0}$ in $\m D$, uniquely defined for a.e. $t \ge 0$, with positive real part such that \eqref{eq:ODE} holds. The measurable family of probability measures $(\rho_t)_{t \ge 0}$ is obtained from the Herglotz-Riesz representation of $(H_t)_{t \ge 0}$. 

The upshot of the discussion is that the  measure
$\rho \in \mc N_+$, Loewner chain $(f_t)_{t\ge 0}$, \emph{uniformizing Loewner chain} $(g_t:=f_t^{-1}: D_t \to \m D)_{t \ge 0}$, the monotone family of simply connected domains $(D_t)_{t\ge 0}$, and the increasing family of hulls $(K_t)_{t \ge 0}$ each determine the others.

Let $\mathcal{L}_+$ be the set of Loewner chains  $(f_t)_{t\ge 0}$, and change notation by writing $f(z,t) = f_t(z)$. We endow $\mathcal{L}_+$ with the topology of uniform convergence of $f$ on compact subsets of $\mathbb{D} \times \m R_+$. In this setting, the continuity of the Loewner transform is well-known.

\begin{lemma}[See e.g., {\cite[Prop.\,6.1]{MilShe2016} and \cite{JohSolTur2012}}]\label{lem:cont-bij-loewner-transf}
	The Loewner transform $\rho \in \mathcal{N}_+ \mapsto f \in \mathcal{L}_+$ is a homeomorphism.
\end{lemma}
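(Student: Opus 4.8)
The plan is to treat bijectivity and bicontinuity separately, reducing everything to sequences. Bijectivity is essentially already recorded in the discussion preceding the statement: given $\rho\in\mathcal N_+$ one solves the Loewner--Kufarev equation \eqref{eq:loewner-pde} to produce $f\in\mathcal L_+$, and conversely Pommerenke's theorem together with the Herglotz--Riesz representation recovers the measurable family $(H_t)$, hence $(\rho_t)$, from a given Loewner chain; uniqueness of solutions to \eqref{eq:loewner-pde} and of the disintegration make the correspondence injective. Since $\mathcal N_+$ carries the vague topology and every $\rho\in\mathcal N_+$ has mass exactly $T$ on $S^1\times[0,T]$, it is a vaguely closed, locally uniformly bounded, hence metrizable, subset of $\M(S^1\times\m R_+)$; likewise $\mathcal L_+$ with the compact--open topology on the $\sigma$-compact set $\m D\times\m R_+$ is metrizable. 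So it suffices to verify sequential continuity of the Loewner transform and of its inverse.

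For continuity of $\rho\mapsto f$, I would take $\rho^{(n)}\to\rho$ in $\mathcal N_+$ with associated chains $f^{(n)}$ and $f$. First, since each $f^{(n)}_t$ maps $\m D$ into itself with $f^{(n)}_t(0)=0$ and $(f^{(n)}_t)'(0)=e^{-t}$, the Koebe distortion theorem makes $\{f^{(n)}\}$ precompact in $\mathcal L_+$, and by Hurwitz' theorem and Carath\'eodory kernel convergence every locally uniform subsequential limit $\tilde f$ is again a Loewner chain. Next, fixing $z\in\m D$, continuity of $\theta\mapsto(e^{i\theta}+z)/(e^{i\theta}-z)$ on $S^1$ and vague convergence $\rho^{(n)}\to\rho$ give $\int_0^\infty\psi(s)H^{(n)}_s(z)\,\dd s\to\int_0^\infty\psi(s)H_s(z)\,\dd s$ for all $\psi\in C_c(\m R_+)$, and combined with the uniform bound $|H^{(n)}_s(z)|\le(1+|z|)/(1-|z|)$ this upgrades to weak-$*$ convergence $H^{(n)}_\cdot(z)\rightharpoonup H_\cdot(z)$ in $L^\infty_{\mathrm{loc}}(\m R_+)$. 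Passing to the limit in the integral form $f^{(n)}_t(z)=z-\int_0^t z\,(f^{(n)}_s)'(z)\,H^{(n)}_s(z)\,\dd s$ along the subsequence---using that $(f^{(n)}_s)'(z)\to\tilde f_s'(z)$ locally uniformly while $H^{(n)}_s(z)$ converges only weak-$*$---yields $\tilde f_t(z)=z-\int_0^t z\,\tilde f_s'(z)\,H_s(z)\,\dd s$, so $\tilde f$ solves \eqref{eq:loewner-pde} driven by $\rho$ and therefore $\tilde f=f$. As every subsequence has a further subsequence converging to $f$, we conclude $f^{(n)}\to f$.

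Continuity of the inverse then follows by a soft compactness argument: if $f^{(n)}\to f$ in $\mathcal L_+$ with drivers $\rho^{(n)},\rho$, then for each $T$ the restrictions $\rho^{(n)}|_{S^1\times[0,T]}$ all have total mass $T$, so $(\rho^{(n)})$ is precompact in $\mathcal N_+$; any subsequential limit $\tilde\rho$ has, by the forward continuity just proven, Loewner chain equal to $\lim f^{(n)}=f$, hence $\tilde\rho=\rho$ by injectivity, and so $\rho^{(n)}\to\rho$.

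The step I expect to be the main obstacle is the limit transition in the Loewner--Kufarev equation: the drivers $H^{(n)}$ converge only \emph{weakly} in time (the slices $\rho^{(n)}_s$ need not converge for a.e.\ fixed $s$), whereas $f^{(n)}$ and $\partial_z f^{(n)}$ converge locally uniformly, so one cannot simply take the limit inside the product $ (f^{(n)}_s)'(z) H^{(n)}_s(z)$. The resolution is the elementary but essential fact that the product of a locally uniformly convergent sequence and a bounded weak-$*$ convergent sequence converges weakly; this is what makes the integral equation stable under the weak convergence of the vector fields. Verifying the remaining precompactness and closedness claims (Koebe, Hurwitz, and Carath\'eodory kernel convergence for $\mathcal L_+$; local uniform boundedness of masses for $\mathcal N_+$) is routine.
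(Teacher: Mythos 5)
Your proof is correct. Note that the paper does not prove this lemma at all: it is stated as well known and attributed to \cite[Prop.~6.1]{MilShe2016} and \cite{JohSolTur2012}, so there is no internal argument to compare against. Your write-up is essentially the standard argument from those references: normal-family precompactness of the chains (plus the uniform Lipschitz bound in $t$ coming from $|H_t(z)|\le(1+|z|)/(1-|z|)$, which is the point hiding behind your appeal to Koebe), identification of subsequential limits by passing to the limit in the integral form of \eqref{eq:loewner-pde} using exactly the ``locally uniform times bounded weak-$*$'' product lemma you isolate, and then the soft compactness/injectivity argument for the inverse. The only claims you leave implicit that deserve a line are the vague closedness of $\mathcal N_+$ (one needs $\tilde\rho(S^1\times\{a\})=0$, which follows from $\tilde\rho(S^1\times(a-\eps,a+\eps))\le 2\eps$) and the $t$-equicontinuity just mentioned; both are routine, as you say.
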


The uniformizing Loewner chain $(g_t=f_t^{-1}: D_t \to \m D)_{t \ge 0}$ driven by $\rho$ satisfies a similar equation
\begin{equation} \label{eq:ODE}
\partial_t g_t (z) =   g_t(z) H_t(g_t(z)), \quad g_0 (z) = z,
\end{equation}
for a.e. $t < \tau(z)$ where for $z \in \m D$,
\begin{align} 
\label{eq:tau_def}
\begin{split}
    \tau(z) & = \sup\{ t \ge 0, \, \text{ the solution of \eqref{eq:ODE} exists for all } s < t\}\\ & = \sup\{t \ge 0: z \in D_t\},
\end{split}
\end{align}
and we set by convention $\tau (z) = 0$ for $z \in S^1$.
Further, we can write
\[D_t = \{z \in \m D, \, \tau(z) > t\}.\] 
An important property which follows immediately from \eqref{eq:ODE} is the \emph{domain Markov property} of the Loewner transform.
\begin{lemma}\label{lem:domain_markov}
  Let $\rho \in \mc N_+$ and $(g_t)_{t \ge 0}$ the associated uniformizing Loewner chain. Fix $T \ge 0$, then $(\tilde g_s : = g_{s +T} \circ g_T^{-1})_{s\ge 0}$ is the uniformizing Loewner chain driven by the measure $\tilde \rho \in \mc N_+$ with disintegration $\tilde \rho_s = \rho_{s+T}$.
\end{lemma}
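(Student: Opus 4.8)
The plan is to verify directly that the time-shifted family $\tilde g_s := g_{s+T}\circ g_T^{-1}$ solves the uniformizing Loewner--Kufarev equation~\eqref{eq:ODE} associated with the measure $\tilde\rho\in\mc N_+$ having disintegration $\tilde\rho_s=\rho_{s+T}$, and then to conclude by uniqueness of the Loewner chain (equivalently, by the bijectivity of the Loewner transform, Lemma~\ref{lem:cont-bij-loewner-transf}). First I would record that $\tilde\rho$ is indeed in $\mc N_+$: measurability of $s\mapsto\tilde\rho_s$ is inherited from that of $t\mapsto\rho_t$, and for every interval $I\subset\m R_+$ one has $\tilde\rho(S^1\times I)=\int_I\rho_{s+T}(S^1)\,\dd s=\int_{I+T}\rho_t(S^1)\,\dd t=|I+T|=|I|$, so the normalization holds. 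Its Herglotz integrals~\eqref{def:herglotz} are $\tilde H_s=H[\tilde\rho_s]=H[\rho_{s+T}]=H_{s+T}$.

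The core is a time-translation computation. Fix $z\in\m D$ and put $w=g_T^{-1}(z)\in D_T$, so that $\tilde g_s(z)=g_{s+T}(w)$ is well defined exactly for $s<\tilde\tau(z):=\tau(w)-T$, with $\tau$ as in~\eqref{eq:tau_def}. Since $t\mapsto g_t(w)$ is absolutely continuous on $[0,\tau(w))$ and satisfies~\eqref{eq:ODE} for all $t$ in that interval outside a null set $N\subset\m R_+$ that does not depend on $w$, the map $s\mapsto g_{s+T}(w)$ is absolutely continuous on $[0,\tilde\tau(z))$ and satisfies
\[
\partial_s\tilde g_s(z)\;=\;(\partial_t g_t)(w)\big|_{t=s+T}\;=\;g_{s+T}(w)\,H_{s+T}\bigl(g_{s+T}(w)\bigr)\;=\;\tilde g_s(z)\,\tilde H_s\bigl(\tilde g_s(z)\bigr)
\]
for every $s$ with $s+T\notin N$, i.e.\ for a.e.\ $s<\tilde\tau(z)$; moreover $\tilde g_0(z)=g_T(g_T^{-1}(z))=z$. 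I would also observe that $\tilde g_s$ is the composition of the conformal bijection $g_{s+T}\colon D_{s+T}\to\m D$ with the conformal bijection $g_T^{-1}$ restricted to $g_T(D_{s+T})$, hence a conformal bijection of $\tilde D_s:=g_T(D_{s+T})$ onto $\m D$ fixing $0$, with $\tilde g_s'(0)=g_{s+T}'(0)/g_T'(0)=e^{s+T}e^{-T}=e^s>0$. Thus $(\tilde g_s)_{s\ge0}$ has all the defining properties of the uniformizing Loewner chain driven by $\tilde\rho$.

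To finish I would invoke uniqueness: the right-hand side of~\eqref{eq:ODE} is holomorphic (hence locally Lipschitz) in the spatial variable and measurable in time, so for each $z\in\m D$ the Carath\'eodory initial value problem has a unique maximal solution; therefore $\tilde g_s(z)$ coincides with the value at $z$ of the uniformizing Loewner chain driven by $\tilde\rho$, and the maximal existence times agree, so the domains $\tilde D_s=g_T(D_{s+T})$ and times $\tilde\tau(z)=\tau(g_T^{-1}(z))-T$ are exactly those produced by $\tilde\rho$. I do not expect a genuine obstacle here: the content is the time-shift covariance of the equation, and the only care needed is bookkeeping --- that the a.e.\ exceptional set $N$ for $g$ transforms into the null set $N-T$ for $\tilde g$, that $g_T^{-1}(0)=0$ so the fixed-point and derivative normalizations survive, and that the composed maps remain conformal onto $\m D$ on the stated domains. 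Alternatively, one can bypass the ODE uniqueness discussion altogether by noting that $(\tilde g_s)$ is a uniformizing Loewner chain and reading off its driving measure from the Herglotz--Riesz representation of $\tilde H_s=H_{s+T}$, which is $\tilde\rho_s=\rho_{s+T}$, and then applying Lemma~\ref{lem:cont-bij-loewner-transf}.
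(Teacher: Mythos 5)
Your proof is correct and follows exactly the route the paper intends: the paper offers no written proof, stating only that the lemma "follows immediately from \eqref{eq:ODE}", and your verification that the shifted family solves the shifted equation with the right normalization, concluded by uniqueness of the Loewner transform, is precisely that argument carried out in detail.
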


\subsection{Chord-arc foliations and the winding function}\label{sect:winding-function}
Let $\gamma$ be a rectifiable Jordan curve. We say that $\gamma$ is chord-arc if there exists a constant $A < \infty$ such that for all $z,w \in \gamma$, we have the inequality $|\gamma^{z,w}| \le A|z-w|$, where $\gamma^{z,w}$ is the subarc of $\gamma \smallsetminus \{z,w\}$ of smaller length. 
 
Consider a monotone family of simply connected domains $(D_t)_{t \ge 0}$ that can be described by Loewner evolution as in Section~\ref{sect:Loewner--Kufarev-Equation}. We call the family  $(\g_t := \partial D_t)_{t \ge 0}$ a \emph{non-injective chord-arc foliation} of $\ad {\m D} \smallsetminus \{0\}$ if
\begin{enumerate}[itemsep=-2pt]
    \item For all $t \ge 0$, $\g_t$ is a chord-arc Jordan curve.
    \item It is possible to parametrize each curve $\g_t, t \ge 0,$ by $S^1$ so that the mapping $t \mapsto \g_t$ is continuous in the supremum norm.
    \item For all $z \in \ad{\m D} \smallsetminus \{0\}$, $\tau(z) <\infty$, where $\tau$ is defined in \eqref{eq:tau_def}.
\end{enumerate}
For convenience we shall say simply foliation in what follows, but we stress that the chord-arc assumption is always in effect.
We refer to the Jordan curves $\g_t$ as \emph{leaves}. Non-injective here means that we do not require that there is a unique leaf that passes through each point in $\ad{\m D}$.
The following lemma shows that a foliation in the above sense indeed foliates the punctured unit disk.
\begin{lemma}\label{lem:tau_foliates}
  Assume that $(\g_t)_{t\ge 0}$ is a foliation of $\ad{\m D} \smallsetminus \{0\}$. Then for all $z \in \ad{\m D} \smallsetminus \{0\}$, we have  $z \in \gamma_{\tau(z)}$. In particular, $\bigcup_{t \ge 0} \g_t =  \ad{\m D} \smallsetminus \{0\}$. 
\end{lemma}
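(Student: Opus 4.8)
The plan is to prove $z \in \gamma_{\tau(z)}$ by a winding-number continuity argument: as $t$ increases, $z$ passes from the inside to the outside of the leaves $\gamma_t$, so by continuity of the family $z$ must lie on $\gamma_t$ for some $t$, which will turn out to be $\tau(z)$. First I would dispose of the boundary case: if $z \in S^1$ then $\tau(z) = 0$ by convention and $z \in S^1 = \partial D_0 = \gamma_0 = \gamma_{\tau(z)}$, so assume henceforth $z \in \m D \smallsetminus \{0\}$. Using the identity $D_t = \{w \in \m D : \tau(w) > t\}$ recorded above (with $t = 0$ it reads $\m D = \{w : \tau(w) > 0\}$), together with finiteness of $\tau$ on $\ad{\m D}\smallsetminus\{0\}$, which is part of the definition of a foliation, we get $s := \tau(z) \in (0,\infty)$; the same identity gives $z \in D_t$ exactly for $t < s$, and in particular $z \notin D_s$.

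Next I would record two facts about the leaves. Since each $D_t$ is a bounded simply connected domain whose boundary $\gamma_t$ is a Jordan curve, $D_t$ is the bounded connected component of $\m C \smallsetminus \gamma_t$ (for instance by Carath\'eodory's theorem), so the winding number of $\gamma_t$ about a point $w \notin \gamma_t$ equals some $\varepsilon \in \{\pm 1\}$ when $w \in D_t$ and equals $0$ when $w \notin \ad{D_t}$; moreover $\varepsilon$ does not depend on $t$, since $0 \in D_t$ for all $t$, so after fixing a parametrization as in property~2 of a foliation the winding number of $\gamma_t$ about $0$ is a continuous $\{\pm 1\}$-valued function of $t$, hence constant. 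The key step is local constancy, on the open set $\{t : z \notin \gamma_t\}$, of the function $n_t$ defined as the winding number of $\gamma_t$ about $z$: if $z \notin \gamma_{t_0}$, then $\dist(z,\gamma_{t_0}) > 0$, and by sup-norm continuity of $t \mapsto \gamma_t$ the loop $\gamma_t$ is homotopic to $\gamma_{t_0}$ in $\m C \smallsetminus\{z\}$ for $t$ near $t_0$ (via the family $(\gamma_\sigma)$ with $\sigma$ between $t_0$ and $t$, which avoids $z$), whence $n_t = n_{t_0}$.

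Granting these, the conclusion is immediate. Suppose for contradiction $z \notin \gamma_s$. Then $\dist(z,\gamma_s) > 0$, so by sup-norm continuity there is $\eta \in (0,s)$ with $z \notin \gamma_t$ for all $t \in (s-\eta,\, s+\eta)$, and hence $n_t$ is constant on this interval. But for $t \in (s-\eta, s)$ we have $z \in D_t$, so $n_t = \varepsilon \neq 0$, whereas for $t \in (s, s+\eta)$ we have $z \notin \ad{D_t}$ (as $z \notin D_t$ and $z \notin \gamma_t$), so $n_t = 0$ --- a contradiction. Therefore $z \in \gamma_s = \gamma_{\tau(z)}$. The ``in particular'' statement then follows: each $\gamma_t = \partial D_t$ lies in $\ad{\m D}$ with $0 \in D_t$ interior, so $\gamma_t \subseteq \ad{\m D}\smallsetminus\{0\}$, and conversely every point of $\ad{\m D}\smallsetminus\{0\}$ lies on some leaf by what was just proved. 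The hard part is the local-constancy step: it is exactly where the sup-norm continuity hypothesis and the homotopy invariance of the winding number are used, and it is what rules out the a priori possibility that $z$ slips from inside to outside the leaves without ever meeting one; everything else is bookkeeping with the definition of $\tau$.
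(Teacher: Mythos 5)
Your proof is correct, but it takes a genuinely different route from the paper's. The paper argues set-theoretically: using monotonicity and the sup-norm continuity of $t\mapsto\g_t$ it establishes the two identities $\bigcup_{t>\tau(z)}D_t=D_{\tau(z)}$ and $\bigcap_{t<\tau(z)}\ad{D_t}=\ad{D_{\tau(z)}}$ (the latter by passing to the complementary monotone family), concluding that $z\notin D_{\tau(z)}$ while $z\in\ad{D_{\tau(z)}}$, hence $z\in\partial D_{\tau(z)}=\g_{\tau(z)}$. You instead run a degree-theoretic argument: the winding number $n_t$ of $\g_t$ about $z$ is locally constant on $\{t: z\notin\g_t\}$ by homotopy invariance and sup-norm continuity, yet it would have to jump from $\pm1$ (for $t<\tau(z)$, when $z\in D_t$, the bounded Jordan component) to $0$ (for $t>\tau(z)$, when $z\notin\ad{D_t}$) if $z$ avoided $\g_{\tau(z)}$. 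Both proofs use the same two ingredients --- the identity $D_t=\{w:\tau(w)>t\}$ and continuity of the leaves --- and both are complete; yours makes the role of continuity very explicit and is self-contained modulo the standard facts that a Jordan domain is the bounded component of the complement of its boundary and that the winding number of a Jordan curve about an interior point is $\pm1$, while the paper's is shorter once one accepts the two limit identities (whose justification it leaves rather terse). One small remark: the observation that the sign $\varepsilon$ is independent of $t$ is not actually needed, since the contradiction only requires $n_t\neq 0$ on one side of $\tau(z)$ and $n_t=0$ on the other.
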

This lemma shows that the definitions of foliation in $\ad {\m D} \smallsetminus \{0\}$ and $\tau$ coincide with those given in Section~\ref{sec:main_results}.
\begin{proof}
From the monotonicity of $(D_t)$ and the continuity of $t \mapsto \g_t$, we have $\bigcup_{t > \tau(z)} D_t = D_{\tau(z)}$. Since $z\in {D_{t}}$ implies $\tau(z) > t$,  $z\not \in {D_{\tau(z)}}$. Similarly, $z \not\in \overline{{D_{t}}}$ implies $t > \tau(z)$ and it follows that $z \in  \overline{{D_{\tau(z)}}} \setminus {D_{\tau(z)}} = \partial D_{\tau(z)}$. This completes the proof. \end{proof}

If $\rho \in \mathcal{N}_+$ and the family of interfaces $(\g_t = \partial D_t)_{t \ge 0}$ produced by Loewner evolution forms a foliation of $\ad{\m D} \smallsetminus\{0\}$, we say that $\rho\in \mc N_+$ \emph{generates a foliation}. 
We will often choose the conformal parametrization for each $\g_t$, obtained by continuously extending $f_t: \m D \to D_t$ to $\overline{\m D}$ by Carath\'eodory's theorem and then restricting to $S^1$.

We will now define the \emph{winding function} $\varphi$ associated with a foliation $(\g_t = \partial D_t)_{t \ge 0}$. 
  It will be convenient to use the notation
    \begin{equation}\label{eq:vartheta}
    \vartheta[f](z) = \arg \frac{z f'(z)}{f(z)} = \int_0^z \dd \arg \frac{f(w) - f(z)}{w - z},
    \end{equation}
when $f$ is a conformal map (defined on a simply connected domain) fixing $0$. Here and elsewhere, the continuous branch of $z\mapsto \arg (z f'(z)/f(z))$ is chosen to equal $0$ at the origin. The integral is taken along any smooth path from $0$ to $z$. The function $\vartheta$ satisfies a chain rule
    \begin{equation}\label{eq:theta_chain}
        \vartheta[f \circ g] = \vartheta [f] \circ g + \vartheta [g]
    \end{equation}
    if $f,g, f\circ g$ are all conformal maps defined on simply connected neighborhoods of the origin which $f,g$ fix.
Given a curve $\g$ with some parametrization $\g(s)$, we say that $\g$ has a tangent at $\g(s_0)$ if there exists $\alpha$ such that
    \[
    \lim_{s  \to s_0 \pm }\frac{\g(s) - \g(s_0)}{|\g(s) - \g(s_0)|} = \pm e^{i \alpha}.
    \]    
    The existence of a tangent does not depend on the chosen parametrization. 
    For any Jordan curve $\g$ we write $\mathcal{T}_\g \subseteq \g$ for the differentiable points on $\g$, that is, set of points $z \in \g$ at which a tangent exists.
    \begin{lemma}\label{lem:boundary-behavior}
       Suppose $\g = \partial D$ is a chord-arc curve where $D$ contains $0$ and let $f:\m D \to D$ be the conformal map such that $f(0)=0, f'(0)>0$. Then $|\g \smallsetminus \mathcal{T}_\g| = 0$ and the following statements are equivalent. 
       \begin{enumerate}[label= (\roman*), itemsep=-2pt]
           \item $f(e^{i\t}) \in \mathcal{T}_\g$.
         \item The unrestricted limit of $\arg(f(z) - f(e^{i\t}))/(z-e^{i\t})$ in $\m D$ exists at $e^{i\t}$ and is finite.
           \item The non-tangential limit of $\arg(f(z) - f(e^{i\t}))/(z-e^{i\t})$ exists at $e^{i\t}$ and is finite.
           \item The non-tangential limit of $\arg f'(z)$ exists at $e^{i\t}$ and is finite.
       \end{enumerate}
       In this case, the limits are equal modulo $2\pi$.
    \end{lemma}
    \begin{proof}
    The equivalence of $(i)$ and $(ii)$ is the first part of \cite[Thm.\,II.4.2]{GM} and this equivalence holds without the chord-arc assumption. Next, $(ii)$ trivially implies $(iii)$ and the converse implication is given in \cite[Thm.\,5.5]{Pommerenke_boundary} since $\g$ is chord-arc so in particular $D$ is a John domain. The equivalence of $(iii)$ and $(iv)$ is \cite[Prop.\,4.11]{Pommerenke_boundary}.
    Finally, since $\g$ is rectifiable $|\g \smallsetminus \mathcal{T}_\g| = 0$ by \cite[Thm.\,6.8]{Pommerenke_boundary}.
    \end{proof}
     \begin{rem}
In the setting of Lemma~\ref{lem:boundary-behavior},  assuming $\g$ is a general Jordan curve (not necessarily chord-arc), $(i)$ still implies that $(ii), (iii)$, and $(iv)$ hold.
\end{rem}
    \begin{rem}
Note that non-tangential approach regions are preserved under conformal mapping between $\m D$ and a quasidisk, see \cite[Prop.\,1.1]{Jerison_Kenig_1982}. 
\end{rem}
\begin{lemma}\label{lem:goluzin}
  Suppose $f: \m D \to D \subset \m D$ is a conformal map satisfying $f(0)=0$. If $z \in \partial \m D$ is such that the non-tangential limit $\arg f'(z)$ exists and $f(z) \in \partial \m D$, then $\vartheta[f](z)=0$. 
\end{lemma}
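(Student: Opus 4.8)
The statement is essentially a boundary Schwarz-lemma / Löwner-type observation: if a conformal map $f:\m D\to D\subset\m D$ fixing $0$ sends a boundary point $z\in S^1$ (where $\arg f'(z)$ exists as a nontangential limit) to another boundary point $f(z)\in S^1$, then the argument of $zf'(z)/f(z)$ vanishes there. Geometrically, $\vartheta[f](z)$ measures the angle between the radial direction at $z$ and its image direction at $f(z)$; the content of the lemma is that at a point where $f$ touches the circle from inside, the image circle is tangent to $\partial\m D$ and the map is, to first order, ``radial'' there. The plan is to reduce to the classical fact (Löwner's lemma / boundary Schwarz lemma, see e.g.\ \cite{Pom_uni}) that a holomorphic self-map of $\m D$ which is not a rotation strictly contracts harmonic measure / Poisson kernels, and that at a regular boundary fixed point the derivative is real and $\ge 1$ — here transplanted to the touching point rather than a fixed point.

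The concrete steps I would carry out: (1) Normalize by precomposing and postcomposing with rotations so that $z=1$ and $f(1)=1$; this does not change $\vartheta[f](z)$ modulo the additive rotation angles, which cancel because rotations $r$ satisfy $\vartheta[r]\equiv 0$ and the chain rule \eqref{eq:theta_chain}. (2) Consider the self-map $F:=f$ of $\m D$ with $F(1)=1$. By the Julia–Wolff–Carathéodory theorem, the nontangential limit $F'(1)$ (which exists by hypothesis as far as its modulus/argument near $z$; one upgrades existence of $\arg f'$ plus the touching condition to existence of the full angular derivative via the standard monotonicity argument) is a positive real number, in fact $F'(1)\ge 1$ since $F$ maps $\m D$ into $\m D$ and fixes the boundary point $1$. (3) Positivity of $F'(1)$ says precisely $\arg f'(1)=0$; combined with $z=1$, $f(z)=1$ this gives $\arg\bigl(z f'(z)/f(z)\bigr)=\arg f'(1)=0$, i.e.\ $\vartheta[f](1)=0$. (4) Finally, check that the branch normalization is consistent: $\vartheta[f]$ is defined as the continuous branch equal to $0$ at the origin, so one must argue the continuous extension of $\arg(zf'/f)$ along a path from $0$ to $z$ actually lands at $0$ and not at a nonzero multiple of $2\pi$. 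For this I would invoke the hypothesis $\partial D\cap\partial\m D\neq\emptyset$ together with the argument principle / the integral representation in \eqref{eq:vartheta}: since $D\subset\m D$ touches the circle, the winding of $f(w)-f(z)$ about $0$ picked up along the path contributes no extra full turn, exactly as in the standard computation that $\vartheta[f]$ stays bounded for such touching maps.

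The main obstacle is step (2)–(4): one has to be careful that the \emph{mere} existence of the nontangential limit $\arg f'(z)$, rather than the full angular derivative, together with the touching condition $f(z)\in\partial\m D$, is enough to put us in the Julia–Wolff–Carathéodory regime, and that the value one extracts is the correct branch. The cleanest route is probably to avoid JWC machinery and instead argue directly: write $u(w)=1-|f(w)|^2\ge 0$, harmonic-majorize it by the Poisson kernel, and use the Hopf-type boundary estimate at $z$ to see that $\partial_n(1-|f|^2)$ at $z$ forces $f'(z)$ to be a positive real multiple of $z/f(z)$; the touching hypothesis $\partial D\cap\partial\m D\neq\emptyset$ is what guarantees $1-|f|^2$ is not bounded below near $z$, so the normal derivative is genuinely positive and the ``first-order radial'' conclusion holds. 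I expect the branch-of-argument bookkeeping in step (4) to be the fiddliest part, but it is routine given \eqref{eq:vartheta} and the chain rule \eqref{eq:theta_chain}.
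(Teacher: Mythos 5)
Your plan is built around the right fact (at a boundary point sent to the boundary, the derivative of $f$ points in the direction $f(z)/z$) and, like the paper, it ultimately combines a Julia--Wolff--Carath\'eodory-type boundary statement with branch control via \eqref{eq:vartheta}. However, step (2) contains a genuine gap. Invoking Julia--Wolff--Carath\'eodory to get that ``$F'(1)$ is a positive real number $\ge 1$'' requires the angular derivative to be \emph{finite}, i.e.\ $\liminf_{w\to z}(1-|f(w)|)/(1-|w|)<\infty$. This is not implied by the hypotheses: the lemma only assumes the nontangential limit of $\arg f'$ exists, which is a much weaker, purely ``angular'' condition. There are conformal maps onto domains $D\subset\m D$ whose boundary touches $S^1$ tangentially at $f(z)$ (so that $\arg f'$ has a nontangential limit at $z$ by the tangent/isogonality theorem) but whose angular derivative at $z$ is infinite --- finiteness of the angular derivative is governed by integrated width-type criteria (Warschawski, Rodin--Warschawski) and is strictly stronger than the existence of a tangent. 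In that situation $f'$ has no finite nontangential limit at all, only its argument does, so the asserted ``upgrade to the full angular derivative via the standard monotonicity argument'' is not available; your fallback Hopf-lemma argument on $1-|f|^2$ only bounds the radial decay from below (the direction the Schwarz lemma already gives) and likewise says nothing about $\arg f'$ unless $f'$ itself converges.

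The paper's proof avoids moduli entirely. For $w\in\m D$ one has $\Re\big(\overline{f(z)}(f(w)-f(z))\big)<0$ and $\Re\big(\bar z(w-z)\big)<0$, so the difference quotient $(f(w)-f(z))/(w-z)$ omits the ray $\{-tf(z)/z:\ t\ge 0\}$; its argument therefore has a single-valued continuous branch on $\m D$, equal to $\arg(f(z)/z)$ at $w=0$. By \eqref{eq:vartheta}, $\vartheta[f](z)$ is the radial limit at $z$ of this branch minus $\arg(f(z)/z)$, and the cited result [GM, Thm.\,II.4.2] --- a statement about nontangential limits of \emph{arguments} of difference quotients, valid with no finiteness assumption on the angular derivative --- identifies that limit with $\arg(f(z)/z)$, giving $\vartheta[f](z)=0$. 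If you rework step (2) in this argument-only form, your steps (1) and (4) go through; note that step (4) is precisely the omitted-ray observation, which you should make explicit rather than leave as ``no extra full turn is picked up''.
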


\begin{proof}
If $z, f(z) \in \partial \m D$, then $w \mapsto (f(w) -f(z))/(w -z)$ takes values in the slit domain $\m C \smallsetminus \{- t f(z)/z, \, t\ge 0\}$. Indeed, if this were not true, then there would exist $t \ge 0$ and $w \in \mathbb{D}$ such that $f(w) + tw f(z)/z= f(z)(1+t)$. By Schwarz lemma this implies 
$$1+t=|f(w) + tw f(z)/z|\le |f(w)| + t |w| \le |w|(1+t),$$ a contradiction since $|w| < 1$.
 We can therefore define $w \mapsto \arg  (f(w) -f(z))/(w -z)$ continuously and unambiguously for $w \in \m D$ by choosing a branch of $\arg$ on this slit domain. 
Consider the path $s\mapsto sz$, $s \in [0,1]$ and set
\[
v(s)=\arg \frac{f(sz) - f(z)}{sz - z} - \arg \frac{f(z)}{z}, \quad s \in [0,1). 
\]
This is a continuous function such that $v(0)=0$.
Equation~\eqref{eq:vartheta} and Lemma~\ref{lem:boundary-behavior} show that 
$$\vartheta [f] (z) =\lim_{s \to 1-}  v(s).$$

To compute the value, we note that the tangent of $\partial \m D$ at $z$, oriented counterclockwise, has argument $\arg z + \pi/2$ modulo $2\pi$. Moreover, if $f(z) \in \partial \m D$ is a differentiable point of $\partial D$ then since $D \subset \m D$ it follows that $\partial D$ is tangent to $\partial \m D$ at $f(z)$ and this tangent has argument $\arg f(z) + \pi/2$ modulo $2\pi$.   
Using Lemma~\ref{lem:boundary-behavior} we have that $\lim_{s \to 1-} \arg \,  (f(sz) - f(z))/(sz - z)$ coincides with the non-tangential limit of $\arg f' (z)$ modulo $2\pi$, and the latter is equal to the difference of the arguments of the tangents at $f(z)$ and at $z$, namely $\arg f(z) / z$ modulo $2\pi$. This implies that $\vartheta [f] (z) = 0$ modulo $2\pi$.

Since the continuous function $w \mapsto (f(w) -f(z))/(w -z)$ takes values in the slit plane $\m C \smallsetminus \{- t f(z)/z, \, t\ge 0\}$, the function $s \mapsto v(s)$ does not take any two values differing by a non-zero multiple of $2\pi$. Since $v(0)=0$ it follows that $\vartheta [f] (z) = 0$.
\end{proof}

Let $\mathcal{T}_t = \mc T_{\g_t}$ be the set of $z \in \g_t$ at which $\g_t$ has a tangent. By Lemma~\ref{lem:boundary-behavior} and the remark following it, $\g_t \smallsetminus \mathcal{T}_t$ has arclength $0$ and if $z \in \mathcal{T}_t$,  
the harmonic function $\vartheta[g_t](w)$ has a non-tangential limit $\vartheta[g_t](z)$ as $w$ tends to $z$ inside $D_t$. 
We define the \emph{winding function} of $\g_t$ by 
\begin{equation}\label{eq:def_winding}
\varphi_{\g_t}(z) := \vartheta[g_t](z), \quad z \in \mathcal{T}_t, \quad t \ge 0.
\end{equation}

\begin{rem}
 Geometrically, modulo $2 \pi$, the winding function at $z \in \mathcal T_t$ is the angle between the tangent to $\g_{t}$ and that of the circle passing through $z$ centered at $0$, as described in Section~\ref{sec:main_results}. 
In Lemma~\ref{lem:turning_winding} we discuss an equivalent geometric definition of the winding function, suggested to us by a referee, at a differentiable point on a Jordan curve using the turning of a smooth curve approaching the point normally.
 \end{rem}

\begin{lemma}\label{lem:merge}
   Suppose $s< t$ and $z \in \mathcal{T}_s \cap \mathcal{T}_t$. Then $\varphi_{\g_s}(z) =  \varphi_{\g_t}(z)$. 
\end{lemma}
\begin{proof}
 By the chain rule \eqref{eq:theta_chain} and Lemma~\ref{lem:domain_markov}, it is enough to assume that $s=0$ and verify that $\varphi_{\g_t}(z) =0$ if $z\in S^1$. This follows from Lemma~\ref{lem:goluzin} which shows that $\vartheta [g_t] (z) = -\vartheta[f_t] (g_t(z)) = 0$.
\end{proof}
This observation allows us to define the winding function of a foliation.
\begin{df}
     Let $\mathcal{T}:=\cup_{t \ge 0} \mathcal{T}_t \subset \overline{\m D}$. We define the \emph{winding function} of a foliation,  $\varphi: \mathcal{T} \to \mathbb{R}$, by forgetting the leaf, namely  $\varphi(z) := \varphi_{\g_t}(z)$ if $z \in \mathcal{T}_t$. We set by convention $\varphi(0)=0$. 
\end{df}

 \begin{rem}If the leaves are all $C^1$, $\varphi$ is defined everywhere since $\mathcal{T} = \overline{\m D} \smallsetminus \{0\}$ by Lemma~\ref{lem:tau_foliates}.
 In the general case of a chord-arc foliation, a function defined arclength-a.e. on all leaves need not be defined on $\ad{\m D}\smallsetminus\{0\}$ Lebesgue-a.e. To illustrate the subtlety, it is possible to construct a subset $X$ of $\m D$ of full Lebesgue measure and a foliation of piecewise smooth curves such that each leaf intersects $X$ in exactly one point, see, e.g., \cite{FubiniFoiled}.  On the other hand, we shall prove in Proposition~\ref{prop:unique_extension} that a function in $\mc E_0(\m D)$ is determined by its ``values'' (interpreted appropriately) on the leaves if the curves are chord-arc. 
 \end{rem}

\begin{lemma} \label{lem:harmonic}
Suppose $\rho \in \mc N_+$ generates a foliation and let $T \ge 0$. If $\rho_t$ is the uniform measure for all $t \ge T$, then for all $z \in D_T$, 
\begin{equation}\label{eq:harmonic}
\varphi (z) = \vartheta [g_T] (z).
\end{equation}
In particular, $\varphi$ is defined and harmonic in $D_T$. Moreover, $(\g_t)_{t \ge T}$ are the equipotentials in $D_T$.
\end{lemma}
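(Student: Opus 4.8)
The plan is to reduce the statement to the case $T=0$ using the domain Markov property, and then to identify $(\g_t)_{t\ge 0}$ explicitly when $\rho_t$ is the uniform measure for all $t$. First I would observe that when $\rho_t$ is uniform, the Herglotz integral is $H_t(z) = \int_{S^1} \frac{e^{i\t}+z}{e^{i\t}-z}\,\frac{\dd\t}{2\pi} = 1$, so the Loewner--Kufarev equation \eqref{eq:loewner-pde} becomes $\partial_t f_t(z) = -z f_t'(z)$, whose solution (with initial data $f_0 = \mathrm{id}$) is $f_t(z) = e^{-t} z$. Hence $D_t = e^{-t}\m D$ and $\g_t$ is the circle of radius $e^{-t}$ about $0$; the leaves $(\g_t)_{t\ge 0}$ are exactly the concentric circles, i.e.\ the equipotentials in $\m D = D_0$. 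For such a leaf, $g_t(z) = e^{t} z$ is linear, so $\vartheta[g_t] \equiv 0$, which shows $\varphi \equiv 0$ on $\ad{\m D}\smallsetminus\{0\}$ in this case — consistent with the formula \eqref{eq:harmonic} since $\vartheta[g_0] = \vartheta[\mathrm{id}] = 0$.

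For general $T$, I would apply Lemma~\ref{lem:domain_markov}: the shifted chain $(\tilde g_s = g_{s+T}\circ g_T^{-1})_{s\ge 0}$ is the uniformizing Loewner chain driven by $\tilde\rho$ with $\tilde\rho_s = \rho_{s+T}$, which by hypothesis is the uniform measure for every $s\ge 0$. By the computation above, the hulls of the shifted chain are the concentric circles in $\m D$, i.e.\ $\tilde g_s(w) = e^{s} w$, so the leaves $\g_{s+T} = g_T^{-1}(\tilde g_s^{-1}(S^1)) = g_T^{-1}(e^{-s} S^1)$ are precisely the images under $f_T = g_T^{-1}$ of the circles about $0$; these are by definition the equipotentials of $D_T$, giving the last assertion. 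Now take $z \in D_T$ and let $t \ge T$, so $z \in \g_{\tau(z)}$ with $\tau(z) \ge T$ (shrink $t$ to $\tau(z)$ if needed, or use that $z$ lies on the leaf $\g_{\tau(z)}$ which is an equipotential and hence $C^1$, so $z \in \mathcal T_{\tau(z)}$). The chain rule \eqref{eq:theta_chain} gives $\vartheta[g_{\tau(z)}](z) = \vartheta[\tilde g_{\tau(z)-T}\circ g_T](z) = \vartheta[\tilde g_{\tau(z)-T}](g_T(z)) + \vartheta[g_T](z)$, and since $\tilde g_{\tau(z)-T}$ is linear the first term vanishes; hence $\varphi(z) = \hat\varphi(\tau(z),z) = \vartheta[g_T](z)$, which is \eqref{eq:harmonic}. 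Finally, $\vartheta[g_T] = \arg\!\big(w g_T'(w)/g_T(w)\big)$ is the imaginary part of the holomorphic function $\log\!\big(w g_T'(w)/g_T(w)\big)$ (well-defined in the simply connected domain $D_T$ since $g_T$ is conformal and fixes $0$), so it is harmonic in $D_T$; in particular $\varphi$ is defined and harmonic there.

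The main obstacle I anticipate is purely bookkeeping: making sure $z \in \mathcal T_t$ so that the winding function is genuinely defined at $z$ via \eqref{eq:def_winding}, and that the non-tangential-limit definition of $\vartheta[g_t]$ agrees with the naive pointwise value when the relevant leaf is smooth. Since the leaves $\g_t$ for $t\ge T$ are equipotentials — images of circles under the conformal map $f_T$, hence real-analytic Jordan curves — every point of such a leaf is a differentiable point, so $\mathcal T_t = \g_t$ and the non-tangential limit of $\vartheta[g_t]$ coincides with its (continuous, indeed harmonic) extension to $\ad{D_t}$; this removes the subtlety. One should also note that $\varphi$ being \emph{defined} on all of $D_T$ uses Lemma~\ref{lem:tau_foliates} (every $z\in D_T$ lies on some leaf $\g_{\tau(z)}$ with $\tau(z)\ge T$) together with the observed smoothness of those leaves.
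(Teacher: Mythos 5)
Your proposal is correct and follows essentially the same route as the paper: reduce to the uniform-measure case (where $H_t\equiv 1$ gives $f_t(z)=e^{-t}z$) via the domain Markov property, then apply the chain rule \eqref{eq:theta_chain} together with $\vartheta[w\mapsto e^{s}w]=0$ to get $\varphi=\vartheta[g_T]$ on each leaf, and identify $(\g_t)_{t> T}$ as the equipotentials $f_T(e^{T-t}S^1)$ covering $D_T\smallsetminus\{0\}$. Your extra care about $\mathcal T_t=\g_t$ for the real-analytic equipotential leaves is a fine (and correct) addition that the paper leaves implicit.
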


\begin{proof}
If $T = 0$, $\rho_t$ is the uniform measure  for all $t \ge 0$ which implies that $g_t(z) =  e^t z$ and $\varphi = 0 = \vartheta[g_0]$.
 Now let $t > T \ge 0$. By Lemma~\ref{lem:domain_markov}, we have $g_t \circ f_T (z) = e^{t - T} z$.
It follows from 
$g_t =(g_t \circ f_T) \circ g_T$, \eqref{eq:theta_chain} and $\vartheta [z \mapsto e^{t- T} z] = 0$ that
$$\varphi (z) = \vartheta[g_t] (z) = \vartheta [g_T] (z) 
 \qquad \forall z \in \partial D_t,$$
and we have $(\g_t = f_T (e^{T-t} S^1))_{t > T}$ which is by definition the family of equipotentials in $D_T$. We conclude the proof of \eqref{eq:harmonic} using $\cup_{t > T} \g_t = \cup_{t > T} g_T^{-1} (e^{T- t} S^1)= g_T^{-1} (\m D \smallsetminus \{0\}) = D_T \smallsetminus \{0\}$ (and by convention, $\varphi(0) = 0 =\vartheta [g_T] (0)$).
\end{proof}

\begin{lemma}\label{lem:zero_trace_varphi}
Suppose $\rho \in \mc N_+$ generates a foliation and let $ \varphi$ be the corresponding winding function. For every $T \ge 0$,  let $ \varphi^{(T)}$ denote the winding function associated to the truncated measure $\rho^{(T)}$ defined by $\rho^{(T)}_t : = \rho_t$ for $t \le T$ and $\rho_t^{(T)}$ is uniform for $t > T$.
Then
$(\varphi -\varphi^{(T)}) \circ f_T$ 
is the winding function generated by the measures $(\tilde \rho_s:= \rho_{T+s})_{s \ge 0}$.
\end{lemma}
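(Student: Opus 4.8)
The plan is to compute the winding function of the shifted measure $(\tilde\rho_s := \rho_{T+s})_{s\geq 0}$ directly from its definition and match it against $(\varphi - \varphi^{(T)})\circ f_T$ on each leaf. Write $(\tilde g_s)_{s\geq 0}$ for the uniformizing Loewner chain driven by $\tilde\rho$; by the domain Markov property (Lemma~\ref{lem:domain_markov}) we have $\tilde g_s = g_{T+s}\circ g_T^{-1} = g_{T+s} \circ f_T$, and correspondingly the domains are $\tilde D_s = g_T(D_{T+s})$, with leaves $\tilde\gamma_s = g_T(\gamma_{T+s})$. First I would observe that $\rho$ generating a foliation of $\overline{\m D}\smallsetminus\{0\}$ forces $\tilde\rho$ to generate a foliation of $\overline{\m D}\smallsetminus\{0\}$ as well: $g_T$ is a homeomorphism of $\overline{D_T}$ onto $\overline{\m D}$ carrying chord-arc curves to chord-arc curves (since $g_T = f_T^{-1}$ extends to a bi-Hölder, in fact quasiconformal-boundary, homeomorphism — actually one should invoke that $f_T$ extends continuously by Carathéodory and chord-arc is preserved under the welding, or simply note $\gamma_{T+s}$ chord-arc and $g_T$ conformal on a neighborhood when $\gamma_{T+s}\subset D_T$; for the top leaf one uses that the foliation hypothesis is on the family, and $\tilde\tau(w) = \tau(f_T(w)) - T < \infty$), so the three defining conditions of a foliation transfer. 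Hence the winding function $\tilde\varphi$ of $\tilde\rho$ is well-defined on $\tilde{\mathcal T} = \bigcup_s \tilde{\mathcal T}_s$.

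Next, fix $s\geq 0$ and a differentiable point $w\in\tilde{\mathcal T}_s$ of $\tilde\gamma_s$, so that $z := f_T(w)\in\gamma_{T+s}$ is a differentiable point of $\gamma_{T+s}$ (conformal maps preserve differentiability of the boundary curve where the derivative has a nonzero nontangential limit; since $\gamma_{T+s}$ is chord-arc this holds arclength-a.e., and I would restrict to that full-measure set). By definition $\tilde\varphi(w) = \vartheta[\tilde g_s](w) = \vartheta[g_{T+s}\circ f_T](w)$. The chain rule \eqref{eq:theta_chain} gives $\vartheta[g_{T+s}\circ f_T](w) = \vartheta[g_{T+s}](f_T(w)) + \vartheta[f_T](w) = \vartheta[g_{T+s}](z) + \vartheta[f_T](w)$. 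Now $\vartheta[g_{T+s}](z) = \hat\varphi(T+s, z) = \varphi(z)$ by \eqref{eq:def_winding} and the well-definedness of $\varphi$ (Lemma~\ref{lem:merge}). For the second term, note that $z\in D_T$ (since $z\in\gamma_{T+s}\subset D_{T+s}\subset D_T$ when $s>0$; the boundary case $s=0$ is handled by continuity or separately), and by Lemma~\ref{lem:harmonic} applied to the truncated measure $\rho^{(T)}$ — whose disintegration agrees with $\rho$ up to time $T$ and is uniform afterward, so its associated uniformizing chain at time $T$ is exactly $g_T$ — we have $\varphi^{(T)}(z) = \vartheta[g_T](z)$ for all $z\in D_T$. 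Since $\vartheta[f_T](w) = -\vartheta[g_T](f_T(w))$ by the chain rule applied to $g_T\circ f_T = \mathrm{id}$ and $\vartheta[\mathrm{id}] = 0$, we get $\vartheta[f_T](w) = -\varphi^{(T)}(z) = -\varphi^{(T)}(f_T(w))$. Combining, $\tilde\varphi(w) = \varphi(f_T(w)) - \varphi^{(T)}(f_T(w)) = (\varphi - \varphi^{(T)})\circ f_T(w)$, which is the claim.

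The main obstacle I anticipate is the bookkeeping around exceptional sets and the top leaf: I must ensure that the set of $w$ for which the argument runs — differentiability of $\tilde\gamma_s$ at $w$, existence of the nontangential limit $\arg f_T'(w)$, differentiability of $\gamma_{T+s}$ at $f_T(w)$, and validity of the boundary chain rule for $\vartheta$ — has full arclength measure in $\tilde\gamma_s$, and that \eqref{eq:theta_chain} and the relation $\vartheta[f_T] = -\vartheta[g_T]\circ f_T$ hold in the nontangential-limit sense at such boundary points, not just in the interior. The interior chain rule is clean; pushing it to the boundary requires the conformal welding/Carathéodory extension and the fact (used already in Lemma~\ref{lem:merge} and Lemma~\ref{lem:goluzin}) that $\vartheta[g_{T+s}](z) = -\vartheta[f_{T+s}](g_{T+s}(z))$ as nontangential limits. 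I would also need the (harmless) observation that when $s=0$, $\tilde\gamma_0 = g_T(\gamma_T) = S^1$ and both sides vanish there, consistent with $\tilde\rho_0$ being part of a chain starting at the identity. Everything else is a direct application of Lemmas~\ref{lem:domain_markov}, \ref{lem:harmonic}, and the chain rule \eqref{eq:theta_chain}.
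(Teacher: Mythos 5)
Your proposal is correct and follows essentially the same route as the paper: the domain Markov property gives $\tilde g_s = g_{s+T}\circ f_T$, the chain rule \eqref{eq:theta_chain} together with $\vartheta[f_T]=-\vartheta[g_T]\circ f_T$ yields $\vartheta[\tilde g_s]=\vartheta[g_{s+T}]\circ f_T-\vartheta[g_T]\circ f_T$, and Lemma~\ref{lem:harmonic} identifies $\vartheta[g_T]$ with $\varphi^{(T)}$ on $D_T$. The extra bookkeeping you supply about exceptional sets and the top leaf is sound but is left implicit in the paper's (much terser) proof.
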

\begin{proof}
From Lemma~\ref{lem:domain_markov}, the winding function $\tilde \varphi$ associated to $\tilde \rho$ is obtained from
$$\vartheta [\tilde g_s] = \vartheta [g_{s +T} \circ f_T] = \vartheta [g_{s+T}] \circ f_T - \vartheta [g_T] \circ f_T$$ 
where
$(\tilde g_s)_{s \ge 0}$ is the Loewner chain of $\tilde \rho$. We conclude the proof using Lemma~\ref{lem:harmonic}.
\end{proof}

In other words, decomposing the driving measure $\rho$ into the truncated (and uniformly extended) measure on $S^1 \times [0,T]$, and the measure $\tilde \rho: s \mapsto \rho_{T+s}$ amounts to decomposing the winding function into $\varphi^{(T)}$ which is defined and harmonic in $D_T$ and $\varphi - \varphi^{(T)}$ which vanishes (arclength-a.e.) on every $\g_t$ for $t \le T$. 

\section{The foliation disintegration isometry}\label{sec:hadamard}

\subsection{Hadamard's formula} \label{subsec:hadamard}
 In this section we use the Loewner--Kufarev equation to derive a formula for the dynamics of the Green's function. 
 The resulting expression is a version of Hadamard's classical formula in a smooth setting. We fix $\rho \in \mc N_+$ and let $(g_t : D_t \to \m D)_{t\ge 0}$ be the associated uniformizing Loewner chain.

For each $t \ge 0$, let $G_t(z,w)$ be the Green's function for $D_t$ with Dirichlet boundary condition. For $z,w$ fixed, we extend the definition of $G_t(z,w)$ to $t \ge \min\{\tau(z), \tau(w)\}$ by $0$. We will show that with this definition, $t \mapsto G_t(z,w)$ is absolutely continuous under the assumption that $\rho$ generates a foliation.  Note that $t \mapsto G_t(z,w)$ is always absolutely continuous for $t \in [0,\min\{\tau(z), \tau(w)\}) $ but need not be continuous at $\min\{\tau(z), \tau(w)\}$,
e.g., in the case when $\partial D_t$ disconnects an open set containing $z, w$ from $0$ as $t \to \tau = \tau(z) = \tau(w)$. 
\begin{lemma} \label{loewner-hadamard}\label{lem:G_to_0}
  Consider the Loewner--Kufarev equation driven by $\rho \in \mathcal{N}_+$. Then for a.e.\ $t < \min\{\tau(z) , \tau(w) \}$ and all $z,w \in D_t$, 
\[
-\partial_t G_{D_t}(z,w)= \int_0^{2\pi} P_{\m D}(g_t(z), e^{i\t})P_{\m D}(g_t(w), e^{i\t})\dd \rho_t(\t).
\]
If in addition $\rho$ is assumed to generate a foliation, then $t \mapsto G_t(z,w)$ is absolutely continuous for all $t$.
\end{lemma}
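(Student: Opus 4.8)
The plan is to separate the two assertions. For the first (the Hadamard-type formula valid for $t < \min\{\tau(z),\tau(w)\}$), I would start from the conformal invariance of the Green's function, $G_{D_t}(z,w) = G_{\m D}(g_t(z), g_t(w)) = -\log\bigl|(g_t(z)-g_t(w))/(1-\overline{g_t(w)}g_t(z))\bigr|$, and differentiate in $t$. Using the Loewner--Kufarev ODE \eqref{eq:ODE}, $\partial_t g_t(z) = g_t(z)H_t(g_t(z))$, together with the chain rule, one gets an explicit expression in terms of $H_t$ evaluated at $g_t(z)$ and $g_t(w)$. The key computational step is to recognize that $\real$ of the relevant combination of Herglotz integrands telescopes into the product of Poisson kernels: writing $a = g_t(z)$, $b = g_t(w)$, one differentiates $-\log|a-b| + \log|1-\bar b a|$ and substitutes $\dot a = aH_t(a)$, $\dot b = bH_t(b)$. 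Since $\partial_t G = -\real\bigl(\frac{\dot a - \dot b}{a-b} - \frac{\bar b \dot a + a \dot{\bar b}}{1-\bar b a}\bigr)$, and $H_t(\zeta) = \int \frac{e^{i\t}+\zeta}{e^{i\t}-\zeta}\,\dd\rho_t$, one expands, and after algebraic simplification the cross terms combine so that for each fixed $\t$ the integrand becomes $P_{\m D}(a,e^{i\t})P_{\m D}(b,e^{i\t})$; this is the standard Hadamard identity rewritten via Loewner--Kufarev, and I would present it as a direct (if slightly tedious) computation, valid for a.e.\ $t$ where \eqref{eq:ODE} holds. Absolute continuity of $t\mapsto G_t(z,w)$ on $[0,\min\{\tau(z),\tau(w)\})$ then follows because $g_t(z)$, $g_t(w)$ are absolutely continuous in $t$ and stay in a compact subset of $\m D$ on compact time intervals, while $\int |P_{\m D}(a,\cdot)P_{\m D}(b,\cdot)|\,\dd\rho_t \le \|P_{\m D}(a,\cdot)\|_\infty \|P_{\m D}(b,\cdot)\|_\infty$ is locally bounded in $t$ since $|\rho_t|=1$.

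For the second assertion — that under the foliation hypothesis $t \mapsto G_t(z,w)$ is absolutely continuous for \emph{all} $t \ge 0$, not merely up to $\min\{\tau(z),\tau(w)\}$ — the point is to show continuity at the (single) potential jump time $\tau_0 := \min\{\tau(z),\tau(w)\}$, since we have defined $G_t(z,w)=0$ for $t \ge \tau_0$. Say $\tau(z)\le\tau(w)$, so $\tau_0=\tau(z)$. By Lemma~\ref{lem:tau_foliates}, $z\in\g_{\tau(z)}=\partial D_{\tau(z)}$, so $z$ lies on the boundary of $D_t$ for $t$ slightly less than $\tau(z)$; one must show $G_{D_t}(z,w)\to 0$ as $t\uparrow\tau(z)$. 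The idea is: $g_t(z) = f_t^{-1}(z)$, and as $t\uparrow\tau(z)$, $f_t$ converges (Carathéodory, using continuity of the Loewner chain, Lemma~\ref{lem:cont-bij-loewner-transf}) to $f_{\tau(z)}$ with $z\in\partial D_{\tau(z)}$; since $\g_{\tau(z)}$ is a Jordan curve, $f_{\tau(z)}$ extends to a homeomorphism of $\ad{\m D}$, so $g_t(z) \to g_{\tau(z)}(z) \in S^1$. Meanwhile $g_t(w)$ stays in a compact subset of $\m D$ (because $\tau(w)\ge\tau(z)$ and if $\tau(w)>\tau(z)$ strictly then $w\in D_t$ for $t$ near $\tau(z)$; if $\tau(w)=\tau(z)$ one handles $w$ symmetrically and both boundary values are on $S^1$ but then $|z-w|$-type estimates via the chord-arc property control the Green's function). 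Then $G_{\m D}(g_t(z),g_t(w)) \to G_{\m D}(\zeta, g_{\tau(z)}(w)) = 0$ for $\zeta\in S^1$ — or $\to 0$ in the doubly-boundary case — giving continuity at $\tau_0$. Combined with absolute continuity on $[0,\tau_0)$ and the constant value $0$ on $[\tau_0,\infty)$, and the local integrability of the derivative bound above (which extends past $\tau_0$ trivially since the function is then constant), this yields absolute continuity on all of $\m R_+$.

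The main obstacle I anticipate is the continuity at $\tau_0$ in the \emph{degenerate cases}: when $\partial D_t$ pinches off a region containing both $z$ and $w$ simultaneously ($\tau(z)=\tau(w)$ and the limiting curve separates a neighborhood of $\{z,w\}$ from $0$), one cannot simply say "one point goes to the boundary and the other stays interior." Here I would use the chord-arc assumption seriously: chord-arc curves are quasicircles, hence the maps $f_t$ have Hölder-type boundary regularity uniform enough (or at least one can use that a Jordan domain's Green's function vanishes at the boundary and estimate $G_{D_t}(z,w) \le G_{D_t}(z, \cdot)$ on a slightly larger set) to force $G_{D_t}(z,w)\to 0$. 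The cleanest route may be to dominate $G_{D_t}(z,w)$ by harmonic measure or by $G_{\Omega}(z,w)$ for a fixed domain $\Omega$ shrinking appropriately, or to invoke that $\operatorname{dist}(z,\partial D_t)\to 0$ together with the Beurling-type estimate $G_{D_t}(z,w)\lesssim (\operatorname{dist}(z,\partial D_t)/\operatorname{dist}(w,\partial D_t))^{1/2}$ valid for simply connected domains — but when $w$ also approaches $\partial D_t$ this requires the chord-arc control to compare the two distances to $|z-w|$. I would isolate this as the one genuinely delicate point and handle it with a direct harmonic-measure / extremal-length estimate rather than anything fancy.
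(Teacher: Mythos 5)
Your computation of the Hadamard formula, your argument for absolute continuity on $[0,\min\{\tau(z),\tau(w)\})$, and your treatment of the case $\tau(z)<\tau(w)$ (where $|g_t(w)|$ stays bounded away from $1$) all coincide with the paper's proof. The gap is in the degenerate case $\tau:=\tau(z)=\tau(w)$, which you correctly flag as the one delicate point but do not actually close. Two concrete problems with what you sketch: first, Carath\'eodory convergence of $f_t$ to $f_\tau$ only gives locally uniform convergence in $\m D$ and does not by itself imply $g_t(z)\to g_\tau(z)\in S^1$ for the boundary point $z\in\partial D_\tau$; that convergence is precisely the nontrivial claim. Second, your fallback bound $G_{D_t}(z,w)\lesssim(\dist(z,\partial D_t)/\dist(w,\partial D_t))^{1/2}$ degenerates when both distances tend to $0$, and the chord-arc constants of the leaves are not assumed uniform in $t$ in the definition of a foliation, so ``using the chord-arc property seriously'' is not available in the quantitative form you would need.

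The paper closes this case as follows. Since $\g_\tau$ is a Jordan curve, $g_\tau$ extends to a homeomorphism $\ad{D_\tau}\to\ad{\m D}$, so $g_\tau(z)\neq g_\tau(w)$ are \emph{distinct} points of $S^1$. One then proves the claim $\lim_{t\to\tau-}g_t(z)=g_\tau(z)$ (and likewise for $w$) via the Beurling estimate applied to the flow rather than to the Green's function: given $\epsilon>0$, pick $v\in D_\tau$ at interior distance at most $\epsilon$ from $z$ in $D_\tau$; by monotonicity the same holds in each $D_{t_n}$, so $|g_{t_n}(z)-g_{t_n}(v)|\le C\sqrt{\epsilon}$ and $|g_\tau(z)-g_\tau(v)|\le C\sqrt{\epsilon}$, while $g_{t_n}(v)\to g_\tau(v)$ because $\tau(v)>\tau$; the triangle inequality then pins down every subsequential limit of $g_{t_n}(z)$. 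Consequently $|g_t(z)-g_t(w)|$ is bounded away from $0$ on $[0,\tau]$, and since $|1-\overline{g_t(w)}g_t(z)|\to|g_\tau(z)-g_\tau(w)|$ as both moduli tend to $1$, the explicit formula $G_t(z,w)=-\log\bigl|(g_t(z)-g_t(w))/(1-\overline{g_t(w)}g_t(z))\bigr|$ forces $G_t(z,w)\to 0$. This is exactly the ``direct harmonic-measure/extremal-length estimate'' you anticipated, but you need to apply it to $g_t$ and to an interior comparison point $v$, not to $G_{D_t}(z,w)$ itself.
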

See also \cite[Thm.\,4]{Roth-schippers} for a similar result.

\begin{proof}
First note that for $z,w \in D_t$, by conformal invariance of the Green's function,
\begin{equation}\label{eq:G_t}
G_t(z,w) = G_{D_t}(z,w)= -\log \left|\frac{g_t(z) - g_t(w)}{1- \overline{g_t(w)} g_t(z)}\right|.
\end{equation}
Setting $z_t=g_t(z), w_t = g_t(w)$, the Loewner--Kufarev equation \eqref{eq:ODE} implies that for a.e. $t < \min\{\tau(z) , \tau(w) \}$, $\partial_t z_t = z_t H_t(z_t)$, $\partial_t w_t = w_t H_t(w_t)$ and so
\begin{align*}
-\partial_t G_t(z,w) & =  \Re \partial_t \log \frac{z_t - w_t}{1- \overline{w_t} z_t}  \\
& = \Re \frac{\overline{w_t}z_t \left(H_t(z_t)  + \overline{H_t(w_t)} \right)}{1- \overline{w_t} z_t} + \Re \frac{z_t H_t(z_t) - w_t H_t(w_t)}{z_t - w_t}.
\end{align*}
Next, note that
\begin{align}
 \Re  \frac{\overline{w_t}z_t \left(H_t(z_t)  + \overline{H_t(w_t)} \right)}{1- \overline{w_t} z_t}& = \int_0^{2\pi}\Re \frac{\overline {w}_t z_t \cdot 2(1-z_t\overline{w_t})}{(1-z_t \overline{w_t} )(e^{i\t} -z_t)(e^{-i\t} - \overline{w}_t)} \dd \rho_t(\t)  \nonumber \\
& = \int_0^{2\pi}\Re \frac{ 2 z_t \overline{w_t} }{(e^{i\t} -z_t)(e^{-i\t} - \overline{w}_t)} \dd \rho_t(\t)  \nonumber \\
& = \int_0^{2\pi} \Re \frac{ 2 z_t \overline{w_t} (e^{-i\t} -\overline{z_t})(e^{i\t} - w_t) }{|e^{i\t}-z_t|^2|e^{i\t}-w_t|^2}  \dd \rho_t(\t)   \label{eq1}.
\end{align}
Moreover,
\begin{align}
\Re  \frac{z_t H_t(z_t) - w_t H_t(w_t)}{z_t - w_t} & = \Re \int_0^{2\pi} \left( \frac{e^{i\t} + z_t}{e^{i\t} - z_t} + \frac{2w e^{i\t}}{(e^{i\t}-z_t)(e^{i\t}-w_t)} \right)  \dd \rho_t(\t)   \nonumber \\
 & = \Re \int_0^{2\pi}  \frac{(e^{i \t} + z)(e^{-i\t} -\overline{z}) |e^{i\t}-w_t|^2}{|e^{i \t}-z_t|^2|e^{i\t}-w_t|^2} \dd \rho_t(\t) \label{eq2} \\
 & \qquad + \Re \int_0^{2\pi}  \frac{ 2w_t e^{i\t}(e^{-i\t}-\overline{z_t})(e^{-i\t}-\overline{w_t})}{|e^{i \t}-z_t|^2|e^{i\t}-w_t|^2} \dd \rho_t(\t) \nonumber. 
\end{align}
Adding \eqref{eq1} and \eqref{eq2} and simplifying,
\begin{align*} -\partial_t G_t(z,w) & =  \int_0^{2\pi} \frac{1-|z_t|^2}{|e^{i\t}-z_t|^2} \cdot \frac{1-|w_t|^2}{|e^{i\t}-w_t|^2} \dd \rho_t(\t) 
= \int_0^{2\pi} P_{\m D}(z_t, e^{i\t})P_{\m D}(w_t, e^{i\t})\dd \rho_t(\t),
\end{align*}
as claimed.

Let us now assume $\rho  \in \mc N_+ $ generates a foliation and prove that $t \mapsto G_t(z,w)$ is absolutely continuous for all $t \ge 0$. For all $z,w \in \m D$, $z \neq w$, we need to show that $G_t(z,w) \to 0$ as $t\nearrow \min\{\tau(z),\tau(w)\}$. Without loss of generality, we assume that $\tau(z) \le \tau(w)$. 
 If $\tau(z) < \tau (w)$, it is clear from \eqref{eq:G_t} that $G_t(z,w) \to 0$ as $t \to \tau(z)$ since $|g_t(w)|$ is bounded away from $1$ for $t \in [0,\tau(z)]$. Assume now that $\tau  = \tau(z) = \tau(w)$.  Lemma~\ref{lem:tau_foliates} implies that $z, w \in \partial D_\tau$. Since $g_\tau : D_\tau \to \m D$ is a conformal map between two Jordan domains, $g_\tau$ extends to a homeomorphism $\ad D_\tau \to \ad {\m D}$ and $g_\tau(z) \neq g_\tau(w) \in S^1$.
 We claim that
 $$ \lim_{t \to \tau-} g_t (z) = g_\tau(z).
 $$ 
 Assuming this, it follows that $|g_t (z) - g_t(w)|$ is bounded away from $0$ on $[0,\tau]$. Therefore $G_t(z,w) \to 0$ as $t\to \tau-$ using \eqref{eq:G_t}. 
 To prove the claim, let $t_n, n=1,2,\ldots,$ be any sequence such that $t_n \nearrow \tau$ and such that $\zeta :=
 \lim_{n \to \infty} g_{t_n}(z)$ exists. Pick $\epsilon > 0$ and let $v \in D_\tau$ be such that the interior distance between $z$ and $v$ in $D_\tau$ is at most $\epsilon$. By the Beurling estimate there is a constant $C$ that does not depend on $z,v$ such that $|g_\tau(z) - g_\tau(v)| \le C \sqrt{\epsilon}$ and by monotonicity of the domains (Schwarz' lemma) the same estimate holds with $\tau$ replaced by any $t_n < \tau$. Let $n_0$ be large such that if $n > n_0$ then $|g_{t_n}(v)- g_{\tau}(v)| + |g_{t_n}(z)-\zeta| < \epsilon$. Such $n_0$ exists by definition of $\zeta$ and since $t \mapsto g_{t}(v)$ is continuous on $[0,\tau(v))$ and $\tau(v) > \tau(z)$.  Combining these estimates with the triangle inequality it follows that $|g_\tau(z) - \zeta| \le 2C \sqrt{\epsilon}$. 
\end{proof}

\subsection{Disintegration isometry}\label{subsec:hadamard_isometry}

In this section we assume that $\rho \in  \mc N_+$ generates a foliation of $\ad{\m D} \smallsetminus\{0\}$. 
We will write
\[
L^2(2\rho) := L^2(S^1 \times \m R_+ , 2\rho)
\]
and $\langle \cdot, \cdot \rangle_{L^2(2\rho)}$ for the corresponding inner product.
Recall also that we write $\mc{E}_0(\m D)$ for $W^{1,2}_0(\m D)$ equipped with the Dirichlet energy norm.
\begin{lemma}
\label{lem:disintegration_general} 
Suppose $\rho \in  \mc N_+$ generates a foliation and that $ \phi \in C^\infty_c (\m D)$.
Then
\begin{equation}\label{eq:iota_general}
 \i [ \phi ] (\t,t)  := \frac{1}{2\pi}\int_{\m D} \D ( \phi \circ f_t) (z) P_{\m D} (z, e^{i\t}) \,\dd  A(z) 
 \end{equation}
 is an element of $L^2 (2\rho)$.
Moreover, for all $T > 0$, 
\begin{align*}
\mc{D}_{\m D}(\phi) - \mc{D}_{\m D}(\phi^0_T) = \norm{\iota[\phi]\mathbf{1}_{S^1 \times [0,T]}}^2_{L^2(2\rho)}
\end{align*}
where $\phi_T^0 \in \mc E_0(\m D)$ is the zero-trace part of $\phi \circ f_T \in W^{1,2}(\m D)$ as in \eqref{eq:orthogonal}.

In particular,  the operator $\iota: \mc E_0(\m D) \cap C^\infty_c (\m D) \to L^2 (2\rho)$
is norm-preserving.
\end{lemma}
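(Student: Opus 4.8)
The plan is to convert both sides of the claimed identity into integrals against the Green's function and then apply Lemma~\ref{loewner-hadamard} together with Fubini's theorem; it suffices to treat real-valued $\phi$. Fix $\phi\in C^\infty_c(\m D)$ and set $K=\operatorname{supp}\Delta\phi$. For each $t$, the orthogonal decomposition \eqref{eq:orthogonal} of $\phi\circ f_t$ corresponds, after conjugating by the conformal map $f_t\colon\m D\to D_t$, to the decomposition $\phi|_{D_t}=(\phi-u_t)+u_t$ in $D_t$, where $u_t$ is the harmonic extension to $D_t$ of $\phi|_{\partial D_t}$. Thus $\phi-u_t\in W^{1,2}_0(D_t)$ with $\Delta(\phi-u_t)=\Delta\phi$, and by conformal invariance of the Dirichlet energy $\mc D_{\m D}(\phi^0_t)=\mc D_{D_t}(\phi-u_t)$. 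Writing $\phi-u_t$ via the Green's representation in the (Dirichlet-regular) Jordan domain $D_t$ and integrating by parts — legitimate since $\phi-u_t\in W^{1,2}_0(D_t)$ and $\Delta\phi\in L^\infty$ — one obtains
\[
\mc D_{\m D}(\phi^0_t)\;=\;\frac{1}{2\pi^2}\int_{\m D}\int_{\m D}G_t(z,z')\,\Delta\phi(z)\,\Delta\phi(z')\,\dd A(z)\,\dd A(z'),
\]
using the convention $G_t\equiv 0$ off $D_t\times D_t$ from Lemma~\ref{loewner-hadamard} (harmless since $\Delta\phi$ is supported in $K\subset D_0=\m D$). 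For $t=0$ this is the classical identity $\mc D_{\m D}(\phi)=\tfrac{1}{2\pi^2}\iint G_{\m D}\,\Delta\phi\,\Delta\phi$, because $\phi^0_0=\phi$.

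Next I subtract the $t=0$ and $t=T$ instances. Because $\rho$ generates a foliation, Lemma~\ref{loewner-hadamard} guarantees that $t\mapsto G_t(z,z')$ is absolutely continuous on $[0,\infty)$, whence
\[
G_0(z,z')-G_T(z,z')\;=\;\int_0^T\!\!\int_0^{2\pi}P_{\m D}(g_t(z),e^{i\t})\,P_{\m D}(g_t(z'),e^{i\t})\,\dd\rho_t(\t)\,\dd t\;\in\;[\,0,\ G_{\m D}(z,z')\,],
\]
the integrand read as $0$ for $t\ge\min\{\tau(z),\tau(z')\}$. Subtracting the two displays above exhibits $\mc D_{\m D}(\phi)-\mc D_{\m D}(\phi^0_T)$ as a fourfold integral whose integrand has absolute value at most $\|\Delta\phi\|_\infty^2\,G_{\m D}(z,z')$, which is integrable over $K\times K$; hence Fubini applies and the integrations may be reordered freely. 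Doing the $z,z'$ integrations first, the integrand factors, and the change of variables $z=f_t(w)$ (using $\Delta(\phi\circ f_t)=|f_t'|^2\,\Delta\phi\circ f_t$) turns the inner integral into $\int_{\m D}P_{\m D}(w,e^{i\t})\Delta(\phi\circ f_t)(w)\,\dd A(w)=2\pi\,\iota[\phi](\t,t)$; this simultaneously shows $\iota[\phi]$ is finite $(2\rho)$-a.e.\ and given by \eqref{eq:iota_general}. Collecting constants,
\[
\mc D_{\m D}(\phi)-\mc D_{\m D}(\phi^0_T)\;=\;\frac{1}{2\pi^2}\int_0^T\!\!\int_0^{2\pi}\big(2\pi\,\iota[\phi](\t,t)\big)^2\,\dd\rho_t(\t)\,\dd t\;=\;\big\|\iota[\phi]\,\mathbf 1_{S^1\times[0,T]}\big\|_{L^2(2\rho)}^2,
\]
which is the asserted identity.

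It remains to let $T\to\infty$. The right-hand side is non-decreasing in $T$, so by monotone convergence it tends to $\|\iota[\phi]\|_{L^2(2\rho)}^2$, which is therefore $\le\mc D_{\m D}(\phi)<\infty$; in particular $\iota[\phi]\in L^2(2\rho)$. On the left, $\mc D_{\m D}(\phi^0_T)\le\mc D_{\m D}(\phi\circ f_T)=\mc D_{D_T}(\phi)=\tfrac1\pi\int_{D_T}|\nabla\phi|^2\,\dd A$, and since the foliation property forces $\tau(z)<\infty$ for every $z\neq 0$, hence $\bigcap_{T}D_T=\{0\}$, dominated convergence gives $\mc D_{\m D}(\phi^0_T)\to 0$. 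Thus $\|\iota[\phi]\|_{L^2(2\rho)}^2=\mc D_{\m D}(\phi)$, and as $C^\infty_c(\m D)\subset\mc E_0(\m D)$ this is precisely the claimed norm-preservation of $\iota$ on $\mc E_0(\m D)\cap C^\infty_c(\m D)$.

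I expect the Fubini justification to be the delicate point: everything hinges on the domination by $\|\Delta\phi\|_\infty^2\,G_{\m D}(z,z')$, which relies on both the non-negativity of $-\partial_t G_t$ and the absolute continuity of $t\mapsto G_t(z,z')$ — and the latter is exactly where the foliation hypothesis is used via Lemma~\ref{loewner-hadamard} (without it, $t\mapsto G_t(z,z')$ need not even be continuous at $\min\{\tau(z),\tau(z')\}$ and the fundamental theorem of calculus step breaks). A second, more bookkeeping-type issue is that $\phi\circ f_t$ is in general not compactly supported in $\m D$, so one cannot represent $\phi^0_t$ directly by a Green potential on $\m D$; working inside $D_t$ with $\phi-u_t$, whose Laplacian is the fixed bounded function $\Delta\phi$, is what circumvents this.
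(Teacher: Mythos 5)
Your proof is correct and follows essentially the same route as the paper: both express $\mc D_{\m D}(\phi)$ and $\mc D_{\m D}(\phi^0_T)$ as Green-function bilinear forms in $\Delta\phi$, invoke Lemma~\ref{loewner-hadamard} (non-negativity of $-\partial_t G_t$ plus absolute continuity under the foliation hypothesis) to write $G_0-G_T$ as the integrated Poisson-kernel product, justify Fubini by the integrability of the logarithmic singularity, and identify $\iota[\phi]$ after the change of variables $z=f_t(w)$. The only (welcome) addition is that you make explicit the step $\mc D_{\m D}(\phi^0_T)\to 0$ as $T\to\infty$, which the paper leaves implicit in its appeal to monotone convergence.
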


\begin{proof}
Let $\phi \in C_c^\infty(\m D)$ set $\mu = \D  \phi$ and write $G_0 = 2\pi \Delta^{-1}$, where we recall that we consider the positive Laplacian. (We write $G_0$ for both the operator and function.) For $z \in \m D$,
 $$ \phi (z)  = \frac{1}{2\pi}G_0 \mu  (z)  = \frac{1}{2\pi}\int_{\m D} G_0(w,z) \mu(w) \,\dd A(w).$$ 
 Using integration by parts, since $\phi$ is smooth and has compact support,
 \begin{align*}    
 \int_{\m D} |\nabla   \phi  (z)|^2 \dd A(z)
 & =  \int_{\m D} \phi  (z) \D  \phi  (z) \,\dd A(z)\\
 & = \frac{1}{2\pi} \int_{\m D} \int_{\m D} G_0(w,z)  \mu(w) \mu(z) \,\dd A (w) \,\dd A(z).
 \end{align*}
 As before we set $G_t(z,w) = G_{D_t}(z,w) = G_0 (z_t, w_t)$ for $z,w \in D_t$ (recall that $z_t = g_t (z)$ and $w_t = g_t(w)$) and $0$ otherwise.
 Let
 \[
  \phi_t :=  \phi \circ f_t , \quad \mu_t : = \Delta  \phi_t= |f_t'|^{2}  \mu \circ f_t
 \]
 and note that the zero-trace part of $\phi_t$ satisfies $\phi_t^0 =  G_0 \mu_t/2\pi$. Moreover, performing a change of variable,
 \begin{align}    
 &\int_{\m D} |\nabla   \phi_T^0  (z)|^2 \dd A(z) \nonumber
 \\ 
 & = \frac{1}{2\pi} \int_{\m D} \int_{\m D} G_0(w,z)  \mu_T(w) \mu_T(z) \,\dd A (w) \,\dd A(z) \nonumber \\
 & = \frac{1}{2\pi} \int_{\m D} \int_{\m D} G_0(w,z)  \mu \circ f_T (w) \mu \circ f_T(z) |f_T'(z)|^2 |f_T'(w)|^2  \dd  A (w) \,\dd A(z) \nonumber \\
 & = \frac{1}{2\pi} \int_{D_T} \int_{D_T} G_T(w,z)  \mu (w) \mu(z)  \dd  A (w) \,\dd A(z). \label{june10}
 \end{align}
 
Next, note that by Lemma~\ref{loewner-hadamard} $$-\partial_t G_t(z,w) = \int_0^{2 \pi} P_{\m D} (g_t(z), e^{i\t}) P_{\m D} (g_t(w),e^{i\t})  \dd \rho_t(\t)  \ge 0$$ 
for $z,w \in \mathbb{D}$ and that $P_{\m D} (g_t(w),e^{i\t}) \ge 0$. 
Since the singularity of the Green's function is logarithmic (and therefore integrable), we obtain
\begin{align*}
  \int_{0}^T \int_{D_t}\int_{D_t} -\partial_t G_t(z,w) \dd A(z) \dd A(w) \dd t  & =\int_0^T \int_{\mathbb{D}}\int_{\mathbb{D}}  -\partial_t G_t(z,w) \,\dd A(z) \,\dd A(w)\,\dd t  \\ & =\int_{\mathbb{D}}\int_{\mathbb{D}} \int_0^T -\partial_t G_t(z,w) \,\dd t \,\dd A(z) \,\dd A(w) \\
 & = \int_{\mathbb{D}}\int_{\mathbb{D}} G_0(z,w)-G_T(z,w) \, \dd A(z) \dd A(w)  < \infty.
\end{align*}
Therefore, using the smoothness of $h$, we can apply Fubini's theorem (repeatedly) and \eqref{june10} to compute
 \begin{align*}
 &2\pi\left(\int_{\m D} |\nabla   \phi  (z)|^2 \dd A(z)  - \int_{\m D} |\nabla   \phi_T^0 (z)|^2 \dd A(z)\right) \\
 & =\int_{\mathbb{D}}\int_{\mathbb{D}} \left(G_0(z,w)-G_T(z,w) \right) \mu(z) \mu(w) \, \dd A(z) \dd A(w) \\
 & = \int_{\m D} \int_{\m D} \int_0^{T} -\partial_t G_t(z,w) \mu(z) \mu(w) \,\dd t \dd A (w) \dd A(z)  \\
 & =\int_0^{T} \int_{D_t} \int_{D_t}  -\partial_t G_t(z,w) \mu(z) \mu(w)\, \dd A (w) \dd A(z) \dd t \\
 & = \int_0^{T}  \int_{D_t} \int_{D_t}  \mu(z) \mu(w)  \int_0^{2 \pi} P_{\m D} (g_t(z), e^{i\t}) P_{\m D} (g_t(w),e^{i\t}) \, \dd \rho_t( \t) \dd A (w) \dd A(z) \dd t \\
  & = \int_0^{T} \int_0^{2 \pi} \int_{D_t} \int_{D_t}  \mu(z) \mu(w)   P_{\m D} (g_t(z), e^{i\t}) P_{\m D} (g_t(w),e^{i\t}) \,  \dd A (w) \dd A(z) \dd \rho_t( \t) \dd t \\
 & =  \iint_{S^1 \times [0,T]}  \left(\int_{\m D} \mu_t(z) P_{\m D}(z,e^{i\t})  \dd A(z)\right) \left(\int_{\m D} \mu_t(w) P_{\m D}(w,e^{i\t})   \dd A(w)\right) \dd \rho(\t,t  )\\
 & = (2\pi)^2\iint_{S^1 \times [0,T]}  \abs{ \i [\phi] (\t,t) }^2 \dd \rho(\t, t),
 \end{align*}
 as claimed.  Letting $T \to \infty$,  by monotone convergence, we obtain that the operator $\iota: \mc E_0(\m D) \cap C^\infty_c (\m D) \to L^2 (2\rho)$
is norm-preserving.
\end{proof}
\begin{rem}
Given a signed measure $\mu$ supported on $\overline{\m D}$, the \emph{balayage} of $\mu$ to $\partial \m D$ is a measure $\nu$ on $\partial \m D$ such that the logarithmic potentials of $\mu$ and $\nu$ agree on $\m D^*$. Viewing $\mu$ as a charge density, $\nu$ represents the optimal way (with respect to logarithmic energy) to redistribute charge to $\partial \m D$ while keeping the potential fixed in the complementary domain. In our setting, for $t$ fixed, one can see that $\iota[\phi](\t,t)$ in fact equals the (density of) the balayage of the ``charge density'' $\D \phi_t \,\dd A$ to $\partial \m D$. See, e.g., \cite[Ch.\,IV]{Landkof}.
\end{rem}

 \begin{rem}
 The disintegration isometry is closely related to the general approach of \cite{Haakan_GFF},  see in particular Section~3 of that paper.
 Hedenmalm and Nieminen consider 
 $C^2$-smooth, strictly monotone deformations of domains, and do not employ the Loewner equation. 
In the smooth, strictly monotone case, a version of our Lemma~\ref{lem:disintegration_general} can be deduced from results of \cite{Haakan_GFF} via a ``polar coordinates'' change of variable which also relies on strong regularity assumptions of the interfaces.
\end{rem}

 Since $C^\infty_c(\m D)$ is dense in $\mc{E}_0(\m D)$, the operator $\iota$ extends to $\mc{E}_0(\m D)$ 
 and we have \[\mc D_{\m D}(\phi) = \norm{\iota[\phi]}^2_{L^2 (2\rho)}, \qquad \phi \in \mc E_0(\m D).\]

 We will now construct 
 an inverse operator $\varkappa : L^2 (2\rho) \to \mc{E}_0(\m D)$ as follows. For $u \in L^2 (2\rho)$ and $\phi \in C^{\infty}_c (\m D)$, we consider
$$\Phi_u : \quad \phi \mapsto 2 \iint_{\m R_+ \times S^1} u (t, \t) \iota[\phi](\t,t) \dd \rho (t, \t) = \brac{u, \iota[\phi]}_{L^2(2\rho)}$$
which  extends to a bounded linear operator $\mc{E}_0(\m D) \to \m R$. Indeed, the Cauchy-Schwarz inequality and Lemma~\ref{lem:disintegration_general} show that 
\begin{equation}\label{eq:Riesz_inequality}
\abs{
\brac{u, \iota[\phi]}_{L^2(2\rho)}} \le \norm {u}_{L^2 (2\rho)} \norm {\iota[\phi]}_{L^2 (2\rho)} = \norm {u}_{L^2 (2\rho)} \mc D_{\m D}(\phi)^{1/2}.    
\end{equation}
By the Riesz representation theorem there exists a unique $\varkappa [u] \in \mc{E}_0(\m D)$ with the property that for all $\phi \in C^\infty_c(\m D)$,
\begin{equation}\label{eq:inverse_char}
\frac{1}{\pi} \int_{\m D}  \varkappa [u] \Delta \phi \,\dd  A (z)  = \frac{1}{\pi} \int_{\m D} \nabla \varkappa [u] \nabla \phi \,\dd  A (z) 
= \Phi_u (\phi). 
\end{equation}
It follows immediately that $\varkappa \circ \iota = \Id_{\mc E_0 (\m D)}$. 
We now give the explicit formula for $\varkappa$. In this statement and below we use the notation $u_t(\cdot) := u(\cdot, t)$ for $u \in L^2 (2\rho)$.

\begin{prop}\label{prop:kappa_formula}
  Suppose $\rho \in \mathcal{N}_+$ generates a foliation. Let $u \in L^2 (2\rho)$. Then for a.e.\ $w \in \m D$,
   $$ \varkappa [u] (w) = 2 \pi \int_0^{\tau(w)} P_{\m D}[u_t\rho_t](g_t(w))  \, \dd t, 
  $$
  and $t\mapsto P_{\m D}[u_t\rho_t](g_t(w))  \in L^1([0,\tau(w)], \dd t)$.
\end{prop}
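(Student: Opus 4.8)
The plan is to prove the formula for $\varkappa[u]$ by showing that the candidate function
\[
F(w) := 2\pi \int_0^{\tau(w)} P_{\m D}[u_t \rho_t](g_t(w)) \,\dd t
\]
is well-defined, lies in $\mc E_0(\m D)$, and satisfies the characterizing identity \eqref{eq:inverse_char}. By the uniqueness in the Riesz representation theorem this will identify $F = \varkappa[u]$.

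First I would handle the $L^1$-integrability claim, which is really the heart of the matter and the step I expect to be the main obstacle. For fixed $w$, one wants $t \mapsto P_{\m D}[u_t\rho_t](g_t(w)) \in L^1([0,\tau(w)],\dd t)$. The natural approach is to test against $\phi \in C_c^\infty(\m D)$ and use Hadamard's formula (Lemma~\ref{loewner-hadamard}) together with the isometry of Lemma~\ref{lem:disintegration_general}. Indeed, writing $\Phi_u(\phi) = \brac{u,\iota[\phi]}_{L^2(2\rho)}$ and unfolding $\iota[\phi](\t,t) = \tfrac{1}{2\pi}\int_{\m D}\Delta\phi_t(z) P_{\m D}(z,e^{i\t})\,\dd A(z)$, one should be able to exchange the order of integration (justified because everything is absolutely convergent once $\phi$ is smooth and compactly supported, exactly as in the last display of the proof of Lemma~\ref{lem:disintegration_general}) to rewrite
\[
\Phi_u(\phi) = \frac{1}{\pi}\int_{\m D} \Delta\phi(w) \left( 2\pi \int_0^{\tau(w)} P_{\m D}[u_t\rho_t](g_t(w))\,\dd t\right)\dd A(w),
\]
using the change of variables $z = f_t(w')$ and the conformal covariance $\Delta\phi_t = |f_t'|^2 (\Delta\phi)\circ f_t$, together with $P_{\m D}(g_t(w), e^{i\t}) = P_{D_t}(w, f_t(e^{i\t}))\cdot(\text{Jacobian factor})$ bookkeeping — or more cleanly, by noting $\iota[\phi](\cdot,t)$ is the balayage density of $\Delta\phi_t\,\dd A$ so that pairing with $u_t\rho_t$ produces the potential-theoretic identity $\int_{\m D}\Delta\phi_t(z)\, P_{\m D}[u_t\rho_t](z)\,\dd A(z) = \int_{D_t}\Delta\phi(w)\,P_{\m D}[u_t\rho_t](g_t(w))\,\dd A(w)$ (after the conformal change of variables, using conformal invariance of the harmonic function $P_{\m D}[u_t\rho_t]\circ g_t$ and Green's identity). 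Applying Cauchy–Schwarz as in \eqref{eq:Riesz_inequality} and taking $\phi$ approximating a point mass / using the already-established boundedness of $\varkappa[u]$ in $\mc E_0(\m D)$ then forces the inner $t$-integral to converge absolutely for a.e.\ $w$.

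Concretely, the steps in order: (1) recall from \eqref{eq:inverse_char} that $\varkappa[u]\in\mc E_0(\m D)$ is already constructed and satisfies $\tfrac1\pi\int_{\m D}\varkappa[u]\Delta\phi\,\dd A = \Phi_u(\phi)$ for all $\phi\in C_c^\infty(\m D)$; (2) show, by the Fubini computation sketched above combined with Hadamard's formula, that $\Phi_u(\phi)$ also equals $\tfrac1\pi\int_{\m D}F(w)\Delta\phi(w)\,\dd A(w)$ with $F$ as defined, where absolute convergence (hence the right to swap integrals) comes from the nonnegativity of $P_{\m D}(g_t(z),e^{i\t})P_{\m D}(g_t(w),e^{i\t})$ and the already-proven finiteness $\int_0^\infty\!\!\int\!\!\int -\partial_t G_t < \infty$ applied with the two factors of $\mu = \Delta\phi$ replaced appropriately; (3) conclude $F = \varkappa[u]$ as $W^{1,2}_0$ functions, since they define the same distribution; (4) deduce the a.e.-pointwise formula and the $L^1$ statement: since $F = \varkappa[u]$ is a genuine function in $W^{1,2}_0$, the representing integral $2\pi\int_0^{\tau(w)}P_{\m D}[u_t\rho_t](g_t(w))\,\dd t$ must be finite for a.e.\ $w$, which is exactly the $L^1([0,\tau(w)],\dd t)$ claim.

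The main obstacle, as noted, is rigorously justifying the interchange of integrations and the conformal change of variables for general $u \in L^2(2\rho)$ without extra regularity — here the key leverage is that $u$ can first be taken bounded (or even in $C_c$) by density in $L^2(2\rho)$, the computation carried out there, and then the general case obtained by the continuity of both sides in $u$ (the left side by \eqref{eq:Riesz_inequality}, the right side by dominated convergence using the same nonnegativity bound). A secondary technical point is that $P_{\m D}[u_t\rho_t]\circ g_t$ is harmonic on $D_t$ and one must check it has the right boundary behavior to apply Green's identity against the compactly supported $\phi$; but since $\phi$ has compact support in $\m D \subset D_t$ for small $t$ and, more importantly, $\Delta\phi$ has compact support, boundary terms never arise and the identity is just the definition of the distributional Laplacian paired with a harmonic (hence smooth in the interior) function.
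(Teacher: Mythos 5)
Your overall strategy --- show that the candidate
\[
F(w)=2\pi\int_0^{\tau(w)}P_{\m D}[u_t\rho_t](g_t(w))\,\dd t
\]
satisfies the defining identity \eqref{eq:inverse_char} and then identify it with $\varkappa[u]$ --- is the same as the paper's, and your Fubini/change-of-variables computation of $\Phi_u(\phi)$ is essentially the one the paper carries out. (The paper makes the interchange rigorous by writing $\phi=\phi^+-\phi^-$ with $\Delta\phi^{\pm}\ge 0$ via elliptic regularity, so that $\iota[\phi^{\pm}]\ge 0$ and Cauchy--Schwarz plus Tonelli apply; your proposed reduction to bounded $u$ followed by ``dominated convergence'' supplies no dominating function and would in any case lead back to the same sign decomposition.) The genuine gap is in your step (3): the identity $\frac1\pi\int_{\m D}F\,\Delta\phi\,\dd A=\Phi_u(\phi)=\frac1\pi\int_{\m D}\varkappa[u]\,\Delta\phi\,\dd A$ for all $\phi\in C_c^\infty(\m D)$ only says that $F-\varkappa[u]$ is weakly harmonic, hence harmonic by Weyl's lemma --- it does \emph{not} say $F=\varkappa[u]$, since every harmonic function is annihilated by pairing against $\Delta\phi$. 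The uniqueness in the Riesz representation identifies $F$ with $\varkappa[u]$ only once you know $F\in\mc E_0(\m D)$, which you announce in your plan but never prove; it is not automatic, and it is precisely the step that rules out a nonzero harmonic discrepancy.

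The paper closes exactly this hole with a separate argument occupying a substantial part of the proof: it extends $\tilde\varkappa[u]$ to a function in $W^{1,2}(e\m D)$ by prepending one unit of time driven by the uniform measure (with $u\equiv 0$ there), observes that the extended function vanishes identically on the annulus $1<|w|<e$, and invokes the characterization of $W^{1,2}_0(\m D)$ as those $W^{1,2}$ functions vanishing a.e.\ outside $\m D$. You would need to supply this (or an equivalent) argument. Two smaller points: the paper first reduces to $u\ge 0$ by splitting $u=u^+-u^-$, and this reduction is also what yields the final claim that $t\mapsto P_{\m D}[u_t\rho_t](g_t(w))$ is in $L^1([0,\tau(w)],\dd t)$ --- for a signed integrand, the mere a.e.-finiteness of the (conditionally defined) integral $F(w)$ does not give absolute integrability in $t$.
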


\begin{proof}
By linearity and splitting $u = u^+ - u^-$ with $u^+ := \max\{ u, 0\}$ and $u^- := \max\{ -u, 0\}$, it suffices to prove the proposition when $ u \ge 0$. 
We let 
\begin{align*}
\tilde \varkappa [u](w) & : =2 \pi \int_0^{\tau(w)} P_{\m D}[u_t\rho_t](g_t(w))  \, \dd t 
=  \int_0^{\tau(w)} \int_{S^1} P_{\m D} (g_t(w), e^{i\t}) u(\t, t) \, \dd \rho_t(\t) \dd t.
\end{align*}

We first prove that 
for all $\phi \in C^\infty_c (\m D)$,
\begin{align}\label{eq:converse_1}
    &2 \int_0^\infty \int_{S^1} u(\t, t) \iota[\phi] (\t, t) \dd \rho_t(\t) \dd t = \frac{1}{\pi}  \int_{\m D} \D \phi(w)  \tilde \varkappa [u](w) \dd A (w). 
\end{align}

For this, we will repeatedly interchange the order of integration in the following computation.
To justify this, we let 
$$\phi^+ := \D^{-1} \max\{\D \phi, 0\} \quad \text{ and } \quad \phi^- : =\D^{-1} \max \{-\D \phi, 0 \}. $$
We have $\phi = \phi^+ - \phi^-$. Since $\phi$ is smooth, $\max\{\D \phi, 0\}$ is Lipschitz continuous. By elliptic regularity (see, e.g., \cite[Sec.\,4.3]{Gilbarg_Trudinger}), $\phi^+, \phi^- \in  \mc E_0 (\m D) \cap C^{2,\a} (\m D)$ for any $\a <1$ and clearly $\D \phi^\pm \ge 0$.
Moreover, recalling that $\phi_t^\pm = \phi^\pm \circ f_t$, we have $\D \phi^\pm_t= \D (\phi^\pm \circ f_t) =  |f_t'|^2(\D \phi^\pm) \circ f_t  \ge 0$. So it follows that
$$\iota [\phi^\pm] (\t, t) = \frac{1}{2\pi} \int_{\m D} \D \phi^\pm_t (z) P_{\m D} (z, e^{i\t}) \dd A(z) \ge 0.$$
Since $u,\iota [\phi^\pm] \in L^2(2\rho)$, the Cauchy-Schwarz inequality shows that $u \cdot \iota [\phi^\pm] \in L^1 (2\rho)$. Using Fubini's theorem, this implies that $u (\t,t) \D \phi_t (z) P_{\m D}(z,e^{i\t})$ is in $L^1 (\m D  \times S^1 \times \m R_+, \dd A \times \dd \rho_t(\t) \times \dd t)$. 
Therefore the left-hand side of
\eqref{eq:converse_1} equals
\begin{align*}
  &   2 \int_0^\infty \int_{S^1} u(\t, t) \left[\frac{1}{2\pi} \int_{\m D} \D \phi_t (z) P_{\m D} (z, e^{i\t}) \dd A(z) \right]\dd \rho_t(\t) \dd t \\
      & = \frac{1}{\pi}  \int_0^\infty \int_{\m D} \D \phi_t (z)  \int_{S^1} u(\t, t) P_{\m D} (z, e^{i\t}) \dd \rho_t(\t) \dd A (z) \dd t  \\
      & = \frac{1}{\pi}  \int_0^\infty \int_{\m D} (\D \phi) \circ f_t(z) |f_t'(z)|^2   \int_{S^1} u(\t, t) P_{\m D} (z, e^{i\t}) \dd \rho_t(\t)  \dd A (z)\dd t\\
      & = \frac{1}{\pi}  \int_0^\infty \int_{D_t} \D \phi(w)    \int_{S^1} u(\t, t) P_{\m D} (g_t(w), e^{i\t}) \dd \rho_t(\t)  \dd A (w)\dd t \\
      & = \frac{1}{\pi}  \int_{\m D} \D \phi(w)  \int_{0}^{\tau(w)}  \int_{S^1} u(\t, t) P_{\m D} (g_t(w), e^{i\t}) \dd \rho_t(\t) \dd t  \dd A (w)
\end{align*}
which proves \eqref{eq:converse_1}. 
This implies that $\tilde \varkappa [u] \in \mc E(\m D)$ since for all $\phi \in C^\infty_c (\m D)$, we have by Cauchy-Schwarz inequality
$$\abs{\brac{\D \phi, \tilde \varkappa [u]}_{L^2 (\m D)}} \le C \norm{u}_{L^2(\rho)} \norm{\nabla \phi}_{L^2 (\m D)}$$
for some universal constant $C$.

Given this, \eqref{eq:inverse_char} implies that $\varkappa [u] - \tilde\varkappa [u]$ is weakly harmonic, and therefore, by Weyl's lemma, it is harmonic in $\m D$. Since we know that $\varkappa[u] \in \mc E_0(\m D)$, to complete the proof, it is therefore enough to show that $\tilde \varkappa[u] \in \mc E_0 (\m D)$ (namely, that it has zero trace). For this, we show that $\tilde \varkappa[u]$ can be extended to a function $\psi \in W^{1,2} (e \m D)$ where $\psi(w) = 0$ for all $1<|w|<e$ (see, e.g., \cite[Prop.\,9.18]{brezis}).
To construct the extension, we define a measure $\hat \rho \in \mc N_+$ with $\hat \rho_{t} = \rho_{t-1}$ for $t \ge 1$ and uniform for $t \in [0,1)$. Similarly, let $\hat u (\t, t) : = u (\t, t-1)$ for $t \ge 1$, and $\hat u(\t,t) = 0$ for $t \in [0,1)$. 

We clearly have $\hat u \in L^2 (2\hat \rho)$, and $ \psi (w): = \tilde \varkappa [\hat u] (e^{-1} w)= 0$ if $1 <|w| < e$. By construction, we have $\psi \in \mc E_0 (e \m D)$, hence $W^{1,2} (e \m D)$, from the proof above. 
Moreover, if $\hat g_t$ is the uniformizing Loewner chain of $\hat \rho$, then for $t \in [0,1]$ we have $\hat g_t(z) = e^t z$ and 
 for $t \ge 1$, $\hat g_t (z) = g_{t - 1} (e z)$. It follows that $\tilde \varkappa [u] (w) = \tilde \varkappa [\hat u] (e^{-1} w) = \psi (w)$ for $w \in \m D$.
This shows that $\varkappa = \tilde \varkappa$ in $\mc E_0 (\m D)$.

Finally, since $u \ge 0$, it follows that 
$P_{\m D} [u_t \rho_t] (g_t(w)) \in L^1([0,\tau(w)], \dd t)$ for a.e. $w \in \m D$.
\end{proof}

\begin{cor} \label{cor:ortho_decomp_formula}
  Let $\phi \in \mathcal{E}_0 (\m D)$ and $T \ge 0$. The orthogonal decomposition of $\phi$ into a function in $\mc E_0 (D_T)$ and a function in $\mc E_0(\m D)$ harmonic in $D_T$ \eqref{eq:orthogonal} is given by 
  $$\phi^{0,T} : = \varkappa \big[\iota[\phi] \mathbf{1}_{S^1 \times [T,\infty)}\big] \in \mc E_0 (D_T) \quad \text{and} \quad \phi^{h,T} := \varkappa \big[\iota[\phi] \mathbf{1}_{S^1 \times [0, T)}\big].$$
\end{cor}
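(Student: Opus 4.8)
The plan is to show that the pair $(\phi^{0,T},\phi^{h,T})$ has the three properties characterizing the orthogonal decomposition \eqref{eq:orthogonal} of $\phi$ with respect to the subdomain $D_T$ — namely $\phi^{0,T}\in\mc E_0(D_T)$, $\phi^{h,T}$ of finite Dirichlet energy and harmonic in $D_T$, and $\phi=\phi^{0,T}+\phi^{h,T}$ — and then to conclude by uniqueness of that decomposition. The sum identity is immediate: since $\mathbf 1_{S^1\times[0,\infty)}=\mathbf 1_{S^1\times[0,T)}+\mathbf 1_{S^1\times[T,\infty)}$ and $\varkappa$ is linear, $\phi^{0,T}+\phi^{h,T}=\varkappa[\iota[\phi]]=\phi$ by the relation $\varkappa\circ\iota=\Id_{\mc E_0(\m D)}$ established after Lemma~\ref{lem:disintegration_general}; moreover $\phi^{0,T},\phi^{h,T}\in\mc E_0(\m D)$ automatically since $\varkappa$ takes values there, so both have finite Dirichlet energy.

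To see that $\phi^{h,T}$ is harmonic in $D_T$, I would use the explicit formula of Proposition~\ref{prop:kappa_formula} with $u=\iota[\phi]\,\mathbf 1_{S^1\times[0,T)}\in L^2(2\rho)$, which gives, for a.e.\ $w\in\m D$,
\[
\phi^{h,T}(w)=2\pi\int_0^{\min\{T,\tau(w)\}}P_{\m D}[\iota[\phi]_t\,\rho_t]\bigl(g_t(w)\bigr)\,\dd t,
\]
and for $w\in D_T$ one has $\tau(w)>T$, so the upper limit is just $T$. For each fixed $t<T$, the map $z\mapsto P_{\m D}[\iota[\phi]_t\rho_t](z)$ is harmonic in $\m D$ (the Poisson integral of the finite signed measure $\iota[\phi]_t\rho_t$, finite because $\iota[\phi]_t\in L^1(\rho_t)$ for a.e.\ $t$), while $w\mapsto g_t(w)$ is holomorphic on $D_t\supseteq D_T$, so $w\mapsto P_{\m D}[\iota[\phi]_t\rho_t](g_t(w))$ is harmonic in $D_T$. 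An integral over $t\in[0,T]$ of such functions is again harmonic once one has a local $L^1$-in-$w$ bound uniform in $t$ permitting exchange of the $t$-integral with spatial sub-mean-value averages; such a bound follows from the Schwarz lemma and the Koebe distortion theorem (controlling $|g_t|$ and $|f_t'|$ on compact subsets of $D_T$ uniformly for $t\le T$) together with Cauchy--Schwarz and $\iota[\phi]\in L^2(2\rho)$, or one may reuse the positivity/Fubini argument from the proof of Proposition~\ref{prop:kappa_formula}. Hence the a.e.\ representative of $\phi^{h,T}$ is harmonic in $D_T$.

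The remaining point — and the one I expect to be the main obstacle — is that $\phi^{0,T}\in\mc E_0(D_T)$, i.e.\ that it genuinely has vanishing boundary trace on $\g_T=\partial D_T$. Applying Proposition~\ref{prop:kappa_formula} with $u=\iota[\phi]\,\mathbf 1_{S^1\times[T,\infty)}$, the integrand $P_{\m D}[u_t\rho_t](g_t(w))$ vanishes for all $t<\tau(w)$ whenever $\tau(w)\le T$, so $\phi^{0,T}=0$ a.e.\ on $\m D\smallsetminus D_T$. For an irregular subdomain this alone would not force membership in $W^{1,2}_0(D_T)$, which is precisely why chord-arc regularity of the leaves is needed. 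I would then argue: the zero extension of $\phi^{0,T}$ lies in $W^{1,2}(\m C)$ (as $\phi^{0,T}\in W^{1,2}_0(\m D)$) and vanishes a.e.\ outside $D_T$; a $W^{1,2}$ function on the plane has no jump across a rectifiable Jordan curve, so the inner and outer traces of $\phi^{0,T}$ on the rectifiable curve $\g_T$ agree $\mc H^1$-a.e., and since the outer trace is $0$ the trace of $\phi^{0,T}|_{D_T}$ on $\g_T$ vanishes $\mc H^1$-a.e.; finally, $\g_T$ being chord-arc, $D_T$ carries a bounded trace operator to $L^2(\g_T,\mc H^1)$ whose kernel is exactly $W^{1,2}_0(D_T)$, giving $\phi^{0,T}\in\mc E_0(D_T)$.

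With $\phi=\phi^{0,T}+\phi^{h,T}$, $\phi^{0,T}\in\mc E_0(D_T)$ and $\phi^{h,T}$ of finite energy and harmonic in $D_T$ all verified, the orthogonality $\brac{\phi^{0,T},\phi^{h,T}}_\nabla=0$ follows automatically by approximating $\phi^{0,T}$ in $C_c^\infty(D_T)$ and integrating by parts against the harmonic $\phi^{h,T}$, so the uniqueness of \eqref{eq:orthogonal} for $D=D_T$ identifies $\phi^{0,T}=\phi^{0}$ and $\phi^{h,T}=\phi^{h}$, which is the claim.
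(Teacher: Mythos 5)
Your proof is correct in outline, and two of its three steps (the sum identity via $\varkappa\circ\iota=\Id$ and the harmonicity of $\phi^{h,T}$ via the explicit formula of Proposition~\ref{prop:kappa_formula}, harmonicity of each $w\mapsto P_{\m D}[\iota[\phi]_t\rho_t](g_t(w))$, and an interchange of the $t$-integral with mean-value averages) coincide with the paper's argument. Where you genuinely diverge is the step you correctly identify as the main obstacle, namely $\phi^{0,T}\in\mc E_0(D_T)$. You argue statically: $\phi^{0,T}$ vanishes a.e.\ on $\m C\smallsetminus D_T$ by the formula for $\varkappa$, hence its Jonsson--Wallin trace on $\g_T$ vanishes, and then you invoke the fact that on a chord-arc domain the kernel of the trace operator is exactly $W^{1,2}_0(D_T)$. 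That last fact is true but nontrivial (it is a spectral-synthesis/null-trace characterization in the spirit of Jonsson--Wallin, and needs the corkscrew property of chord-arc complements to upgrade ``zero a.e.\ outside'' to ``zero quasi-everywhere on $\g_T$''); it is nowhere stated or cited in the paper, so if you take this route you must supply a precise reference or proof. The paper avoids it entirely by a dynamical argument: by the domain Markov property (Lemma~\ref{lem:domain_markov}) the time-shifted measure $\hat\rho_s=\rho_{T+s}$ drives the chain $\hat g_s=g_{s+T}\circ f_T$, and Proposition~\ref{prop:kappa_formula} applied to that chain shows $\phi^{0,T}\circ f_T=\hat\varkappa\big[\iota[\phi](\cdot,\cdot+T)\big]\in\mc E_0(\m D)$, whence $\phi^{0,T}\in\mc E_0(D_T)$ by conformal invariance of the zero-trace Dirichlet space. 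The paper's route is shorter and stays inside the machinery already built; yours is more robust in that it does not use the Loewner semigroup structure, but it imports boundary-trace theory that needs justification. Your closing remark that orthogonality then follows from harmonicity plus $\phi^{0,T}\in\mc E_0(D_T)$ and the uniqueness of \eqref{eq:orthogonal} is exactly how the paper concludes as well.
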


\begin{rem}
Using the notation of Lemma~\ref{lem:disintegration_general}, a consequence of the corollary is that $\phi^{0,T} = \phi^0_T \circ g_T$.
\end{rem}
\begin{proof}
We define  $$\phi^{0,T} : = \varkappa \big[\iota[\phi] \mathbf{1}_{S^1 \times [T,\infty)}\big] \quad \text{and} \quad \phi^{h,T} := \varkappa \big[\iota[\phi] \mathbf{1}_{S^1 \times [0, T)}\big].$$
   Since $\varkappa \circ \iota [\phi] = \phi$, we have $\phi^{0,T} + \phi^{h,T} = \phi$.
   
   We now show that $\phi^{0,T}\in \mc E_0 (D_T)$. For this, consider the Loewner chain driven by $(\hat \rho_t : = \rho_{T+t})_{t \ge 0}$ and the associated operator $\hat \varkappa$. We obtain from the domain Markov property Lemma~\ref{lem:domain_markov} that
   $$
   \phi^{0,T} \circ f_T = \hat \varkappa \big[\iota[\phi](\cdot, \cdot + T)\big].$$ 
   Then Proposition~\ref{prop:kappa_formula} shows that $   \phi^{0,T} \circ f_T \in \mc E_0 (\m D).$
   Using conformal invariance, this yields $\phi^{0,T}
   \in \mc E_0 (D_T)$. 
   
   We show that $\phi^{h,T}$ is harmonic in $D_T$. In fact, for $z \in D_T$, by definition,
   $$\phi^{h,T} (z) = 2 \pi \int_0^{T} P_{\m D}[\iota[\phi]_t \rho_t](g_t(z))  \, \dd t.$$
   Note that $P_{\m D}[\iota[\phi]_t \rho_t](g_t(\cdot))$ is harmonic in $D_T$ for $t < T$. Therefore, by the characterization of harmonic functions in terms of the mean value property and Fubini's theorem we obtain that $\phi^{h,T}$ is harmonic in $D_T$.
\end{proof}

\begin{thm}\label{thm:bi_isometry}
The operator $\i : \mc{E}_0(\m D) \to L^2 (2\rho)$ is a \textnormal{(}bijective\textnormal{)} isometry. 
\end{thm}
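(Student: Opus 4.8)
The plan is to observe that most of the work is already done and to reduce the statement to surjectivity. By Lemma~\ref{lem:disintegration_general} together with the density of $C_c^\infty(\m D)$ in $\mc E_0(\m D)$, the operator $\iota$ extends to a \emph{norm-preserving} linear map $\mc E_0(\m D)\to L^2(2\rho)$, so $\iota$ is a linear isometry and its image is a closed subspace of $L^2(2\rho)$; and the construction of $\varkappa$ via the Riesz representation theorem gives the adjoint relation $\langle\varkappa[u],\phi\rangle_\nabla=\langle u,\iota[\phi]\rangle_{L^2(2\rho)}$ for all $\phi$, i.e.\ $\varkappa=\iota^{\ast}$, together with $\varkappa\circ\iota=\Id_{\mc E_0(\m D)}$. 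Consequently $\iota\varkappa=\iota\iota^{\ast}$ is self-adjoint and idempotent, hence the orthogonal projection of $L^2(2\rho)$ onto the closed subspace $\mathrm{Im}(\iota)$; what remains is therefore precisely to prove that $\iota$ is surjective, equivalently that $\mathrm{Im}(\iota)^{\perp}=\{0\}$.

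The key intermediate step I would establish is that $\mathrm{Im}(\iota)$ is invariant under restriction to a time interval: for every $\phi\in C_c^\infty(\m D)$ and $0<T<\infty$ one has $\iota[\phi]\,\mathbf{1}_{S^1\times[0,T)}\in\mathrm{Im}(\iota)$. Write $v=\iota[\phi]\,\mathbf{1}_{S^1\times[0,T)}$. By Corollary~\ref{cor:ortho_decomp_formula}, $\varkappa[v]=\phi^{h,T}$, where $\phi=\phi^{0,T}+\phi^{h,T}$ is the orthogonal decomposition \eqref{eq:orthogonal} of $\phi$ relative to $D_T$ (so $\mc D_{\m D}(\phi)=\mc D_{\m D}(\phi^{0,T})+\mc D_{\m D}(\phi^{h,T})$), and the remark after that corollary identifies $\phi^{0,T}=\phi^0_T\circ g_T$. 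Applying the orthogonal projection $\iota\varkappa$ to $v$ gives $\iota\varkappa[v]=\iota[\phi^{h,T}]$, of $L^2(2\rho)$-norm $\mc D_{\m D}(\phi^{h,T})^{1/2}$ since $\iota$ is an isometry. On the other hand, Lemma~\ref{lem:disintegration_general} (using $\mathbf{1}_{S^1\times[0,T]}=\mathbf{1}_{S^1\times[0,T)}$ $\rho$-a.e.) and the conformal invariance of the Dirichlet energy under $g_T$ give $\|v\|_{L^2(2\rho)}^2=\mc D_{\m D}(\phi)-\mc D_{\m D}(\phi^0_T)=\mc D_{\m D}(\phi)-\mc D_{\m D}(\phi^{0,T})=\mc D_{\m D}(\phi^{h,T})$. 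Thus $\|\iota\varkappa[v]\|=\|v\|$, and for an orthogonal projection this forces $v=\iota\varkappa[v]\in\mathrm{Im}(\iota)$. Subtracting, $\iota[\phi]\,\mathbf{1}_{S^1\times[T,\infty)}\in\mathrm{Im}(\iota)$, and hence $\iota[\phi]\,\mathbf{1}_{S^1\times[S,T)}\in\mathrm{Im}(\iota)$ for all $0\le S<T$.

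To conclude, let $u\in\mathrm{Im}(\iota)^\perp$. For every $\phi\in C_c^\infty(\m D)$ and $0\le S<T$ the previous step gives $\langle u\,\mathbf{1}_{S^1\times[S,T)},\iota[\phi]\rangle_{L^2(2\rho)}=\langle u,\iota[\phi]\,\mathbf{1}_{S^1\times[S,T)}\rangle_{L^2(2\rho)}=0$, so $u\,\mathbf{1}_{S^1\times[S,T)}\perp\mathrm{Im}(\iota)$ and therefore $\varkappa[u\,\mathbf{1}_{S^1\times[S,T)}]=0$ in $\mc E_0(\m D)$. By the explicit formula of Proposition~\ref{prop:kappa_formula} this means $\int_{[S,T)\cap[0,\tau(w))}P_{\m D}[u_t\rho_t](g_t(w))\,\dd t=0$ for a.e.\ $w\in\m D$. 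Fixing $S=0$, letting $T$ range over a countable dense set, and using continuity of $T\mapsto\int_0^{\min(T,\tau(w))}P_{\m D}[u_t\rho_t](g_t(w))\,\dd t$ (the integrand lies in $L^1([0,\tau(w)))$ by Proposition~\ref{prop:kappa_formula}, applied to $|u|$), we get that for a.e.\ $w$ the function $t\mapsto P_{\m D}[u_t\rho_t](g_t(w))$ vanishes for a.e.\ $t<\tau(w)$. A Fubini argument then shows that for a.e.\ $t$ the harmonic function $w\mapsto P_{\m D}[u_t\rho_t](g_t(w))$ vanishes a.e.\ on $D_t$, hence identically; since $g_t(D_t)=\m D$, this yields $P_{\m D}[u_t\rho_t]\equiv 0$ on $\m D$, so $u_t\rho_t=0$ by injectivity of the Poisson transform on finite signed measures, i.e.\ $u_t=0$ in $L^2(\rho_t)$. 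Therefore $u=0$, $\mathrm{Im}(\iota)=L^2(2\rho)$, and $\iota$ is a bijective isometry.

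I expect the main obstacle to be the norm identity in the second paragraph — extracting $\|\iota[\phi]\,\mathbf{1}_{S^1\times[0,T)}\|_{L^2(2\rho)}^2=\mc D_{\m D}(\phi^{h,T})$ cleanly from Lemma~\ref{lem:disintegration_general}, Corollary~\ref{cor:ortho_decomp_formula} and conformal invariance — because this is exactly what upgrades the soft statement "$\iota\iota^\ast$ is an orthogonal projection" to genuine surjectivity; everything after that is measure-theoretic bookkeeping. The only other delicate point is that $\rho_t$ is a priori only a probability measure (possibly singular), which is why the final step is phrased via injectivity of the Poisson transform on measures rather than on densities.
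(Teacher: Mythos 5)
Your proof is correct, and its reduction and endgame coincide with the paper's: both arguments boil bijectivity down to $\Ker(\varkappa)=\{0\}$ (equivalently, triviality of the orthogonal complement of the range of $\iota$, since $\varkappa=\iota^{*}$), and both conclude identically from $\varkappa\big[u\mathbf{1}_{S^1\times[0,T)}\big]=0$ by differentiating the explicit formula of Proposition~\ref{prop:kappa_formula} in $T$ along a countable dense family of points, invoking harmonicity, and deducing that the measure $u_t\rho_t$ vanishes. Where you genuinely diverge is the middle step. The paper obtains $\varkappa\big[u\mathbf{1}_{S^1\times[0,T)}\big]=0$ for $u\in\Ker(\varkappa)$ in one line, by noting that $\varkappa\big[u\mathbf{1}_{S^1\times[0,T)}\big]$ and $\varkappa\big[u\mathbf{1}_{S^1\times[T,\infty)}\big]$ furnish the orthogonal decomposition \eqref{eq:orthogonal} of $\varkappa[u]=0$ with respect to $D_T$ — i.e.\ it tacitly applies the argument of Corollary~\ref{cor:ortho_decomp_formula} to an arbitrary $u\in L^2(2\rho)$ rather than only to $u=\iota[\phi]$. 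You instead first show that the range of $\iota$ is stable under multiplication by $\mathbf{1}_{S^1\times[S,T)}$, using the projection identity $\iota\varkappa=\iota\iota^{*}$ together with the norm computation $\norm{\iota[\phi]\mathbf{1}_{S^1\times[0,T)}}^2_{L^2(2\rho)}=\mc D_{\m D}(\phi^{h,T})=\norm{\iota\varkappa\big[\iota[\phi]\mathbf{1}_{S^1\times[0,T)}\big]}^2_{L^2(2\rho)}$, and then dualize to land on the same conclusion. This alternative is sound and has the merit of quoting Lemma~\ref{lem:disintegration_general} and Corollary~\ref{cor:ortho_decomp_formula} only in the forms literally stated (for $\iota[\phi]$ with $\phi$ smooth), at the cost of an extra layer of Hilbert-space bookkeeping; the paper's route is shorter but rests on the (easily checked) observation that the proof of Corollary~\ref{cor:ortho_decomp_formula} works for general $u$. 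Your handling of the boundary cases — $\mathbf{1}_{[0,T]}=\mathbf{1}_{[0,T)}$ $\rho$-a.e., the $L^1$-in-$t$ integrability needed for continuity in $T$, and the passage from a.e.\ vanishing of a harmonic function to identical vanishing — is all correct.
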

\begin{proof}
We already know that $\varkappa \circ \iota = \Id_{\mc{E}_0(\m D)}$.
To check that $\iota \circ \varkappa = \Id_{L^2(2\rho)}$, it suffices to show that $\Ker (\varkappa) =  \{0\}$ in $L^2 (2\rho)$. Indeed, for all $u \in L^2(2\rho)$, we have $\varkappa [u] = \varkappa \circ \iota \circ \varkappa [u]$. Then $\Ker (\varkappa) = \{0\}$ implies that $u = \iota \circ \varkappa [u]$.

Now let $u \in \Ker (\varkappa)$. 
Fix $T \ge 0$. By Corollary~\ref{cor:ortho_decomp_formula},
$\varkappa [u \1_{S^1 \times [0,T)}] \in \mc E_0 (\m D)$ and $\varkappa[ u \1_{S^1 \times [T,\infty)}]$ give the orthogonal decomposition \eqref{eq:orthogonal}  of $\varkappa[u] = 0$ with respect to $D_T$.  Therefore, for all $w \in D_T$,
$$\varkappa[u \1_{S^1 \times [0,T)}] (w) =  2 \pi \int_0^{T} P_{\m D} [u_t \rho_t] (g_t(w)) \dd t= 0.$$
Taking the derivative in $T$, we obtain that for a.e. $T \ge 0$ the function
$P_{\m D} [u_T \rho_T]$ 
vanishes in $\m D$. (One can choose a dense and countable family $J$ of points in $\m D$, we have for a.e. $T \ge 0$ and $w \in D_T \cap J$, 
$P_{\m D} [u_T \rho_T] (g_T(w))= 0$. Since the latter integral is harmonic thus continuous, it vanishes for a.e. $T \ge 0$ and all $w \in D_T$.)
But this implies that the measure $u_T \rho_T$ on $S^1$ is the zero measure.  Therefore, $u(\t, T) = 0$ for $\rho_T$-a.e. $\t \in S^1$. It follows that
$$\iint_{S^1 \times \m R_+} \abs{u(\t,t)}^2 \,\dd \rho(\t, t) = 0,$$
which proves $\Ker(\varkappa) = \{0\}$ and this concludes the proof.
\end{proof}

We will now discuss the relation between $\mc{E}_0(\m D)$ and functions defined on the leaves of a foliation. We will consider a generalization of the standard trace operator for $W^{1,2}$ on Lipschitz domains to chord-arc domains, see \cite{Jonsson-Wallin} and \cite[Appx.\,A]{VW1}.
Suppose $\phi \in W^{1,2}_{\textrm{loc}}(\m C)$ and $\gamma$ is a chord-arc curve in $\mathbb{C}$. The \emph{Jonsson--Wallin trace} of $\phi$ on $\gamma$ is defined for arclength-a.e. $z \in \gamma$ by the following limit of averages on balls $B(z,r) = \{w: |w-z| < r\}$ 
\begin{equation}\label{def:trace}
\phi|_{\gamma}(z):=\lim_{r \to 0+} \frac{1}{|B(z,r)|} \int_{B(z,r)} \phi \,\dd A. 
\end{equation}

A function $\varphi$ defined arclength-a.e.\ on all leaves of a foliation $(\g_t = \partial D_t)$ is said to have an extension $\phi$ in $\mc E_0(\m D)$ if for all $t$, the Jonsson--Wallin trace of $\phi$ on $\g_t$ (denoted $\phi|_{\g_t}$) coincides with $\varphi$ arclength-a.e. (Here and below we identify $\phi \in \mc E_0(\m D)$ with the function in $W^{1,2}(\m C)$ that is equal to $\phi$ in $\m D$ and $0$ in $\m D^*$.)

\begin{prop}\label{prop:unique_extension}
  If a function $\varphi$ defined arclength-a.e. on each leaf  of a foliation of $\m D$ has an extension to $\mc E_0(\m D)$, then the extension is unique.
\end{prop}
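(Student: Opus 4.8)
The plan is to suppose $\phi_1, \phi_2 \in \mc E_0(\m D)$ are two extensions of $\varphi$ and show $\psi := \phi_1 - \phi_2 = 0$. By definition of extension, the Jonsson-Wallin trace $\psi|_{\g_t}$ vanishes arclength-a.e.\ on every leaf $\g_t$. The key reduction is to translate this statement about traces on leaves into a statement that the image of $\psi$ under the isometry $\iota$ of Theorem~\ref{thm:bi_isometry} (equivalently Lemma~\ref{lem:disintegration_general}) vanishes: if we can show $\iota[\psi] = 0$ in $L^2(2\rho)$, then since $\iota$ is injective (indeed an isometry) we conclude $\psi = 0$, hence $\phi_1 = \phi_2$.

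The first step is to relate $\iota[\psi]$ to the trace of $\psi$ on leaves. Using Corollary~\ref{cor:ortho_decomp_formula}, the orthogonal decomposition of $\psi$ with respect to $D_T$ is $\psi = \psi^{0,T} + \psi^{h,T}$ with $\psi^{0,T} = \varkappa[\iota[\psi]\mathbf 1_{S^1\times[T,\infty)}]$; equivalently (via the remark after that corollary) $\psi^{0,T} = \psi_T^0 \circ g_T$ where $\psi_T^0$ is the zero-trace part of $\psi\circ f_T \in W^{1,2}(\m D)$. The point is that $\psi^{0,T} \in \mc E_0(D_T)$, so its Jonsson-Wallin trace on $\g_T = \partial D_T$ vanishes, while $\psi^{h,T}$ is harmonic in $D_T$; since $\psi|_{\g_T} = 0$ by hypothesis, taking traces on $\g_T$ in the decomposition forces the trace of $\psi^{h,T}$ on $\g_T$ to vanish. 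A harmonic function on the chord-arc Jordan domain $D_T$ lying in $W^{1,2}$ with vanishing (Jonsson-Wallin) boundary trace must be identically zero — this uses that $\psi^{h,T}$, being the harmonic extension of its own boundary values (its trace agrees with that of $\psi\in W^{1,2}(\m C)$ which has a well-defined trace on the chord-arc curve), is the Poisson-type extension of the zero function. Hence $\psi^{h,T} \equiv 0$ in $D_T$, i.e.\ $\psi = \psi^{0,T} \in \mc E_0(D_T)$ for every $T$.

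Now $\psi \in \mc E_0(D_T)$ for all $T \ge 0$ means, via $\varkappa[\iota[\psi]\mathbf 1_{S^1\times[0,T)}] = \psi^{h,T} = 0$ and the formula in Proposition~\ref{prop:kappa_formula}, that $2\pi\int_0^T P_{\m D}[\iota[\psi]_t\rho_t](g_t(w))\,\dd t = 0$ for all $w \in D_T$. Differentiating in $T$ (along a countable dense set of $w$ and using continuity/harmonicity of the Poisson integral, exactly as in the proof of Theorem~\ref{thm:bi_isometry}) gives $P_{\m D}[\iota[\psi]_T\rho_T] \equiv 0$ in $\m D$ for a.e.\ $T$, hence the measure $\iota[\psi]_T\,\rho_T$ is the zero measure, so $\iota[\psi](\t,T) = 0$ for $\rho_T$-a.e.\ $\t$ and a.e.\ $T$; that is $\iota[\psi] = 0$ in $L^2(2\rho)$. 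Since $\iota$ is an isometry (Theorem~\ref{thm:bi_isometry}), $\mc D_{\m D}(\psi) = \|\iota[\psi]\|^2_{L^2(2\rho)} = 0$, and as $\psi \in \mc E_0(\m D)$ this gives $\psi = 0$, completing the proof.

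The main obstacle I anticipate is the step asserting that a $W^{1,2}$ harmonic function on the chord-arc domain $D_T$ with vanishing Jonsson-Wallin trace on $\g_T$ must vanish; one must be careful that the trace appearing in the orthogonal decomposition is genuinely the boundary value of the harmonic representative (rather than merely an $L^2$-abstract trace), so that "zero trace + harmonic + finite Dirichlet energy $\Rightarrow$ zero" is legitimate on chord-arc domains. This is where the chord-arc hypothesis and the Jonsson-Wallin trace theory (as in \cite{Jonsson-Wallin} and \cite[Appx.~A]{VW1}) are essential; alternatively one can bypass harmonicity and argue directly that $\iota[\psi]$ — which by Lemma~\ref{lem:disintegration_general} records, leaf by leaf, the balayage of $\Delta\psi_t$ — is determined by the traces $\psi|_{\g_t}$, but either route ultimately rests on the same regularity input.
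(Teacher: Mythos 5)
Your proposal is correct and follows essentially the same route as the paper: decompose with respect to $D_T$ via Corollary~\ref{cor:ortho_decomp_formula}, observe that the Jonsson--Wallin trace of the harmonic part on $\g_T$ is pinned down by $\varphi$ (the paper phrases your ``zero trace $+$ harmonic $+$ finite energy $\Rightarrow$ zero'' step as the mutual absolute continuity of arclength and harmonic measure on chord-arc curves, which is the same regularity input you flag), and then run the differentiation-in-$T$ argument from the proof of Theorem~\ref{thm:bi_isometry} to get $\iota[\psi]=0$ and hence $\psi=0$. No substantive differences.
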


\begin{proof}
  Let $\phi$ be an extension of $\varphi$ in $\mc E_0(\m D)$. 
  For a fixed $T \ge 0$, the orthogonal decomposition $\phi = \phi^{0,T} + \phi^{h,T}$ where $\phi^{0,T} \in \mc E_0 (D_T)$ and $\phi^{h,T} \in \mc E_0(\m D)$ is harmonic in $D_T$ is given by 
  $$\phi^{0,T} = \varkappa [\iota[\phi] \mathbf 1_{ S^1 \times [T,\infty) }] \quad \text{and} \quad \phi^{h,T} = \varkappa [\iota[\phi] \mathbf 1_{ S^1 \times [0,T) }],$$
by Corollary~\ref{cor:ortho_decomp_formula}.  
 We have that $\phi|_{\partial D_T} = \phi^{h,T}|_{\partial D_T}$ arclength-a.e. since they coincide in $\m C \smallsetminus D_T$, see  \cite[Lem.\,A.2]{VW1}. Hence, $\varphi$ determines $\phi^{h,T}|_{D_T}$ since arclength and harmonic measure are mutually absolutely continuous on chord-arc curves. Assume that $\tilde \phi$ also extends $\varphi$. Then we have for all $w \in D_T$, 
 \begin{align*}
     0 &= \phi^{h,T}(w) - \tilde \phi^{h,T}(w) = \varkappa\left[(\iota[\phi] - \iota [\tilde \phi])\mathbf 1_{S^1 \times [0, T)}\right] (w) \\
     &=  2\pi \int_0^{T} P_{\m D} [\iota[\phi-\tilde \phi]_t \rho_t] (g_t(w)) \dd t.
 \end{align*}
 The proof of Theorem~\ref{thm:bi_isometry} shows that $\iota[\phi] = \iota [\tilde \phi]$ in $L^2(2\rho)$. Therefore $\phi = \tilde \phi$ in $\mc E_0 (\m D)$ and this completes the proof.
\end{proof}

\section{Loewner--Kufarev energy}\label{sect:LK-energy}

\subsection{Definitions}
For each measure $\sigma \in \M(S^1)$ we define
\begin{equation}\label{eq:ldp_local}
		L(\sigma) =  
		\frac 12\int_{S^1} \nu'(\t)^2  \,\dd\t  
\end{equation}
if $\dd \sigma(\t)=\nu^2 (\t)\,\dd \t$ with the non-negative square-root density $\nu \in W^{1,2} (S^1)$ and, by convention, $L(\sigma)  =  \infty$ otherwise. With this definition, $L(\sigma)$ is the usual Dirichlet energy of $\nu$ on $S^1$.  
\begin{rem}
   Note that $L (\sigma) = 0$ if and only if $\sigma$ is the uniform measure on $S^1$. 
\end{rem}
Then for $\rho \in \mc N_+$ (see Section~\ref{sect:Loewner--Kufarev-Equation}), we define 
\begin{equation}\label{def:dual-loewner-energy}
S_+ (\rho) =\int_0^{\infty} L(\rho_t)\, \dd t,
\end{equation} 
where $(\rho_t)_{t\ge 0}$ is a disintegration of $\rho$. We call $S_+(\rho)$ the \emph{Loewner--Kufarev energy} of the measure $\rho$. 
When $L(\rho_t) < \infty$, we write $\dd \rho_t = \rho_t (\t) \dd \t = \nu_t^2(\t) \dd \t$.

It is also useful to define
\[
S_{[a,b]} (\rho) =\int_a^b L(\rho_t)\,\dd t.
\]

\subsection{First properties}
We now record a few simple properties that will be used in our proofs.
\begin{lemma}\label{lem:time-reparam}
 Suppose $S_+(\rho) < \infty$ and define the time-changed family of measures $(\tilde{\rho}_{s} = t'(s) \rho_{t (s)} )_{s \ge 0}$, where 
 $$ t(s) = \int_0^s |\tilde{\rho}_u| \dd u.$$ Then $
 S_+(\tilde{\rho}) = S_+(\rho)$. 
 Moreover, if $(g^\rho_t)_{t \ge 0}$ and $(g^{\tilde{\rho}}_s)_{s \ge 0}$ are the corresponding uniformizing Loewner chains, 
 then $g_s^{\tilde{\rho}} = g_{t(s)}^\rho$.
 \end{lemma}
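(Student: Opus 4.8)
The plan is to observe that the time change rescales the square-root densities by a scalar depending only on $s$, so that the local energy $L$ transforms linearly under it; the energy identity then follows from an ordinary change of variables, while the identity of Loewner chains follows from the linearity of the Herglotz integral in its driving measure together with uniqueness of solutions to \eqref{eq:ODE}.

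First I would fix notation: $s\mapsto t(s)$ is the increasing absolutely continuous function with $t(0)=0$ and $t'(s)=|\tilde\rho_s|$ for a.e.\ $s$ (the displayed formula for $t$ is self-consistent since $|\rho_{t(s)}|=1$ forces $|\tilde\rho_s|=t'(s)$), which, as is implicit in the statement, we take to be onto $\m R_+$, hence a homeomorphism; and $S_+$ is read off \eqref{def:dual-loewner-energy} also for families of finite, not necessarily probability, measures. For a.e.\ $s$ with $t'(s)>0$ one has $L(\rho_{t(s)})<\infty$, and writing $\dd\rho_{t(s)}(\t)=\nu_{t(s)}(\t)^2\,\dd\t$ gives $\dd\tilde\rho_s(\t)=\bigl(\sqrt{t'(s)}\,\nu_{t(s)}(\t)\bigr)^2\,\dd\t$, so the non-negative square-root density of $\tilde\rho_s$ is $\sqrt{t'(s)}\,\nu_{t(s)}\in W^{1,2}(S^1)$ and, since the $\t$-derivative only picks up the scalar $\sqrt{t'(s)}$,
\[
L(\tilde\rho_s)=\tfrac12\int_{S^1}t'(s)\,\nu_{t(s)}'(\t)^2\,\dd\t=t'(s)\,L(\rho_{t(s)})
\]
(for the remaining $s$, $t'(s)=0$, $\tilde\rho_s$ is the zero measure, and both sides vanish). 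Integrating in $s$ and applying the change-of-variables formula for the monotone absolutely continuous bijection $t(\cdot)$,
\[
S_+(\tilde\rho)=\int_0^\infty L(\tilde\rho_s)\,\dd s=\int_0^\infty t'(s)\,L(\rho_{t(s)})\,\dd s=\int_0^\infty L(\rho_t)\,\dd t=S_+(\rho),
\]
which in particular is finite; since $t(\cdot)$ is a bijection the argument is symmetric in $\rho$ and $\tilde\rho$.

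For the Loewner chains I would use that the Herglotz integral \eqref{def:herglotz} is linear in the measure, so $H[\tilde\rho_s]=t'(s)\,H[\rho_{t(s)}]$, and set $\hat g_s(z):=g^\rho_{t(s)}(z)$ (defined for $t(s)<\tau(z)$). Since $u\mapsto g^\rho_u(z)$ is absolutely continuous and $s\mapsto t(s)$ is increasing and absolutely continuous, $s\mapsto\hat g_s(z)$ is absolutely continuous with $\partial_s\hat g_s(z)=t'(s)\,(\partial_u g^\rho_u)(z)\big|_{u=t(s)}$ for a.e.\ $s$; combining this with \eqref{eq:ODE} for $\rho$ yields
\[
\partial_s\hat g_s(z)=t'(s)\,g^\rho_{t(s)}(z)\,H[\rho_{t(s)}]\!\bigl(g^\rho_{t(s)}(z)\bigr)=\hat g_s(z)\,H[\tilde\rho_s]\!\bigl(\hat g_s(z)\bigr),\qquad\hat g_0(z)=z.
\]
By uniqueness of the solution of the Loewner--Kufarev equation, $g^{\tilde\rho}_s=\hat g_s=g^\rho_{t(s)}$.

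The only point requiring real care is the measure-theoretic bookkeeping behind the last two displays: the standard change-of-variables and chain-rule facts for absolutely continuous functions, namely that the composition of an absolutely continuous function with a monotone absolutely continuous function is again absolutely continuous and differentiates by the chain rule a.e., and equivalently that the null set on which \eqref{eq:ODE} may fail, when pulled back through $t(\cdot)$, contributes nothing because it is multiplied by the factor $t'$ which vanishes a.e.\ there. Monotonicity of $t(\cdot)$ is precisely what makes all of this routine; the genuine content of the lemma is the two linear scalings $L(\tilde\rho_s)=t'(s)L(\rho_{t(s)})$ and $H[\tilde\rho_s]=t'(s)H[\rho_{t(s)}]$.
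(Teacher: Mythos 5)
Your proof is correct and follows essentially the same route as the paper's: the energy identity comes from the degree-one homogeneity $L(c\sigma)=cL(\sigma)$ together with the change of variables $\dd t=t'(s)\,\dd s$, and the identification of the chains comes from checking that $s\mapsto g^\rho_{t(s)}$ satisfies the Loewner--Kufarev equation driven by $\tilde\rho$ and invoking uniqueness. The paper's proof is terser (it converts the $\tilde\rho$-equation into the $\rho$-equation rather than the other way around, and leaves the absolute-continuity bookkeeping implicit), but the content is the same.
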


\begin{proof}
We have
   $$S_+ (\tilde{\rho})  = \int_0^{\infty} L(\tilde{\rho}_t) \dd t =  \int_0^{\infty} L(t'(s) \rho_{t} ) \dd s = \int_0^{\infty} L(\rho_{t})  t'(s) \dd s  =  S_+ (\rho). $$
   On the other hand, the Loewner flow driven by $(\tilde{\rho}_s)$ is the solution  $s \mapsto g_s^{\tilde{\rho}} (z)$ to 
   $$ \partial_s g_s^{\tilde{\rho}} (z) = g_s^{\tilde{\rho}} (z)  H[\tilde{\rho}_s](g_s^{\tilde{\rho}}(z))  \quad g_0^{\tilde{\rho}}(z) = z.$$
   Using the definition of $\tilde{\rho}$, we therefore get
   $$ \partial_t g_s^{\tilde{\rho}} (z) = g_s^{\tilde{\rho}} (z)  H[\rho_t](g_s^{\tilde \rho}(z))$$
   and this implies $g_s^{\tilde{\rho}} = g_{t(s)}^\rho$. 
 \end{proof}
 
   One should thus view $\tilde{\rho}$ as a time-reparametrization of $\rho$ and  the solution to the Loewner equation associated with the measure $\tilde \rho$ is a reparametrization of the solution associated to $\rho$.
 This invariance property of the Loewner--Kufarev energy suggests that the energy is intrinsic to the foliation (once we know that finite energy measures generate foliations) and does not depend on the particular time-parametrization. This is further reflected in the energy duality Theorem~\ref{thm:main} expressing $S_+$ in terms of the winding function.

\begin{lemma} \label{lem:foliates}
If $S_+(\rho) <\infty$, then for every $z \in \overline{\m D} \smallsetminus \{0\}$ we have $\tau(z) < \infty$.
\end{lemma}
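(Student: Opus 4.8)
The plan is to show that if $\tau(z_0) = \infty$ for some $z_0 \in \overline{\m D} \smallsetminus \{0\}$, then $S_+(\rho) = \infty$, by extracting a quantitative lower bound on the energy from the fact that the point $z_0$ is never swallowed. Recall that $\tau(z_0) = \infty$ means $z_0 \in D_t$ for all $t \geq 0$, so the Loewner flow $t \mapsto g_t(z_0)$ is defined for all time and satisfies $\partial_t g_t(z_0) = g_t(z_0) H_t(g_t(z_0))$ with $g_0(z_0) = z_0$. Since $f_t'(0) = e^{-t} \to 0$, the domains $D_t$ shrink to $\{0\}$ in the Carathéodory sense, so in particular $|g_t(z_0)| \to 1$ as $t \to \infty$ (the point $z_0$ gets pushed towards the boundary circle, which is the only way a fixed point can stay in a family of disks shrinking to the origin).

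First I would control the radial part of the flow. Writing $r_t = |g_t(z_0)|$, one has $\partial_t \log r_t = \Re \big(H_t(g_t(z_0)) \big) = \int_{S^1} P_{\m D}(g_t(z_0), e^{i\theta})\, \dd\rho_t(\theta) = P_{\m D}[\rho_t](g_t(z_0))$. Since $r_t \to 1$, the quantity $\log(1/r_t)$ decreases to $0$, and integrating gives $\int_0^\infty P_{\m D}[\rho_t](g_t(z_0))\, \dd t = \log(1/|z_0|) < \infty$ (if $|z_0| < 1$; the boundary case $z_0 \in S^1$ is excluded since $\tau \equiv 0$ there by convention, or rather is immediate). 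Now the key point: since $P_{\m D}[\rho_t](g_t(z_0)) \geq (1-r_t)/( 1+r_t) \cdot |\rho_t| = (1-r_t)/(1+r_t)$ because $\rho_t$ is a probability measure and the Poisson kernel is bounded below by $(1-r)/(1+r)$ uniformly in the angle, we get $\int_0^\infty (1 - r_t)\, \dd t < \infty$, hence $r_t \to 1$ fast enough that $1 - r_t$ is integrable. The next step is to contradict this using the energy bound: when $L(\rho_t) < \infty$, write $\dd\rho_t = \nu_t^2\, \dd\theta$ with $\int_{S^1} \nu_t^2 = 1$, and use a Sobolev/Poincaré-type estimate on $S^1$ to see that a measure with small Dirichlet energy $L(\rho_t)$ has density bounded above by roughly $1/(2\pi) + C\sqrt{L(\rho_t)}$ in sup norm. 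Consequently the Poisson integral $P_{\m D}[\rho_t](w)$ for $|w| = r$ is bounded \emph{above} by something like $1 + C'\sqrt{L(\rho_t)}\cdot \frac{1}{1-r}$; more usefully, one obtains that whenever $1 - r_t$ is small, $P_{\m D}[\rho_t](g_t(z_0))$ cannot be too large unless $L(\rho_t)$ is large — but we actually want a lower bound, so I would instead argue as follows.

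The cleanest route, I expect, is to bound things the other way: the harmonic function $\vartheta[g_t]$ and the radial growth together force energy. Since $S_+(\rho) < \infty$ implies (by Jensen, using that $t \mapsto L(\rho_t)$ is integrable) that for a.e.\ $t$ the density $\nu_t^2$ is genuinely close to uniform for most $t$, the flow $g_t(z_0)$ should behave asymptotically like the trivial flow $e^t z_0$, which blows up out of $\overline{\m D}$ in finite time — contradiction. To make this rigorous I would compare $g_t(z_0)$ with the uniform-measure flow using a Grönwall-type argument: $\partial_t \log(1/r_t) = P_{\m D}[\rho_t](g_t(z_0)) \geq \frac{1-r_t}{1+r_t}$, and as long as $r_t$ stays bounded away from $1$, say $r_t \leq 1 - \delta$, we get $\partial_t \log(1/r_t) \geq \delta/2$, forcing $r_t$ to decrease — wait, that pushes $z_0$ \emph{inward}, not outward, so $z_0$ would leave $D_t$ from the outside... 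Let me reconsider: $\log(1/r_t)$ increasing means $r_t$ decreasing, but we need $r_t \to 1$, so $r_t$ cannot stay $\leq 1 - \delta$; hence $r_t \to 1$. But then $g_t(z_0) \to S^1$, and we need to see this is incompatible with $\tau(z_0) = \infty$. Actually $\tau(z_0) = \infty$ is consistent with $r_t \to 1$; the contradiction must come from the fact that $z_0$ itself is \emph{fixed} while $\bigcup D_t = \m D$ only — no, $D_0 = \m D$ and $D_t \subset \m D$, so $z_0 \in \bigcap_t D_t$ requires $z_0$ to be in every $D_t$, and since $\mathrm{crad}(D_t, 0) = e^{-t} \to 0$, by the Koebe one-quarter theorem $D_t \subset \{|z| < 4e^{-t}\}$, so $|z_0| < 4 e^{-t}$ for all $t$, forcing $z_0 = 0$ — contradiction with $z_0 \neq 0$.

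So the \textbf{actual proof is much shorter}: I would simply invoke that $S_+(\rho) < \infty$ implies $f_t'(0) = e^{-t}$ (each $\rho_t$ is a probability measure, so this holds for \emph{any} $\rho \in \mc N_+$), combined with the Koebe one-quarter theorem: $D_t = f_t(\m D) \supset \{|z| < |f_t'(0)|/4\}$ and $D_t \subset \{|z| < 4|f_t'(0)|\} = \{|z| < 4e^{-t}\}$. Therefore if $\tau(z_0) = \infty$, i.e.\ $z_0 \in D_t$ for all $t$, then $|z_0| < 4e^{-t}$ for all $t \geq 0$, whence $z_0 = 0$. This contradicts $z_0 \in \overline{\m D}\smallsetminus\{0\}$, so $\tau(z_0) < \infty$. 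The finite-energy hypothesis is in fact not even needed beyond ensuring we are in the setting where $\rho \in \mc N_+$ and the disintegration consists of probability measures — the lemma holds for all $\rho \in \mc N_+$. The main (and only) obstacle is recognizing that the hypothesis $S_+(\rho) < \infty$ is a red herring here and the statement is a direct consequence of the normalization $f_t'(0) = e^{-t}$ together with distortion estimates for univalent functions; no delicate analysis of the driving measure is required.
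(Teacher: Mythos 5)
Your final argument rests on a false distortion estimate. The Koebe one-quarter theorem gives only the \emph{inner} inclusion $D_t = f_t(\m D) \supset \{|z| < e^{-t}/4\}$; there is no companion bound $D_t \subset \{|z| < 4e^{-t}\}$. The growth theorem gives $|f_t(z)| \le e^{-t}|z|/(1-|z|)^2$, which controls nothing as $|z| \to 1$, and indeed a simply connected domain of small conformal radius at $0$ can have diameter bounded below (a thin tube of width $\eps$ reaching out to $|z|=1/2$ has conformal radius $\asymp \eps$ at its base, since conformal radius is comparable to the distance to the boundary, not to the diameter). So $\mathrm{crad}(D_t,0)=e^{-t}\to 0$ does \emph{not} force $\bigcap_t D_t = \{0\}$. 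Your closing claim that the hypothesis $S_+(\rho)<\infty$ is a red herring is therefore wrong: for general $\rho \in \mc N_+$ the lemma fails. For example, a Dirac-mass driving measure tracing a slit that spirals in toward $0$ produces domains $D_t$ of conformal radius $e^{-t}\to 0$ that nevertheless contain a fixed point $z_0\neq 0$ for all time, so $\tau(z_0)=\infty$. Your earlier, abandoned computation actually shows why no contradiction arises from the flow alone: $\partial_t \log|g_t(z_0)| = \Re H_t(g_t(z_0)) \ge 0$ and $\int_0^\infty \Re H_t(g_t(z_0))\,\dd t = \log(r_\infty/|z_0|) < \infty$ is perfectly consistent with $\tau(z_0)=\infty$.

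The energy hypothesis enters exactly where you need a \emph{lower} bound on $\Re H_t(g_t(z_0))$ that does not degenerate as $g_t(z_0)$ approaches $S^1$. The paper's proof shows there is a universal $\vare>0$ such that $L(\rho_t)<\vare$ forces $\min_\t \nu_t^2(\t) > 1/4\pi$ (if the density dips below $1/4\pi$ somewhere, then since $\int \nu_t^2 = 1$ it must exceed $1/2\pi$ somewhere else, and the Dirichlet energy of $\nu_t$ needed to bridge that gap is bounded below). By the minimum principle the Poisson integral then satisfies $P_{\m D}[\rho_t] \ge 1/4\pi$ \emph{everywhere} in $\m D$ for such $t$, so $\partial_t\log|g_t(z_0)| \ge 1/2$ for all $t$ outside the exceptional set $E=\{t: L(\rho_t)>\vare\}$, whose measure is at most $S_+(\rho)/\vare$ by Markov's inequality. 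Integrating, $-\log|z_0| \ge \log|g_t(z_0)/z_0| \ge |[0,t]\smallsetminus E|/2$, which gives the explicit bound $\tau(z_0) \le -2\log|z_0| + S_+(\rho)/\vare$. This quantitative use of the energy is the content of the lemma and cannot be bypassed.
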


\begin{proof}
We claim that there exists $\vare > 0$ such that for all $t \ge 0$, $L(\rho_t) < \vare$ implies $\min_{\t} \nu_t^2(\t) > 1/4\pi$. Indeed, suppose without loss in generality that $\nu_t^2(0) \le 1/4\pi$. Then by the mean value theorem and that $\rho_t \in \mc M_1(S^1)$, there is $\t_0$ such that $\nu_t^2(\t_0) \ge  1/2\pi$ and  we obtain easily by interpolating linearly $\nu_t$ on  $[0,\t_0]$  that $L(\rho_t) \ge (3-2\sqrt 2)/16\pi^2 = : \vare$.
Let $E:=\{t \ge 0 : L(\rho_t) > \vare \}$. For $t \notin E$ the Poisson integral of $\rho_t$ is continuous on $\overline{\m D}$ and by the maximum principle, $P_{\m D}[\rho_t] \ge 1/4\pi$. 

Now fix $z \in \ad{\m D} \smallsetminus \{0\}$. Using \eqref{eq:ODE}, for $t < \tau(z)$,
\begin{align*}
    \log |g_t(z)/z| & = \int_0^t \partial_s \Re \log g_s(z) \, \dd s = \int_0^t \Re H_s(g_s(z)) \dd s\\
    &= 2\pi \int_0^t P_{\m D}[\rho_s](g_s(z)) \, \dd s  \ge \big|[0,t] \smallsetminus E\big|/2 
\end{align*}
since the Poisson integral is non-negative. 
Since $\log |g_t (z) / z|\le -\log |z|$ it follows that
$$\tau(z) \le -2 \log |z| + |E| \le -2 \log |z| + S_+(\rho)/\vare <\infty $$
where we also used Markov's inequality and that $S_+(\rho) = \int_0^\infty L(\rho_t) \dd t$.
\end{proof}

\subsection{Examples and energy minimizers}\label{sect:examples}
\begin{figure}
    \centering
    \includegraphics[scale=0.5]{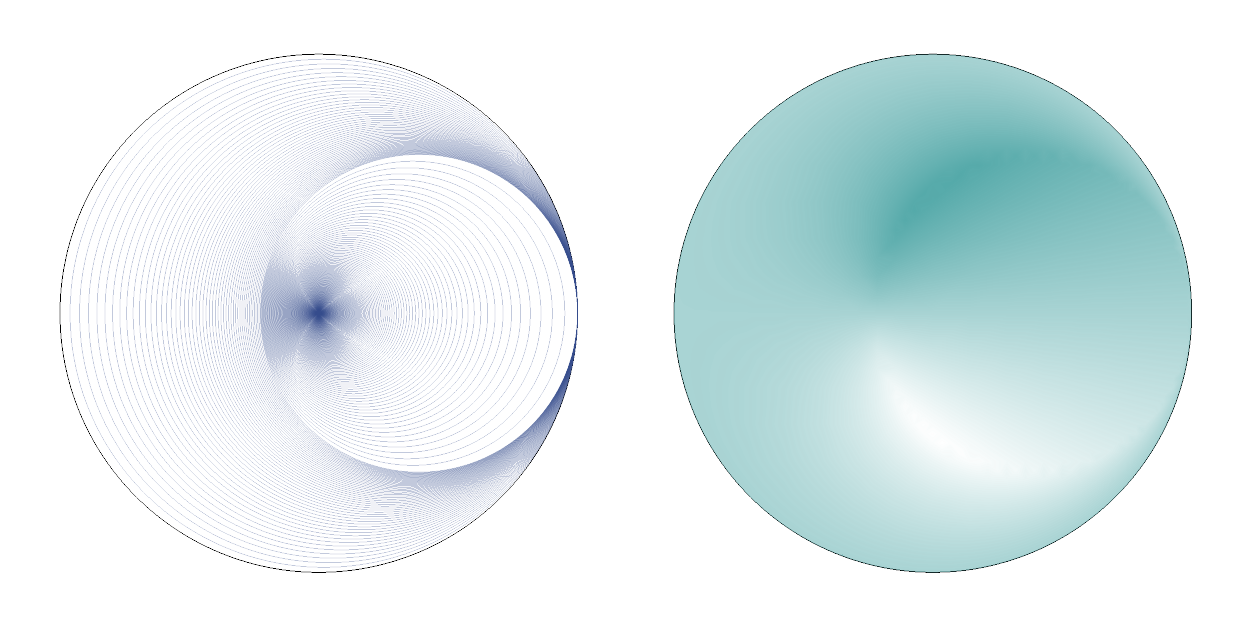}
    \caption{\emph{Left:} Leaves of the foliation corresponding to the measure that equals $\pi^{-1}\sin^2(\t / 2) \,\dd \t \dd t$ for $0 \le t < 1$ and is uniform for $t \ge 1$, drawn at equidistant times. 
    \emph{Right:} Density plot of the winding function corresponding to the same measure, where lighter color represents a larger value. The winding function is harmonic, but non-zero, in the part foliated after time $1$.}
    \label{fig:foliation_example}
\end{figure}
We discussed the simple but important example of a Loewner chain driven by the constant zero-energy measure in the introduction and Section~\ref{sect:Loewner--Kufarev-Equation}. Here we give another example of a time-homogeneous driving measure, this time vanishing at $\t = 0$, 
and minimizing the Loewner--Kufarev energy among all such measures. We compute explicitly the winding function of the corresponding non-injective foliation and verify energy duality by hand. Even for this simple example, while the computation is straightforward it is not entirely trivial.

Let $T > 0$ and $\nu \in W^{1,2} ([0,2\pi])$ with $\nu(0) = \nu (2\pi) =0$ and $\int_{S^1} \nu^2 (\t) \,\dd \t = 1$. Consider the measure $\rho \in \mc N_+$ such that $\dd \rho_t (\t) = \nu^2 (\t) \,\dd \t$ for $t \in [0,T]$, and that is equal to the uniform measure on $S^1$ otherwise. 
Among all such measures, $\nu^2 (\t) \dd\t$ minimizes $L$ if and only if $\nu$ is the first eigenfunction of the Dirichlet Laplacian on $[0,2\pi]$, namely, $ \nu^2(\t) = \sin^2(\t/2)/\pi$.
In this case, we have $L(\rho_t ) \equiv 1/8$ so $S_+(\rho) = \int_0^T L(\rho_t) \dd t= T/8$. 
 The corresponding Herglotz integral can be evaluated explicitly 
 and the Loewner--Kufarev equation in this case is simply
\[
\partial_t f_t(z) =- zf'_t(z) \left(1- z \right) , \quad z \in \mathbb{D},
\]
see, e.g.,~\cite{sola} where solutions to Loewner equations of this form were studied. We have
\[
f_t(z) = \frac{e^{-t}z}{1-(1-e^{-t})z}.
\]
The leaf $\partial D_t$ is a circle of radius $1/(2-e^{-t})$ centered at $(1 - e^{-t})/(2-e^{-t})$ and the hull $K_t$ at time $t$ is a 
crescent-shaped compact set, see Figure~\ref{fig:foliation_example}. In particular, $z=0$ and $z=1$ are fixed points of the evolution.
We now compute the corresponding winding function. Suppose $z \in K_T$ and let $t = \tau(z)$. Then
\[
\varphi(z) = \arg \frac{z g'_t(z)}{g_t(z)} = -\arg(z(1-e^{-t})+e^{-t})=\arg \frac{z}{(z-1)^2} + \pi.
\]
Moreover, for $z \in D_T$, we have
\[
\varphi(z) =  -\arg(z(1-e^{-T})+e^{-T}),
\]
which is harmonic in $D_T$. From these formulas one can verify directly that $\mc D_{\m D}(\varphi) = 
2T = 16 S_+ (\rho)$, which is the statement of energy duality in this case. We note that the computation of $\mc D_{\m D}(\varphi)$ is slightly technical as $\nabla \varphi$ has a singularity at $1$ (that 
is $L^2$-integrable).

\begin{rem}Conjugating $f_t$ by $z \mapsto z^m$, it is also possible carry out similar explicit computations for measures of the form $\pi^{-1} \sin^2(m\t/2) \dd \t \dd t$ that correspond to Laplace eigenfunctions on $S^1$ corresponding to higher eigenvalues. 
\end{rem}

\section{Weak energy duality and foliation by Weil--Petersson quasicircles}\label{sect:weak-WP}
In this section we prove energy duality in the disk, for measures satisfying a strong smoothness assumption.
We then use this result and an approximation argument to prove that any $\rho$ with $S_+(\rho)<\infty$, produces interfaces that are Weil--Petersson quasicircles which form a foliation of $\ad{\m D} \smallsetminus \{0\}$.

\subsection{Weak energy duality}
Let 
$\mc N_+^\infty$ denote the set of $\rho \in \mc N_+$ such that: 
\begin{enumerate}[itemsep=-2pt]
    \item There exists $T < \infty$, such that for $t > T$, $\rho_t = \dd \t /2\pi$ is the uniform measure on $S^1$.
    \item The mapping $t \mapsto \rho_t$ is piecewise constant (on finitely many time intervals).
    \item For all $t \ge 0$, $\rho_t$ has a $C^\infty$-smooth and strictly positive density with respect to $\dd\t$. 
\end{enumerate}

\begin{lemma}\label{lem:constant_rho}\label{cor:smooth_foliation}
Any $\rho\in \mc N^\infty_+$ generates a foliation of $\ad{\m D}\smallsetminus\{0\}$ in which each leaf is a smooth Jordan curve. The corresponding winding function $\varphi$ is continuous and piecewise smooth on $\overline{\m D}$.
\end{lemma}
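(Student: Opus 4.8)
The plan is to reduce the general piecewise-constant case to a single constant-measure step via the domain Markov property (Lemma~\ref{lem:domain_markov}) and an induction on the number of time intervals, so that the main work is to prove the statement when $\rho_t \equiv \sigma$ is a fixed measure with smooth strictly positive density on a finite interval $[0,T]$ (and uniform afterwards). For such a constant step, the Loewner-Kufarev equation becomes an autonomous ODE: the Herglotz function $H = H[\sigma]$ is a fixed holomorphic function on $\m D$ with $\Re H > 0$, and since $\sigma$ has smooth strictly positive density, $H$ extends continuously (indeed smoothly) to $\ad{\m D}$ with $\Re H = 2\pi \nu^2 > 0$ on $S^1$. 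First I would record that the boundary vector field $e^{i\t}\mapsto i e^{i\t} H(e^{i\t})$ points strictly inward (its real radial component is $-\Re H < 0$), so the flow $g_t$ (equivalently $f_t$) extends smoothly to the closed disk by standard ODE theory with parameters, each hull $K_t = \ad{\m D}\smallsetminus D_t$ grows, and each leaf $\g_t = f_t(S^1)$ is a smooth Jordan curve. Strict inwardness also gives that every boundary point is swept in finite time, i.e.\ $\tau(z) < \infty$ for all $z \in \ad{\m D}\smallsetminus\{0\}$ (compare Lemma~\ref{lem:foliates}, whose argument via the lower Poisson bound applies verbatim since $P_{\m D}[\rho_t]\ge \min 2\pi\nu^2 > 0$), and monotonicity plus smoothness of $t\mapsto f_t$ gives continuity of $t\mapsto\g_t$ in the supremum norm. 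Hence $\rho$ restricted to one interval generates a foliation, and patching finitely many such steps together with Lemma~\ref{lem:domain_markov} (each later step is the Loewner chain for a time-shifted measure, applied after the conformal map $f_T$, which is a diffeomorphism of $\ad{\m D}$ onto $\ad{D_T}$) shows any $\rho\in\mc N_+^\infty$ generates a foliation of $\ad{\m D}\smallsetminus\{0\}$ with smooth leaves.

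For the winding function, since all leaves are smooth Jordan curves we have $\mathcal T = \ad{\m D}\smallsetminus\{0\}$ by Lemma~\ref{lem:tau_foliates}, so $\varphi$ is defined everywhere. On the open set $D_T$ (foliated by the uniform measure for $t\ge T$) Lemma~\ref{lem:harmonic} gives $\varphi = \vartheta[g_T]$, which is harmonic hence smooth on $D_T$. On the complementary hull $K_T$, writing $t=\tau(z)$, we have $\varphi(z) = \vartheta[g_t](z) = \arg\big(z g_t'(z)/g_t(z)\big)$; here $g_t = f_t^{-1}$ extends smoothly to $\ad{D_t}$ and $z\in\g_t$, so this is a smooth function of $(z,t)$, and I would argue that $z\mapsto\tau(z)$ is smooth on the interior of $K_T\smallsetminus\g_0$ (the leaves $\g_t$ sweep out a smooth foliation of a neighborhood because the flow is a smooth diffeomorphism and $\partial_t|g_t|\ne 0$ by strict inwardness — an implicit function theorem argument), so $\varphi$ is smooth on each ``layer'' $\{z : \tau(z)\in(t_{j-1},t_j)\}$ between consecutive break-times $t_j$ of the piecewise-constant schedule. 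Continuity of $\varphi$ across the finitely many leaves $\g_{t_j}$ (and across $\g_0 = S^1$, where Lemma~\ref{lem:merge}/Lemma~\ref{lem:goluzin} forces the value $0$) follows from Lemma~\ref{lem:merge} and the continuity of the boundary extensions, giving that $\varphi$ is continuous on $\ad{\m D}$ and piecewise smooth.

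The main obstacle, I expect, is the regularity of $\tau$ and hence the ``piecewise smoothness'' claim near the boundary circle $S^1$ and near points where $\nu$ is allowed to vanish — wait, condition (3) forbids vanishing, so $\Re H > 0$ is uniform and the flow is uniformly transverse to the boundary, which removes that difficulty; the genuine care needed is (a) justifying the smooth extension of $f_t, g_t$ to the closed disk uniformly in $t$ on compact time intervals, for which I would cite the standard theory of conformal maps with analytic (or merely smooth strictly positive) boundary data together with smooth dependence on parameters in ODEs, and (b) handling the finitely many interfaces $\g_{t_j}$ at which the driving density jumps, across which $\varphi$ is continuous but only one-sided-smooth — which is exactly what ``piecewise smooth'' is meant to allow. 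I do not expect to need any energy estimate here: this lemma is purely the qualitative statement that $\mc N_+^\infty$ measures generate smooth foliations with continuous piecewise-smooth winding functions, and the quantitative identity $\mc D_{\m D}(\varphi) = 16 S_+(\rho)$ for such $\rho$ will be the content of the subsequent Proposition.
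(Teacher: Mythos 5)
Your proposal is correct and follows the same skeleton as the paper's proof: reduce to a single constant-in-time step, use smoothness and strict positivity of the density to get $\Re H>0$ and smoothness of $H$ on $\ad{\m D}$, show that $(\t,t)\mapsto f_t(e^{i\t})$ is a local diffeomorphism onto the hull (your implicit-function-theorem argument for $\tau$ is exactly the paper's nonvanishing-Jacobian computation), and patch finitely many steps together with Lemma~\ref{lem:domain_markov}. The one sub-step where you genuinely diverge is the boundary regularity of $f_t$: the paper gets smoothness of $f_t$ on $\ad{\m D}$ for free from the one-parameter semigroup representation $f_t=\psi^{-1}(e^{-t}\psi)$ with the explicit starlike Koenigs map $\psi(z)=z\exp\int_0^z w^{-1}(H(w)^{-1}-1)\,\dd w$ (citing Siskakis), whereas you invoke smooth dependence on initial conditions for the flow of $w\mapsto wH(w)$ on the closed disk; that works, but strictly speaking requires extending the vector field smoothly past $S^1$ (or a flow-on-manifold-with-boundary argument), so the paper's explicit formula is the cleaner route. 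Two small slips that do not affect the argument: the boundary velocity is $-e^{i\t}f_t'(e^{i\t})H(e^{i\t})$, not $ie^{i\t}H(e^{i\t})$ (your stated radial component $-\Re H$ corresponds to the former), and the forward flow $g_t$ pushes points \emph{outward} toward $S^1$ ($\partial_t\log|g_t(z)|=\Re H>0$), so it is $f_t=g_t^{-1}$, the backward flow, that moves inward.
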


\begin{proof}
We assume first that $\rho \in \mathcal{N}_+^\infty$ and that $ \rho_t$ is constant on $[0,T]$. Let $H : =H[\rho_t]$ which is constant for $t \le T$.  The family $(f_t)_{t \in [0,T]}$ solves the equation $\partial_t f_t(z) = -z f'_t(z) H(z)$ with $f_0(z) = z$ and forms a semigroup of conformal maps with fixed point $0$. (See Lemma~\ref{lem:domain_markov}.) The solution therefore enjoys the following representation:  there exists a starlike\footnote{Starlike here means that $D$ contains $0$ and for every $z \in D$, the line segment $[0,z] \subset D$.} domain $D$ and a conformal map $\psi:\m D \to D$ fixing $0$ such that 
\[
f_t(z) = \psi^{-1}(e^{-t}\psi(z)), \qquad \psi(z) = z\exp\left(\int_0^z \frac{1}{w}\left(\frac{1}{H(w)} -1\right) \dd w\right).
\]
See, e.g., \cite[Sec.\,3]{siskakis} and the references therein. Since $H$ is the Herglotz integral of a smooth, positive function, the maximum principle implies that it is non-zero on $\overline{\m D}$ and the smoothness implies that all derivatives exist and extend continuously to $\overline{\m D}$. (See, e.g., \cite[Cor.\,II.3.3]{GM}.) It follows that $\psi$ is smooth on $\overline{\m D}$. The foliation generated by $\rho$ is given by the family $(\g_t = \psi^{-1} (e^{-t} \psi(S_1)))$ for $t \le T$, and for $t > T$, the leaves $\g_t$ are equipotential curves in $D_T$ by Lemma~\ref{lem:harmonic}.
We now verify the smoothness of the winding function $z \mapsto \varphi(z)$ in $K_T$. For this, consider $(\t,t) \in S^1\times [0,T]$ and set $z(\t,t) = f_t (e^{i\t})$. Then $\varphi(z(\t,t)) = -\arg e^{i\t} f'_t(e^{i\t})/f_t(e^{i\t})$. 
We have 
 \begin{align*}
 \partial_\t  z & = i e^{i\t} f_t '(e^{i\t})  \\
 \partial_t  z & = \partial_t f_t(re^{i\t}) = e^{i\t} f_t '(e^{i\t}) H(e^{i\t}).
 \end{align*}
 Since $\Re H > 0$ the Jacobian is non-zero. Further differentiation using the Loewner equation shows that $z(\t,t)$ is smooth and by the inverse function theorem, the inverse is also smooth in $K_T$, and so is $\varphi$. 
 Since $\varphi|_{D_T} = \vartheta (g_T)$ by Lemma~\ref{lem:harmonic} and is continuous on $\ad{D_T}$, we obtain that $\varphi$ is continuous on $\ad{\m D}$ and smooth in $\ad{\m D} \smallsetminus \g_T$.
 
 Using Lemma~\ref{lem:domain_markov} and  the fact the smoothness is preserved under composition of smooth conformal mappings, the statement immediately generalizes to arbitrary $\rho \in \mc N^\infty_+$.
\end{proof}

\begin{lemma}\label{lem:Loewner-formulae}
 If $\rho \in \mc N_+$ and 
  $z \in \m D$, then
$- \partial_t \vartheta[f_t](z)  \mid_{t = 0} = \Im zH_0'(z).$
 \end{lemma}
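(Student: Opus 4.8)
We want to show $-\partial_t \vartheta[f_t](z)\big|_{t=0} = \Im\, z H_0'(z)$ for $z \in \m D$, where $\vartheta[f_t](z) = \arg\big(z f_t'(z)/f_t(z)\big)$. The natural approach is to differentiate the defining expression $\log\big(z f_t'(z)/f_t(z)\big)$ in $t$ (its imaginary part is $\vartheta[f_t]$, at least with the branch normalized to vanish at $0$), and substitute the Loewner--Kufarev equation $\partial_t f_t(z) = -z f_t'(z) H_t(z)$.

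First I would write
\[
\partial_t \log\frac{z f_t'(z)}{f_t(z)} = \frac{\partial_t f_t'(z)}{f_t'(z)} - \frac{\partial_t f_t(z)}{f_t(z)}.
\]
For the second term, the Loewner--Kufarev equation gives directly $\partial_t f_t(z)/f_t(z) = -z f_t'(z) H_t(z)/f_t(z)$. For the first term, I would differentiate the Loewner--Kufarev equation in $z$ (interchanging $\partial_t$ and $\partial_z$, which is justified since $f$ is smooth in $z$ and absolutely continuous in $t$ with locally bounded derivative): $\partial_t f_t'(z) = -f_t'(z) H_t(z) - z f_t''(z) H_t(z) - z f_t'(z) H_t'(z)$, so that $\partial_t f_t'(z)/f_t'(z) = -H_t(z) - z H_t(z) f_t''(z)/f_t'(z) - z H_t'(z)$. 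Subtracting, the terms involving $f_t''/f_t'$ and $f_t'/f_t$ combine: one gets
\[
\partial_t \log\frac{z f_t'(z)}{f_t(z)} = -H_t(z) - z H_t'(z) - z H_t(z)\Big(\frac{f_t''(z)}{f_t'(z)} - \frac{f_t'(z)}{f_t(z)}\Big) = -H_t(z) - z H_t'(z) - H_t(z) \cdot z \big(\log(f_t'/f_t)\big)'(z).
\]

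Now I specialize to $t=0$, where $f_0(z) = z$, hence $f_0'(z) = 1$, $f_0''(z) = 0$, and $z f_0'(z)/f_0(z) = 1$, so the bracketed derivative term vanishes: $z\big(f_0''/f_0' - f_0'/f_0\big) = z(0 - 1/z) = -1$. Thus $\partial_t \log\big(z f_t'(z)/f_t(z)\big)\big|_{t=0} = -H_0(z) - z H_0'(z) + H_0(z) = -z H_0'(z)$. Taking imaginary parts and negating gives $-\partial_t \vartheta[f_t](z)\big|_{t=0} = \Im\, z H_0'(z)$, as claimed. I should also note that $\vartheta[f_0] \equiv \arg 1 = 0$, consistent with the normalization, and that $\Im \log(\cdot)$ agrees with the continuous branch $\arg$ near $t=0$ since $z f_0'(z)/f_0(z) = 1$ stays away from the branch cut for small $t$.

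**Main obstacle.** There is essentially no deep difficulty here — it is a direct computation — so the only points requiring a word of care are: (i) justifying that $\partial_z$ and $\partial_t$ commute when applied to $f_t(z)$, which follows from the regularity of Loewner chains (absolute continuity in $t$, holomorphy in $z$, with derivatives locally bounded on $\m D \times [0,\varepsilon]$), and (ii) checking that differentiating $\vartheta[f_t]$ in $t$ really amounts to taking $\Im$ of the $t$-derivative of the logarithm, i.e. that the chosen continuous branch is differentiable and matches — this is immediate at $t=0$ since the argument of the log equals $1$ there and varies continuously. Everything else is algebra.
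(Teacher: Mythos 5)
Your computation is correct and is essentially identical to the paper's proof: both differentiate $\log\bigl(zf_t'(z)/f_t(z)\bigr)$ in $t$ using the Loewner--Kufarev equation $\partial_t f_t = -zf_t'H_t$ and its $z$-derivative, and then evaluate at $t=0$ where $f_0(z)=z$ makes the $f_t''/f_t' - f_t'/f_t$ terms cancel the $-H_0(z)$ term. The brief remarks on interchanging $\partial_t$ and $\partial_z$ and on the branch of the argument are fine and consistent with the paper's conventions.
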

\begin{proof}
Using \eqref{eq:loewner-pde}, we have for $t \ge 0$,
\[
\partial_t \log f_t(z) = -z\frac{f'_t(z)}{f_t(z)} H_t(z).
\]
and
\[
\partial_t \log f_t'(z) = -\frac{\partial_z (zf'_t(z) H_t(z))}{f_t'(z)}
=-\left(1+z \frac{f''_t(z)}{f'_t(z)}\right) H_t(z) - z H_t'(z).
\]
Hence,
\[\partial_t \log \frac{z f'_t(z)}{f_t(z)}  =  -\left(1+z \frac{f''_t(z)}{f'_t(z)}-z\frac{f'_t(z)}{f_t(z)}\right) H_t(z) - z H_t'(z). \]
Since $f_0(z)=z$, we have $f''_0(z) = 0$ and the claimed expression when evaluated at $t = 0$ follows by taking the imaginary part.
\end{proof}  

\begin{prop}[Weak energy duality]\label{prop:weak_duality}
Suppose $\rho \in \mc N_+^\infty$ and let $\varphi$ be the associated winding function. Then $\mc D_{\m D} (\varphi) = 16\, S_+(\rho)$. 
\end{prop}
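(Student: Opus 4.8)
The plan is to reduce to the time-homogeneous case via the domain Markov property (Lemma~\ref{lem:domain_markov}) and the additivity of both sides under concatenation of time intervals, and then to compute $\mc D_{\m D}(\varphi)$ directly by changing variables to the $(\t,t)$-coordinates supplied by the Loewner flow. By Lemma~\ref{cor:smooth_foliation} we already know $\rho \in \mc N_+^\infty$ generates a foliation by smooth Jordan curves and that $\varphi$ is continuous and piecewise smooth on $\ad{\m D}$, so all the manipulations below are legitimate. Since both $S_+$ and (by Lemma~\ref{lem:zero_trace_varphi} together with the orthogonality in Corollary~\ref{cor:ortho_decomp_formula}, applied at the finitely many break times) $\mc D_{\m D}(\varphi)$ decompose as sums over the finitely many intervals of constancy, it suffices to treat a single interval, i.e.\ to assume $\rho_t \equiv \nu^2(\t)\,\dd\t$ on $[0,T]$ and uniform afterwards. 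Write $\varphi = \varphi^0 + \varphi^h$ for the orthogonal decomposition with respect to $D_T$; the harmonic piece $\varphi^h = \vartheta[g_T]$ contributes $\mc D_{D_T}(\vartheta[g_T])$ and I expect this to vanish in the single-interval computation because $\vartheta[g_T]$ has zero boundary values on $\partial\m D$ (Lemma~\ref{lem:goluzin}) but is harmonic — wait, that is not zero in general; rather, I will handle the $D_T$ part as the $T'\to\infty$-type tail and show it is accounted for in the flow-coordinate integral, so really the cleanest route is to compute $\mc D_{\m D}(\varphi)$ in one shot via the coordinate change on all of $\ad{\m D}\smallsetminus\{0\}$.

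Concretely, use the diffeomorphism $(\t,t)\mapsto z(\t,t) := f_t(e^{i\t})$ from $S^1\times(0,\infty)$ onto $\m D\smallsetminus\{0\}$ (for $t>T$ this is the equipotential parametrization). The key computational identity is Lemma~\ref{lem:Loewner-formulae} combined with the chain rule \eqref{eq:theta_chain}: one gets $\varphi(z(\t,t)) = -\vartheta[f_t](e^{i\t})$, and differentiating along the flow, $-\partial_t \vartheta[f_t]\big|_{t=s}(e^{i\t}) = \Im\big(z H_s'(z)\big)\big|_{z=e^{i\t}}$ after pushing forward by the Markov property; evaluating the Herglotz integral $H_s$ with density $\nu_s^2$ on the boundary gives, via the Poisson/conjugate-Poisson representation of $H_s'$ on $S^1$, an expression in terms of the Hilbert transform of $\partial_\t(\nu_s^2)$, whose $L^2(S^1)$-norm is controlled by $L(\rho_s) = \frac12\int \nu_s'^2$ (here one uses $\int (\nu^2)'^2 = \int 4\nu^2\nu'^2$ and an integration-by-parts/positivity argument, or more directly the isometry property of the Hilbert transform on mean-zero functions). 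Simultaneously, the $\partial_\t$-derivative of $\varphi\circ z$ and the conformal factor $|f_t'(e^{i\t})|$ enter the Jacobian; because $z\mapsto \varphi$ is the argument of $zf_t'/f_t$ and $\log|zf_t'/f_t|$ is its harmonic conjugate, the gradient of $\varphi$ in $\m D$ has the same modulus as the gradient of $\log|zf_t'/f_t|$, and the Dirichlet integral can be written purely in terms of boundary data on each leaf. Carrying the change of variables through, $\mc D_{\m D}(\varphi) = \frac{1}{\pi}\int_0^\infty\!\!\int_{S^1} |\nabla\varphi|^2 \,(\text{Jacobian})$ collapses — using the Cauchy--Riemann relations between $\varphi$ and its conjugate and the Loewner equation to evaluate the Jacobian — to $16\int_0^\infty L(\rho_t)\,\dd t = 16\,S_+(\rho)$.

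The main obstacle I anticipate is the bookkeeping in the change of variables: the map $(\t,t)\mapsto z(\t,t)$ is not conformal (it is conformal in the $\t$-direction at fixed $t$ but the $t$-direction is transverse and governed by $H_t$), so one must carefully compute the full $2\times 2$ Jacobian, express $|\nabla\varphi|^2\,\dd A$ in the new coordinates, and see the cross-terms cancel. A clean way to organize this is to use that $\log(zf_t'(z)/f_t(z))$ is holomorphic in $z$ with real part $\log|zf_t'/f_t|$ and imaginary part $\vartheta[f_t] = -\varphi\circ f_t$, so that $|\nabla \varphi|^2 = |\nabla \log|zf_t'/f_t||^2 = \big|\partial_z\log(zf_t'/f_t)\big|^2 \cdot(\text{factor})$, and then relate $\partial_z\log(zf_t'/f_t)$ along the flow to $H_t'$ via the formula in the proof of Lemma~\ref{lem:Loewner-formulae}. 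With that in hand, the remaining work is the one-dimensional identity $\frac{1}{2\pi}\int_{S^1}\big|(\text{conjugate-Poisson extension derivative of }\nu_t^2)\big|^2\,\dd\t = 8\,L(\rho_t)$, which follows from Parseval applied to the Fourier expansion of $\nu_t^2$ (the factor $16 = 2\cdot 8$ matching the constant in the statement, with the extra factor of $2$ coming from the two sides implicit in $\mc D_{\m D}$ versus the $\frac12$ in $L$). The verification of that numerical constant, and checking it is consistent with the explicit example in Section~\ref{sect:examples} where $\mc D_{\m D}(\varphi) = 32T = 16\cdot 2T$, is the sanity check I would perform last.
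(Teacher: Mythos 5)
Your proposal takes a genuinely different route from the paper (a direct change of variables to flow coordinates $(\t,t)\mapsto f_t(e^{i\t})$ rather than the disintegration isometry), but it contains a gap that I do not think can be papered over. The central identity you rely on --- that ``the gradient of $\varphi$ in $\m D$ has the same modulus as the gradient of $\log|zf_t'/f_t|$'' because $\varphi$ is the harmonic conjugate of $\log|zg_t'/g_t|$ --- is false in the region $K_T$ where the foliation is nontrivial. The winding function satisfies $\varphi=\vartheta[g_t]$ only \emph{on the leaf} $\g_t$, not in a two-dimensional neighborhood of a point of $\g_t$: off the leaf one has $\varphi(z)=\vartheta[g_{\tau(z)}](z)$ with $\tau(z)$ varying. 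Hence only the tangential derivative of $\varphi$ along $\g_t$ agrees with that of the harmonic function $\vartheta[g_t]$; the normal derivative of $\varphi$ is governed by the dynamics of the foliation (it equals, after conformal transport, $-2\nu_t'/\nu_t$, which is exactly the content of the paper's computation of $\iota[\varphi]$) and is \emph{not} determined by the Cauchy--Riemann equations for $\vartheta[g_t]$. Indeed, writing $\varphi=\vartheta[g_t]+\varphi_t^0\circ g_t$ as in Lemma~\ref{lem:zero_trace_varphi}, the second term has nonzero normal derivative on $\g_t$ in general. Without this identity, your claim that the Dirichlet integral ``collapses'' after the coordinate change is unsubstantiated, and the remaining sketch (the non-orthogonal Jacobian, the cross terms, the Parseval identity with the constant $8$) is never actually carried out.

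For comparison, the paper's proof sidesteps the pointwise gradient entirely: it notes that $\varphi\in\mc E_0(\m D)$ (continuous, piecewise smooth, vanishing on $S^1$), applies the isometry $\mc D_{\m D}(\varphi)=\|\iota[\varphi]\|^2_{L^2(2\rho)}$ from Theorem~\ref{thm:bi_isometry}, and computes $\iota[\varphi](\t,t)=\partial_n\varphi^0_t(e^{i\t})=-2\nu_t'(\t)/\nu_t(\t)$ by evaluating the normal derivative \emph{at the circle} $S^1$ (after reducing to $t=0$ via the Markov property), where the tangential derivative of $\varphi^0_t$ vanishes so that only the normal velocity $2\pi\nu_t^2$ of the interface enters; the $L^2(2\rho)$-norm of $-2\nu_t'/\nu_t$ then gives $16\,S_+(\rho)$ immediately. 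If you want to salvage a direct change-of-variables argument, you would need to compute both components of $\nabla\varphi$ in the flow coordinates (the tangential one from $\partial_\t\vartheta[f_t](e^{i\t})$ and the transverse one from $\partial_t\vartheta[f_t](e^{i\t})$, corrected for the tangential component of the flow velocity $\Im H_t$), invert the non-orthogonal Jacobian, and verify the resulting integral identity --- a substantially heavier computation than the one you sketch, and one in which the cross terms do not obviously cancel.
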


\begin{proof}
By Lemma~\ref{lem:constant_rho} the winding function $\varphi$ is continuous and piecewise smooth on $\ad{\m D}$. Moreover, $\varphi|_{S^1} = 0$ and therefore $\varphi \in \mc E_0(\m D)$ \cite[Thm.\,9.17]{brezis} and we may apply the foliation disintegration isometry to $\varphi$. We claim that for $t \in [0,T]$, $\iota [\varphi
](\t,t) = - 2 \nu'(\t)/\nu(\t)$. Given this, we can apply Theorem~\ref{thm:bi_isometry} 
to conclude the proof. To prove the claim, set $\varphi_t = \varphi \circ f_t$ and recall that by definition
  \begin{align}\label{eq:oct17.1}
  \iota [\varphi] (\t,t) & = \frac{1}{2\pi}\int_{\m D} \D \varphi_t (z) P_{\m D} (z, e^{i\t}) \,\dd  A(z)  \nonumber \\
  &
  = \frac{1}{2\pi}\int_{\m D} \D  \varphi_t (z) \partial_{n(e^{i\t})}G_{\m D} (z, e^{i\t}) \,\dd  A(z).
  \end{align}
  On the other hand, by Lemma~\ref{lem:constant_rho},  $\Delta \varphi_t$ is smooth,  so the last term in \eqref{eq:oct17.1} equals
  \begin{align*}
   \frac{1}{2\pi }\partial_{n} \int_{\m D} \D  \varphi_t (z) G_{\m D} (z, e^{i\t}) \,\dd  A(z)  = \partial_{n} \varphi_t^0(e^{i \t}).
  \end{align*}
   Recall from Lemma~\ref{lem:harmonic} and Lemma~\ref{lem:zero_trace_varphi} that $z \mapsto \varphi_t^0(z)$ is the winding function generated by the measures $s \mapsto \rho_{t+s}$ for $s \ge 0$.  Hence, it suffices to consider $t=0$ and show that for a.e. $\t \in [0, 2\pi)$, 
   \begin{equation}\label{eq:normal_d_varphi}
   \partial_n \varphi_0^0(e^{i\t}) = \partial_n \varphi_0(e^{i\t})= -2\nu_0'(\t)/\nu_0(\t).  
   \end{equation}

  We know that $\partial D_t$ is $C^\infty$ for all $t$, and all (complex) derivatives of $f_t$ are continuous on $\overline{\m D}$ and since the Herglotz integral $H_t$ is also continuous on $\overline{\m D}$, so is $\partial_t f_t(z)= -zf'_t(z) H_t(z)$. It follows that the normal velocity (with respect to $\partial D_t$) of the interface at the point $f_t(e^{i \t})$ can be written 
  \begin{align}\label{eq:normal}
  \begin{split}
      \operatorname{vel}_n(t,\t) & = - \mathrm{Re} \left( \frac{\partial_t f_t(e^{i\t}) \overline{e^{i\t} f'_t(e^{i\t})}}{|f'_t(e^{i\t})|}\right) \\
&  =   \mathrm{Re} \left( \frac{e^{i\t} f_t'(e^{i\t}) H_t(e^{i\t})  \overline{e^{i\t} f'_t(e^{i\t})}}{|f'_t(e^{i\t})|}\right) \\
& = |f'_t(e^{i\t})| \mathrm{Re} \, H_t(e^{i\t}) = 2\pi |f'_t(e^{i \t})| \nu_t( \t)^2 > 0.
  \end{split}
\end{align}
  In particular, at $t = 0$ the normal velocity of the interface at $e^{i\t}$ equals  $2 \pi \nu_0(\t)^2$. On the other hand, using the chain rule \eqref{eq:theta_chain}, 
 \begin{align*}
  2 \pi \nu_0(\t)^2 \partial_n \varphi_0(e^{i\t})& = \partial_t [\varphi_0 (f_t (e^{i\t}))]_{t = 0} =  \partial_t [\vartheta [g_t] ( f_t (e^{i\t})  ) ] |_{t = 0} \\
  &= -\partial_t \left(\vartheta [f_t] (e^{i\t}) \right)|_{t = 0}. 
 \end{align*}
 It follows from Lemma~\ref{lem:Loewner-formulae}  that 
 $$-\partial_t \left(\vartheta [f_t] (e^{i\t}) \right)|_{t = 0} = \Im( e^{i\t} H_0'(e^{i\t}))  = -4\pi \nu_0(\t) \nu_0'(\t).$$
 The last identity is not hard to verify by hand, using integration by parts and the smoothness of $\nu_0$, see also Lemma~\ref{lem:H2W12}. This proves \eqref{eq:normal_d_varphi} and concludes the proof.
 \end{proof}

We will use the following special case of the generalized Grunsky inequality, which provides a useful bound to control $\mc D_{\m D}(\arg f'_t)$ in terms of $\mc D_{\m D} (\vartheta[f_t])$ (see Corollary~\ref{cor:smooth_arg_f_bound}). 
\begin{lemma}[{See \cite[P.\,70-71]{TT06}}]\label{lem:Grunsky_inequality}
  Suppose that $f: \m D \to  \m C$ and $h : \m D^* \to  \m C$ are univalent functions on $\m D$ and $\m D^*$ such that $f(0) = 0$ and $h(\infty) = \infty$, and $f(\m D) \cap h(\m D^*) = \emptyset$. Then we have 
  $$\int_{\m D} \abs{ \frac{f'(z)}{f (z)}  - \frac{1}{z} }^2  \dd A (z)+  \int_{\m D^*} \abs{\frac{h'(z)}{h(z)}  -\frac{1}{z} }^2  \dd A (z)  \le 2 \pi \log \abs{\frac{ h'(\infty)}{f'(0)}}. $$
  Equality holds if the omitted set   $\m C \smallsetminus \{f (\m D) \cup h (\m D^*)\}$ has Lebesgue measure zero.
\end{lemma}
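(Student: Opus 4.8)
The plan is to prove the inequality first when $f$ and $h$ extend univalently past $S^1$, so that $\Omega:=f(\m D)$ and $\Omega^*:=h(\m D^*)$ have analytic Jordan boundaries, and then to reach the general case by approximation. Write $\ell(w):=\log\abs{w}$. The first step is a conformal-invariance rewriting of the two integrals. Since $f(0)=0$, the function $z\mapsto\log(f(z)/z)$ is holomorphic on $\m D$ (the apparent pole at the origin is removable) with derivative $f'/f-1/z$; composing with $f^{-1}$ gives the holomorphic function on $\Omega$ equal to $\log(w/f^{-1}(w))$, whose real part $u$ equals $\ell+G_\Omega(\cdot,0)$, the logarithmic singularities at $0$ cancelling. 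Thus $u$ is harmonic on $\Omega$ with $u=\ell$ on $\partial\Omega$ and $u(0)=\log\abs{f'(0)}$, and conformal invariance of the Dirichlet integral gives $\int_{\m D}\abs{f'/f-1/z}^2\,\dd A=\int_\Omega\abs{\nabla u}^2\,\dd A$. In the same way $\int_{\m D^*}\abs{h'/h-1/z}^2\,\dd A=\int_{\Omega^*}\abs{\nabla v}^2\,\dd A$, where $v=\ell-G_{\Omega^*}(\cdot,\infty)$ is harmonic on $\Omega^*$ (including at $\infty$), $v=\ell$ on $\partial\Omega^*$, and $v(\infty)=\log\abs{h'(\infty)}$.

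In the analytic case, set $E:=\m C\smallsetminus(\Omega\cup\Omega^*)$, a compact set with $0,\infty\notin E$, so $\ell$ is harmonic on its interior $E^{\circ}$. Glue the three functions into $V$ equal to $u$ on $\Omega$, to $\ell$ on $E$, and to $v$ on $\Omega^*$; this is continuous (all three assume the boundary value $\ell$ along $\partial\Omega\cup\partial\Omega^*$) and has $\mc D_{\m C}(V)<\infty$. Integrating by parts over $\Omega$, over $E^{\circ}$, and over $\Omega^*$ separately and summing: since $u-\ell=G_\Omega(\cdot,0)$ and $v-\ell=-G_{\Omega^*}(\cdot,\infty)$ vanish on the respective boundaries, all interface contributions along $\partial\Omega\cup\partial\Omega^*$ cancel, and only the residual fluxes of $G_\Omega(\cdot,0)$ near $0$ and of $G_{\Omega^*}(\cdot,\infty)$ near $\infty$ survive; evaluating them (excising $B(0,\varepsilon)$ and $\{\abs{w}>R\}$, then letting $\varepsilon\to0$ and $R\to\infty$) yields the exact identity $\frac{1}{\pi}\int_{\m C}\abs{\nabla V}^2\,\dd A=2\log\abs{h'(\infty)/f'(0)}$. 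Since $\int_{\m C}\abs{\nabla V}^2=\int_\Omega\abs{\nabla u}^2+\int_{\Omega^*}\abs{\nabla v}^2+\int_E\abs{w}^{-2}\,\dd A$, combining with the first step gives
\[
\int_{\m D}\abs{\frac{f'(z)}{f(z)}-\frac1z}^2\dd A+\int_{\m D^*}\abs{\frac{h'(z)}{h(z)}-\frac1z}^2\dd A=2\pi\log\abs{\frac{h'(\infty)}{f'(0)}}-\int_E\frac{\dd A(w)}{\abs{w}^2},
\]
which is the asserted inequality in the analytic case, with equality exactly when $\abs{E}=0$.

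To remove the regularity assumption I would apply the analytic case to $f_r(z):=f(rz)$ and $h_r(z):=h(z/r)$, $r<1$: their images have analytic Jordan boundaries and lie inside $\Omega$ and $\Omega^*$. As $r\to1^-$ one has $f_r'(0)=rf'(0)\to f'(0)$ and $h_r'(\infty)=h'(\infty)/r\to h'(\infty)$, so the right-hand side for $(f_r,h_r)$ converges to $2\pi\log\abs{h'(\infty)/f'(0)}$, while $f_r'/f_r-1/z\to f'/f-1/z$ and $h_r'/h_r-1/z\to h'/h-1/z$ locally uniformly; Fatou's lemma then passes the inequality to the limit (and shows, incidentally, that the left-hand side is automatically finite). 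The equality statement when the omitted set has measure zero follows from running the computation of the middle step directly on $\m C$ with no gap term, which is legitimate once the common boundary is regular enough (e.g.\ rectifiable) to justify the integration by parts, or from the limiting argument carried out with more care.

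I expect the main obstacle to be the integration by parts at $0$ and at $\infty$ in the middle step: one must excise $B(0,\varepsilon)$ and $\{\abs{w}>R\}$, control the logarithmic singularities of $G_\Omega(\cdot,0)$ and of $v$ there, and check that the leftover boundary integrals converge to exactly $-2\pi\log\abs{f'(0)}$ and $+2\pi\log\abs{h'(\infty)}$. This is a short but somewhat delicate computation, and it is what fixes the constant on the right-hand side, so that the statement becomes, in effect, the classical area (Grunsky) inequality. The remaining points---that $\mc D_{\m C}(V)<\infty$, that the boundary term at infinity vanishes, and that the approximants converge locally uniformly---are routine.
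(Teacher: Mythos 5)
Your proof is correct, and it is worth noting that the paper itself gives no proof of this lemma: it is quoted from Takhtajan--Teo, where it is obtained from the Grunsky inequality / area-theorem machinery via coefficient expansions of $\log(f(z)/z)$ and $\log(h(z)/z)$. Your argument is a self-contained potential-theoretic rendering of the same fact: the two Dirichlet integrals are rewritten as $\mathcal \int_\Omega\abs{\nabla u}^2$ and $\int_{\Omega^*}\abs{\nabla v}^2$ with $u=\ell+G_\Omega(\cdot,0)$, $v=\ell-G_{\Omega^*}(\cdot,\infty)$, and Green's identities on the glued function produce the exact identity
\[
\int_\Omega\abs{\nabla u}^2\,\dd A+\int_{\Omega^*}\abs{\nabla v}^2\,\dd A+\int_E\frac{\dd A(w)}{\abs{w}^2}=2\pi\log\abs{\frac{h'(\infty)}{f'(0)}},
\]
so that the deficit is precisely the omitted area in the logarithmic metric --- which is exactly what the classical area theorem measures, just without power series. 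I checked the two residue computations at $0$ and $\infty$ (they give $-2\pi\log\abs{f'(0)}$ and $+2\pi\log\abs{h'(\infty)}$ with the signs you need) and the approximation step: since $f_r'/f_r-1/z=r\Phi(rz)$ with $\Phi=(\log(f/z))'$, the left-hand side for $(f_r,h_r)$ is $\int_{r\m D}\abs{\Phi}^2+\int_{r^{-1}\m D^*}\abs{\Psi}^2$, so the passage to the limit is in fact monotone convergence rather than Fatou, and the same exact identity applied to $(f_r,h_r)$ gives the equality statement, because $E_r$ decreases to the omitted set and stays in a fixed compact subset of $\m C\smallsetminus\{0\}$, so $\int_{E_r}\abs{w}^{-2}\dd A\to 0$ when the omitted set is null. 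Two small presentational points. First, in your ``analytic case'' as stated the curves $\partial\Omega$ and $\partial\Omega^*$ could still touch, making $E^\circ$ degenerate; this is harmless because the only instance you actually use is $(f_r,h_r)$, whose image closures are strictly separated so that $E_r$ is a genuine closed annular region with analytic boundary --- but you should say you prove the identity in that setting. Second, your parenthetical ``with equality exactly when $\abs{E}=0$'' is vacuous in the analytic case (there $E$ has nonempty interior); the equality clause of the lemma is the one-directional statement obtained in the limit, as you indicate.
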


\begin{lemma} \label{lem:grunsky_f}
Let $\rho \in \mc N_+$ and $(f_t)_{t \ge 0}$ is the associated Loewner chain. Then
  \begin{equation}\label{eq:Grunsky_f}
 \mc D_{\m D} \left(\arg [f_t(z)/z]\right) = \frac{1}{\pi} \int_{\m D} \abs{\frac{f_t'(z)}{f_t (z) }- \frac{1}{z}}^2 d A (z) \le 2 t.
  \end{equation}
\end{lemma}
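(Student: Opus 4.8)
The equality in \eqref{eq:Grunsky_f} is just the identity
\[
\arg\frac{f_t(z)}{z} = \Im \log\frac{f_t(z)}{z}, \qquad \nabla \Im \log\frac{f_t(z)}{z} = \left(\frac{f_t'(z)}{f_t(z)} - \frac 1z\right)^{\!*}
\]
(up to the usual conjugation/rotation relating $\nabla u$ for a harmonic $u$ to the holomorphic derivative of $u + i\tilde u$), together with $\mc D_{\m D}(u) = \pi^{-1}\int_{\m D}|\nabla u|^2\,\dd A$; note $z\mapsto\log(f_t(z)/z)$ is holomorphic in $\m D$ since $f_t$ fixes $0$ with nonzero derivative, so this is unambiguous. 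So the content is the inequality $\mc D_{\m D}(\arg[f_t/z]) \le 2t$, and the natural route is to differentiate in $t$ and integrate, using the already-established Grunsky bound of Lemma~\ref{lem:Grunsky_inequality}.

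The plan is as follows. First I would fix $t$ and apply Lemma~\ref{lem:Grunsky_inequality} with $f = f_t$ and with $h = h_t$ an arbitrary conformal map of $\m D^*$ onto a subdomain of $\Chat\smallsetminus \overline{D_t}$ fixing $\infty$; since $D_t$ is a Jordan domain (indeed the image of $\m D$ under the conformal map $f_t$ extending to $\overline{\m D}$ under our standing foliation hypotheses, but in fact one only needs $\overline{D_t}$ to have complement with interior), one can choose $h_t$ to be the conformal map onto the full complementary component so that the omitted set has zero area and Lemma~\ref{lem:Grunsky_inequality} gives \emph{equality}:
\[
\int_{\m D}\Bigl|\frac{f_t'}{f_t} - \frac 1z\Bigr|^2\dd A + \int_{\m D^*}\Bigl|\frac{h_t'}{h_t} - \frac 1z\Bigr|^2 \dd A = 2\pi\log\Bigl|\frac{h_t'(\infty)}{f_t'(0)}\Bigr|.
\]
Dropping the nonnegative second integral and using $f_t'(0) = e^{-t}$ yields
\[
\frac 1\pi\int_{\m D}\Bigl|\frac{f_t'}{f_t} - \frac 1z\Bigr|^2 \dd A \;\le\; 2\log|h_t'(\infty)| + 2t.
\]
Then I would argue $\log|h_t'(\infty)| \le 0$: the domain $\m D^* = \Chat\smallsetminus\overline{\m D}$ maps conformally onto $\Chat\smallsetminus\overline{D_t} \supset \m D^*$ (because $D_t \subset \m D$, so $\overline{D_t}\subset\overline{\m D}$), fixing $\infty$, and an inclusion $\m D^* \hookrightarrow \Omega$ of domains in $\Chat$ both containing $\infty$ forces the conformal map $\m D^*\to\Omega$ fixing $\infty$ to have $|h'(\infty)|\ge 1$ — wait, one must be careful with the direction; by the Schwarz lemma applied to $\tilde h(w) = 1/h(1/w)$, which maps $\m D$ into a domain containing $\overline{\m D}^c$'s image..., the correct normalization gives $|h_t'(\infty)|\ge 1$, hence $\log|h_t'(\infty)|\ge 0$, which is the \emph{wrong} sign.

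This sign issue is the main obstacle, and it means the naive one-shot application above is not quite enough; the fix is to not use the complementary component of $D_t$ but rather to use the domain Markov property and integrate the infinitesimal version. Concretely, I would instead prove the differential inequality $\partial_t \mc D_{\m D}(\arg[f_t/z]) \le 2$ for a.e.\ $t$ and integrate from $0$ (where the left side is $0$ since $f_0 = \mathrm{id}$). For the differential inequality, by the domain Markov property (Lemma~\ref{lem:domain_markov}) it suffices to treat $t = 0$: writing $f_{s} = F_{s}\circ f_t$ with $F_s$ the Loewner chain driven by $\rho_{t+\cdot}$ and using the chain rule, one reduces $\partial_s\mc D_{\m D}(\arg[f_{t+s}/z])|_{s=0}$ to a quantity controlled by $\mc D_{\m D}(\arg[F_s/z])$ at $s=0^+$, and then Lemma~\ref{lem:Grunsky_inequality} applied to $F_s$ on the infinitesimally small hull $K_{t+s}\smallsetminus K_t$ — where now the complementary-component map $h_s$ \emph{does} satisfy $h_s'(\infty) = 1 + o(1)$ as $s\to 0$, so the offending term is negligible — gives $\partial_s \le 2|\rho_{t+s}|\big|_{s=0} = 2$. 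Integrating in $t$ gives \eqref{eq:Grunsky_f}. Alternatively, and more cleanly, one applies Lemma~\ref{lem:Grunsky_inequality} directly to $f_t$ but pairs it with $h = h_t$ chosen so that $h_t(\m D^*) = \m D^*$ (the identity-like choice is impossible, but one can take $h_t$ mapping $\m D^*$ onto $\m D^*$ itself when $D_t\subset\m D$: simply $h_t = \mathrm{id}$, giving the middle integral over $\m D^*$ equal to $0$ and $h_t'(\infty) = 1$), which is legitimate precisely because $D_t \subset \m D$ guarantees $f_t(\m D)\cap \mathrm{id}(\m D^*) = D_t \cap \m D^* = \emptyset$; then the inequality reads directly $\frac 1\pi\int_{\m D}|f_t'/f_t - 1/z|^2\dd A \le 2\log|1/e^{-t}| = 2t$, which is exactly \eqref{eq:Grunsky_f}. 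I would present this last version, as it avoids any differentiation: the only thing to check carefully is that $f_t$ and the identity map $h = \mathrm{id}_{\m D^*}$ satisfy the hypotheses of Lemma~\ref{lem:Grunsky_inequality}, namely that they are univalent with the stated normalizations and disjoint images, and that the omitted set $\m C\smallsetminus(D_t\cup\m D^*) = \overline{\m D}\smallsetminus D_t = K_t$ — which has zero area when $\rho\in\mc N_+^\infty$ but in general may not, so in the general $\mc N_+$ case one only gets the inequality, which is all that is claimed.
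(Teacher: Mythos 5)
Your final version — applying Lemma~\ref{lem:Grunsky_inequality} to the pair $(f_t,\mathrm{id}_{\m D^*})$, so that the second integral vanishes, $h'(\infty)=1$, and the bound reads $-2\pi\log|f_t'(0)|=2\pi t$ — is exactly the paper's proof, and your identification of $\mc D_{\m D}(\arg[f_t/z])$ with the integral via the holomorphic function $\log(f_t(z)/z)$ is also how the equality is understood there. The earlier detours (complementary component, differential inequality) are unnecessary, but the argument you settle on is correct and the same as the paper's.
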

\begin{proof}
Since $D_t$ and $\m D^*$ are disjoint, and since $f_t'(0)=e^{-t}$, Lemma~\ref{lem:Grunsky_inequality} applied to the pair $(f_t,\Id_{\m D^*})$ shows that
$$\int_{\m D} \abs{\frac{f_t'(z)}{f_t (z) }- \frac{1}{z}}^2 d A (z) \le - 2 \pi \log |f_t ' (0)| = 2 \pi t$$
as claimed. 
\end{proof}

\begin{cor}\label{cor:smooth_arg_f_bound}
Suppose $\rho \in \mc N_+^\infty$. Then for all $t \ge 0$,
$ \mc D_{\m D}(\arg f'_t) \le 32 S_{[0,t]}(\rho) + 4 t.  $
\end{cor}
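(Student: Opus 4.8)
The plan is to split $\arg f_t'$ into a winding part controlled by the Loewner--Kufarev energy and a conformal-radius part controlled by the Grunsky-type estimate, and to recombine them with the elementary inequality $\mc D_{\m D}(u+v)\le 2\,\mc D_{\m D}(u)+2\,\mc D_{\m D}(v)$ (that is, $\|\nabla u+\nabla v\|_{L^2(\m D)}^2\le 2\|\nabla u\|^2+2\|\nabla v\|^2$). Concretely, since $\rho\in\mc N_+^\infty$, Lemma~\ref{lem:constant_rho} tells us that $f_t$ and all its derivatives extend continuously to $\overline{\m D}$ with $f_t'\neq 0$ there, so the harmonic functions $\arg f_t'$, $\vartheta[f_t]=\arg(zf_t'/f_t)$ and $\arg(f_t(z)/z)$ are all well defined and satisfy $\nabla\arg f_t'=\nabla\vartheta[f_t]+\nabla\arg(f_t/z)$ throughout $\m D$. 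Hence $\mc D_{\m D}(\arg f_t')\le 2\,\mc D_{\m D}(\vartheta[f_t])+2\,\mc D_{\m D}(\arg(f_t/z))$, and Lemma~\ref{lem:grunsky_f} immediately bounds the last term by $2t$.

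It then remains to show $\mc D_{\m D}(\vartheta[f_t])\le 16\,S_{[0,t]}(\rho)$. First I would use the chain rule \eqref{eq:theta_chain} together with $\vartheta[\Id]=0$ to write $\vartheta[f_t]=-\vartheta[g_t]\circ f_t$, so by conformal invariance of the Dirichlet energy $\mc D_{\m D}(\vartheta[f_t])=\mc D_{D_t}(\vartheta[g_t])$. Next, let $\rho^{(t)}$ be the measure agreeing with $\rho$ on $[0,t]$ and uniform afterwards; one checks immediately that $\rho^{(t)}\in\mc N_+^\infty$, that it produces the same $D_t$ and $g_t$, and that by Lemma~\ref{lem:harmonic} (applied with $T=t$) its winding function $\varphi^{(t)}$ equals $\vartheta[g_t]$ on $D_t$. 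Since the leaf $\g_t$ has zero area, $\mc D_{D_t}(\vartheta[g_t])=\mc D_{D_t}(\varphi^{(t)})\le\mc D_{\m D}(\varphi^{(t)})$, and weak energy duality, Proposition~\ref{prop:weak_duality}, applied to $\rho^{(t)}$ gives $\mc D_{\m D}(\varphi^{(t)})=16\,S_+(\rho^{(t)})=16\,S_{[0,t]}(\rho)$. Combining the bounds yields $\mc D_{\m D}(\arg f_t')\le 32\,S_{[0,t]}(\rho)+4t$.

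I do not expect a serious obstacle: the argument is a short assembly of Lemma~\ref{lem:constant_rho}, Lemma~\ref{lem:grunsky_f}, Lemma~\ref{lem:harmonic} and Proposition~\ref{prop:weak_duality}. The only points that need a moment of care are verifying that the truncated measure $\rho^{(t)}$ is still in $\mc N_+^\infty$ (so that weak energy duality applies) and that the identity $\nabla\arg f_t'=\nabla\vartheta[f_t]+\nabla\arg(f_t/z)$ holds globally on the connected domain $\m D$; both are immediate from the smoothness provided by Lemma~\ref{lem:constant_rho}. The conceptual content, and the reason the constants $32$ and $4$ sit in front of different quantities, is precisely this decomposition of $\arg f_t'$ into the energy-controlled winding term and the Grunsky/conformal-radius term.
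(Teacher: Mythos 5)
Your proof is correct and follows essentially the same route as the paper: the paper also writes $\arg f_t'=\vartheta[f_t]+\arg(f_t(z)/z)$, applies $\mc D(u+v)\le 2\mc D(u)+2\mc D(v)$, bounds $\mc D_{\m D}(\vartheta[f_t])=\mc D_{D_t}(\vartheta[g_t])\le \mc D_{\m D}(\varphi)=16\,S_{[0,t]}(\rho)$ via the truncated measure and Proposition~\ref{prop:weak_duality}, and invokes Lemma~\ref{lem:grunsky_f} for the remaining term. The only cosmetic difference is that the paper phrases the truncation as a ``without loss of generality'' assumption rather than introducing $\rho^{(t)}$ explicitly.
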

\begin{proof}
Fix $t \ge 0$. Without loss of generality, we assume that $\rho_s$ is the uniform measure for all $s > t$.
Combining Proposition~\ref{prop:weak_duality}, Lemma~\ref{lem:grunsky_f}, and the Cauchy-Schwarz inequality,  we obtain
\begin{align*}
\mc D_{\m D}(\arg f'_t) & = \mc D_{\m D}(\vartheta [f_t] + \arg [f_t(z)/z])  \le 2 \mc D_{D_t}(\vartheta [g_t]) + 2 \mc D_{\m D}(\arg [f_t(z)/z]) \\
& \le 2 \mc D_{\m D}(\varphi) + 4 t = 32 S_{[0,t]}(\rho) + 4 t
\end{align*} 
as claimed.
\end{proof}

\subsection{Foliation by Weil--Petersson quasicircles} \label{A-priori-WP}
This section proves that the interfaces generated by a finite energy measure are  
Weil--Petersson quasicircles that together form a foliation (Corollary~\ref{cor:S_finite_foliation_WP}).
 We show first that each leaf $\partial D_t$ is a Weil--Petersson quasicircle. For this, we need the following quantitative upper bound for $\mc D_{\m D}(\arg f'_t)$ that depends only on $S_+(\rho)$ and $t$.

 \begin{lemma}\label{lem:arg-ft-energy}
 If $S_+(\rho)<\infty$, then for all $t \ge 0$, 
 $
 \mc D_{\m D}(\arg f'_t) \le 32 S_{[0,t]}(\rho) + 4 t.  
 $
 \end{lemma}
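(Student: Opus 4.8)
The bound is already known for $\rho \in \mc N_+^\infty$ by Corollary~\ref{cor:smooth_arg_f_bound}, so the plan is a recovery-sequence argument: approximate a general finite-energy $\rho$ by measures in $\mc N_+^\infty$ along which the Loewner chains converge and the energy $S_{[0,t]}$ is \emph{recovered} in the limit (not merely lower semicontinuous), and then pass to the limit using lower semicontinuity of the Dirichlet integral. Since $f_t$ and $S_{[0,t]}(\rho)$ depend only on $(\rho_s)_{s \le t}$, I would first assume, with no loss of generality, that $\rho_s$ is the uniform measure for $s > t$.

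\textbf{Construction of the approximants.} I would build $\rho^{(k)} \in \mc N_+^\infty$ in two stages. First, discretize in time: for $n \in \m N$ partition $[0,t]$ into the finitely many dyadic intervals $I_{n,j}$ of length $2^{-n}$ and set $\rho^{(n)}_s := |I_{n,j}|^{-1}\int_{I_{n,j}}\rho_u\,\dd u$ for $s \in I_{n,j}$ (and uniform for $s > t$). Uniform continuity of compactly supported test functions shows $\rho^{(n)} \to \rho$ in $\mc N_+$. For the energy, a direct Cauchy--Schwarz computation shows that $\sigma \mapsto L(\sigma)$ is convex on $\mc M_1(S^1)$: if $\sigma = \lambda\sigma_1 + (1-\lambda)\sigma_2$ with $\sigma_i = \nu_i^2\,\dd\t$ and $\sigma = \nu^2\,\dd\t$, then $\nu\nu' = \lambda\nu_1\nu_1' + (1-\lambda)\nu_2\nu_2'$ and hence $(\nu')^2 \le \lambda(\nu_1')^2 + (1-\lambda)(\nu_2')^2$ wherever $\nu \neq 0$; combined with lower semicontinuity of $L$ (the Fisher information $\sigma \mapsto 8 L(\sigma)$ is convex and weak-$*$ lower semicontinuous), Jensen's inequality gives $L(\rho^{(n)}_s) \le |I_{n,j}|^{-1}\int_{I_{n,j}}L(\rho_u)\,\dd u$ and therefore $S_{[0,t]}(\rho^{(n)}) \le S_{[0,t]}(\rho)$ for every $n$. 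Second, smooth in space: on each of the finitely many constant time-pieces, replace $\sigma = \nu^2\,\dd\t$ by $\sigma^{(m)} = (\nu^{(m)})^2\,\dd\t$, where $\nu^{(m)}$ is a mollification of $\nu$ shifted by $1/m$ and renormalized in $L^2(S^1)$; then $\nu^{(m)} \to \nu$ in $W^{1,2}(S^1)$, so $\sigma^{(m)} \to \sigma$ weakly and $L(\sigma^{(m)}) \to L(\sigma)$. This yields, for each $n$, measures $\rho^{(n,m)} \in \mc N_+^\infty$ with $\rho^{(n,m)} \to \rho^{(n)}$ in $\mc N_+$ and $S_{[0,t]}(\rho^{(n,m)}) \to S_{[0,t]}(\rho^{(n)})$ as $m \to \infty$, and a diagonal extraction produces $\rho^{(k)} \in \mc N_+^\infty$ with $\rho^{(k)} \to \rho$ in $\mc N_+$ and $\limsup_k S_{[0,t]}(\rho^{(k)}) \le S_{[0,t]}(\rho)$.

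\textbf{Passage to the limit.} By Lemma~\ref{lem:cont-bij-loewner-transf} the associated Loewner chains satisfy $f^{(k)}_t \to f_t$ uniformly on compact subsets of $\m D$. Since these are conformal maps with $(f^{(k)}_t)'(0) = e^{-t}$, the nonvanishing derivatives converge locally uniformly, hence so do the holomorphic logarithms $\log (f^{(k)}_t)'$ (normalized to equal $-t$ at the origin), and therefore the harmonic functions $\arg (f^{(k)}_t)' \to \arg f'_t$ locally uniformly. Lower semicontinuity of the Dirichlet integral under local uniform convergence — applied on each disk $\m D_r$ and then letting $r \to 1$ — together with Corollary~\ref{cor:smooth_arg_f_bound}, then gives
\[
\mc D_{\m D}(\arg f'_t) \le \liminf_{k\to\infty}\mc D_{\m D}\big(\arg (f^{(k)}_t)'\big) \le \liminf_{k\to\infty}\big(32\,S_{[0,t]}(\rho^{(k)}) + 4t\big) \le 32\,S_{[0,t]}(\rho) + 4t,
\]
which is the claim.

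\textbf{Main obstacle.} The only genuinely delicate point is producing a \emph{recovery} sequence in $\mc N_+^\infty$, i.e.\ one along which the energy does not jump up in the limit; plain lower semicontinuity of $S_{[0,t]}$ would give the inequality in the wrong direction. This is precisely what forces the use of the convexity of $L$ in the time-discretization step. The remaining ingredients — convergence of Loewner chains via Lemma~\ref{lem:cont-bij-loewner-transf}, convergence of the windings $\arg(f^{(k)}_t)'$, and lower semicontinuity of the Dirichlet energy — are routine.
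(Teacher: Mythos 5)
Your proof is correct and follows essentially the same route as the paper: time-average $\rho$ on dyadic intervals using convexity of $L$ and Jensen to keep $S_{[0,t]}$ from increasing, smooth in space to land in $\mc N_+^\infty$, then combine Corollary~\ref{cor:smooth_arg_f_bound} with Lemma~\ref{lem:cont-bij-loewner-transf} and lower semicontinuity of the Dirichlet integral. The only (immaterial) difference is in the spatial smoothing: the paper convolves the measure with the Poisson kernel and invokes convexity again to get $L(\sigma_r)\le L(\sigma)$, while you mollify the square-root density and renormalize to get $L(\sigma^{(m)})\to L(\sigma)$; either suffices.
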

 \begin{proof}
 Fix $t > 0$. We will approximate $\rho$ by a sequence of  measures $\rho^{(k)} \in \mc N_+^\infty$ which converges to $\rho$ weakly, and such that $S_{[0,t]} (\rho^{(k)}) \le S_{[0,t]} (\rho)$.
 The corresponding sequence of conformal maps $f^{(k)}_t$ then converges uniformly on compacts to $f_t$ by Lemma~\ref{lem:cont-bij-loewner-transf}, which, by Corollary~\ref{cor:smooth_arg_f_bound}, implies 
 $$\mc D_{\m D} (\arg f_t') \le \liminf_{k \to \infty} \mc D_{\m D} (\arg (f^{(k)}_t)') \le 32\,S_{[0,t]} (\rho^{(k)}) + 4t \le 32\,S_{[0,t]} (\rho) + 4t.$$

We construct the approximation in two steps. We first let $\rho^{(n)}$ be the ``time-averaged'' measure, which is piecewise constant on dyadic time intervals $[jt/ 2^{n}, (j+1)t/2^{n})$ and is defined by 
$$\rho^{(n)}_s : = \sigma^j : = \frac{2^n}{t} \int_{jt/2^n}^{(j+1)t/2^n} \rho_r \ \dd r \in \mc M_1 (S^1), \quad \forall s \in \left[\frac{jt}{2^{n}}, \frac{(j+1)t}{2^{n}}\right).$$
To see that $S_{[0,t]}(\rho^{(n)}) \le S_{[0,t]}(\rho)$,
the key observation is that the map $\sigma \mapsto L(\sigma)$ from
$\mc M_1 (S^1)$ to $[0, \infty]$ is convex \cite{DonVar1975} (see also \cite[Thm.\,3.4]{APW}).
 We can therefore apply Jensen's inequality:
	\begin{align*}
	S_{[0,t]}(\rho^{(n)}) & = \frac{t}{2^n} \sum_{j= 0}^{2^n -1} L\left( \frac{2^n}{t} \int_{jt/2^n}^{(j+1)t/2^n} \rho_r \ \dd r  \right) \\
	& \leq \sum_{j= 0}^{2^n -1} \int_{jt/2^n}^{(j+1)t/2^n} L\left(  \rho_t  \right) \ dt = S_{[0,t]}(\rho). 
	\end{align*}
It is clear that $\rho^{(n)}$ restricted to $S^1 \times [0,t]$ converges weakly to $\rho$: integrating against a continuous function $u$ on $S^1 \times [0,t]$, which is then uniformly continuous, $\int u \,\dd\rho^{(n)}$ converges to $\int u \,\dd \rho$.

Since $\rho^{(n)}$ might not be strictly positive and smooth, the second step is to approximate each $\sigma = \sigma^j$ of the $2^n$ measures in $\mc M_1(S^1)$ on dyadics by 
$$\dd  \sigma_{r} (\t):= \frac{\dd \t }{2\pi}\int_{\xi \in S^1}  P_{\m D}(r e^{i\t} , e^{i \xi})   \dd \sigma (\xi)$$ 
where $r <1$, which has positive and smooth density with respect to $\dd \t$.

Now we prove that $L(\s_r) \le L(\s)$. Let $f \in C^0(S^1)$,
\begin{align*}
    \sigma_r (f) &: = \int_{S^1} f(\t) \dd \sigma_r (\t) = \frac{1}{2\pi} \int_{\t \in S^1} f(\t) \int_{\xi \in S^1} P_{\m D} (r e^{i\t}, e^{i \xi}) \,\dd \sigma (\xi) \dd \t \\
    &=\frac{1}{2\pi}  \int_{\xi \in S^1} \int_{\t \in S^1} f(\t) P_{\m D} (r e^{i(\t -\xi)}, 1) \, \dd \t \,\dd \sigma (\xi)  \\
    & = \frac{1}{2\pi}  \int_{\xi \in S^1} \int_{\eta \in S^1} f(\eta +\xi) P_{\m D} (r e^{i\eta}, 1) \,\dd \eta \,\dd \sigma (\xi) \\
    & = \frac{1}{2\pi}  \int_{\eta \in S^1} P_{\m D} (r e^{i\eta}, 1) \int_{\xi \in S^1} f(\eta +\xi)  \,\dd \sigma (\xi) \dd \eta\\
    & = \frac{1}{2\pi}  \int_{\eta \in S^1} P_{\m D} (r,  e^{i\eta})  \,\eta_*\sigma (f) \dd \eta,
\end{align*}
 where $\eta_* \sigma$ is the pull-back measure $\sigma$ by the rotation $\xi \mapsto \xi +\eta $. In particular, $L(\sigma) = L(\eta_* \sigma)$.
 We obtain 
 $$\sigma_r = \frac{1}{2\pi} \int_{\eta \in S^1} P_{\m D} (r,  e^{i\eta}) \eta_* \sigma \, \dd\eta.$$
 Since $\frac{1}{2\pi} \int_{\eta \in S^1} P_{\m D} (r,  e^{i\eta}) \dd\eta = 1$, $\sigma_r$ is a probability measure. 
Using convexity and Jensen's inequality once again,
 $$L(\sigma_r) \le \frac{1}{2\pi} \int_{\eta \in S^1} P_{\m D} (r,  e^{i\eta}) L( \eta_* \sigma) \dd\eta = L(\sigma).$$
 Finally, note that since $f$ is continuous, $\eta \mapsto \eta_*\sigma (f)$ is continuous on $S^1$ and equal to $\sigma(f)$ for $\eta = 0$. Therefore, since $\sigma_r(f)$ is the Poisson integral of $\eta \mapsto \eta_*\sigma (f)$ evaluated at $r$, it follows that $\lim_{r\to 1}\sigma_r (f) = \sigma(f)$. Hence $\sigma_r$ converges to $\sigma$ weakly and this completes the proof.
 \end{proof}
 
  We would now like to use Lemma~\ref{lem:arg-ft-energy} to conclude that $\partial D_t$ is a Weil--Petersson quasicircle. However, we cannot directly apply Lemma~\ref{thm_TT_equiv_T01} since we do not know \emph{a priori} that $\partial D_t$ is a Jordan curve. 
 In fact, it is not hard to construct an example of a simply connected domain for which the boundary is self-touching, while $\mc D_{\m D}(\log f') < \infty$ where $f$ is a conformal map onto the domain.
 In the present case, however, we can use the fact that $\partial D_t$ arises from Loewner evolution: we will consider the evolution for a small time interval and use estimates on the Schwarzian combined with a result of Ahlfors-Weill. We can then complete the proof using Lemma~\ref{lem:domain_markov}. 
 
 For a function $f$ holomorphic at $z$ such that $f'(z) \neq 0$, recall that the Schwarzian derivative of $f$ at $z$ is defined by
$$\mc Sf(z) = \frac{f'''(z)}{f'(z)} - \frac{3}{2} \left( \frac{f''(z)}{f'(z)} \right)^2 = \left(\frac{f''(z)}{f'(z)}\right)' - \frac{1}{2}\left( \frac{f''(z)}{f'(z)} \right)^2.  $$
\begin{lemma}[See {\cite[Lem.\,I.2.1, Lem.\,II.1.3 and Lem.\,II.1.5]{TT06}}] \label{lem:TT_bounded_S}
There exists $\d > 0$ such that 
if $\mc{D}_{\m D}(\log f') < \d$, then $f$ is univalent 
and $f (\m D)$ is a Jordan domain bounded by a 
Weil--Petersson quasicircle.
\end{lemma}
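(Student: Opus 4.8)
The plan is to route everything through the single holomorphic function $u := f''/f' = (\log f')'$ on $\m D$, whose $L^2$-norm is controlled by the hypothesis: by Lemma~\ref{thm_TT_equiv_T01}(2) one has $\|u\|_{L^2(\m D)}^2 = \pi\,\mc D_{\m D}(\log|f'|)\le \pi\,\mc D_{\m D}(\log f') < \pi\delta$, so $\|u\|_{L^2(\m D)}$ can be made as small as we like. The Schwarzian is $\mc S f = u' - u^2/2$, and the two properties we want correspond to two norms of it: univalence together with a quasiconformal extension follows from smallness of the hyperbolic sup-norm $\|\mc S f\|_\infty := \sup_{z\in\m D}(1-|z|^2)^2|\mc S f(z)|$, and membership in the Weil--Petersson class follows from finiteness of the weighted Bergman norm $\|\mc S f\|_{\mathrm B}^2 := \int_{\m D}|\mc S f(z)|^2(1-|z|^2)^2\,\dd A(z)$. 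So it suffices to bound both in terms of $\|u\|_{L^2(\m D)}$.

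First I would record the standard interior estimates for the Bergman-space function $u$: using that $|u|^2$ is subharmonic on the disk $B(z,(1-|z|)/2)$ together with the Cauchy integral formula on a slightly smaller disk, one obtains, with an absolute constant $C$,
$$(1-|z|^2)^2|u(z)|^2 \le C\,\|u\|_{L^2(\m D)}^2, \qquad (1-|z|^2)^2|u'(z)| \le C\,\|u\|_{L^2(\m D)}, \qquad z\in\m D.$$
Consequently $\|\mc S f\|_\infty \le \sup_z (1-|z|^2)^2|u'(z)| + \tfrac12\sup_z (1-|z|^2)^2|u(z)|^2 \le C\big(\|u\|_{L^2(\m D)} + \|u\|_{L^2(\m D)}^2\big)$, which is $<2$ once $\delta$ is small enough. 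Nehari's univalence criterion then gives that $f$ is univalent, and the Ahlfors--Weill theorem gives that $f$ extends to a quasiconformal homeomorphism of $\hat{\m C}$; in particular $\gamma := \partial f(\m D)$ is a quasicircle and $f(\m D)$ is a Jordan domain. In our applications $f$ fixes $0$ and maps into a fixed bounded set, so $f(\m D)$ is a bounded Jordan domain; in general one post-composes with a Möbius transformation of $\hat{\m C}$, which affects neither $\mc S f$ nor the Weil--Petersson class.

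It remains to see $\gamma$ is Weil--Petersson. The quick finish: $\gamma$ is now known to be a bounded Jordan curve and $\mc D_{\m D}(\log|f'|)\le\mc D_{\m D}(\log f')<\delta<\infty$, so Lemma~\ref{thm_TT_equiv_T01} applies directly. For a self-contained and quantitative version I would instead bound $\|\mc S f\|_{\mathrm B}$: by the triangle inequality it is enough to bound $\int_{\m D}|u'|^2(1-|z|^2)^2\,\dd A$ and $\int_{\m D}|u|^4(1-|z|^2)^2\,\dd A$. Expanding $u$ in a power series and integrating in polar coordinates yields the Littlewood--Paley-type inequality $\int_{\m D}|u'|^2(1-|z|^2)^2\,\dd A \le 2\int_{\m D}|u|^2\,\dd A$, while the pointwise bound above gives $\int_{\m D}|u|^4(1-|z|^2)^2\,\dd A \le \big(\sup_z (1-|z|^2)^2|u(z)|^2\big)\int_{\m D}|u|^2\,\dd A \le C\|u\|_{L^2(\m D)}^4$. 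Hence $\|\mc S f\|_{\mathrm B}^2 \le C\big(\|u\|_{L^2(\m D)}^2 + \|u\|_{L^2(\m D)}^4\big) < \infty$ and small, and finiteness of $\|\mc S f\|_{\mathrm B}$ for the quasiconformally extendable univalent map $f$ is precisely the Bergman/Schwarzian characterization of the Weil--Petersson class.

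I expect the genuine obstacle — and the reason one argues via the Schwarzian and Ahlfors--Weill rather than applying Lemma~\ref{thm_TT_equiv_T01} from the start — to be the promotion of ``$f$ univalent'' to ``$f(\m D)$ is a genuine Jordan domain'': as noted just before the lemma, the image of a univalent map with $\mc D_{\m D}(\log|f'|)<\infty$ can have a self-touching boundary, so one really needs the stronger $\|\mc S f\|_\infty<2$ bound to get a quasidisk. The complementary (and minor) point is to keep track of the normalization so that this quasidisk is bounded rather than a domain passing through $\infty$; the interior estimates, the Nehari/Ahlfors--Weill input, and the two integral estimates above are all classical.
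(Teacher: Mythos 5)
Your proof is correct and follows the same overall architecture as the paper's: reduce to showing $f(\m D)$ is a quasidisk by verifying the Ahlfors--Weill criterion $\sup_{z}(1-|z|^2)^2|\mc Sf(z)|<2$ for $\delta$ small, and then conclude the Weil--Petersson property from Lemma~\ref{thm_TT_equiv_T01} once the boundary is known to be a Jordan curve. You also correctly identify the real point of the lemma, namely promoting ``univalent with finite Dirichlet energy of $\log f'$'' to ``Jordan (indeed quasi-) domain,'' which is exactly why the paper argues via the Schwarzian.

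The one genuine difference is how the sup-norm bound on the Schwarzian is obtained. The paper chains two cited estimates from \cite{TT06}: first $\int_{\m D}|\mc Sf|^2(1-|z|^2)^2\,\dd A \le \pi\mc D_{\m D}(\log f')+\tfrac{\pi}{8}\mc D_{\m D}(\log f')^2$, and then the pointwise bound $\sup_z(1-|z|^2)^2|\mc Sf(z)|\lesssim \bigl(\int_{\m D}|\mc Sf|^2(1-|z|^2)^2\,\dd A\bigr)^{1/2}$. You instead bound the sup-norm directly from standard Bergman-space interior estimates on $u=f''/f'$ and $u'$, writing $\mc Sf=u'-u^2/2$; this is more self-contained (it avoids the black-box citations) at the cost of unspecified absolute constants, while the paper's route gives the explicit constants it reuses elsewhere. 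Your optional quantitative finish via $\|\mc Sf\|_{\mathrm B}$ is also fine, but unnecessary given Lemma~\ref{thm_TT_equiv_T01}, which is what the paper uses. Either way the argument is complete.
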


\begin{proof}
By Lemma~\ref{thm_TT_equiv_T01} and the assumption that $\mc{D}_{\m D}(\log f') < \d < \infty$, it is enough to prove that $f(\m D)$ is a Jordan domain. We will prove more and show that $f(\m D)$ is a quasidisk. By a theorem of Ahlfors-Weill (see \cite[Cor.\,5.24]{Pommerenke_boundary}), to show that $f(\m D)$ is a quasidisk it suffices to show that for small enough $\d$, 
\begin{equation}\label{jun12.1}
\sup_{z \in \m D} (1 - |z|^2)^2 \abs{\mc Sf  (z)} < 2.
\end{equation}
We will estimate the left-hand side of \eqref{jun12.1} in terms of $\mc{D}_{\m D}(\log f')$.
The required estimate is a combination of two bounds. First we claim that if $f$ is holomorphic on $\m D$ and $f' \neq 0$, then,
$$ \int_{\m D} |\mc S f (z)|^2  (1 - |z|^2)^2 \dd A(z) \le \pi \mc{D}_{\m D}(\log f')+ \frac{\pi}{8 } \left(\mc{D}_{\m D}(\log f')\right)^2.$$
Indeed, this follows from the proof of \cite[Lem.\,II.1.5]{TT06}  where the bound depends on a constant from \cite[Lem.\,II.1.3]{TT06}. 
On the other hand, it follows directly from 
\cite[Lem.\,I.2.1]{TT06} that
$$\sup_{z \in \m D} (1 - |z|^2)^2 \abs{\mc Sf (z)} \le \sqrt{\frac{12}{\pi}} \left(\int_{\m D} |\mc S f(z)|^2  (1 - |z|^2)^2 \dd A(z)\right)^{1/2}.$$
 Combining these bounds we see that \eqref{jun12.1} indeed holds provided $\delta$ is chosen sufficiently small. 
\end{proof}

\begin{prop}\label{prop:WP-QC-final}
If $S_+(\rho) <\infty$, then for all $t \ge 0$, $\partial D_t$ is a Weil--Petersson quasicircle.
\end{prop}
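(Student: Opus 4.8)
The plan is to obtain the proposition by assembling the quantitative bound of Lemma~\ref{lem:arg-ft-energy}, the domain Markov property (Lemma~\ref{lem:domain_markov}), the local quasidisk criterion of Lemma~\ref{lem:TT_bounded_S}, and the equivalences of Lemma~\ref{thm_TT_equiv_T01}. Fix $t>0$ (the case $t=0$ is trivial since $\partial D_0=S^1$). Since $f_t'\neq 0$ on the simply connected domain $\m D$, the function $\log f_t'$ is holomorphic there, so the Cauchy--Riemann equations give $|\nabla \log|f_t'|\,| = |\nabla \arg f_t'|$ pointwise and hence $\mc D_{\m D}(\log|f_t'|)=\mc D_{\m D}(\arg f_t') \le 32\,S_{[0,t]}(\rho)+4t<\infty$ by Lemma~\ref{lem:arg-ft-energy}. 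By Lemma~\ref{thm_TT_equiv_T01} it then suffices to prove that $\partial D_t$ is a Jordan curve, so the whole issue is upgrading the finite-energy information to a topological statement.

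To do this I would decompose the evolution into short time steps so that Lemma~\ref{lem:TT_bounded_S} applies on each step. Let $\delta>0$ be the constant of Lemma~\ref{lem:TT_bounded_S}. Since $r\mapsto L(\rho_r)$ lies in $L^1([0,t])$, absolute continuity of the Lebesgue integral yields $\Delta\in(0,\delta/8]$ with $S_{[a,b]}(\rho)<\delta/64$ whenever $0\le a<b\le t$ and $b-a\le\Delta$. Pick a partition $0=t_0<t_1<\dots<t_n=t$ with $t_k-t_{k-1}\le\Delta$ and set $F_k:=f_{t_{k-1}}^{-1}\circ f_{t_k}$, a conformal map of $\m D$ onto $f_{t_{k-1}}^{-1}(D_{t_k})\subset\m D$ (the inclusion because $D_{t_k}\subset D_{t_{k-1}}$). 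By Lemma~\ref{lem:domain_markov}, $F_k$ is the time-$(t_k-t_{k-1})$ map of the Loewner chain driven by $(\rho_{t_{k-1}+s})_{s\ge 0}$, whose energy on $[0,t_k-t_{k-1}]$ equals $S_{[t_{k-1},t_k]}(\rho)$; hence Lemma~\ref{lem:arg-ft-energy} together with the holomorphicity observation above gives $\mc D_{\m D}(\log F_k')=\mc D_{\m D}(\arg F_k')\le 32\,S_{[t_{k-1},t_k]}(\rho)+4(t_k-t_{k-1})<\delta$. By Lemma~\ref{lem:TT_bounded_S}, each $F_k(\m D)$ is a quasidisk.

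Finally I would compose the pieces. Each $F_k$ extends to a quasiconformal self-homeomorphism $\hat F_k$ of $\hat{\m C}$ with $\hat F_k|_{\m D}=F_k$, by the standard characterization of quasicircles via quasiconformal reflection. Using $D_{t_k}\subset D_{t_{k-1}}$ one checks inductively that $F_k\circ\cdots\circ F_n(\m D)=f_{t_{k-1}}^{-1}(D_t)\subset\m D$, so on $\m D$ the composition $\Phi:=\hat F_1\circ\cdots\circ\hat F_n$ agrees with the conformal map $F_1\circ\cdots\circ F_n$, which telescopes (since $f_0=\mathrm{Id}$) to $f_t$. Thus $\Phi$ is a quasiconformal self-homeomorphism of $\hat{\m C}$ that is conformal on $\m D$, so $D_t=\Phi(\m D)$ is a quasidisk and $\partial D_t$ is a quasicircle, in particular a Jordan curve. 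Combined with $\mc D_{\m D}(\log|f_t'|)<\infty$ and Lemma~\ref{thm_TT_equiv_T01}, this shows $\partial D_t$ is a Weil-Petersson quasicircle. I expect the only delicate point to be the bookkeeping in this last step, namely verifying that the images stay inside $\m D$ so that the composed quasiconformal extension restricts to $f_t$ on $\m D$; the rest is a direct application of the already-established lemmas.
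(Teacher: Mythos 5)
Your proof is correct and follows the paper's strategy in all essentials: the same reduction via Lemma~\ref{thm_TT_equiv_T01} to showing $\partial D_t$ is a Jordan curve, the same decomposition into time increments of small Loewner-Kufarev energy via Lemma~\ref{lem:domain_markov}, and the same application of Lemma~\ref{lem:arg-ft-energy} and Lemma~\ref{lem:TT_bounded_S} to each increment. The only genuine difference is the final assembly: the paper argues by induction along the partition, using Carath\'eodory's theorem and the fact that a composition of homeomorphisms of $\ad{\m D}$ is a homeomorphism, so that $\partial D_t = f_{t_j}\circ g_{t_j}(\partial D_t)$ is seen to be a Jordan curve at each step; you instead extend each short-time map $F_k$ quasiconformally to $\hat{\m C}$ (conformal map onto a quasidisk admits such an extension) and compose, obtaining directly that $D_t$ is a quasidisk. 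Your route imports a standard but nontrivial piece of quasiconformal extension theory where the paper gets by with elementary topology, and in exchange it yields the quasidisk property of $D_t$ explicitly rather than as a consequence of the Weil-Petersson conclusion; both arguments are sound.
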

\begin{proof} Let $\d > 0$ be the small constant from Lemma~\ref{lem:TT_bounded_S}. 
Pick $t_1 > 0$ such that
$32 S_{[0,t_1]} (\rho)+ 4 t_1  < \d$.
For $t \le t_1$,
Lemma~\ref{lem:arg-ft-energy} and Lemma~\ref{lem:TT_bounded_S} show that $\partial D_t$ is a Weil--Petersson quasicircle.

  Now we consider the general case $t \ge 0$.  Lemma~\ref{lem:arg-ft-energy} shows that $\mc D_{\m D} (\arg f_t')$ 
  is finite, so it suffices to prove that $D_t$ is a Jordan domain to conclude that $\partial D_t$ is a Weil--Petersson quasicircle by Lemma~\ref{thm_TT_equiv_T01}. For this, let
   $0=t_0 <  t_1 < t_2< \ldots$ be a sequence tending to $\infty$ such that $32 S_{[t_j, t_{j+1}]} (\rho) + 4(t_{j+1} - t_j)  < \d$.
   We show by induction that $f_{t}$ is a homeomorphism from $\ad{\m D}$ onto $\ad D_{t}$ for all $t \le t_j$.
   We have already proved this in the case $j = 1$. Assume it is true for all $t \le t_j$. Using Lemma~\ref{lem:domain_markov} and the choices of $\delta$ and $|t_{j+1}-t_j|$ we obtain that $g_{t_j} (\partial D_t)$ is a Weil--Petersson quasicircle for each $t_j \le t \le t_{j+1}$, in particular a Jordan curve. Since by assumption $f_{t_j}$ is a homeomorphism of $\ad{\m D}$, $\partial D_t = f_{t_j} \circ g_{t_j} (\partial D_t)$ is also a Jordan curve and this completes the induction.
   \end{proof}

   We now show that $t \mapsto \g_t$ is continuous, where $\g_t$ is viewed as a parametrized curve $S^1 \to \g_t$ by $\theta \mapsto f_t (e^{i \theta})$. We will need the following lemma.  
   
\begin{lemma}\label{lem:derivative-estimate}
  Suppose $D$ is a simply connected domain containing $0$ and let $f: \m D \to D$ be a conformal map with $f(0)=0$
  and assume that $\mc D_{\m D}(\log f') < \infty$. There exists a constant $C < \infty$ depending only on $\mc D_{\m D}(\log f')$ such that
   \begin{equation}\label{derivative}
   |f'(r e^{i\t})| \le C |f'(0)| \exp \sqrt{C\log (1-r)^{-1}} .
  \end{equation}
\end{lemma}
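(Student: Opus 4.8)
The plan is to reduce everything to the Taylor expansion of $\log f'$ together with a single application of the Cauchy--Schwarz inequality. Since $f$ is conformal, $f'$ is holomorphic and non-vanishing on the simply connected domain $\m D$, so there is a globally defined holomorphic branch of $\log f'$ on $\m D$; normalize it so that its value at $0$ equals $\log f'(0)$ (a fixed branch of the logarithm there), and write $\log f'(z) = \sum_{n\ge 0} a_n z^n$ with $a_0 = \log f'(0)$. By the Cauchy--Riemann equations $|\nabla \log|f'||^2 = |(\log f')'|^2 = |f''/f'|^2$, so integrating $|(\log f')'(re^{i\t})|^2$ over $\m D$ and using orthogonality of the exponentials $e^{in\t}$ gives
\[
E := \mc D_{\m D}(\log|f'|) = \frac{1}{\pi}\int_{\m D}\left|\frac{f''}{f'}\right|^2 \dd A = \sum_{n\ge 1} n|a_n|^2 ,
\]
which, by hypothesis and Lemma~\ref{thm_TT_equiv_T01}, is finite and bounded by $\mc D_{\m D}(\log f')$.

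Next I would estimate $|f'|$ pointwise. For $z = re^{i\t}$ with $0 \le r<1$,
\[
\log\frac{|f'(re^{i\t})|}{|f'(0)|} = \Re\sum_{n\ge 1} a_n z^n \le \sum_{n\ge 1}|a_n|\,r^n ,
\]
and by Cauchy--Schwarz,
\[
\sum_{n\ge 1}|a_n|\,r^n = \sum_{n\ge 1}\big(n^{1/2}|a_n|\big)\frac{r^n}{n^{1/2}} \le \Big(\sum_{n\ge 1}n|a_n|^2\Big)^{1/2}\Big(\sum_{n\ge 1}\frac{r^{2n}}{n}\Big)^{1/2} = E^{1/2}\Big(\log\frac{1}{1-r^2}\Big)^{1/2}.
\]
Since $1-r^2 \ge 1-r$ we have $\log\frac{1}{1-r^2}\le \log\frac{1}{1-r}$, whence $|f'(re^{i\t})| \le |f'(0)|\exp\big(E^{1/2}\sqrt{\log(1-r)^{-1}}\big)$. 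Taking $C := 1 + \mc D_{\m D}(\log f') \ge \max(1,E)$ gives both the prefactor $1\le C$ and $E^{1/2}\le \sqrt{C}$, which is precisely the asserted bound \eqref{derivative}.

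As for difficulties: there is essentially no obstacle of substance here — the argument is a standard Dirichlet-space/Hardy-space estimate. The only points deserving a word of care are that the branch of $\log f'$ is globally well defined (which uses that $f$ is univalent, hence $f'$ never vanishes) and the identification $\mc D_{\m D}(\log|f'|) = \sum_{n\ge 1}n|a_n|^2$ via Cauchy--Riemann and the power-series identity $\sum_{n\ge 1} r^{2n}/n = \log(1-r^2)^{-1}$. I emphasize that, as the statement indicates, $D$ is not assumed to be a Jordan domain; this plays no role, since the proof uses only that $f$ is a conformal map of $\m D$ with $\mc D_{\m D}(\log f')<\infty$.
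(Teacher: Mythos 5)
Your proof is correct, and it reaches the stated bound by a genuinely different and more elementary route than the paper. You expand $\log f'$ in Taylor coefficients, identify $\mc D_{\m D}(\log|f'|)=\sum_{n\ge 1}n|a_n|^2$, and apply Cauchy--Schwarz against $\sum_{n\ge 1} r^{2n}/n=\log(1-r^2)^{-1}$; this is the classical pointwise growth estimate for functions in the Dirichlet space, and it yields \eqref{derivative} in a few lines with an explicit constant. The paper instead invokes a weak-type exponential estimate for boundary values of Dirichlet-class holomorphic functions, namely $\left|\{\t : |\log f'| > \lambda\}\right| \le C_1 e^{-C_2\lambda^2/(|\log f'(0)|^2 + \mc D_{\m D}(\log f'))}$, upgrades the resulting exponential integrability of $|\log f'|^2$ from $S^1$ to circles of radius $r$ via subharmonicity, and then passes from integral to pointwise control at dyadic points $z_{k,n}$ using the Koebe distortion theorem. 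Both arguments give the same form of bound with a constant depending only on $\mc D_{\m D}(\log f')$; yours is shorter, avoids Koebe entirely, and makes the constant explicit, while the paper's route produces along the way the uniform exponential integrability of $\log|f'|$ on circles — a slightly stronger piece of information that is reused in the remark following Proposition~\ref{prop:Lipschitz} but is not needed for the lemma itself. The only point worth flagging is notational: the paper writes $\mc D_{\m D}(\log f')$ while Lemma~\ref{thm_TT_equiv_T01} is phrased for $\mc D_{\m D}(\log|f'|)$; these differ at most by a factor of $2$ (the harmonic conjugate $\arg f'$ has the same Dirichlet energy), which is absorbed into $C$, so your bound $E \le \mc D_{\m D}(\log f')$ is harmless either way.
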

\begin{rem}
Note that it is not assumed that $D$ is a Jordan domain. The estimate \eqref{derivative} easily implies that $f$ is continuous on $\overline{\m D}$.
\end{rem}
\begin{proof}
We may assume $f'(0) = 1$.  
Any $\phi$ that is holomorphic in $\m D$ and such that $\mc{D}_{\m D}(\phi) < \infty$ has non-tangential limits a.e. on $S^1$, and writing $\phi$ for that function as well we have the following weak-type estimate: there exist universal constants $C_1, C_2$ such that 
  $ \left|\{\t \in S^1 : |\phi| > \lambda\} \right| \le C_1 e^{-C_2\lambda^2/(|\phi(0)|^2+\mc{D}_{\m D}(\phi))}.$
See \cite[Cor.\,3.3.2]{primer} for a proof.
We apply this estimate with $\phi = \log f'$ which is normalized so that $\log f'(0) = 0$. Hence the upper bound in the weak-type estimate depends only on $\mc{D}_{\m D}(\log f')$ and 
implies there exist $C_1,C_2$ depending only on $\mc{D}_{\m D}(\log f')$ (but which, in what follows, are allowed to change from line to line) such that    \[
     \int_0^{2\pi} \exp\left( C_1| \log|f'(e^{i\t})||^2 \right) \,\dd\t \le C_2.
     \]
    Since $\log|f'(z)|$ is harmonic, $\exp\left(C_1|\log|f'(z)||^2\right)$ is subharmonic and it follows that for $0 \le r < 1$
    \[
    \int_0^{2\pi} \exp \left( C_1| \log|f'(re^{i\t})||^2 \right) \,\dd\t \le C_2.
    \]
    Therefore, if $r_n = 1-2^{-n}$ and $z_{k,n} = r_n e^{i2\pi k/2^n}$, using Koebe's distortion theorem (see, e.g., \cite[Ch.\,2.3]{Duren1983}), there is a universal constant $C_3 < \infty$ such that, taking $C_2$ larger if necessary,
    \begin{align*}
        \frac{1}{2^n}  \sum_{k=1}^{2^n}  \exp\left( C_1 (|\log|f'(z_{k,n})|)^2 \right)  &  \le   \int_0^{2\pi} \exp\left( C_1 (|\log|f'(r_ne^{i\t})|| + C_3)^2 \right) \,\dd\t
         \le C_2.
    \end{align*}
    Whence,
    \[
    |f'(z_{k,n})| \le \exp \sqrt{ C_1^{-1}\log(C_2 2^n)}.
    \]
    Using the distortion theorem again we deduce
    \[
    |f'(r e^{i\t})| \le C \exp\sqrt{ C \log  (1-r)^{-1} },
    \]
    where $C$ depends only on $\mc{D}_{\m D}(\log f')$,
    as claimed.
    \end{proof}

If $(f_t)_{t\ge 0}$ is the Loewner chain generated by a finite energy measure $\rho$ then by Lemma~\ref{lem:arg-ft-energy} we have
    $\mc D_{\m D}(\log|f'_t|) \le 32S_+(\rho) + 4T$ for all $t \le T$. Therefore, by  Lemma~\ref{lem:derivative-estimate},  if $\sigma (x) :  =  C \exp \sqrt {C\log (x)}$ 
    where $C$ depends  only on $S_+(\rho)$ and $T$, and
    \begin{align}\label{derivative2}
    |f'_t(re^{i\t})| \le |f'_t(0)| \sigma(1/(1-r)) \le \sigma(1/(1-r)).
    \end{align}
In the rest of the section we use the conformal parametrization of the leaves, namely $\gamma_t(\t) := f_t(e^{i \t})$.

\begin{prop}\label{prop:Lipschitz}
   Suppose $S_+(\rho)<\infty$. Then the function $t \mapsto (\gamma_t: S^1 \to \m C):  [0,\infty) \to (C^0, \|\cdot \|_{\infty})$ is continuous. 
 \end{prop}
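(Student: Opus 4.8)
The plan is to combine the uniform boundary regularity of the maps $f_t$ for $t$ in a bounded time interval, which comes from the derivative estimate~\eqref{derivative2}, with the local uniform continuity of $t \mapsto f_t$ inside $\m D$, all packaged in a three-$\varepsilon$ argument.

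Fix $T>0$. First I would show that the boundary maps $\{\gamma_t = f_t|_{S^1} : t \in [0,T]\}$ are uniformly equicontinuous. Each $f_t$ extends continuously to $\ad{\m D}$ (see the remark after Lemma~\ref{lem:derivative-estimate}, or Proposition~\ref{prop:WP-QC-final}), and by~\eqref{derivative2} there is a subpower function $\sigma$, depending only on $S_+(\rho)$ and $T$, with $|f_t'(re^{i\t})| \le \sigma(1/(1-r))$ for all $t \le T$ and $r<1$. Integrating $|f_t'|$ first radially and then along a circular arc gives, for $\t_1,\t_2 \in S^1$ and any $r<1$,
\[ |f_t(e^{i\t_1}) - f_t(e^{i\t_2})| \;\le\; 2\int_r^1 \sigma\bigl(\tfrac{1}{1-s}\bigr)\,\dd s + |\t_1 - \t_2|\,\sigma\bigl(\tfrac{1}{1-r}\bigr). \]
Since $\sigma$ is subpower, $\eta(r) := \int_r^1 \sigma(1/(1-s))\,\dd s \to 0$ as $r \to 1$; choosing $r = 1-|\t_1-\t_2|$ (for $|\t_1-\t_2|<1$) then yields a modulus of continuity for $\gamma_t$ that is independent of $t \in [0,T]$. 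The case $\t_1 = \t_2$ of the same computation gives $\sup_{\t}|f_t(e^{i\t}) - f_t(re^{i\t})| \le \eta(r)$ for every $t \le T$.

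Second, I would note that $f_t \to f_{t_0}$ locally uniformly in $\m D$ as $t \to t_0$: the maps $f_t$ take values in $\m D$, so $\{f_t\}$ is a normal family, and since $t \mapsto f_t(z)$ is continuous for each fixed $z$, every locally uniform subsequential limit along any $t_n \to t_0$ must equal $f_{t_0}$, hence $f_t \to f_{t_0}$ locally uniformly. Now fix $t_0 \ge 0$, work on $[0,t_0+1]$, and let $\varepsilon>0$. Choose $r<1$ with $\eta(r)<\varepsilon$. Inserting $f_t(re^{i\t})$ and $f_{t_0}(re^{i\t})$ and using the first paragraph,
\[ \|\gamma_t - \gamma_{t_0}\|_\infty \;\le\; 2\varepsilon + \sup_{|z| \le r}|f_t(z) - f_{t_0}(z)| \]
for $|t-t_0|\le 1$, and the last term tends to $0$ as $t \to t_0$ by local uniform convergence on the compact set $\{|z| \le r\}$. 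Thus $\limsup_{t\to t_0}\|\gamma_t - \gamma_{t_0}\|_\infty \le 2\varepsilon$; letting $\varepsilon \to 0$ proves continuity at $t_0$.

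The substantive input here is the uniform derivative bound~\eqref{derivative2}, which rests on $\mc D_{\m D}(\log|f_t'|) \le 32 S_{[0,t]}(\rho) + 4t$ from Lemma~\ref{lem:arg-ft-energy} together with Lemma~\ref{lem:derivative-estimate}; the only point needing care is that the resulting modulus of continuity depends on $t$ only through the time horizon $T$, which holds because $\sigma$ does. The rest is the standard normal-family / three-$\varepsilon$ packaging, so I do not anticipate a real obstacle.
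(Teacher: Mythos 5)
Your proof is correct, and it diverges from the paper's in one substantive respect. The boundary part is the same: both arguments integrate the uniform derivative bound \eqref{derivative2} radially and along circular arcs to control $|\gamma_t(\t)-f_t(re^{i\t})|$ uniformly in $t\le T$. Where you differ is in the interior term $\sup_{|z|\le r}|f_t(z)-f_{t_0}(z)|$: you dispose of it with a soft normal-families argument (uniform boundedness plus pointwise continuity of $t\mapsto f_t(z)$, which is part of the definition of the Loewner chain, forces locally uniform convergence), whereas the paper estimates it quantitatively from the Loewner ODE, writing $f_s(re^{i\t})-f_t(re^{i\t})=-\int_s^t zf_u'(z)H_u(z)\,\dd u$ and bounding $\sup_{\overline{\m D}}|H_u|\le CM_u$ with $M_u$ an integrable function of $\|\nu_u'\|_{L^2}$, then optimizing $r$ over $1-r=\int_s^t M_u\,\dd u\wedge 1$. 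Your route is shorter and avoids the H\"older estimate on $\nu_t^2$ and the Hardy-space bound on $H_t'$, at the cost of producing only qualitative continuity; the paper's computation yields an explicit modulus of continuity in $t$, which is exactly what feeds the subsequent remark that $t\mapsto\gamma_t$ is weakly Lipschitz when $L(\rho_t)$ is uniformly bounded. For the proposition as stated, your argument suffices.
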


\begin{proof}
Throughout $C$ denotes a constant whose value is allowed to change from line to line.
  Fix any $T < \infty$. Then if $0\le s \le t \le T$, for any $0 \le r < 1$,
  \begin{align}\label{feb19.0}
    |\gamma_s(\t) - \gamma_t(\t)|  \le   \, |\gamma_s(\t) - f_s(re^{i\t})|  
     +|\gamma_t(\t) - f_t(re^{i\t})|  +   |f_s(re^{i\t}) - f_t(re^{i\t})|  .
\end{align}
By integrating \eqref{derivative2} we have 
\begin{equation}\label{feb19.1} 
|\gamma_s(\t) - f_s(re^{i\t})| + |\gamma_t(\t) - f_t(re^{i\t})| \le C(1-r)\sigma(1/(1-r)),
\end{equation}
 where $\sigma$ is a subpower function depending only on $S_+(\rho)$ and $T$.
Using the Loewner equation and once again \eqref{derivative}, we have for $z=re^{i\t}$,
\[
     |f_s(re^{i \t}) - f_t(re^{i \t})|  = | \int_s^t zf'_u(z)H_u(z) \dd u|  \le C \sigma(1/(1-r)) \int_s^t|H_u(z)| \dd u.
     \]
Since for a.e. $t$, $\dd \rho_t(\t) = \nu_t(\t)^2 \dd \t$, we can estimate using the Cauchy-Schwarz inequality\[
|\nu_t(\t_1)^2 - \nu_t(\t_2)^2|  \le 2\|\nu_t\|_{\infty} \|\nu_t'\|_{L^2}|\t_1-\t_2|^{1/2} \le M_t|\t_1-\t_2|^{1/2},
 \]
 where 
 \[M_t := 2( 1/\sqrt{2\pi} + \sqrt{2 \pi} \|\nu_t'\|_{L^2} )\|\nu_t'\|_{L^2}.\]
 Indeed, since $\nu_t$ is continuous and $\int \nu_t^2 = 1,$ we can assume that $\nu_t(0) < 1/\sqrt{2\pi}$. 
 Then $|\nu_t(\t) - \nu_t(0)| \le \sqrt{\t}\|\nu_t'\|_{L^2} \le \sqrt{2 \pi} \|\nu_t'\|_{L^2}$, so $\|\nu_t\|_{\infty} \le 1/\sqrt{2\pi} + \sqrt{2 \pi} \|\nu_t'\|_{L^2}.$ We claim that for a.e. $t$, $\sup_{z \in \overline{\m D}}|H_t(z)| \le C M_t$. Since $\Re H_t(z) = 2\pi P_{\m D}[\rho_t](z)$, we have $|H_t'(r e^{i\t})| \le C M_t(1-r)^{-1/2}$. (See, e.g., \cite[Thm.\,5.8 and 5.1]{Duren_Hardy}.) So by integration, the claim follows. Consequently $\int_s^t|H_u(z)| \dd u \le C \int_{s}^t M_u \dd u$. Hence
\begin{equation}\label{feb19.2} 
|f_s(re^{i\t}) - f_t(re^{i\t})| \le C \sigma(1/(1-r))\int_{s}^t M_u \dd u.
\end{equation}
Now we choose $r$ so that $1-r = \int_s^tM_u \dd u \wedge 1$ and plug in \eqref{feb19.1} and \eqref{feb19.2} into \eqref{feb19.0} to conclude that
\[
 \sup_{\t \in [0,2\pi)} |\gamma_s(\t) - \gamma_t(\t)| =o(1)
\]
as $
|t-s| \to 0$. Since $T < \infty$ was arbitrary, this completes the proof. 
\end{proof}

\begin{rem}
Under the stronger assumption that $L(\rho_t)$ is uniformly bounded, the proof of Lemma~\ref{lem:derivative-estimate} shows that the mapping $t \mapsto (\gamma_t: S^1 \to \m C):  [0,\infty) \to (C^0, \|\cdot \|_{\infty})$ is weakly Lipschitz continuous, that is, it admits a modulus of continuity of the form $|\cdot|\sigma(1/|\cdot|)$, where $\sigma$ is a subpower function. 
\end{rem}

We are now ready to prove the main result of this section.

\begin{cor} \label{cor:S_finite_foliation_WP}
If $S_+(\rho) < \infty$, then $\rho$ generates a foliation of $\ad{\m D} \smallsetminus\{0\}$  by Weil--Petersson quasicircles.
\end{cor}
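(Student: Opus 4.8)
The plan is to assemble the three ingredients already established in this section. Unwinding the definition of a (non-injective chord-arc) foliation from Section~\ref{sect:winding-function}, the statement that $\rho$ generates a foliation of $\ad{\m D}\smallsetminus\{0\}$ amounts to verifying three properties of the interfaces $(\g_t = \partial D_t)_{t \ge 0}$: first, that each $\g_t$ is a chord-arc Jordan curve; second, that the leaves admit parametrizations by $S^1$ for which $t \mapsto \g_t$ is continuous in the supremum norm; and third, that $\tau(z) < \infty$ for every $z \in \ad{\m D}\smallsetminus\{0\}$.

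For the first property I would invoke Proposition~\ref{prop:WP-QC-final}, which already gives that $\partial D_t$ is a Weil-Petersson quasicircle for every $t \ge 0$, together with the fact that every Weil-Petersson quasicircle is a chord-arc Jordan curve (see, e.g., \cite{TT06, bishop-WP}). For the second property, I would use the conformal parametrization $\gamma_t(\t) = f_t(e^{i\t})$: by Proposition~\ref{prop:Lipschitz}, under the hypothesis $S_+(\rho) < \infty$ the map $t \mapsto (\gamma_t : S^1 \to \m C)$ is continuous from $[0,\infty)$ into $(C^0,\|\cdot\|_\infty)$. For the third property, I would cite Lemma~\ref{lem:foliates}, which gives precisely $\tau(z) < \infty$ for all $z \in \ad{\m D}\smallsetminus\{0\}$ when $S_+(\rho) < \infty$. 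Combining these three facts shows that $\rho$ generates a foliation, and Proposition~\ref{prop:WP-QC-final} identifies its leaves as Weil-Petersson quasicircles, which is the assertion.

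There is essentially no obstacle here: the corollary is a bookkeeping statement that packages Propositions~\ref{prop:WP-QC-final} and \ref{prop:Lipschitz} and Lemma~\ref{lem:foliates} into the form of the definition in Section~\ref{sect:winding-function}. The only point that deserves an explicit word is that the chord-arc hypothesis built into the definition of foliation is automatically satisfied, since membership in the Weil-Petersson class already entails the chord-arc property — a standard entry in the list of equivalent characterizations of Weil-Petersson quasicircles.
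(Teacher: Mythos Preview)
Your proposal is correct and follows essentially the same approach as the paper: both invoke Proposition~\ref{prop:WP-QC-final} for the Weil-Petersson (hence chord-arc) property of each leaf, Proposition~\ref{prop:Lipschitz} for the continuity of $t \mapsto \g_t$ in the conformal parametrization, and Lemma~\ref{lem:foliates} for $\tau(z) < \infty$. Your version is slightly more explicit in noting that the chord-arc condition is implied by the Weil-Petersson property, but otherwise the arguments are identical.
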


\begin{proof}
  Proposition~\ref{prop:WP-QC-final} shows $\g_t$ is a  Weil--Petersson quasicircle for each $t$ and $t\mapsto \g_t$ is continuous in the supremum norm for the conformal parametrization by Proposition~\ref{prop:Lipschitz}. Lemma~\ref{lem:foliates} shows that $\tau(z) < \infty$ for all $z \in \ad{\m D} \smallsetminus \{0\}$ and this completes the proof.
  \end{proof}

The next result is not used in the rest of the paper, but as it is an interesting and immediate consequence of Lemma~\ref{lem:derivative-estimate}, we choose to state it here. 
\begin{cor}
Suppose $D$ is a simply connected domain containing $0$ and let $f: \m D \to D$ be a conformal map with $f(0)=0$
  and assume that $\mc D_{\m D}(\log f') < \infty$. Then the conformal parametrization of $\partial D$ is weakly Lipschitz continuous on $S^1$ with subpower function depending only on $\mc{D}_{\m D}(\log f')$.
\end{cor}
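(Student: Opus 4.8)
The plan is to establish the spatial counterpart of the (temporal) continuity statement of Proposition~\ref{prop:Lipschitz}, the only input being the radial derivative bound of Lemma~\ref{lem:derivative-estimate}. By the remark following Lemma~\ref{lem:derivative-estimate}, $f$ extends continuously to $\ad{\m D}$, so that $\gamma(\t):=f(e^{i\t})$ is a well-defined continuous parametrization of $\partial D$. After rescaling $D$ (which affects neither $\mc D_{\m D}(\log f')$ nor the conformal nature of the parametrization) we may assume $f'(0)=1$; the general case follows trivially since the parametrization is then merely multiplied by a constant. Lemma~\ref{lem:derivative-estimate} supplies a subpower function $\sigma$, depending only on $\mc D_{\m D}(\log f')$, with $|f'(re^{i\t})|\le \sigma(1/(1-r))$ for all $re^{i\t}\in\m D$, and we allow $\sigma$ to grow from line to line while remaining subpower and depending only on $\mc D_{\m D}(\log f')$.

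Fix $\t_1,\t_2\in[0,2\pi)$ and set $\delta:=|\t_1-\t_2|$; since $\gamma$ is bounded, the asserted modulus of continuity is trivial unless $\delta$ is small, so assume $\delta<1$. Put $r:=1-\delta$ and split, by the triangle inequality,
\[
|\gamma(\t_1)-\gamma(\t_2)| \le |\gamma(\t_1)-f(re^{i\t_1})| + |f(re^{i\t_1})-f(re^{i\t_2})| + |f(re^{i\t_2})-\gamma(\t_2)|.
\]
The middle term is bounded by integrating $f'$ along the circular arc $s\mapsto re^{is}$ between the arguments $\t_1$ and $\t_2$, giving $|f(re^{i\t_1})-f(re^{i\t_2})|\le \delta\sup_s|f'(re^{is})|\le \delta\,\sigma(1/\delta)$. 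Each outer term is bounded by integrating $f'$ along a radial segment and substituting $u=1-s$:
\[
|\gamma(\t_j)-f(re^{i\t_j})| \le \int_r^1 |f'(se^{i\t_j})|\,\dd s \le \int_0^{\delta}\sigma(1/u)\,\dd u.
\]

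It therefore remains to check the elementary estimate $\int_0^{\delta}\sigma(1/u)\,\dd u\le C\,\delta\,\sigma(1/\delta)$ for all small $\delta$, with $C$ depending only on $\mc D_{\m D}(\log f')$. Writing $\sigma(x)=C\exp\sqrt{C\log x}$ as in Lemma~\ref{lem:derivative-estimate} and changing variables via $w=\log(1/u)$, the integral equals $C\int_{W}^\infty \exp(\sqrt{Cw}-w)\,\dd w$ with $W:=\log(1/\delta)$, whereas $\delta\,\sigma(1/\delta)=C\exp(\sqrt{CW}-W)$. Since $w\mapsto\sqrt{Cw}-w$ is concave with derivative at most $-1/2$ for $w$ large, its exponent decays at least linearly past $W$, so $\int_W^\infty\exp(\sqrt{Cw}-w)\,\dd w\le 2\exp(\sqrt{CW}-W)$ once $W$ is large, i.e.\ once $\delta$ is small. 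Combining the three bounds yields $|\gamma(\t_1)-\gamma(\t_2)|\le C\,\delta\,\sigma(1/\delta)$; absorbing $C$ into $\sigma$ and enlarging $\sigma$ to cover the (trivial) range of $\delta$ bounded away from $0$, we conclude that $\gamma$ is weakly Lipschitz continuous on $S^1$ with subpower function depending only on $\mc D_{\m D}(\log f')$.

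I expect no genuine obstacle here: the only non-mechanical point is the integral estimate in the last paragraph, which is elementary, and the substantive analytic work was already done in Lemma~\ref{lem:derivative-estimate}. Indeed, this corollary is precisely the ``spatial'' analogue of Proposition~\ref{prop:Lipschitz} — there one bounds the increment of the conformal parametrization in the time variable, here in the angular variable — and the same splitting into a radial and a circular contribution does the job.
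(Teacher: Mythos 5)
Your proof is correct and follows essentially the same route as the paper's: the same triangle-inequality decomposition into two radial segments and one circular arc, the same bound via Lemma~\ref{lem:derivative-estimate}, and the same choice $r=1-|\t_1-\t_2|$. The only difference is that you verify the elementary integral estimate $\int_0^{\delta}\sigma(1/u)\,\dd u\le C\delta\,\sigma(1/\delta)$ explicitly, where the paper simply absorbs it into a new subpower function $\tilde\sigma$.
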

\begin{rem}
The condition $\mc D_{\m D} (\log f') <\infty$ allows $f'$ to be unbounded and the conformal parametrization is not Lipschitz in general in this setting, so up to the exact form of the subpower function this modulus of continuity is sharp. 
\end{rem}
\begin{proof}
    We have already noted that $f$ is continuous on $\overline{\m D}$. By Lemma~\ref{lem:derivative-estimate}, we have for $0 < r< 1$,  
    \begin{align*}
         |f(e^{i \t_1}) - f(e^{i \t_1})| 
        & \le  |f(e^{i \t_1})  - f(r e^{i\t_1})| +  |f(e^{i \t_2})  - f(r e^{i\t_2})| + 
|f(r e^{i\t_1})- f(r e^{i\t_2})| \\
& \le 2 \int_r^1\sigma(1/(1-u)) \dd u+ \sigma(1/(1-r))|\t_1 - \t_2|\\
& \le ((1-r) + |\t_1-\t_2| )\tilde \sigma(1/(1-r)),
    \end{align*}
    where $\tilde \sigma$ is a subpower function that depends only on $\mc{D}_{\m D}(\log f')$. Now choose $r=1-|\t_1-\t_2|$ and we obtain the desired estimate. 
\end{proof}

\section{Disk energy duality: proof of Theorem~\ref{thm:main}}\label{sec:disk_duality}

The proof of Theorem~\ref{thm:main} is completed at the end of the section. 
We assume that $\rho \in \mathcal{N}_+$ generates a foliation of $\ad{\m D} \smallsetminus \{0\}$ 
throughout.
The proof is carried out in two steps: in Section~\ref{subsec:beta_energy} we assume $S_+
(\rho)<\infty$ and derive energy duality. Then in Section~\ref{subsec:converse} we
assume $\mc D_{\m D}(\varphi)<\infty$ and prove that this implies $S_+(\rho)<\infty$. 
An overview of the argument presented in this section was provided in Section~\ref{sect:core-argument}.

\subsection{$S_+(\rho) < \infty$  implies $\mc D_{\m D}(\varphi) = 16 S_+(\rho)$} \label{subsec:beta_energy}

For $\rho \in \mathcal{N}_+$, we define (for a.e. $t$)
\[
\a_t (z) =   \Im (z H_t'(z)), \quad z \in \m D,
\]
where $H_t$ is the Herglotz integral of $\rho_t$.
In this section, we assume $S_+(\rho) <\infty$, and write as before $\dd \rho_t = \nu_t^2 (\t )\dd \t$. In this case, we have 
$$H_t (z) = \int_{0}^{2\pi} \frac{e^{i\t} +z }{e^{i\t} - z}\nu_t^2(\t) \dd\t. $$

\begin{lemma}\label{cold-water-swim}
If $L(\rho_t) < \infty$, then $H'_t \in \mathcal{H}^2$.
\end{lemma}
\begin{proof}We have $(\nu_t^2)' = 2 \nu_t' \nu_t \in L^2(S^1, \dd \t)$ since $\nu_t$ is bounded and $\nu_t' \in L^2(S^1, \dd \t)$ by assumption. Writing the complex derivative in polar coordinates shows that $\Im zH'_t(z) = -\partial_\theta \Re H_t(z)$. 
Let 
$$P_r(\t -s) := P_{\m D}(re^{i(\t-s)}) = (1-r^2)/|1-re^{i(\t -s)}|^2.$$  Then using integration by parts
\begin{align*}
\partial_\t \Re H_t(z) & =   \int_0^{2\pi} \partial_\t P_r(\t -s) \nu_t(s)^2\dd s  = \int_0^{2\pi} (-\partial_s P_r(\t -s)) \nu_t(s)^2\dd s \\
& = \int_0^{2\pi} P_r(\t -s) [\nu_t(s)^2]'\dd s = 2\pi P_{\m D}[(\nu_t^2)'](z).
\end{align*}
Therefore, since $(\nu_t^2)' \in L^2$, we get that $\Im H'_t \in \mathfrak{h}^2$. By \cite[Thm.\,4.1]{Duren_Hardy} this in turn implies $\Re H'_t \in \mathfrak{h}^2$ and we conclude that $H'_t \in \mathcal{H}^2$. 
\end{proof}

\begin{lemma}\label{lem:H2W12}
If $H_t' \in \mathcal{H}^1$ then for a.e. $\t \in S^1$, 
\begin{equation}\label{eq:im_nu}
\Im (e^{i\t} H_t'(e^{i\t})) = -4\pi \nu_t(\t)\nu_t'(\t),
\end{equation} 
where the left-hand side is understood in terms of radial limits and we have
$\alpha_t = -4\pi P_{\m D}[\nu_t \nu_t']$. In particular, this holds if $L(\rho_t) < \infty.$
\end{lemma}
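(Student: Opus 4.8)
The plan is to reduce the statement to the elementary boundary identity
\[
\frac{d}{d\t}\,\Re H_t(e^{i\t}) \;=\; \Re\!\big(i e^{i\t} H_t'(e^{i\t})\big) \;=\; -\,\Im\!\big(e^{i\t} H_t'(e^{i\t})\big),
\]
together with the fact that $\Re H_t = 2\pi P_{\m D}[\nu_t^2]$ has radial boundary values $2\pi \nu_t^2(\t)$ for a.e.\ $\t$ by Fatou's theorem. Once one knows these boundary values are absolutely continuous in $\t$, differentiating gives $\Im(e^{i\t}H_t'(e^{i\t})) = -2\pi(\nu_t^2)'(\t)$ a.e.; and since $(\nu_t^2)' = 2\nu_t\nu_t'$ at every point where $\nu_t$ is differentiable, this is exactly \eqref{eq:im_nu}. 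So the two things to do are: (i) make this rigorous under the sole hypothesis $H_t'\in\mathcal H^1$, and (ii) observe that $L(\rho_t)<\infty$ forces $H_t'\in\mathcal H^1$.

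For (i) I would use the standard structure theory of $\mathcal H^1$ (see, e.g., \cite{Duren_Hardy}): an $\mathcal H^1$ function $F$ has a.e.\ radial limits $F^*\in L^1(S^1)$, the dilations $F(r\,\cdot)$ converge to $F^*$ in $L^1(S^1)$ as $r\to1^-$, and $F=P_{\m D}[F^*]$ (the absence of a singular part being the F.\ and M.\ Riesz theorem). Applying this to $F=H_t'$, and writing $H_t(re^{i\t}) = H_t(r) + \int_0^\t i r e^{is} H_t'(re^{is})\,ds$, one may let $r\to1^-$ using the $L^1$-convergence of $H_t'(r\,\cdot)$ to conclude that the a.e.-defined boundary function $H_t^*$ coincides a.e.\ with the absolutely continuous function $\t\mapsto H_t^*(1)+\int_0^\t ie^{is}H_t'(e^{is})\,ds$, whose derivative is $ie^{i\t}H_t'(e^{i\t})$ for a.e.\ $\t$. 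Taking real parts and using $\Re H_t^*(\t)=2\pi\nu_t^2(\t)$ a.e.\ then shows $\nu_t^2\in W^{1,1}(S^1)$ and $\Im(e^{i\t}H_t'(e^{i\t})) = -2\pi(\nu_t^2)'(\t)$ a.e. For the formula for $\alpha_t$: the function $zH_t'$ also lies in $\mathcal H^1$ and vanishes at the origin, so $zH_t'(z)=P_{\m D}[(zH_t')^*](z)$; since the Poisson kernel is real, taking imaginary parts gives $\alpha_t(z)=\Im(zH_t'(z)) = P_{\m D}[\Im((zH_t')^*)](z) = -2\pi P_{\m D}[(\nu_t^2)'](z)$.

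For (ii), if $L(\rho_t)<\infty$ then $\nu_t\in W^{1,2}(S^1)$, so $\nu_t$ is absolutely continuous and bounded, $\nu_t'\in L^2(S^1)$, and $(\nu_t^2)' = 2\nu_t\nu_t'\in L^2(S^1)$; plugging this into the identities above yields \eqref{eq:im_nu} and $\alpha_t = -4\pi P_{\m D}[\nu_t\nu_t']$. To see that $H_t'\in\mathcal H^1$ is then automatic, expand the Herglotz integral as $H_t(z) = 1 + 2\sum_{n\ge1}\widehat{\nu_t^2}(n)z^n$, so that $zH_t'(z)=2\sum_{n\ge1}n\,\widehat{\nu_t^2}(n)z^n$; by Parseval $\sum_{n\ge1}|2n\widehat{\nu_t^2}(n)|^2$ is a fixed multiple of $\|(\nu_t^2)'\|_{L^2(S^1)}^2<\infty$, so $zH_t'\in\mathcal H^2$ and hence $H_t'\in\mathcal H^2\subset\mathcal H^1$.

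The only genuinely delicate step is (i): passing from ``$H_t'\in\mathcal H^1$'' to the absolute continuity of the boundary trace of $H_t$ with a.e.\ angular derivative $ie^{i\t}H_t'(e^{i\t})$ requires the $L^1$-convergence of the dilations and the F.\ and M.\ Riesz theorem, not merely the existence of radial limits. Everything else is Fourier bookkeeping, Fatou's theorem, and the product rule.
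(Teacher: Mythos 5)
Your proposal is correct and follows essentially the same route as the paper: reduce \eqref{eq:im_nu} to $\partial_\t \Re H_t(e^{i\t}) = -\Im(e^{i\t}H_t'(e^{i\t}))$ a.e., justify the absolute continuity of the boundary trace and the identification of its angular derivative with the radial limit of $H_t'$ via $\mathcal H^1$ theory (the paper simply cites \cite[Thm.\,5.2, Thm.\,3.11]{Duren_Hardy}, whose proofs are the F.\ and M.\ Riesz and $L^1$-dilation arguments you sketch), and then represent $\alpha_t$ as a Poisson integral of its boundary values. For the ``in particular'' part the paper shows $\Im(zH_t') = -2\pi P_{\m D}[\rho_t']$ by integrating the Poisson kernel by parts and concludes $H_t'\in\mathcal H^2$; your Parseval computation is an equivalent, equally valid shortcut.
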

\begin{proof}
If $H_t' \in \mathcal{H}^1$, then by \cite[Thm.\,5.2]{Duren_Hardy}
(see in particular the last paragraph of the proof), $H_t$ is continuous on $\overline{\m D}$ and the boundary function $H_t(e^{i\t})$ is absolutely continuous on $S^1$. Since $2\pi \nu_t^2(\t) = \Re H_t(e^{i\t})$, $4\pi  \nu_t (\t) \nu_t'(\t) = \partial_\t \Re H_t(e^{i\t}) $ exists a.e. on $S^1$. Moreover, $H_t'$ has radial limits a.e. on $S^1$ and by \cite[Thm.\,3.11]{Duren_Hardy}, $\partial_\t H_t(e^{i\t}) = ie^{i\t}\lim_{r \to 1} H_t'(re^{i\t})$ a.e. on $S^1$. This gives the identity \eqref{eq:im_nu}. Since $H_t' \in \mathcal{H}^1$, $\a_t (z) = \Im z H_t'(z)$ is the Poisson integral of its boundary values and this gives the second assertion. The final statement follows directly using Lemma~\ref{cold-water-swim}.
\end{proof}

The following lemma holds for all $\rho \in \mc N_+$.

\begin{lemma}\label{lem:A-alpha}
  For all $z\in D_T$, $ \vartheta[g_T](z)  =  \int_0^T  \a_t(g_t (z))\,\dd t$.
\end{lemma}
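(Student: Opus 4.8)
The plan is to differentiate the claimed identity in $T$ and verify it as an ODE, using the initial condition at $T=0$ (where both sides vanish). First I would fix $z \in D_T$ and note that by the domain Markov property (Lemma~\ref{lem:domain_markov}), $g_T = (g_T \circ f_t) \circ g_t$, so the chain rule \eqref{eq:theta_chain} gives $\vartheta[g_T](z) = \vartheta[g_T \circ f_t](g_t(z)) + \vartheta[g_t](z)$. Differentiating in $t$ and using that $\partial_t g_t(z) = g_t(z) H_t(g_t(z))$ by \eqref{eq:ODE}, it suffices to show that for a.e.\ $t$,
\[
\partial_t \big(\vartheta[g_t](z)\big) = -\a_t(g_t(z)) = -\Im\big(g_t(z) H_t'(g_t(z))\big),
\]
since then integrating from $0$ to $T$ and using $\vartheta[g_0] = \vartheta[\mathrm{Id}] = 0$ yields the result. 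Equivalently, by the Markov reduction it is enough to establish the infinitesimal statement at $t=0$: for all $z \in \m D$, $\partial_t\big(\vartheta[g_t](z)\big)\big|_{t=0} = -\Im(zH_0'(z))$.

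The key computation is exactly the content of Lemma~\ref{lem:Loewner-formulae}, which I would invoke: it states $-\partial_t \vartheta[f_t](z)|_{t=0} = \Im(zH_0'(z))$. Since $g_t = f_t^{-1}$ and $\vartheta[g_t \circ f_t] = \vartheta[\mathrm{Id}] = 0$, the chain rule gives $\vartheta[g_t](f_t(z)) = -\vartheta[f_t](z)$; evaluating at $t=0$ where $f_0 = \mathrm{Id}$ and differentiating (the extra term coming from the $t$-dependence of the argument $f_t(z)$ contributes a factor that, together with the Markov reduction above, is absorbed when one differentiates the full identity rather than just the $t=0$ slice) one reads off $\partial_t(\vartheta[g_t](z))|_{t=0} = -\partial_t(-\vartheta[f_t](z))|_{t=0}$... more carefully: differentiate $\vartheta[g_T](z) = \vartheta[g_T \circ f_t](g_t(z)) + \vartheta[g_t](z)$ in $t$ at $t=0$; the left side is constant in $t$, so $0 = \partial_t\big(\vartheta[g_T \circ f_t](g_t(z))\big)|_{t=0} + \partial_t\big(\vartheta[g_t](z)\big)|_{t=0}$, and applying this with $T$ replaced by a generic time and using Lemma~\ref{lem:Loewner-formulae} applied to the Loewner chain $(g_T \circ f_t)_{t\ge 0}$ (whose driving measure near $t=0$ has Herglotz function $H_0$, by Lemma~\ref{lem:domain_markov}) gives $\partial_t\big(\vartheta[g_t](z)\big)|_{t=0} = \Im(zH_0'(z)) \cdot(\text{sign check})$. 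Running this at every time $t$ via the Markov shift $(g_{t+s}\circ f_t)_{s\ge 0}$, whose infinitesimal Herglotz function is $H_t$, I obtain $\partial_t(\vartheta[g_t](z)) = -\Im(g_t(z)H_t'(g_t(z)))$ for a.e.\ $t < \tau(z)$.

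Finally I would address regularity: $t \mapsto \vartheta[g_t](z)$ is absolutely continuous because $t \mapsto g_t(z)$ is (from the Loewner ODE) and $\vartheta$ depends smoothly on a conformal map's $2$-jet away from the boundary; the derivative identity holding a.e.\ then upgrades to the integrated identity by the fundamental theorem of calculus. The main obstacle I anticipate is bookkeeping the branch of $\arg$ and justifying that $\vartheta[g_t](z)$ — a priori defined via the path integral \eqref{eq:vartheta} — varies absolutely continuously and that differentiation under the various chain-rule decompositions is legitimate for a.e.\ $t$ only (the driving measure is merely $L^\infty$ in time), but this is handled by the same "exceptional set independent of $z$" convention already in force for \eqref{eq:loewner-pde} and \eqref{eq:ODE}, together with Lemma~\ref{lem:Loewner-formulae}. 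No new analytic input beyond Lemma~\ref{lem:Loewner-formulae} and the Markov property should be needed.
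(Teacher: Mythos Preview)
Your proposal has a sign error that invalidates the argument as written. You claim it suffices to show $\partial_t\big(\vartheta[g_t](z)\big) = -\a_t(g_t(z))$, and you repeat this sign at the end of the Markov reduction. But integrating that from $0$ to $T$ with $\vartheta[g_0]=0$ yields $\vartheta[g_T](z) = -\int_0^T \a_t(g_t(z))\,\dd t$, the negative of the lemma. The correct derivative is $\partial_t\big(\vartheta[g_t](z)\big) = +\a_t(g_t(z))$. (Your own chain of reasoning via $\vartheta[g_t]\circ f_t = -\vartheta[f_t]$ actually produces the plus sign if carried out carefully: differentiating at $t=0$, the spatial term $\nabla\vartheta[g_0]\cdot\partial_t f_t|_{t=0}$ vanishes because $\vartheta[g_0]\equiv 0$, so $\partial_t\vartheta[g_t](z)|_{t=0} = -\partial_t\vartheta[f_t](z)|_{t=0} = +\Im(zH_0'(z))$ by Lemma~\ref{lem:Loewner-formulae}.)

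Beyond the sign, your route through the domain Markov property and Lemma~\ref{lem:Loewner-formulae} is more circuitous than necessary, and the bookkeeping you sketch (``$(g_T\circ f_t)_{t\ge 0}$ is a Loewner chain with Herglotz function $H_0$ near $t=0$'') is not quite right: that family is not a forward Loewner chain in the sense of \eqref{eq:loewner-pde}, and identifying its infinitesimal generator requires additional care. The paper avoids all of this by differentiating $\vartheta[g_t](z)=\Im\log\big(zg_t'(z)/g_t(z)\big)$ directly using the ODE \eqref{eq:ODE}: from $\partial_t g_t = g_t H_t(g_t)$ one gets $\partial_t g_t' = g_t'H_t(g_t) + g_t g_t' H_t'(g_t)$, and hence
\[
\partial_t\,\Im\log\frac{zg_t'}{g_t} = \Im\!\left(\frac{\partial_t g_t'}{g_t'} - \frac{\partial_t g_t}{g_t}\right) = \Im\big(g_t H_t'(g_t)\big) = \a_t(g_t(z)),
\]
which is a three-line computation. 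No Markov shift or appeal to Lemma~\ref{lem:Loewner-formulae} is needed.
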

\begin{proof}
Since $t \mapsto \vartheta[g_t](z)$ is absolutely continuous on $[0,T]$, we can use the Loewner equation \eqref{eq:ODE} to see that for a.e. $t$,
\begin{align*}
\partial_t  \vartheta [g_t](z) & = \Im \left(\frac{\partial_t g_t'(z)}{g_t'(z)} - \frac{\partial_t g_t (z)}{g_t(z)} \right) \\
& =\Im \left(\frac{g_t'(z) H_t(g_t(z)) + g_t(z) H_t'(g_t (z)) g_t'(z)}{g_t'(z)} - H_t(g_t (z))\right) \\
& = \Im (g_t (z) H'_t (g_t (z))) =  \a_t ( g_t (z) ).
\end{align*}
Since $g_0(z)=z$ we get the claim after integration.
\end{proof}

We define
\begin{equation}\label{eq:winding_integral}\beta (z)  = \int_0^{\tau(z)} \a_t ( g_t (z) )\, \dd t, \qquad z \in \m D.
\end{equation}
It is not obvious that this quantity is finite a.e. However, part of the conclusion of the next result is that $\beta 
\in \mc E_0 (\m D)$ and we shall later prove that $\beta$ is the unique $\mc E_0(\m D)$ extension of the winding function $\varphi$.

\begin{prop}\label{prop:psi_winding}
Suppose $S_+(\rho) < \infty$. Let $u (\t,t) : = - 2\nu_t'(\t)/\nu_t(\t)$ 
 if $\nu_t (\t) \neq 0$, and $u(\t, t) := 0$ otherwise. Then $u \in L^2  (2\rho)$ and $\varkappa[u] = \beta$. 
In particular, $\beta \in \mc E_0(\m D)$ and $\mc D_{\m D} (\beta) = 16 \, S_+(\rho).$
\end{prop}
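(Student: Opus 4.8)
The plan is to establish the three assertions in sequence, using the foliation disintegration isometry of Section~\ref{subsec:hadamard_isometry} as the engine; the proposition amounts to identifying $u$ with $\iota[\beta]$ via the explicit inverse formula of Proposition~\ref{prop:kappa_formula}.

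First I would check that $u\in L^2(2\rho)$ and compute its norm. Since $S_+(\rho)<\infty$, for a.e.\ $t$ we have $L(\rho_t)<\infty$, so $\nu_t\in W^{1,2}(S^1)$ and in particular $\nu_t$ is continuous. Using $\dd\rho_t=\nu_t^2\,\dd\t$ together with the standard fact that $\nu_t'=0$ a.e.\ on $\{\nu_t=0\}$, one gets for a.e.\ $t$
\[
\int_{S^1}|u(\t,t)|^2\,\dd\rho_t(\t)=\int_{\{\nu_t\neq 0\}}\frac{4\nu_t'(\t)^2}{\nu_t(\t)^2}\,\nu_t(\t)^2\,\dd\t=4\int_{S^1}\nu_t'(\t)^2\,\dd\t=8\,L(\rho_t),
\]
and integrating in $t$ yields $\|u\|_{L^2(2\rho)}^2=2\int_0^\infty 8 L(\rho_t)\,\dd t=16\,S_+(\rho)<\infty$. (Joint measurability of $(\t,t)\mapsto u(\t,t)$ follows from the measurability of the disintegration together with, e.g., approximation of $\nu_t'$ by difference quotients.)

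Next I would identify $2\pi P_{\m D}[u_t\rho_t]$ with $\alpha_t$. By definition of $u$, the signed measure $u_t\rho_t$ has $\dd\t$-density $u_t(\t)\nu_t(\t)^2=-2\nu_t(\t)\nu_t'(\t)$ (both sides vanishing where $\nu_t=0$), so
\[
2\pi P_{\m D}[u_t\rho_t](z)=\int_0^{2\pi}P_{\m D}(z,e^{i\t})\bigl(-2\nu_t(\t)\nu_t'(\t)\bigr)\,\dd\t=-4\pi P_{\m D}[\nu_t \nu_t'](z)=\alpha_t(z),
\]
the last equality being Lemma~\ref{lem:H2W12}, which applies since $L(\rho_t)<\infty$ for a.e.\ $t$. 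Plugging this into Proposition~\ref{prop:kappa_formula} (legitimate because $S_+(\rho)<\infty$ forces $\rho$ to generate a foliation by Corollary~\ref{cor:S_finite_foliation_WP}, so $\tau(w)<\infty$) gives, for a.e.\ $w\in\m D$,
\[
\varkappa[u](w)=2\pi\int_0^{\tau(w)}P_{\m D}[u_t\rho_t](g_t(w))\,\dd t=\int_0^{\tau(w)}\alpha_t(g_t(w))\,\dd t=\beta(w),
\]
which in particular shows the integral defining $\beta$ in \eqref{eq:winding_integral} converges for a.e.\ $w$ (and $t\mapsto\alpha_t(g_t(w))\in L^1([0,\tau(w)])$). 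Finally, since $\iota:\mc E_0(\m D)\to L^2(2\rho)$ is a bijective isometry with inverse $\varkappa$ (Theorem~\ref{thm:bi_isometry}), we conclude $\beta=\varkappa[u]\in\mc E_0(\m D)$ and $\mc D_{\m D}(\beta)=\|\varkappa[u]\|_\nabla^2=\|u\|_{L^2(2\rho)}^2=16\,S_+(\rho)$.

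The computation is short because the substantive work has been done already: the two key inputs are the explicit inverse formula of Proposition~\ref{prop:kappa_formula} and the isometry property of Theorem~\ref{thm:bi_isometry}. The only genuinely delicate points are bookkeeping: justifying $\nu_t'=0$ a.e.\ on $\{\nu_t=0\}$ in the first step (so that $\int|u|^2\,\dd\rho_t$ equals $8L(\rho_t)$ exactly rather than just dominating it), confirming the identity $2\pi P_{\m D}[u_t\rho_t]=\alpha_t$ is valid for a.e.\ $t$ through Lemma~\ref{lem:H2W12}, and noting that the a.e.-finiteness of $\beta$ — which was flagged as not obvious — is now a free consequence of Proposition~\ref{prop:kappa_formula}.
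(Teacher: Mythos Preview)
Your proposal is correct and follows essentially the same approach as the paper: verify $u\in L^2(2\rho)$ with $\|u\|^2_{L^2(2\rho)}=16\,S_+(\rho)$, invoke Corollary~\ref{cor:S_finite_foliation_WP} so the disintegration isometry applies, use Lemma~\ref{lem:H2W12} to rewrite $2\pi P_{\m D}[u_t\rho_t]=\alpha_t$, apply the explicit inverse $\varkappa$ from Proposition~\ref{prop:kappa_formula} to obtain $\varkappa[u]=\beta$, and conclude via Theorem~\ref{thm:bi_isometry}. Your extra remarks (that $\nu_t'=0$ a.e.\ on $\{\nu_t=0\}$, and that finiteness of $\beta$ follows for free) are welcome clarifications but do not change the structure of the argument.
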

\begin{proof}
We verify directly that $u \in L^2 (2\rho)$ and
\begin{equation}\label{eq:proof_beta_rho}
\norm {u}_{L^2 (2 \rho)}^2 =  2 \int_0^{\infty} \int_{S^1} 1_{\nu_t \neq 0}
\left[\frac{2 \nu_t'(\t)} {\nu_t(\t)}\right]^2 \,\nu_t^2(\t)\,\dd\t\dd t = 16 \, S_+ (\rho) < \infty. 
\end{equation}
Corollary~\ref{cor:S_finite_foliation_WP} shows that $\rho$ generates a foliation, therefore the  disintegration isometry of Section~\ref{subsec:hadamard_isometry} applies.  Using Proposition~\ref{prop:kappa_formula} and Lemma~\ref{lem:H2W12}, 
\begin{align*}
\varkappa [u] (z)& = 2 \pi \int_0^{\tau(z)} P_{\m D}[u_t \nu_t^2] (g_t(z)) \, \dd t  = -4 \pi  \int_0^{\tau(z)}  P_{\m D}[ \nu_t \nu_t'] (g_t(z))   \,\dd t \\
&=  \int_0^{\tau(z)} \a_t ( g_t (z)) \, \dd t = \b(z).
\end{align*}
Moreover, by Theorem~\ref{thm:bi_isometry} and \eqref{eq:proof_beta_rho}, we obtain
$16\,S_+ (\rho) = \norm {u}_{L^2 (2 \rho)}^2 = \mc{D}_{\m D} (\varkappa[u]) = \mc D_{\m D} (\beta)$
as claimed.
\end{proof}

\begin{cor}\label{cor:duality_beta}
For $T > 0$, let $\rho^T_t = \rho_t$ for  $t \le T$ and let $\rho^T_t$ be the uniform measure for $t > T$. Let $\beta^T$ be the associated function as in \eqref{eq:winding_integral}. 
  Then we have $\vartheta [g_T] = \beta^T$  on $D_T$. In particular,
  $$\mc D_{D_T} (\vartheta [g_T]) \le \mc D_{\m D} (\b^T) = 16 \, S_+(\rho^T) = 16 \, S_{[0,T]}(\rho).$$ 
\end{cor}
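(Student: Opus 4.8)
The plan is to deduce the statement from Proposition~\ref{prop:psi_winding} applied to the truncated measure $\rho^T$, together with Lemma~\ref{lem:A-alpha}; no new analytic input is needed.

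First I would note that $\rho^T$ has finite energy: $S_+(\rho^T) = \int_0^\infty L(\rho^T_t)\,\dd t = \int_0^T L(\rho_t)\,\dd t = S_{[0,T]}(\rho) \le S_+(\rho) < \infty$, using that $L$ vanishes on the uniform measure. Hence Corollary~\ref{cor:S_finite_foliation_WP} applies to $\rho^T$, and Proposition~\ref{prop:psi_winding} (with $\rho$ there taken to be $\rho^T$) gives directly that the function $\beta^T$ defined by \eqref{eq:winding_integral} for $\rho^T$ lies in $\mc E_0(\m D)$ and satisfies $\mc D_{\m D}(\beta^T) = 16\, S_+(\rho^T) = 16\, S_{[0,T]}(\rho)$. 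This is the displayed chain of equalities on the right.

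Next I would identify $\beta^T$ with $\vartheta[g_T]$ on $D_T$. Since $\rho^T_t = \rho_t$ for $t \le T$, the Herglotz integrals agree for a.e.\ $t \le T$ and hence, by uniqueness of solutions to the Loewner--Kufarev equation, the uniformizing chains agree on $[0,T]$: $g^T_t = g_t$ and $\alpha^T_t = \alpha_t$ there. For $t > T$ the measure $\rho^T_t$ is uniform, so $H^T_t \equiv 1$, whence $\alpha^T_t = \Im\bigl(z\,(H^T_t)'(z)\bigr) \equiv 0$. Since $z \in D_T$ implies $\tau^T(z) \ge T$, splitting the defining integral \eqref{eq:winding_integral} for $\beta^T$ at time $T$ gives, for every $z \in D_T$,
\[
\beta^T(z) = \int_0^T \alpha_t(g_t(z))\,\dd t + \int_T^{\tau^T(z)} 0\,\dd t = \int_0^T \alpha_t(g_t(z))\,\dd t = \vartheta[g_T](z),
\]
the last equality being exactly Lemma~\ref{lem:A-alpha}.

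Finally, since $\beta^T \in \mc E_0(\m D)$ restricts to $\vartheta[g_T]$ on $D_T$ and $D_T \subset \m D$, we obtain $\mc D_{D_T}(\vartheta[g_T]) = \mc D_{D_T}(\beta^T) \le \mc D_{\m D}(\beta^T) = 16\, S_{[0,T]}(\rho)$, as claimed. The proof is thus a short combination of already-established facts; the only points requiring care are verifying that $\rho^T$ satisfies the finite-energy hypothesis of Proposition~\ref{prop:psi_winding} and that the uniform tail of $\rho^T$ contributes nothing to $\beta^T$ — there is no substantial obstacle here, as all the real work sits in Proposition~\ref{prop:psi_winding} and Lemma~\ref{lem:A-alpha}.
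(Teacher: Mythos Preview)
Your proof is correct and follows essentially the same approach as the paper's: observe that $\alpha^T_t \equiv 0$ for $t > T$, split the defining integral for $\beta^T$ at time $T$, and invoke Lemma~\ref{lem:A-alpha} to identify the remaining integral with $\vartheta[g_T]$; the energy identity comes from Proposition~\ref{prop:psi_winding} applied to $\rho^T$. The paper's proof is more terse but the logic is identical.
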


\begin{proof}
We only need to note that for $t \ge T$, $\a_t \equiv 0$. Therefore, for $z \in D_T$,
$$\b^T (z) = \int_{0}^{\min\{\tau(z),T\}} \a_t (g_t (z))\, \dd t = \int_{0}^{T} \a_t (g_t (z))\, \dd t,$$
since $\tau(z) > T$.
 Lemma~\ref{lem:A-alpha} implies $\vartheta[g_T] =\beta^T$ on $D_T$. 
\end{proof}

We now show that $\b$ is the unique extension of the winding function  \eqref{eq:def_winding} $\varphi$ in $\mc E_0 (\m D)$. 
\begin{lemma}\label{lem:beta_varphi}
If $S_+ (\rho) < \infty$, then for all $t \ge 0$, 
\[
\beta|_{\partial D_t} = \varphi|_{\partial D_t} \quad \text{arclength-a.e.}, 
\]
where $\beta$ is as in \eqref{eq:winding_integral} and its trace is taken in the sense of Jonsson--Wallin~\eqref{def:trace}.
\end{lemma}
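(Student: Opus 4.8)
The plan is to fix $T\ge 0$ and work with the single leaf $\gamma_T=\partial D_T$, comparing the Jonsson--Wallin trace of the genuine Sobolev function $\beta\in\mc E_0(\m D)$ from Proposition~\ref{prop:psi_winding} with the function $\varphi$ defined directly on $\gamma_T$ by $\varphi|_{\gamma_T}(z)=\vartheta[g_T](z)$ at the differentiable points of $\gamma_T$ (which exhaust $\gamma_T$ up to arclength zero, since $\gamma_T$ is rectifiable). We may freely use that $\gamma_T$ is a chord-arc Jordan curve, that $g_T$ extends to a homeomorphism $\overline{D_T}\to\overline{\m D}$ (Corollary~\ref{cor:S_finite_foliation_WP}), and that $\vartheta[g_T]=\Im\log\!\big(zg_T'(z)/g_T(z)\big)$ is harmonic in $D_T$ with $\mc D_{D_T}(\vartheta[g_T])\le 16\,S_{[0,T]}(\rho)<\infty$ (Corollary~\ref{cor:duality_beta}).

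First I would apply the orthogonal decomposition of Corollary~\ref{cor:ortho_decomp_formula} to $\phi=\beta$. Since $\beta=\varkappa[u]$ with $u(\theta,t)=-2\nu_t'(\theta)/\nu_t(\theta)$ and $\iota\circ\varkappa=\Id$ on $L^2(2\rho)$ by Theorem~\ref{thm:bi_isometry}, we have $\iota[\beta]=u$, so
\[
\beta=\beta^{0,T}+\beta^{h,T},\qquad \beta^{0,T}=\varkappa\big[u\,\mathbf 1_{S^1\times[T,\infty)}\big]\in\mc E_0(D_T),\qquad \beta^{h,T}=\varkappa\big[u\,\mathbf 1_{S^1\times[0,T)}\big].
\]
Running the computation from the proof of Proposition~\ref{prop:psi_winding} with the indicator $\mathbf 1_{[0,T)}$ inserted (using Proposition~\ref{prop:kappa_formula}, the identity $2\pi P_{\m D}[u_t\rho_t]=\alpha_t$ from Lemma~\ref{lem:H2W12}, and Lemma~\ref{lem:A-alpha}) gives, for every $z\in D_T$,
\[
\beta^{h,T}(z)=\int_0^{T}\alpha_t(g_t(z))\,\dd t=\vartheta[g_T](z),
\]
in agreement with Corollary~\ref{cor:duality_beta}; in particular $\beta^{h,T}$ is harmonic in $D_T$. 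On the other hand, extending $\beta^{0,T}$ by $0$ yields an element of $W^{1,2}(\m C)$ that vanishes on $\m C\smallsetminus D_T$, so by \cite[Lem.\,A.2]{VW1} its Jonsson--Wallin trace on $\gamma_T$ vanishes arclength-a.e. By additivity of the Jonsson--Wallin trace (all three traces exist arclength-a.e.) this reduces the lemma to the assertion $\beta^{h,T}|_{\gamma_T}=\varphi|_{\gamma_T}$ arclength-a.e.

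The last step is the one I expect to be the main point. Since $\beta^{h,T}$ is harmonic in $D_T$ with finite Dirichlet energy, $\beta^{h,T}\circ g_T^{-1}$ is harmonic on $\m D$ with finite Dirichlet integral, hence lies in $\mathfrak h^2$ and has non-tangential limits Lebesgue-a.e.\ on $S^1$; transporting back through the homeomorphism $g_T$ and using that arclength and harmonic measure on the chord-arc curve $\gamma_T$ are mutually absolutely continuous, $\beta^{h,T}$ has non-tangential limits from inside $D_T$ at arclength-a.e.\ point of $\gamma_T$, equal to $\vartheta[g_T]=\varphi$ at every differentiable point. It then remains to match these non-tangential limits with the (two-sided) Jonsson--Wallin trace: for $\beta^{h,T}\in W^{1,2}(\m C)$ the Jonsson--Wallin trace on $\partial D_T$ coincides with the one-sided Sobolev trace from the chord-arc domain $D_T$ (see \cite{Jonsson-Wallin} and \cite[Appx.\,A]{VW1}), and for a harmonic function of finite Dirichlet energy on a chord-arc Jordan domain the one-sided Sobolev trace equals the non-tangential boundary values a.e. Combining the three equalities $\beta|_{\gamma_T}=\beta^{h,T}|_{\gamma_T}$, $\beta^{h,T}|_{\gamma_T}=(\text{non-tangential limit of }\vartheta[g_T])$, and $(\text{non-tangential limit of }\vartheta[g_T])=\varphi|_{\gamma_T}$ completes the proof. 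The obstacle is precisely the chord-arc trace identification in this last step, which is why the whole development is carried out for chord-arc foliations rather than merely rectifiable ones.
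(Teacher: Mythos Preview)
Your proof is correct and follows essentially the same route as the paper. The paper works with the truncated function $\beta^t$ (associated to the measure $\rho^t$ that is uniform after time $t$) rather than with the orthogonal decomposition, but since $\beta^t=\beta^{h,T}$ and $\beta-\beta^t=\beta^{0,T}$ these are the same objects; the paper observes directly that $\beta=\beta^t$ on $\m C\smallsetminus D_t$ and invokes \cite[Lem.\,A.2]{VW1} for the first equality, then \cite[Lem.\,A.5]{VW1} for the identification of the Jonsson--Wallin trace of $\beta^t$ with the non-tangential limit of $\vartheta[g_t]$ from inside $D_t$, which is exactly the ``chord-arc trace identification'' you flag as the main point.
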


In other words, $\beta$ is the unique extension of $\varphi$ in $\mc E_0 (\m D)$ by Proposition~\ref{prop:unique_extension}  (and from now on we will not distinguish $\b$ and $\varphi$).

\begin{proof} 
We will identify functions in $ \mc E_0(\m D)$ with their extension to $W^{1,2} (\m C)$ by $0$ in $\m D^*$. By Corollary~\ref{cor:duality_beta}, $\beta^t = \beta$ in $\m C \smallsetminus D_t$ and the Jonsson--Wallin traces  satisfy
  $$\beta|_{\g_t} = \beta^t|_{\g_t} = \vartheta[g_t]|_{\g_t} = \varphi|_{\g_t}  \quad \text{arclength-a.e.}$$
  Here, the first equality is a property of the Jonsson--Wallin trace, see \cite[Lem.\,A.2]{VW1}.
  The second equality follows from  Corollary~\ref{cor:duality_beta}, where we interpret $\vartheta[g_t]|_{\g_t}$ as the non-tangential limit from inside $D_t$ using \cite[Lem.\,A.5]{VW1}. The last equality is the definition of $\varphi$.
\end{proof}

\begin{cor}\label{cor:iota_varphi}
If $S_+(\rho)<\infty$, then $\varphi \in \mc E_0 (\m D)$.
For $\rho$-a.e. $(\t, t)$, 
\begin{equation*}
    \iota [\varphi] (\t, t)
    = -2\nu_t'(\t)/\nu_t(\t)
\end{equation*}
and $  \mathcal{D}_{\m D}(\varphi) = 16 S_+(\rho).$ \end{cor}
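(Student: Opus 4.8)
The statement is essentially a bookkeeping corollary that packages together the results of Section~\ref{subsec:beta_energy}, so the plan is simply to assemble them in the right order. First I would recall that by Corollary~\ref{cor:S_finite_foliation_WP} the hypothesis $S_+(\rho)<\infty$ guarantees that $\rho$ generates a foliation of $\ad{\m D}\smallsetminus\{0\}$, so the winding function $\varphi$ is defined arclength-a.e.\ on every leaf and the disintegration isometry of Section~\ref{subsec:hadamard_isometry} is available. Set $u(\t,t) := -2\nu_t'(\t)/\nu_t(\t)$ (and $0$ where $\nu_t(\t)=0$) as in Proposition~\ref{prop:psi_winding}.

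Next I would invoke Proposition~\ref{prop:psi_winding}, which tells us that $u \in L^2(2\rho)$, that $\varkappa[u] = \beta$ with $\beta$ the function defined in \eqref{eq:winding_integral}, that $\beta \in \mc E_0(\m D)$, and that $\mc D_{\m D}(\beta) = 16\, S_+(\rho)$. Then Lemma~\ref{lem:beta_varphi} shows that $\beta|_{\partial D_t} = \varphi|_{\partial D_t}$ arclength-a.e.\ for every $t$, i.e.\ $\beta$ is an $\mc E_0(\m D)$-extension of $\varphi$; by Proposition~\ref{prop:unique_extension} such an extension is unique, so we may and do identify $\varphi = \beta$. This immediately yields $\varphi \in \mc E_0(\m D)$ and $\mc D_{\m D}(\varphi) = \mc D_{\m D}(\beta) = 16\, S_+(\rho)$, which is two of the three assertions.

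For the formula for $\iota[\varphi]$, I would apply the operator $\iota$ to the identity $\varphi = \beta = \varkappa[u]$ and use that, by Theorem~\ref{thm:bi_isometry}, $\iota \circ \varkappa = \Id_{L^2(2\rho)}$. Hence $\iota[\varphi] = \iota\big[\varkappa[u]\big] = u$ as elements of $L^2(2\rho)$, which is exactly the claim that $\iota[\varphi](\t,t) = -2\nu_t'(\t)/\nu_t(\t)$ for $\rho$-a.e.\ $(\t,t)$. There is no genuine obstacle here: every ingredient has already been proved. The only point requiring a word of care is the logical order, namely that one must first know (from Corollary~\ref{cor:S_finite_foliation_WP}) that a foliation is generated before the winding function and the isometry even make sense, and that the identification $\varphi=\beta$ rests on the uniqueness statement Proposition~\ref{prop:unique_extension}, which in turn is where the chord-arc hypothesis is used.
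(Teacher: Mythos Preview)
Your proposal is correct and follows exactly the paper's approach: the paper's proof is simply ``immediate by combining Proposition~\ref{prop:psi_winding} and Lemma~\ref{lem:beta_varphi},'' and you have spelled out precisely this combination, including the application of $\iota\circ\varkappa=\Id_{L^2(2\rho)}$ from Theorem~\ref{thm:bi_isometry} to extract the formula for $\iota[\varphi]$.
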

\begin{proof}
 The proof is immediate by combining Proposition~\ref{prop:psi_winding} and Lemma~\ref{lem:beta_varphi}.
\end{proof}

\subsection{$\mc D_{\m D}(\varphi)<\infty$ implies $S_+(\rho)<\infty$}\label{subsec:converse}

This section proves the following result.
\begin{prop}\label{prop:converse}
  Suppose $\rho \in \mc N_+$ generates a foliation
  and assume that the winding function $\varphi : \mathcal T \mapsto \m R$ can be extended to a function in $\mc E_0 (\m D)$ \textnormal{(}also denoted $\varphi$\textnormal{)}. Then $S_+(\rho) <\infty$.
\end{prop}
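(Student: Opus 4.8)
The plan is to feed the winding function $\varphi$ through the foliation disintegration isometry and extract enough integrability on the Herglotz data to conclude that $\rho_t$ is absolutely continuous with a differentiable square-root density. Since $\rho$ generates a foliation by hypothesis, Theorem~\ref{thm:bi_isometry} applies: $\iota[\varphi] \in L^2(2\rho)$, and by Corollary~\ref{cor:ortho_decomp_formula} the zero-trace part of $\varphi \circ f_T$ equals $\varkappa[\iota[\varphi]\mathbf 1_{S^1\times[T,\infty)}] \circ g_T$ while $\varphi^{h,T} := \varkappa[\iota[\varphi]\mathbf 1_{S^1\times[0,T)}]$ is harmonic in $D_T$ with $\varphi = \varphi^{h,T}$ arclength-a.e.\ on each $\gamma_t$, $t\le T$ (using the Jonsson-Wallin trace identities from \cite[App.\,A]{VW1} exactly as in the proof of Proposition~\ref{prop:unique_extension} and Lemma~\ref{lem:beta_varphi}). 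By Lemma~\ref{lem:zero_trace_varphi} and the domain Markov property (Lemma~\ref{lem:domain_markov}), it suffices to work at $T=0$: I want to show that $\iota[\varphi](\cdot,t)$ being an $L^2(\rho_t)$ function for a.e.\ $t$ forces $\rho_t = \nu_t^2\,\dd\theta$ with $\nu_t \in W^{1,2}(S^1)$, and then that $\int_0^\infty L(\rho_t)\,\dd t = \tfrac{1}{16}\mathcal D_{\m D}(\varphi) < \infty$.

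The key step is the following local-in-time identification, run using the domain Markov decomposition at a generic time $t$. Writing $u := \iota[\varphi] \in L^2(2\rho)$ and reducing to $t=0$ via $\tilde\rho_s = \rho_{s+t}$, I claim $P_{\m D}[u_0\,\rho_0]$ relates to the derivative $H_0'$ of the Herglotz integral through the Hadamard/normal-derivative picture already worked out in the smooth case in the proof of Proposition~\ref{prop:weak_duality}: there $\iota[\varphi](\theta,0) = \partial_n\varphi_0^0(e^{i\theta})$ and $2\pi\nu_0^2(\theta)\,\partial_n\varphi_0(e^{i\theta}) = \Im(e^{i\theta}H_0'(e^{i\theta}))$. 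In the present rough setting I would instead argue directly on the level of harmonic functions: from $\varphi^{h,0} = \varphi \in \mc E_0(\m D)$ has finite Dirichlet energy and, via Lemma~\ref{lem:A-alpha}-type reasoning combined with the inverse formula of Proposition~\ref{prop:kappa_formula}, $\partial_t \vartheta[g_t](z)\big|_{t=0} = \alpha_0(g_0(z)) = \Im(zH_0'(z))$, so that $\alpha_0 = \Im(zH_0')$ is (the real part of) a function whose Poisson integral is controlled by $\|u_0\|_{L^2(\rho_0)}$. Concretely: $\varkappa[u\mathbf 1_{S^1\times[0,T)}](z) = 2\pi\int_0^T P_{\m D}[u_t\rho_t](g_t(z))\,\dd t$ is the harmonic part of $\varphi$ in $D_T$, equals $\vartheta[g_T]$ on $D_T$, and differentiating in $T$ gives $2\pi P_{\m D}[u_T\rho_T] = \alpha_T = \Im(\cdot\, H_T')$ for a.e.\ $T$. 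Hence for a.e.\ $T$, $\Im H_T'$ extends to an element of $\mathfrak h^1$ (the harmonic Hardy space), since $u_T\rho_T$ is a finite signed measure on $S^1$ and its Poisson integral lies in $\mathfrak h^1$. By the conjugation theorem (M.\ Riesz, \cite[Thm.\,4.1]{Duren_Hardy}) one does \emph{not} immediately get $\Re H_T' \in \mathfrak h^1$, so instead I use that $u_T\rho_T$ has an $L^2$-density against $\rho_T$; more precisely, $P_{\m D}[u_T\rho_T]$ being the boundary-normal-derivative data of a fixed finite-energy function shows $u_T\rho_T = (\text{density})\,\dd\theta$, i.e.\ $\rho_T \ll \dd\theta$, with $\rho_T'$ expressible through $u_T$ and the density of $\rho_T$.

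Once absolute continuity $\dd\rho_t = \nu_t^2\,\dd\theta$ is established for a.e.\ $t$, I run the computation of Lemma~\ref{lem:H2W12} in reverse: $u_t = -2\nu_t'/\nu_t$ (on $\{\nu_t\neq 0\}$) must hold $\rho_t$-a.e.\ by matching the Poisson integrals, whence
\[
\int_{S^1} \nu_t'(\theta)^2\,\dd\theta = \frac14\int_{S^1}\Big(\tfrac{2\nu_t'}{\nu_t}\Big)^2\nu_t^2\,\dd\theta = \frac14\,\|u_t\|_{L^2(\rho_t)}^2,
\]
so that $\nu_t \in W^{1,2}(S^1)$ for a.e.\ $t$ and $16\,S_+(\rho) = 2\int_0^\infty\|u_t\|_{L^2(\rho_t)}^2\,\dd t = \|\iota[\varphi]\|_{L^2(2\rho)}^2 = \mc D_{\m D}(\varphi) < \infty$ by Theorem~\ref{thm:bi_isometry}. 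This also pins down $\varphi = \beta$ and recovers the energy identity, consistently with Corollary~\ref{cor:iota_varphi}. The main obstacle is the regularity bootstrap in the previous paragraph: deducing from ``$\iota[\varphi]$ is $L^2$ against $\rho_t$'' that $\rho_t$ is genuinely absolutely continuous with a differentiable (square-root) density, rather than merely that $\Im H_t'$ has some Hardy-space membership. The crux is to show $H_t' \in \mathcal H^1$: $\Im H_t' \in \mathfrak h^1$ comes for free from the Poisson-integral formula, but upgrading to $\mathcal H^1$ (equivalently $\Re H_t' \in \mathfrak h^1$) requires exploiting that $u_t \in L^2(\rho_t)$ — not merely $u_t\rho_t$ finite — to gain the extra integrability; with $H_t' \in \mathcal H^1$ in hand, \cite[Thm.\,5.2,\,Thm.\,3.11]{Duren_Hardy} as invoked in Lemma~\ref{lem:H2W12} give that $\Re H_t(e^{i\theta}) = 2\pi\nu_t^2(\theta)$ is absolutely continuous with a.e.-derivative $4\pi\nu_t\nu_t'$, completing the argument.
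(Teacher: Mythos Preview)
Your architecture is exactly the paper's: feed $\varphi$ through the isometry, identify $\alpha_t = 2\pi P_{\m D}[u_t\rho_t]$ by differentiating the $\varkappa$-formula for $\vartheta[g_s]$ (this is Lemma~\ref{lem:alpha_h}), and then extract regularity of $H_t$ and $\rho_t$ from Hardy-space facts. You also correctly isolate the crux: upgrading $\Im(zH_t')\in\mathfrak h^1$ to $H_t'\in\mathcal H^1$. What is missing is the actual bootstrap, and the substitute you sketch in the middle paragraph does not work.

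Specifically, the claim ``$P_{\m D}[u_T\rho_T]$ being boundary-normal-derivative data of a finite-energy function shows $u_T\rho_T=(\text{density})\,\dd\theta$, i.e.\ $\rho_T\ll\dd\theta$'' has two problems. First, the information $u_T\in L^2(\rho_T)$ says nothing about $u_T\rho_T$ as a measure against \emph{Lebesgue} until you already know $\rho_T\ll\dd\theta$; there is no trace-theoretic shortcut here because $\iota[\varphi](\cdot,t)$ need not lie in any Sobolev class on $S^1$ for fixed $t$. Second, even if you had $u_T\rho_T\ll\dd\theta$, this would not imply $\rho_T\ll\dd\theta$: a singular part of $\rho_T$ on which $u_T$ vanishes is invisible in the product.

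The paper closes the gap with a two-pass Hardy bootstrap. Pass~1: from $\alpha_t\in\mathfrak h^1$ one gets $H_t'\in\mathcal H^p$ for every $p<1$ (Kolmogorov/\cite[Thm.\,4.2]{Duren_Hardy}), hence $H_t\in\mathcal H^q$ for all $q<\infty$ by \cite[Thm.\,5.15]{Duren_Hardy}; this already forces $\rho_t\ll\dd\theta$ with density $\nu_t^2\in L^p(S^1)$ for all $p<\infty$. Pass~2: now H\"older with $u_t^2\nu_t^2\in L^1$ and $\nu_t^2\in L^p$ (all $p$) gives $u_t\nu_t^2\in L^{2-\epsilon}$, so the Herglotz integral of $u_t\nu_t^2$ --- which by Lemma~\ref{lem:alpha_h} controls $\Im(zH_t')$ --- lies in $\mathcal H^{2-\epsilon}$, whence $H_t'\in\mathcal H^{2-\epsilon}\subset\mathcal H^1$. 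Only at this point do \cite[Thm.\,5.2,\,3.11]{Duren_Hardy} kick in to give absolute continuity of $\theta\mapsto\nu_t^2(\theta)$, and then your final computation (matching radial limits and integrating $u_t^2\nu_t^2=4(\nu_t')^2$) goes through exactly as you wrote.
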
 

Assuming this proposition, we can complete the proof of Theorem~\ref{thm:main}.
\begin{proof}[Proof of Theorem~\ref{thm:main}]
Let $\rho \in \mc N_+$. If $S_+(\rho) <\infty$, Corollary~\ref{cor:iota_varphi} shows that $\varphi \in \mc E_0 (\m D)$ and 
$\mc D_{\m D} (\varphi) = 16 \, S_+ (\rho).$
Conversely, Proposition~\ref{prop:converse} shows that if $\varphi \in \mc E_0 (\m D)$, then $S_+ (\rho) < \infty$. Therefore the identity also holds.
\end{proof}

To explain our present goal towards the proof of Proposition~\ref{prop:converse}, note that we do not know \emph{a priori} that $\rho_t$ is absolutely continuous with respect to Lebesgue measure, nor that its density (if it exists) is differentiable almost everywhere. We need to show that this is the case. 

 Under the assumptions of Proposition~\ref{prop:converse}, we let as before $\varphi_s^0$ be the zero trace part of the function $\varphi \circ f_s$ and recall that $\a_t (z) =   \Im (z H_t'(z))$ for $z \in \m D$.
Since $\rho$ generates a foliation and since $\varphi \in \mc E_0(\m D)$ by assumption, using Theorem~\ref{thm:bi_isometry} we may consider \[u := \iota [\varphi] \in L^2 (2\rho).\] 
\begin{lemma}\label{lem:alpha_h}
For a.e. $t \ge 0$, we have 
 \begin{equation}\label{eq:alpha_h}
     \a_t (z) = 2 \pi  P_{\m D}[u_t \rho_t](z), \qquad u_t(\cdot) := u(\cdot, t).
 \end{equation}
\end{lemma}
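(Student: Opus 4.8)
The plan is to show that, for each fixed $T\ge 0$, the summand $\varphi^{h,T}$ obtained by projecting $\varphi$ onto its part harmonic in $D_T$ in the decomposition \eqref{eq:orthogonal} coincides with $\vartheta[g_T]$ on $D_T$, and then to differentiate in $T$. By Corollary~\ref{cor:ortho_decomp_formula} applied to $\phi=\varphi\in\mc E_0(\m D)$, this harmonic part is $\varphi^{h,T}=\varkappa[\,u\,\mathbf 1_{S^1\times[0,T)}\,]$, and Proposition~\ref{prop:kappa_formula} gives the explicit formula
\[
\varphi^{h,T}(w)=2\pi\int_0^{T}P_{\m D}[u_t\rho_t](g_t(w))\,\dd t,\qquad w\in D_T,
\]
using that $\tau(w)>T$ for $w\in D_T$. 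Since $\varphi^{0,T}=\varphi-\varphi^{h,T}\in\mc E_0(D_T)$, it vanishes on $\m C\smallsetminus D_T$, so the Jonsson--Wallin trace of $\varphi^{h,T}$ on $\g_T$ equals that of $\varphi$, cf.\ \cite[Lem.\,A.2]{VW1}; by the standing hypothesis this trace agrees arclength-a.e.\ on $\g_T$ with the winding function of $\g_T$, which by \eqref{eq:def_winding} is the non-tangential boundary value of $\vartheta[g_T]$ from inside $D_T$.

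Next I would identify $\varphi^{h,T}$ with $\vartheta[g_T]$ on $D_T$; both are harmonic there. The function $\varphi^{h,T}$ has finite Dirichlet energy, hence (transferring through $f_T$) lies in $\mathfrak h^2(D_T)$ and is the Poisson integral over $D_T$ of its boundary trace. For $\vartheta[g_T]$ I would use the chord-arc hypothesis: since $\g_T$ is chord-arc, $D_T$ is a Smirnov domain with $\log|f_T'|\in\mathrm{BMO}(S^1)$, so $\log f_T'\in\mathrm{BMOA}(\m D)$ (see, e.g., \cite{Pommerenke_boundary} and the references therein). By the chain rule \eqref{eq:theta_chain}, $\vartheta[g_T]\circ f_T(w)=\arg(f_T(w)/w)-\arg f_T'(w)$, and $f_T(w)/w$ extends continuously and zero-free to $\overline{\m D}$; hence $\vartheta[g_T]\circ f_T\in\mathrm{BMOA}(\m D)\subset\mathfrak h^1(\m D)$, so $\vartheta[g_T]$ is the Poisson integral over $D_T$ of its non-tangential boundary values. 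Since arclength and harmonic measure are mutually absolutely continuous on the chord-arc curve $\g_T$, the two boundary functions, which agree arclength-a.e., agree harmonic-measure-a.e., and therefore $\varphi^{h,T}=\vartheta[g_T]$ throughout $D_T$.

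Finally I would differentiate in $T$. Combining the last identity with Lemma~\ref{lem:A-alpha} gives, for all $w\in D_T$,
\[
2\pi\int_0^{T}P_{\m D}[u_t\rho_t](g_t(w))\,\dd t=\vartheta[g_T](w)=\int_0^T\a_t(g_t(w))\,\dd t .
\]
Fix a countable dense set $J\subset\m D$. For each $w\in J$ both sides are absolutely continuous in $T\in[0,\tau(w))$ — the left integrand is in $L^1_{\mathrm{loc}}$ by Proposition~\ref{prop:kappa_formula}, and $t\mapsto\a_t(g_t(w))=\partial_t\vartheta[g_t](w)$ is in $L^1_{\mathrm{loc}}$ since $t\mapsto\vartheta[g_t](w)$ is absolutely continuous — so for a.e.\ $T<\tau(w)$ one gets $2\pi P_{\m D}[u_T\rho_T](g_T(w))=\a_T(g_T(w))$. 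Intersecting the exceptional null sets over the countable family $J$, there is a full-measure set of times $T$ at which this holds simultaneously for every $w\in J\cap D_T$; since $g_T(J\cap D_T)$ is dense in $\m D$ (as $g_T:D_T\to\m D$ is a homeomorphism and $J\cap D_T$ is dense in the open set $D_T$) and both $P_{\m D}[u_T\rho_T]$ and $\a_T=\Im(zH_T')$ are harmonic, hence continuous, in $\m D$, they agree everywhere on $\m D$, which is \eqref{eq:alpha_h}.

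The main obstacle is the identification $\varphi^{h,T}=\vartheta[g_T]$ on $D_T$: a priori $\vartheta[g_T]$ is only known to have non-tangential boundary limits almost everywhere, and the point of that step is that these limits determine $\vartheta[g_T]$ through the Poisson integral. This is precisely where the chord-arc assumption is used in an essential way — for a general rectifiable (non-Smirnov) leaf, $\vartheta[g_T]$ could carry a singular harmonic summand and $\varphi^{h,T}$ would only be its harmonic (Poisson) projection. Everything else is bookkeeping with the operators $\iota,\varkappa$ and a routine Lebesgue-differentiation argument.
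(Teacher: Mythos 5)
Your proof is correct and has the same skeleton as the paper's: both obtain the harmonic part of $\varphi$ with respect to $D_T$ as $\varkappa[u\,\mathbf 1_{S^1\times[0,T)}](w)=2\pi\int_0^T P_{\m D}[u_t\rho_t](g_t(w))\,\dd t$ via Corollary~\ref{cor:ortho_decomp_formula} and Proposition~\ref{prop:kappa_formula}, identify this harmonic part with $\vartheta[g_T]$ on $D_T$, and then differentiate in $T$ using Lemma~\ref{lem:A-alpha}, a countable dense family of points, and harmonicity of both sides. The one place where you genuinely diverge is the identification $\varphi^{h,T}=\vartheta[g_T]$: the paper deduces it from Lemma~\ref{lem:zero_trace_varphi} (the decomposition of the winding function under truncation of the driving measure, combined with the uniqueness mechanism behind Proposition~\ref{prop:unique_extension}), whereas you argue directly that both harmonic functions are Poisson integrals of their non-tangential boundary data --- $\varphi^{h,T}$ because a harmonic function of finite Dirichlet energy transplants into $\mathfrak h^2(\m D)$, and $\vartheta[g_T]$ because $\arg(f_T(w)/w)$ is bounded while $\log f_T'\in\mathrm{BMOA}$ for chord-arc domains --- and then match boundary values using mutual absolute continuity of arclength and harmonic measure on $\g_T$. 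Your route makes explicit a point the paper leaves implicit, namely why $\vartheta[g_T]$ carries no singular harmonic summand and is therefore determined by its non-tangential limits; the price is importing the Smirnov/BMOA theory of chord-arc domains, which the paper's argument via the truncated winding function avoids. Both arguments are valid, and the remaining steps (Lebesgue differentiation on a countable dense set of generic points, then continuity of $\alpha_T$ and $P_{\m D}[u_T\rho_T]$) coincide with the paper's.
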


\begin{proof}
Proposition~\ref{prop:kappa_formula} shows that a.e. in $\m D$,
\begin{equation}\label{eq:winding_formula_u}
 \varphi (w) = \varkappa[u](w) = 
  2 \pi \int_{0}^{\tau(w)}  P_{\m D}[u_t \rho_t](g_t(w))\, \dd t.
 \end{equation}
 Lemma~\ref{lem:zero_trace_varphi} shows that the unique orthogonal decomposition  of $\varphi$ with respect to $D_s$ is given by the zero-trace part $ \varphi_s^0 \circ g_s \in \mc E_0 (D_s)$ and the harmonic part $\varphi - \varphi_s^0 \circ g_s$ which equals $\vartheta[g_s]$ in $D_s$.
On the other hand, Corollary~\ref{cor:ortho_decomp_formula} shows that this decomposition is also given by 
$\varkappa[u\1_{S^1 \times [0, s)}]$ and $\varkappa [u \1_{S^1 \times [s,\infty)}]$. Hence,
for all $w \in D_s$ (so that $\tau(w) > s$),
$$\vartheta [g_s] (w)  = 2 \pi \int_0^{s}  P_{\m D}[u_t \rho_t](g_t(w))  \, \dd t. $$

Taking a derivative in $s$ in the above expression we obtain from Lemma~\ref{lem:A-alpha}  that for a fixed $w \in \m D$ and  a.e. $t < \tau(w)$,  $ 2 \pi  P_{\m D}[u_t \rho_t](g_t(w))  = \a_t 
(g_t (w))$. Indeed, by choosing a countable and dense family $J$ of points in $\m D$, we have for a.e. $t \ge 0$ and all $w \in D_t \cap J$, $ 2\pi  P_{\m D}[u_t \rho_t](g_t(w)) = \a_t (g_t (w))$.
Since both $\a_t$ and $P_{\m D}[u_t \rho_t]$ are continuous (actually harmonic) in $\m D$, we therefore obtain a.e. $t$, $\a_t = 2\pi  P_{\m D}[u_t \rho_t]$ for all $w \in \m D$.
\end{proof}

We are now ready to prove the main result of this section.
\begin{proof}[Proof of Proposition~\ref{prop:converse}]

Since $u = \iota[\varphi] \in L^2 (2\rho)$, for a.e. $t \in \m R_+$, $u_t \in L^2(S^1, 2\rho_t)$.
By the Cauchy-Schwarz inequality and the fact that $\rho_t \in \mc M_1 (S^1)$ is a probability measure, we also know that $u_t \in L^1 (S^1,2\rho_t)$. Hence $\a_t = 2\pi  P_{\m D}[u_t \rho_t] \in \mathfrak{h}^1$ by Lemma~\ref{lem:alpha_h}.
This implies $H_t' \in \mathcal{H}^p$ for any $p < 1$. (See \cite[Thm.\,4.2]{Duren_Hardy},  and use that an analytic function is in $\mathcal{H}^p$ if and only if its real and imaginary parts are in $\mathfrak{h}^{p}$.) Using \cite[Thm.\,5.12]{Duren_Hardy} this in turn implies $H_t \in \mathcal{H}^p$ for every $p < \infty$. (More precisely, if $f' \in \mathcal{H}^p$ for some $p<1$, then $f \in \mathcal{H}^q$ with $q=p/(1-p)$.) 
Therefore, the radial limits of $H_t$ exist a.e. and define a function in $L^p(S^1, \dd \t)$ for all $p<\infty$. It follows that the positive function $\Re H_t/2\pi$ is the Poisson integral of a function in $L^p(S^1, \dd \t)$ for all $p < \infty$, and we denote this function by $\nu_t^2 (\t)$. (See, e.g., Corollary 2 to  \cite[Thm.\,3.1]{Duren_Hardy}.)
In other words we have shown that the measure $\rho_t$ is absolutely continuous and
$\dd \rho_t(\t) = \nu_t^2(\t) \dd \t$, where $\nu_t^2 \in L^p(S^1, \dd \t)$ for every $p < \infty$.

Since $u_t^2  \nu_t^2 \in L^1(S^1, \dd \t)$ and $\nu_t^2 \in L^p(S^1, \dd \t)$ for all $p < \infty$, H\"older's inequality with $p=2/(2-\eps), q=2/\eps$ implies $u_t \nu_t^2  \in L^{2-\eps}(S^1, \dd \t)$ for any $\eps \in (0, 2)$. This in turn implies that the Herglotz integral of $u_t \nu_t^2$ is in $\mathcal{H}^{2-\eps}$ if $\eps \in (0, 1)$.  Lemma~\ref{lem:alpha_h} then implies that $H_t' \in \mathcal{H}^{2-\eps}$. It follows that $H_t$ is continuous on $\overline{\m D}$ and absolutely continuous on $S^1$ (see \cite[Thm.\,3.11]{Duren_Hardy}) and consequently so is the density $\t \mapsto \nu_t(\t)$ and $\nu_t'(\t)$ is well-defined for Lebesgue-a.e. $(\t, t)$.

Moreover,  
by taking radial limits in \eqref{eq:alpha_h}, we obtain using Lemma~\ref{lem:H2W12}
and Fubini's theorem that for Lebesgue-a.e. $(\t, t) \in S^1 \times \m R_+$,
$$ u_t (\t) \nu_t^2(\t) =  \frac{1}{2\pi} \a_t(e^{i\t}) = - 2\nu_t'(\t) \nu_t (\t).$$
It follows that Lebesgue-a.e., when $\nu_t (\t) \neq 0$,
$u_t^2(\t) \nu_t^2(\t) = 4\nu_t'(\t)^2,$
and both sides are equal to $0$ otherwise. 
We conclude the proof by integrating over $S^1 \times \m R_+$, and we obtain $S_+(\rho) <\infty$ since $u = \iota [\varphi] \in L^2(2 \rho)$.
\end{proof}

\section{Whole-plane energy duality: proof of Theorem~\ref{thm:main0}} \label{sec:whole-plane} 

In this section we deduce whole-plane energy duality,  Theorem~\ref{thm:main0}, from disk energy duality, Theorem~\ref{thm:main}. 

\subsection{Whole-plane Loewner evolution}\label{subsec:whole_plane_Loewner_chain}

We now describe the whole-plane Loewner chain.
We define similarly as before a space of driving measures
$$\mc N := \{\rho\in \M (S^1 \times \m R): \rho(S^1\times I) = |I| \text{ for all intervals } I \}.$$  
The whole-plane Loewner chain driven by $\rho \in \mc N$, or equivalently by its measurable family of disintegration measures $\m R \to \mc M_1 (S^1): \, t \mapsto \rho_t$, is the unique family of conformal maps $(f_t : \m D \to D_t)_{t \in \m R}$ such that 
\begin{enumerate}[label= (\roman*), itemsep= -2pt]
    \item For all $s < t$, $0 \in D_t \subset D_s$. \label{it:whole_monotone}
    \item For all $t \in \m R$, $f_t (0) = 0$ and $f_t'(0) = e^{-t}$ (in other words, $D_t$ has conformal radius $e^{-t}$ seen from $0$). \label{it:whole_radius}
    \item For all $s \in \m R$, $( f_t^{(s)} : = f_{s}^{-1} \circ f_t :  \m D \to D_{t}^{(s)})_{t \ge s}$ is the Loewner chain driven by $(\rho_t)_{t\ge s}$, which satisfies \eqref{eq:loewner-pde} with the initial condition $f_s^{(s)} (z) = z$, as discussed in Section~\ref{sect:Loewner--Kufarev-Equation}.  \label{it:whole_Loewner}
\end{enumerate}
\begin{rem}
   If $\rho_t \in \mc M_1 (S^1)$ is the uniform measure for all $t \le 0$, then $f_t (z) = e^{-t} z$ for $t \le 0$ and $(f_t)_{t\ge 0}$ is the Loewner chain driven by $(\rho_t)_{t \ge 0} \in \mc N_+$ as in Section~\ref{sect:Loewner--Kufarev-Equation}. Indeed, we check directly that $(f_t)_{t\in \m R}$ satisfy the three conditions above. In other words, the Loewner chain in $\m D$ is a special case of the whole-plane Loewner chain.
\end{rem}

Note that the range $D_{t}^{(s)}$ of $f_t^{(s)}$ has conformal radius $e^{s-t}$ seen from $0$ and the family $(f_t^{(s)})_{t \ge s}$ is uniquely defined for all $s \in \m R$ and satisfies for $t \ge s$ and $z \in \m D$,
$$\partial_t f_t^{(s)} (z) =  -z f_t^{(s)}{}' (z) H_t(z), \qquad H_t(z) = \int_{S^1} \frac{ e^{i\t} + z}{ e^{i\t}-z } \,\dd  \rho_t (\t),$$
with the initial condition $ f_s^{(s)} (z) = z$ (see Section~\ref{sect:Loewner--Kufarev-Equation}). 
The condition \ref{it:whole_Loewner} is then equivalent to for all $t \in \m R$ and $z  \in \m D$,
\begin{equation} \label{eq:whole-plane-radial}
 \partial_t f_t (z) = -z f'_t (z) H_t(z)
\end{equation}
as described in the introduction.
For the purposes of our proofs later on it is convenient to give a slightly more explicit construction of the family $(f_t)_{t \in \m R}$ and explain why it is uniquely determined by $t \to \rho_t$, even though this statement is well-known. 
For this, note that once we determine $f_n$,  \eqref{eq:whole-plane-radial} gives $f_t$ for all $t \ge n$ by 
\begin{equation}
    \label{eq:whole-plane-radial-comp}
f_t = f_n \circ f_t^{(n)}.
\end{equation}

{\bf Existence:} Consider for $- \infty < s \le t$, the conformal map 
$$F_t^{(s)} (z) : = e^{-s}f_t^{(s)} (z)$$ 
which maps $\m D$ onto $e^{-s} D^{(s)}_{t}$. Since $F_t^{(s)}{}'(0) = e^{-t}$, $(F_t^{(s)})_{s \in (-\infty, t]}$ is a normal family for any $t$. We extract a sequence $(s_k)$ converging to $-\infty$, such that for all $n \in \m Z$,
$F_n^{(s_k)}$ converges uniformly on compacts to a univalent function that we call $F_n$. We construct $(f_t)$ by taking $f_n : = F_n$ and generate the other $f_t$ using \eqref{eq:whole-plane-radial-comp}.
We need to verify compatibility, that is, that we have that $F_{n+1} = F_n \circ f_{n+1}^{(n)}$.
To see this, notice that for $s < t_1 < t_2$, 
$$(F_{t_1}^{(s)})^{-1} \circ F_{t_2}^{(s)} = (f_{t_1}^{(s)})^{-1} \circ f_{t_2}^{(s)} = f_{t_2}^{(t_1)} $$
is independent of $s$. The last equality follows from the fact that as a function of $t_2$ on $[t_1,\infty)$, both terms satisfy the same differential equation with same initial condition. Hence,  we have 
$F_{n}^{-1} \circ F_{n+1} = f^{(n)}_{n+1}$ as claimed.

{\bf Uniqueness:} If there are two such families $(f_t : \m D \to D_t)_{t \in \m R}$ and $(\tilde f_t : \m D \to \tilde D_t)_{t \in \m R}$. Let $\psi_t : = \tilde f_t \circ f_t^{-1}: D_t \to \tilde D_t$.
Since $(f_t)$ and $(\tilde f_t)$ are driven by the same process of measures,
for all $s \le t$,
$$f_s^{-1} \circ f_t = \tilde f_s^{-1} \circ \tilde f_t.$$
And for $z = f_t (w) \in D_t$,
$$\psi_s (z) = \tilde f_s \circ f_s^{-1} \circ f_t(w) =  \tilde f_s \circ \tilde f_s^{-1} \circ \tilde f_t(w) = \tilde f_t \circ f_t^{-1} (z) = \psi_t(z).$$
That is, $\psi_s|_{D_t} = \psi_t$. Hence, $\psi_t$ extends to a conformal map $\cup_{t \in \m R} D_t \to \cup_{t \in \m R} \tilde D_t = \m C$. This shows that $\psi_t$ is the identity map and $D_t = \tilde D_t$, since $\psi_t(0) = 0$ and $\psi_t'(0) = 1$ and completes the proof of the uniqueness.

We remark that if $\rho \in \mc N$, then  $\cup_{t\in \m R} D_t = \m C$.   Indeed, $D_{t}$ has conformal radius $e^{-t}$, therefore contains the centered ball of radius $e^{-t}/4$ by Koebe's $1/4$ theorem. 
We define for all $z \in \m C$, 
$$\tau(z) : =  \sup\{ t\in \m R :  z \in D_t\} \in (-\infty, \infty].$$

Similar to the definition of foliations of $\ad {\m D} \smallsetminus \{0\}$, we say that $\rho \in \mc N$ generates a foliation $(\g_t : = \partial D_t)_{t\in \m R}$ of $\m C \smallsetminus \{0\}$ if
\begin{enumerate}[itemsep=-2pt]
    \item For all $t \in \m R$, $\g_t$ is a chord-arc Jordan curve.
    \item It is possible to parametrize each curve $\g_t, t \in \m R,$ by $S^1$ so that the mapping $t \mapsto \g_t$ is continuous in the supremum norm.
    \item For all $z \in \m C \smallsetminus \{0\}$, $\tau(z) <\infty$.
\end{enumerate}

We have the whole-plane version of Lemma~\ref{lem:tau_foliates}:
\begin{lemma}
  Assume that $\rho$ generates a foliation $(\g_t)_{t\in \m R}$ of $\m C \smallsetminus \{0\}$. For all $z\neq 0$, we have  $z \in \gamma_{\tau(z)}$. In particular, $\bigcup_{t \ge 0} \g_t = \m C \smallsetminus \{0\}$.
\end{lemma}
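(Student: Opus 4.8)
The plan is to adapt the argument from the proof of Lemma~\ref{lem:tau_foliates} to the present whole-plane setting. First I would record that $\tau(z)$ is in fact a real number whenever $z \neq 0$: it is finite by the third defining property of a foliation, and it is bounded below because Koebe's $1/4$ theorem gives $\{|w| < e^{-t}/4\} \subset D_t$, so $z \in D_t$ for all sufficiently negative $t$. Hence $\tau(z) \in \m R$ and the objects $D_{\tau(z)}$ and $\g_{\tau(z)} = \partial D_{\tau(z)}$ make sense.

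Next I would show $z \notin D_{\tau(z)}$. From the monotonicity of $(D_t)_{t \in \m R}$ together with the continuity of $t \mapsto \g_t$ in the supremum norm (equivalently, Carath\'eodory kernel convergence of $D_t$ to $D_{\tau(z)}$ as $t \searrow \tau(z)$), one gets
\[ \bigcup_{t > \tau(z)} D_t = D_{\tau(z)}. \]
Since $z \notin D_t$ for every $t > \tau(z)$, this forces $z \notin D_{\tau(z)}$.

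Then I would show $z \in \ad{D_{\tau(z)}}$, exactly as in the disk case. The point is that the complementary family $D_t^\ast := \Chat \smallsetminus \ad{D_t}$, $t \in \m R$, is again a family of simply connected domains, each bounded by the Jordan curve $\g_t$ and each containing $\infty$; it is monotone, now \emph{increasing} in $t$, and continuous. Applying the identity of the previous paragraph to this family yields $\bigcup_{t < \tau(z)} D_t^\ast = D_{\tau(z)}^\ast$, and passing to complements,
\[ \bigcap_{t < \tau(z)} \ad{D_t} = \Bigl( \bigcup_{t < \tau(z)} D_t^\ast \Bigr)^{\!c} = \bigl( D_{\tau(z)}^\ast \bigr)^{c} = \ad{D_{\tau(z)}}. \]
Monotonicity gives $z \in D_t \subset \ad{D_t}$ for every $t < \tau(z)$, hence $z \in \bigcap_{t < \tau(z)} \ad{D_t} = \ad{D_{\tau(z)}}$. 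Combining the two inclusions, $z \in \ad{D_{\tau(z)}} \smallsetminus D_{\tau(z)} = \partial D_{\tau(z)} = \g_{\tau(z)}$, and since $z \neq 0$ was arbitrary this gives $\bigcup_{t \in \m R} \g_t = \m C \smallsetminus \{0\}$.

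The only steps requiring genuine care — and where I would focus — are the two ``no-jump'' identities $\bigcup_{t > s} D_t = D_s$ and $\bigcup_{t < s} D_t^\ast = D_s^\ast$. These say that the evolving domains do not jump in either time direction, and they follow from monotonicity together with the supremum-norm continuity of $t \mapsto \g_t$ via Carath\'eodory convergence of the (complementary) kernels. Once these are established, the remainder is elementary point-set topology, identical to the disk case of Lemma~\ref{lem:tau_foliates}.
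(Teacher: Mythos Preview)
Your proposal is correct and follows essentially the same approach as the paper, which gives no separate proof for this whole-plane lemma and simply refers back to Lemma~\ref{lem:tau_foliates}. Your added observation that $\tau(z) > -\infty$ via Koebe's $1/4$ theorem is exactly the extra ingredient needed in the whole-plane setting (the paper notes this fact just above the lemma when recording $\cup_{t\in \m R} D_t = \m C$), and the rest of your argument reproduces the disk proof verbatim.
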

We associate similarly to a  foliation of $\m C \smallsetminus \{0\}$ its \emph{winding function} defined by $\varphi (0) = 0$,
$$\varphi (z) : = \vartheta[g_{t}] (z),  \quad \text{for arclength-a.e. } z \in \g_t \text{ and } \forall t \in \m R,$$
 where $g_t = f_t^{-1}$. We may also define $\varphi(z)$ by $\vartheta[g_{\tau(z)}] (z)$.
We say that $\varphi$ has an extension to $W^{1,2}_{\mathrm{loc}}$ if $\varphi|_{\g_t}$ coincides a.e. with the Jonsson--Wallin trace of its extension on $\g_t$, for all $t \in \m R$.

\subsection{Proof of Theorem~\ref{thm:WP-leaf} and Theorem~\ref{thm:main0}}\label{subsec:whole_plane}

Recall the definition of the Loewner--Kufarev energy in the whole-plane setting: for $\rho \in \mc N$,
$$S (\rho)  = \int_{\m R} L (\rho_t) \, \dd t \quad \text{and} \quad S_{[a,b]} (\rho) =  \int_{a}^b L (\rho_t) \,\dd t.$$
 We use the notation from Section~\ref{subsec:whole_plane_Loewner_chain} and write for $s \le  t$,
$$g_t^{(s)} : = (f_t^{(s)})^{-1} = g_t \circ f_s, \quad g_t = f_t^{-1}.$$ 
Then $(g^{(s)}_t)_{t \ge s}$ is the uniformizing Loewner chain driven by $\rho^{(s)} : = (\rho_t)_{t\ge s}$, mapping $D_t^{(s)} = g_s (D_t)$ onto $\m D$.
We are now ready to prove Theorem~\ref{thm:WP-leaf}.
\begin{proof}[Proof of Theorem~\ref{thm:WP-leaf}]
By Conditions~\ref{it:whole_monotone} and \ref{it:whole_Loewner} we have that
   $$\cap_{t\in \m R} D_t = \cap_{t \in \m R_+} D_t = f_0 (\cap_{t \ge 0} D_t^{(0)}) = \{0\},$$
 where we used Lemma~\ref{lem:foliates} and  $S_+(\rho) < \infty$.  Therefore $\tau(z) <\infty$ for all $z \neq 0$.

Now we show that $\partial D_t$ is a Weil--Petersson quasicircle. Note that on $D_t$,
$$g_t = (g_t \circ f_s) \circ g_s = g^{(s)}_t   \circ g_s, \quad \forall s \le t.$$

Since $S_{[s,\infty)}(\rho^{(s)}) < \infty$, Proposition~\ref{prop:WP-QC-final} shows that $g_s (D_t) = g_s (f_t (\m D)) = f^{(s)}_t (\m D) =  D_t^{(s)}$ is bounded by a Weil--Petersson quasicircle.  
 Moreover, from the proof of Lemma~\ref{lem:foliates}, we know that for all $t \in \m R$ there is $s_0 < t$, such that $D_t^{(s_0)} \subset (1/2) \m D$.
As $f_{s_0}: \m D \to D_{s_0}$ is conformal, $\g_t = f_{s_0} (\partial D_t^{(s_0)})$ is also a Weil--Petersson quasicircle (e.g. by Lemma~\ref{thm_TT_equiv_T01}).
We also obtain the continuity of $t \mapsto \g_t$ from the continuity of $t \mapsto  \partial D_t^{(s_0)}$ by Corollary~\ref{cor:S_finite_foliation_WP}.
\end{proof}

We are now ready to prove whole-plane energy duality.
 \begin{proof}[Proof of Theorem~\ref{thm:main0}] 
Assume first $S(\rho) < \infty$, we show $\mc D_{\m C} (\varphi) < \infty$.
For this, note that the winding function $\varphi^{(s)}$ of the foliation in $\ad{\m D}\smallsetminus\{0\}$ driven by $\rho^{(s)} = (\rho_t)_{t \ge s}$ is given by 
\begin{align*}
\varphi^{(s)} (w) = \vartheta[g^{(s)}_{\tau(w)}] (w) = \vartheta[g_{\tau(z)}](z) - \vartheta[g_s] (z) = \varphi (z) - \vartheta[g_s ](z)
\end{align*}
    where $z = f_s (w) \in D_s$ and we used the chain rule \eqref{eq:theta_chain}.
    We have
    \begin{align}\label{eq:proof_whole-plane_duality}
    \begin{split}
    \mc D_{D_s} (\varphi)& = \mc D_{D_s} (\varphi^{(s)} \circ g_s) + \mc D_{D_s} (\vartheta[g_s])=  \mc D_{\m D} (\varphi^{(s)}) + \mc D_{D_s} (\vartheta[g_s]) \\
    & 
    = 16 \,S_{[s,\infty)} (\rho) +  \mc D_{\m D} (\vartheta[f_s]).    
    \end{split}
        \end{align}
    The first equality follows from orthogonality of $\vartheta[g_s]$ and $\varphi^{(s)} \circ g_s$ in $\mc E_0 (\m D)$, since $\vartheta[g_s]$ is harmonic in $D_s$ and $\varphi^{(s)} \circ g_s \in \mc E_0 (D_s)$. The second equality follows from the conformal invariance of the Dirichlet energy, and the third from Theorem~\ref{thm:main}. We obtain immediately the lower bound
    $$\mc D_{\m C} (\varphi) \ge 16 \, S(\rho).$$
    
    For the opposite inequality, since $F_s^{(r)} := e^{-r}f_s^{(r)}$ (see Section~\ref{subsec:whole_plane_Loewner_chain}) converges uniformly on compact subsets of $\m D$ to $f_s$ as $r \to -\infty$, we have that
    $\vartheta[F_s^{(r)}] (z)= \vartheta[f_s^{(r)}] (z) $ converges uniformly on compact sets to $\vartheta[f_s] (z)$. The lower semicontinuity of the Dirichlet energy then shows that for all compact $K \subset \m D$,
    $$\mc D_{K} (\vartheta[f_s]) \le \liminf_{r \to -\infty} \mc D_{K} (\vartheta[f_s^{(r)}]).$$
    On the other hand, Corollary~\ref{cor:duality_beta} shows that 
    $$\mc D_{K} (\vartheta[f_s^{(r)}]) = \mc D_{f_s^{(r)} (K)} (\vartheta[g_s^{(r)}]) \le 16 \, S_{[r,s]} (\rho),$$
    letting $r \to -\infty$ yields
    $$\mc D_{\m D} (\vartheta[f_s]) \le 16 \int_{-\infty}^s L(\rho_t)\, \dd t. $$
    Combining this with \eqref{eq:proof_whole-plane_duality} shows that 
    $\mc D_{D_s} (\varphi) \le 16 \,S(\rho)$. Letting $s \to -\infty$ we obtain the upper bound and hence $\mc D_{\m C}(\varphi) = 16 \, S(\rho)$.
    
    For the converse, if $\rho$ generates a foliation of $\m C\smallsetminus \{0\}$ with $\mc D_{\m C} (\varphi) < \infty$, Proposition~\ref{prop:converse} and \eqref{eq:proof_whole-plane_duality} imply that 
    $$16\, S_{[s, \infty)} (\rho^{(s)}) = \mc D_{\m D} (\varphi^{(s)})  \le \mc D_{D_s} (\varphi) \le \mc D_{\m D} (\varphi).$$
    Letting $s \to -\infty$ we obtain $S(\rho) < \infty$ and this completes the proof. 
\end{proof}

\section{Applications of energy duality}\label{sec:application}
In this section we derive several consequences of Theorem~\ref{thm:main0}.
\subsection{Reversibility of the Loewner--Kufarev energy}\label{sec:reversibility}

Let $(\g_t)_{t \in \m R}$ be a foliation generated by $\rho \in \mathcal N$ and let $(D_t)_{t \in \m R}$ be the corresponding family of domains. We will consider the evolution of its time-reversal, that is the Loewner chain corresponding to the family $(\tilde D_{s(t)} : = j (\Chat \smallsetminus D_t))_{t \in \m R}$,  where $j(z):= 1/z$ and $e^{-s (t)}$ is the conformal radius of $j (\Chat \smallsetminus D_t)$. Let $\tilde f_{s}$ be the conformal map from $\m D$ onto $\tilde D_s$ with $\tilde f_s(0) = 0$ and $\tilde f_s'(0) = e^{-s}$.

\begin{lemma}\label{lem:capacity_complement}
  The function $t \mapsto s(t)$ is a decreasing homeomorphism of $\m R$.
\end{lemma}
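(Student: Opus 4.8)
The plan is to rephrase $s(t)$ conformally and then check strict monotonicity, continuity, and surjectivity onto $\m R$ in turn. Since $\gamma_t$ is a Jordan curve, $D_t^*:=\Chat\smallsetminus\ad{D_t}$ is its (simply connected) unbounded complementary component, $\Chat\smallsetminus D_t=\ad{D_t^*}$, and the domain $j(D_t^*)$, with $j(z)=1/z$, contains $j(\infty)=0$; by definition $e^{-s(t)}$ is the conformal radius of $j(D_t^*)$ at $0$. Equivalently, if $h_t$ maps $\{|z|>1\}\cup\{\infty\}$ conformally onto $D_t^*$ with $h_t(\infty)=\infty$ and $h_t'(\infty)>0$, then $s(t)=\log h_t'(\infty)=\log\operatorname{cap}(\ad{D_t})$, the logarithm of the logarithmic capacity of $\ad{D_t}$. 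All three properties become transparent in this language.

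For strict monotonicity, fix $s<t$, so $D_t\subset D_s$. If $D_s\subset\ad{D_t}$ then, as $\gamma_t$ is Jordan and hence the interior of $\ad{D_t}$ is $D_t$, we would get $D_s\subset D_t$, forcing $D_s=D_t$ and $e^{-s}=e^{-t}$, a contradiction; so $D_s\smallsetminus\ad{D_t}$ is a nonempty open set, and $j(D_s^*)\subsetneq j(D_t^*)$ with the difference containing an open set. Composing the Riemann map of $j(D_t^*)$ onto $\m D$ fixing $0$ with the inverse of the corresponding Riemann map of $j(D_s^*)$ gives a holomorphic self-map of $\m D$ fixing $0$ that is not onto, hence not a rotation; by the Schwarz lemma its derivative at $0$, namely $\operatorname{crad}(j(D_s^*),0)/\operatorname{crad}(j(D_t^*),0)$, has modulus $<1$. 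Hence $s(s)>s(t)$, so $s$ is strictly decreasing, in particular injective and a homeomorphism onto its image.

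For continuity I would use one-sided monotone Carath\'eodory convergence. Arguing as in the proof of Lemma~\ref{lem:tau_foliates} (whole-plane version), $\bigcup_{s<t}D_s^*=D_t^*$, while $D_t^*\subset\bigcap_{s>t}D_s^*\subset\bigcap_{s>t}(\Chat\smallsetminus D_s)=\Chat\smallsetminus\bigcup_{s>t}D_s=\ad{D_t^*}$. Applying $j$, the domains $j(D_s^*)$ increase to $j(D_t^*)$ as $s\uparrow t$ and decrease as $s\downarrow t$ to a set whose interior is $j(D_t^*)$ (the Jordan curve $\gamma_t$ has empty interior); in both cases the Carath\'eodory kernel at $0$ is $j(D_t^*)$, so the kernel convergence theorem yields $\operatorname{crad}(j(D_s^*),0)\to\operatorname{crad}(j(D_t^*),0)$, i.e.\ $s(s)\to s(t)$. (Alternatively, $\|\gamma_s-\gamma_t\|_\infty\to 0$ gives Hausdorff convergence of the boundaries, hence Carath\'eodory convergence of the Jordan domains $D_s^*$.) Being continuous and strictly decreasing, $s$ is then a homeomorphism of $\m R$ onto the open interval $s(\m R)$.

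It remains to see $s(\m R)=\m R$. That $s(t)\to+\infty$ as $t\to-\infty$ is immediate: by Koebe's $1/4$ theorem $\ad{D_t}\supset\ad{B(0,e^{-t}/4)}$, so $j(D_t^*)\subset B(0,4e^t)$ and $e^{-s(t)}\le 4e^t\to 0$. That $s(t)\to-\infty$ as $t\to+\infty$ reduces to $\bigcap_{t\in\m R}\ad{D_t}=\{0\}$: granting this, the decreasing compacts $\ad{D_t}$ converge to $\{0\}$ in the Hausdorff metric, so for each $R$ eventually $\ad{D_t}\subset\ad{B(0,1/R)}$, whence $j(D_t^*)\supset B(0,R)$ and $e^{-s(t)}\ge R$. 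To prove $\bigcap_t\ad{D_t}=\{0\}$, suppose $z_*\ne 0$ lay in every $\ad{D_t}$; since $\tau(z_*)<\infty$ and $\gamma_t$ is Jordan, $z_*\in\ad{D_t}\smallsetminus D_t=\gamma_t$ for all $t>\tau(z_*)$, so all these leaves pass through $z_*$, and since $\dist(0,\gamma_t)\le\operatorname{crad}(D_t,0)=e^{-t}\to 0$ they also pass through points arbitrarily close to $0$, whence $\diam(\gamma_t)\ge|z_*|/2$ for $t$ large. This is the crux, and I expect it to be the only real difficulty; the rest is soft. It can be closed by noting that a chord-arc curve is a quasicircle, and that a quasicircle of diameter bounded below with controlled quasiconstant encloses a domain of conformal radius bounded below, contradicting $\operatorname{crad}(D_t,0)\to 0$ — using that along the foliation the leaves have uniformly controlled quasiconstants (e.g.\ because they carry uniformly bounded Loewner energy, Corollary~\ref{cor:WP-LE-bound}); alternatively, directly from the supremum-norm continuity of $t\mapsto\gamma_t$, since otherwise the leaves would converge as $t\to\infty$ to a nondegenerate closed curve through $z_*$ ``bounding'' the single point $\{0\}=\bigcap_t D_t$, which is absurd.
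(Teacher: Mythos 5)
Most of your argument tracks the paper's: strict monotonicity via the Schwarz lemma applied to the inclusion $j(D_s^*)\subsetneq j(D_t^*)$ (your extra care that the difference contains an open set, using that a Jordan curve has empty interior, is a correct way to get strictness), continuity via monotone Carath\'eodory kernel convergence driven by the sup-norm continuity of $t\mapsto\g_t$, and $s(t)\to+\infty$ as $t\to-\infty$ via Koebe. These parts are fine.

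The gap is exactly where you suspected it: the proof that $\bigcap_t\ad{D_t}=\{0\}$. Your main closing step asserts that a quasidisk of diameter bounded below with controlled quasiconstant has conformal radius \emph{from $0$} bounded below, contradicting $\operatorname{crad}(D_t,0)=e^{-t}\to 0$. That is false: by Koebe, $\operatorname{crad}(D_t,0)\le 4\operatorname{dist}(0,\g_t)\le 4e^{-t}$ no matter how fat $D_t$ is --- a round disk of diameter $1$ containing $0$ at distance $\epsilon$ from its boundary already has conformal radius about $2\epsilon$ from $0$. So no contradiction with $\operatorname{crad}(D_t,0)\to 0$ can arise this way. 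What plumpness of uniformly quasiconformal disks actually gives is a ball $B(c_t,\delta_0)\subset D_t$ with $\delta_0$ comparable to $\operatorname{diam}(D_t)$ but centered at some $c_t$ that need not be $0$; one can still close the argument by extracting $c_{t_n}\to c_\infty$ and using monotonicity of $(D_t)$ to get $B(c_\infty,\delta_0/2)\subset\bigcap_t D_t=\{0\}$, a contradiction --- but that is a different argument from the one you wrote, and it still needs $S(\rho)<\infty$ to invoke Corollary~\ref{cor:WP-LE-bound} for uniform quasiconstants. Your fallback (``the leaves would converge to a nondegenerate closed curve bounding $\{0\}$, which is absurd'') is not a proof: sup-norm continuity on a half-line gives no convergence as $t\to\infty$, and the scenario to be excluded is precisely one where the leaves collapse onto a nondegenerate continuum through $z_*$ whose ``inside'' shrinks to $\{0\}$ --- this is not absurd on its face and is exactly what must be ruled out. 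The paper's route is more direct: the proof of Lemma~\ref{lem:foliates} yields the uniform bound $\tau(z)\le -2\log|z|+S_+(\rho)/\vare$, so for $t$ large every $z$ with $|z|\ge r$ has left $D_t$, whence $\ad{D_t}\subset\ad{B(0,r)}$ and $\operatorname{diam}(\ad{D_t})\to 0$ immediately.
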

\begin{proof}
To show that $t \mapsto s$ is decreasing, let $t_1 < t_2$. Since $D_{t_2} \varsubsetneq D_{t_1}$, we have $\tilde D_{s(t_1)} \varsubsetneq \tilde D_{s(t_2)}$. Schwarz' lemma then shows that $s (t_2) < s(t_1)$. 
We claim that $s(t) \to \infty$ as $t \to -\infty$. In fact, Koebe's $1/4$-theorem shows that $D_t$ contains the centered disk of radius $e^{-t}/4$, therefore $\tilde D_{s(t)}$ is contained in the centered disk of radius $4 e^{t}$. Schwarz' lemma shows $ s(t) \ge - t  - \log 4$ which goes to $\infty$ as $t \to -\infty$.
Since the diameter of $D_t$ tends to $0$ 
as $t \to \infty$ by Lemma~\ref{lem:foliates}, $\tilde D_{s(t)}$ has conformal radius tending to $\infty$. Therefore $s (t) \to -\infty$ as $t \to \infty$.  It remains to verify that $s$ is continuous. To see this, note that the continuity of $t \mapsto \g_t$ in the supremum norm shows that as $t \to t_0 \in \m R$, $\tilde D_{s(t)}$ converges in the Carath\'eodory kernel sense to $\tilde D_{s(t_0)}$. Therefore,
$\tilde f_{s(t)}'(0)$ tends to $\tilde f_{s(t_0)}'(0)$, and equivalently, $s(t)$ tends to $s(t_0)$.
\end{proof}

 Lemma~\ref{lem:capacity_complement} implies that $ (\tilde f_s)_{s\in \m R}$ is a  whole-plane Loewner chain as defined in Section~\ref{sec:whole-plane}. Note that in Section~\ref{sec:whole-plane} we took the measure $\rho$ as starting point for the definition whereas we have constructed the monotone family of domains here. However, using Pommerenke's theorem as discussed in Section~\ref{sect:Loewner--Kufarev-Equation} the existence of a measure $\tilde \rho$ generating the family follows easily. Energy duality, Theorem~\ref{thm:main0}, now implies the following reversibility of the Loewner--Kufarev energy.

\begin{thm} [Energy reversibility]\label{thm:energy_rev}Let $\tilde \rho \in \mc N$ be the measure generating $(\tilde D_s)_{s \in \m R}$. 
 Then $S (\rho) = S(\tilde \rho). $
\end{thm}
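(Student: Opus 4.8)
The plan is to use energy duality (Theorem~\ref{thm:main0}) to reduce the reversibility statement to a symmetry of the winding function under the inversion $j(z) = 1/z$. The key point is that the Loewner-Kufarev energy of $\rho$ equals $\tfrac{1}{16}\mc D_{\m C}(\varphi)$, where $\varphi$ is the winding function of the foliation $(\g_t)_{t \in \m R}$ generated by $\rho$, and similarly $S(\tilde\rho) = \tfrac{1}{16}\mc D_{\m C}(\tilde\varphi)$, where $\tilde\varphi$ is the winding function of the time-reversed, inverted foliation $(\tilde D_s)_{s \in \m R}$. Since the Dirichlet energy $\mc D_{\m C}$ is invariant under the M\"obius map $j$ (conformal invariance of the Dirichlet integral in two dimensions), it suffices to show that $\tilde\varphi \circ j$ and $\varphi$ differ by a function with zero Dirichlet energy — in fact I expect $\tilde\varphi \circ j = -\varphi$ up to an additive constant, or more precisely that $\tilde\varphi\circ j + \varphi$ is (locally) constant, hence has vanishing gradient.

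Concretely, first I would fix $t \in \m R$ and analyze the leaf $\g_t = \partial D_t$. Under $j$ this maps to $\partial \tilde D_{s(t)}$, which is the \emph{same} Jordan curve (as a set in $\hat{\m C}$), now viewed as the boundary of the domain $\tilde D_{s(t)} = j(\hat{\m C}\smallsetminus D_t)$ containing $0$. If $g_t : D_t \to \m D$ is the uniformizing map fixing $0$ with positive derivative, and $\tilde g_{s(t)} : \tilde D_{s(t)} \to \m D$ likewise, then $\tilde g_{s(t)} \circ j$ is a conformal map from $\hat{\m C}\smallsetminus D_t$ (containing $\infty$) onto $\m D$ sending $\infty \mapsto 0$. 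The winding function $\varphi$ at a differentiable point $z \in \g_t$ is $\vartheta[g_t](z) = \arg(z g_t'(z)/g_t(z))$, and $\tilde\varphi$ at the image point $1/z$ is $\vartheta[\tilde g_{s(t)}](1/z)$. The main computation is then the chain-rule-type identity relating $\vartheta[\tilde g_{s(t)}](1/z)$ to $\vartheta[g_t](z)$: writing $h(w) := \tilde g_{s(t)}(1/w)$ one checks by direct differentiation that $\vartheta[h](z) = \arg(z h'(z)/h(z))$ and that this equals $-\vartheta[g_t](z)$ modulo an additive constant, using that $h$ and $g_t$ are two conformal maps onto $\m D$ with complementary domains and that $\arg(zh'/h) = -\arg(z g'/g)$ is precisely the statement that the tangent direction to $\g_t$ at $z$, measured relative to the circle through $z$ centered at $0$, reverses sign when one passes from the interior to the exterior uniformization. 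This is the geometric heart of the matter and also where I expect the main technical care: one must track the additive $2\pi$-ambiguities and the normalization of branches (the paper fixes $\arg(zg'/g) = 0$ at $0$, resp.\ at $\infty$), and verify the identity holds arclength-a.e.\ on each leaf, using that differentiable points of $\g_t$ as boundary of $D_t$ coincide with those as boundary of $\hat{\m C}\smallsetminus D_t$ and that non-tangential limits from the two sides are compatible for chord-arc curves.

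Having established $\tilde\varphi \circ j = -\varphi$ arclength-a.e.\ on every leaf, I would next promote this to an identity of $W^{1,2}_{\mathrm{loc}}$ functions on $\m C$. By Theorem~\ref{thm:main0}, since $S(\rho) < \infty$ we have $\varphi \in W^{1,2}_{\mathrm{loc}}$ with $\mc D_{\m C}(\varphi) = 16\, S(\rho) < \infty$; conformal invariance of Dirichlet energy gives that $-\varphi \circ j^{-1} = -\varphi \circ j$ is also in $W^{1,2}_{\mathrm{loc}}$ with the same energy. Since the time-reversed foliation $(\tilde D_s)$ is indeed a foliation of $\m C\smallsetminus\{0\}$ by Weil-Petersson quasicircles (its leaves are inversions of the leaves of $(D_t)$, which are Weil-Petersson quasicircles by Theorem~\ref{thm:WP-leaf}, and Weil-Petersson quasicircles are preserved by M\"obius maps), the function $-\varphi\circ j$ is a $W^{1,2}_{\mathrm{loc}}$ extension of the winding function $\tilde\varphi$ defined arclength-a.e.\ on the leaves $\tilde\g_s$. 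By the uniqueness of such extensions (Proposition~\ref{prop:unique_extension}, in its whole-plane form as used in Section~\ref{subsec:whole_plane}), we conclude $\tilde\varphi = -\varphi \circ j$ as elements of $W^{1,2}_{\mathrm{loc}}$. Finally, applying energy duality once more to $\tilde\rho$ (which generates the foliation $(\tilde D_s)$) yields
\[
16\, S(\tilde\rho) = \mc D_{\m C}(\tilde\varphi) = \mc D_{\m C}(-\varphi\circ j) = \mc D_{\m C}(\varphi) = 16\, S(\rho),
\]
where the third equality is the conformal (M\"obius) invariance of the two-dimensional Dirichlet integral. This gives $S(\rho) = S(\tilde\rho)$.

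One subtlety I would want to handle carefully: the case $S(\rho) = \infty$. Here one argues that if $S(\tilde\rho)$ were finite, then running the above argument with the roles of $\rho$ and $\tilde\rho$ swapped (using that time-reversal followed by inversion is an involution on foliations, so $\rho$ is recovered from $\tilde\rho$ by the same operation) would force $S(\rho) < \infty$, a contradiction; hence $S(\tilde\rho) = \infty = S(\rho)$. The main obstacle, as noted, is the branch-tracking in the identity $\vartheta[\tilde g_{s(t)}]\circ j = -\vartheta[g_t]$ on each leaf and verifying it genuinely holds at arclength-a.e.\ point with the prescribed normalizations — everything else is bookkeeping with conformal invariance and the uniqueness of $W^{1,2}_{\mathrm{loc}}$ extensions already proved in the paper.
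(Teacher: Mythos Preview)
Your approach is essentially the same as the paper's: reduce energy reversibility to a relation between the two winding functions under $j$, then invoke Theorem~\ref{thm:main0} and conformal invariance of the Dirichlet energy. However, your key intermediate identity has the wrong sign: the correct relation is $\tilde\varphi \circ j = \varphi$, not $-\varphi$. Your geometric justification (``the tangent direction \ldots\ reverses sign when one passes from the interior to the exterior uniformization'') is mistaken. The paper's argument is simply that $j$ is conformal and preserves the family of circles centered at $0$, so the angle between a leaf and the concentric circle through the same point is unchanged under $j$; the orientation reversal you expect from switching sides of the curve is exactly cancelled by the fact that $j$ also reverses the orientation of concentric circles. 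The $2\pi$-branch ambiguity is handled in the paper not by analytic bookkeeping but by continuously deforming along the equipotentials $f_t(rS^1)$ inside $D_t$, which sends both winding functions to $0$ as $r \to 0$.

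The sign error is harmless for the \emph{conclusion}, since $\mc D_{\m C}(-\varphi) = \mc D_{\m C}(\varphi)$, and your remarks on the $S(\rho) = \infty$ case and on uniqueness of the $W^{1,2}_{\mathrm{loc}}$ extension are sound. But your ``more precisely, $\tilde\varphi\circ j + \varphi$ is locally constant'' is false (with the correct sign it equals $2\varphi$), so if you actually carry out the computation you sketch with $h(w) = \tilde g_{s(t)}(1/w)$ you will not obtain the relation you predict; indeed $\arg(z h'(z)/h(z)) = \pi + \vartheta[\tilde g_{s(t)}](1/z)$ modulo $2\pi$, not $-\vartheta[g_t](z)$.
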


For the proof of Theorem~\ref{thm:energy_rev} we will use the following lemma which allows us to relate the winding function associated with $\rho$ to that with $\tilde \rho$. A weaker lemma sufficient for our purposes appeared in an earlier version of the paper and in \cite{Wang_survey}. The proof of the present stronger version was suggested by an anonymous referee. 
\begin{lemma}\label{lem:matching_trace}
Let $\g$ be a Jordan curve separating $0$ from $\infty$. Suppose $z \in \mathcal T_\g$. Then the winding functions $\varphi_\g$ and $\varphi_{1/\g}$ are well-defined at $z$ and $1/z$, respectively, and
$$\varphi_\g (z) = \varphi_{1/\g}(1/z). $$
\end{lemma}

 Geometrically, $\varphi_\g (z)$  measures the angle between  $\g$ and the circle centered at $0$ passing through $z$ modulo $2\pi$. The map $z\mapsto 1/z$ is conformal and maps centered circles to centered circles, so the angle is preserved. The difficulty lies in proving the equality without ``modulo $2\pi$'' with our choice for the branch of the argument. Before giving the proof, we first complete the proof of Theorem~\ref{thm:energy_rev} assuming the lemma.

\begin{proof}[Proof of Theorem~\ref{thm:energy_rev} assuming Lemma~\ref{lem:matching_trace}]
Assume that $S(\rho) < \infty$. Theorem~\ref{thm:WP-leaf} implies that every leaf of the associated foliation is a Weil--Petersson quasicircle, hence rectifiable and differentiable almost everywhere.
Lemma~\ref{lem:matching_trace} implies that the winding function $\tilde \varphi$ associated with the foliation generated by $\tilde \rho$ satisfies 
$$ \tilde \varphi (1/z) = \varphi (z)$$
for all $z \in \mc T_{\g_t}$, $t \in \m R$.
Since the Dirichlet energy is invariant under conformal mappings, we obtain $\mc D_{\m C} (\tilde \varphi) = \mc D_{\m C} ( \varphi)$ which implies $S (\rho) = S(\tilde \rho)$ by Theorem~\ref{thm:main0}. 

Finally, if $S(\rho) = \infty$, then $S(\tilde \rho) = \infty$. Indeed, if this does not hold we would have $S(\rho) < \infty$, since $\rho$ corresponds to the time-reversed foliation of that associated with $\tilde \rho$. This contradiction completes the proof.
\end{proof}

 It remains to prove Lemma~\ref{lem:matching_trace}. For this we will need some more notation and another lemma. Let $\gamma$ be a Jordan curve separating $0$ from $\infty$. Recall that the winding function  $\varphi_\g (z)$ at $z \in \mathcal{T}_\g$ is by definition the non-tangential limit at $z$ of $\vartheta [g]$ where $g : D \to \m D$ is a conformal map fixing $0$ (the fact that the limit exists follows from the remark after Lemma~\ref{lem:boundary-behavior}). We first give an equivalent expression for the winding function at a given $z \in \mathcal{T}_\g$ in terms of the turning of a smooth curve approaching $z$ normally.

We introduce the following notation. For a simply connected subset $V\subset \m C$ containing $0$, and $z \notin V$, let $w \mapsto \arg_V (z/(z - w))$ be the unique real-valued continuous function on $V$ such that $\arg_V(z/(z - 0)) = 0$ and 
$\exp\left(i \arg_V(z/(z - w))\right) = z\abs{z - w}/|z|(z - w)$.
If $v : [0,1] \to \m C$ is a smooth path joining $0$ to $z \in \m C, z \neq 0,$ we define the \emph{turning} of $v$ around $z$ as
$$\Theta_v (z) = \lim_{s \to 1^-} \arg_{v[0,1)} \left(\frac{z}{z - v(s)}\right) = \int_v \dd \arg \frac{z}{z - w}.$$
The limit exists since $v$ is smooth and has a well-defined tangent at $z$ and $z \neq 0$.

\begin{lemma} \label{lem:turning_winding}
    Let $\g$ be a Jordan curve separating $0$ and $\infty$, and let $D$ be the bounded connected component of $\m C \smallsetminus \g$ containing $0$. Suppose $z \in \mc T_\g$.
    Let $v : [0,1] \to \ad D$ be any smooth path joining $0$ to $z$ in $D$ \textnormal(with $v(1) = z \in \ad D$\textnormal). Then $|\Theta_v (z) - \varphi_\g (z)| \le \pi/2$. In particular, if $v$ approaches $z$ normally, then $\Theta_v (z) = \varphi_\g (z)$. 
\end{lemma}

\begin{proof}
    By \eqref{eq:vartheta} we have
    $$\varphi_\g (z) = \int_0^1 \dd \arg \frac{g(z) - g (v(s))}{ z - v(s)} = \lim_{s \to 1^-}  \left[\arg \frac{g(z) - g (v(s))}{ z - v(s)} - \arg \frac{g(z) }{ z } \right]$$
    where the branch of $\arg$ is chosen continuously along $v$ (and the difference vanishes when $s = 0$). This may be rewritten as
     \begin{align*}
     \varphi_\g (z) & = \lim_{s \to 1-} \left[ \arg_D \left( \frac{z}{ z - v(s)} \right)- \arg_{\m D} \left(\frac{g(z) }{  g(z) - g (v(s))} \right)\right] \\
     & = \Theta_v (z) - \lim_{s \to 1^-}  \arg_{\m D} \left(\frac{g(z) }{  g(z) - g (v(s))} \right).
     \end{align*}
     Since $g$ takes values in $\m D$ and $g (z) \in S^1$, we have that $w \mapsto  \frac{g(z) }{  g(z) - w} $ maps $\m D$ to $1/(1 + \m D)= \{z \in \m C \,|\, \Re (z) > 1/2\}$. Therefore, \[\arg_{\m D} \left(\frac{g(z) }{  g(z) - g (v(s))} \right)\in (-\pi/2, \pi/2).\] We obtain  $|\Theta_v (z) - \varphi_\g (z)| \le \pi/2$.

     If $v$ approaches $z$ normally, then from the geometric interpretation of $\varphi_\g$ (see Figure~\ref{fig:intro_fol}), we have $\Theta_v (z) = \varphi_\g (z)$ modulo $2\pi$. The last paragraph shows that $\Theta_v (z) = \varphi_\g (z)$.
 \end{proof}

\begin{proof}[Proof of Lemma~\ref{lem:matching_trace}]
Write $\tilde{\g} = 1/\g$. Let $D$ and $\tilde D$ be the bounded connected component of $\m C \smallsetminus \g$ and of $\m C\smallsetminus \tilde \g$ respectively. Given $z \in \mc T_\g$, let $v$ be a smooth path connecting $0$ to $z$ in $D$, approaching $z$ normally as in Lemma~\ref{lem:turning_winding}. Similarly, let $\tilde v$ be a smooth path connecting $0$ to $1/z$ in $\tilde D$, approaching $1/z$ normally. We assume furthermore that the paths $v$ and $1/\tilde v$ can be concatenated to form a smooth path from $0$ to $\infty$. That is, 
$$ u (s) : = \begin{cases} v(s), \qquad & s \in [0,1) \\
z, & s  = 1\\
1/\tilde v(1/s), & s = (1, \infty]
\end{cases}
$$
is also smooth in a neighborhood of $s = 1$.

Now we define the function
$$F_u (s) = \begin{cases}
    \dfrac{u(1)}{u(1) - u (s)} \abs{\dfrac{u(1) - u (s)}{u(1)}}, \qquad &  s \in [0,1) \\
    \dfrac{u(1)}{u'(1)} \abs{\dfrac{u'(1)}{u(1)}}, & s= 1\\
     \dfrac{u(s)}{u(s) - u (1)} \abs{\dfrac{u(s) - u (1)}{u(s)}}, \qquad & s \in (1, \infty) \\
     1, & s = \infty.
\end{cases}$$
We note that $F_u (0) = 1 = F_u(\infty) = \lim_{s \to \infty} F_u (s)$. Hence, $F_u$ defines a continuous map $[0,\infty]_{/0\sim \infty} \simeq S^1 \to S^1$. We note that for $s > 1$,
$$ F_u (s) = \frac{u(s)}{u(s) - u (1)} \abs{\frac{u(s) - u (1)}{u(s)}} = \frac{1/z}{ 1/z - \tilde v(1/s)} \abs{\frac{ 1/z - \tilde v(1/s)} {1/z}} \xrightarrow[]{s \to 1+}\exp \left(i \Theta_{ \tilde v} (1/z)\right). $$
By definition
$$\lim_{s \to 1-} F_u(s) = \exp (i \Theta_v (z)).$$

Now we scale and reparametrize the path $u$: consider for $r \in (0,1]$, 
$$u_r (s) : = r^{-1} u(rs)$$ and define $F_{u_r} : [0,\infty]_{/0\sim \infty} \simeq S^1 \to S^1$ analogously to $F_u$ (with $z$ replaced by $u_r (1) = r^{-1} u(r)$).  It is not hard to see that $F_{u_r}$ converges uniformly as $r \to 0+$ to the constant function $1$, so $r \mapsto F_{u_r}$ defines a homotopy from the constant function $1$ to $F_u$. 
Thus, there exists a continuous argument along $s \mapsto F_u(s)$ such that $\arg F_u (0) = \arg F_u (\infty) = 0$, which implies $$\Theta_v (z) = \arg F_u (1-) = \arg F_u (1+) = \Theta_{\tilde v} (1/z).$$ By Lemma~\ref{lem:turning_winding} we conclude that $\varphi_\g (z) = \varphi_{1/\g} (1/z)$.
\end{proof}

\subsection{Characterization of Weil--Petersson quasicircles}\label{subsec:jordan_curve}
Let $\g$ be a Weil--Petersson quasicircle separating $0$ from $\infty$. We will associate with $\g$ a particular measure $\rho = \rho^\g$ (and foliation) with $S(\rho) < \infty$ and in this way prove Theorem~\ref{thm:main-jordan-curve}. 
 We assume for notational simplicity that the bounded component of $\m C \smallsetminus \g$ has conformal radius $1$\footnote{This assumption is only made for convenience, so that the curve $\g$ corresponds to time-index $0$ in the foliation. All results in this section hold for a general Weil--Petersson quasicircle $\g$ separating $0$ from $\infty$.}. Let $f$ (resp. $h$) be the conformal map from $\m D$  (resp. $\m D^*$) to the bounded component $D_0$ (resp. unbounded component $D_0^*$) of $\m C \smallsetminus \g$ such that $f$ (resp. $h$) fixes $0$ (resp. $\infty$) and has derivative $f'(0) = 1$ (resp. $h'(\infty) >0$).
 Consider the foliation $(\g_t)_{t \in \mathbb{R}}$ that consists of $\g$ together with the family of equipotentials on both sides of $\g$. By equipotential which we mean image of a circle $r S^1$ under $f$ (resp. under $h$), and we include all equipotentials corresponding to $r < 1$ (resp. $r > 1$). The parametrization of  $(\g_t)_{t \in \m R}$ by $t$ is chosen so that the connected component $D_t$ of $\Chat \smallsetminus \g_t$ containing $0$ has conformal radius $e^{-t}$.
  
 Let $\rho^\g \in \mc N$ be the measure associated with $(\g_t)_{t \in \m R}$ and let 
$\varphi$ be the corresponding winding function. 
Along with the Loewner chain $(f_t: \m D \to D_t )_{t \in \m R}$, we consider also the conformal maps $h_t : \m D^*\to  D_t^* $, such that $h_t(\infty) = \infty$ and $h_t '(\infty) > 0$. 
In particular, $f = f_0$ and $h = h_0$.
We also set $g_t : = f_t ^{-1}$ and $k_t : = h_t ^{-1}$. For a conformal map $k$ fixing $\infty$, we define similarly $$\vartheta [k](z) = \arg \frac{z k'(z)}{k(z)},$$ 
where the argument is chosen so that $\vartheta[k](z) \to 0$ as $z \to \infty$.

\begin{lemma} \label{lem:harm_equi}
For a.e. $t \ge 0$, $\rho_t^\g$ is the uniform measure. Moreover, $\varphi|_{\m C \smallsetminus \g}$ is harmonic, and we have
$$\varphi|_{D_0} = \vartheta [g_0], \quad \text{and} \quad \varphi|_{D_0^*} = \vartheta [k_0].$$
\end{lemma}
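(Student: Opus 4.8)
The plan is to verify the three assertions directly from the construction of the foliation $(\g_t)_{t\in\m R}$ associated to the Weil-Petersson quasicircle $\g$. First I would treat the claim that $\rho_t^\g$ is the uniform measure for a.e.\ $t \ge 0$. By construction, for $t \ge 0$ the leaves $\g_t$ are the equipotentials $f(rS^1)$ inside $D_0$, i.e.\ $D_t = f(e^{-t}\m D)$, so the uniformizing Loewner chain inside $D_0$ satisfies $g_t = (z\mapsto e^t z)\circ g_0$ on $D_t$, and hence $g_{s+t}\circ g_t^{-1}$ is the scaling $z\mapsto e^s z$ for all $s,t \ge 0$. Feeding this into the Loewner-Kufarev equation \eqref{eq:ODE} together with the domain Markov property (Lemma~\ref{lem:domain_markov}), we get that $H_t(z) = \partial_s (g_{s+t}\circ g_t^{-1})(z)|_{s=0}/z$ must be the constant $1$ for a.e.\ $t\ge0$, which by the Herglotz-Riesz representation forces $\rho_t^\g$ to be the uniform probability measure on $S^1$ for a.e.\ $t\ge0$.

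Next, the harmonicity of $\varphi$ on $D_0$ and the identity $\varphi|_{D_0} = \vartheta[g_0]$: this is exactly the content of Lemma~\ref{lem:harmonic} applied with $T = 0$, once we observe (as just established) that $\rho_t^\g$ is uniform for all $t\ge0$. Indeed Lemma~\ref{lem:harmonic} gives $\varphi(z) = \vartheta[g_0](z)$ for all $z \in D_0$ and that the $(\g_t)_{t\ge0}$ are the equipotentials in $D_0$, which matches our construction; harmonicity of $\vartheta[g_0]$ is immediate since it is the imaginary part of the holomorphic function $\log(z g_0'(z)/g_0(z))$. For the exterior identity $\varphi|_{D_0^*} = \vartheta[k_0]$, I would run the symmetric argument: the leaves $\g_t$ for $t$ below $0$ are the exterior equipotentials $h(rS^1)$, $r>1$, so on $D_t^*$ the map $k_t$ equals a scaling composed with $k_0$, and by the definition of $\varphi$ on a leaf $\g_t$ as $\vartheta[g_t](z)$ — which on the exterior side we reinterpret via $\vartheta[k_t]$ using the time-reversal/inversion symmetry set up in the construction of the whole-plane chain (and the definition of $\vartheta[k]$ for maps fixing $\infty$ given just before the lemma) — the same chain-rule computation $\vartheta[k_t] = \vartheta[k_0]$ on $D_t^*$ shows $\varphi$ agrees with the harmonic function $\vartheta[k_0]$ arclength-a.e.\ on each exterior leaf, hence on $D_0^*$.

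The main obstacle I anticipate is bookkeeping with the $2\pi$-branches and reconciling the ``interior'' definition of the winding function $\varphi(z) = \vartheta[g_{\tau(z)}](z)$ (which uses the Riemann map of the \emph{bounded} component onto $\m D$) with the exterior expression $\vartheta[k_0]$ on $D_0^*$, since for $z\in D_0^*$ the relevant leaf $\g_{\tau(z)}$ is an exterior equipotential and $\tau(z)<0$. Concretely one must check that the continuous branch of $\arg(wg_t'(w)/g_t(w))$ normalized to vanish at $0$, when restricted to an exterior equipotential and then expressed through the exterior map $k_t$ normalized to vanish at $\infty$, produces exactly $\vartheta[k_0]$ with no extra additive constant. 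I would handle this by the same deformation argument used in the proof of Theorem~\ref{thm:energy_rev}: continuously shrink the equipotential parameter $r$ and track that both $\varphi$ and $\vartheta[k_0]$ tend to $0$, pinning down the branch. Everything else is a routine application of the chain rule \eqref{eq:theta_chain}, conformal invariance, and Lemmas~\ref{lem:domain_markov}, \ref{lem:harmonic}, and \ref{lem:zero_trace_varphi}.
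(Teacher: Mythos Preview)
Your proposal is correct and follows essentially the same approach as the paper. For the interior part ($t\ge 0$) you argue exactly as the paper does, identifying $g_t\circ f$ with the dilation $z\mapsto e^t z$ and then invoking Lemma~\ref{lem:harmonic}. For the exterior identity the paper also reduces to the interior case via the inversion from Theorem~\ref{thm:energy_rev}: it uses directly that $\varphi = \tilde\varphi\circ j$, observes that equipotentials are preserved under $j$, and then computes $\tilde\varphi = \vartheta[j\circ k_0\circ j] = \vartheta[k_0]\circ j$ using $\vartheta[j]\equiv 0$ and the chain rule---which is a slightly cleaner packaging of your ``reinterpret via $\vartheta[k_t]$ using time-reversal/inversion'' step, and handles the branch issue you flagged in one stroke rather than via a separate deformation argument.
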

\begin{proof}
By our definition of equipotential we have $\g_t = f (e^{-t} S^1)$ for $t \ge 0$. The flow $(g_t \circ f)_{t \ge 0}$, driven by $(\rho_t^\g)_{t \ge 0}$ equals $(z \mapsto e^t z)$, which shows that $(\rho_t^\g)_{t\ge 0}$ is a.e. the uniform measure on $S^1$. The identity  $\varphi = \vartheta [g_0]$  on $D_0$ and hence, the harmonicity of $\varphi|_{D_0}$ follows as in Lemma~\ref{lem:harmonic}.

Now we consider $D_0^*$.
Using the notation of Theorem~\ref{thm:energy_rev}, the winding function in $D_0^*$ is given by $\varphi (z) = \tilde \varphi \circ j (z)$ where $\tilde \varphi$ is the winding function associated with the foliation under the inversion map $j : z \mapsto 1/z$ by Lemma~\ref{lem:matching_trace}.
Since the family of equipotentials is preserved under the inversion, it follows that $\tilde \varphi$ is harmonic in $j (D_0^*)$ and
$$\tilde \varphi = \vartheta [j \circ k_0 \circ j] = \vartheta [k_0] \circ j,$$
using the chain rule \eqref{eq:theta_chain} and $\vartheta [j] \equiv 0$.
Therefore $\varphi|_{D_0^*} = \vartheta [k_0]$. 
\end{proof}

Recall that the Loewner energy of the curve $\g$ is
\begin{equation}\label{eq:def_Loewner_energy}
I^L(\g) = \mc D_{\m D} (\arg f') + \mc D_{\m D^*} (\arg h') + 4 \log |f'(0)/h'(\infty)|.
\end{equation}
(We have normalized so that $f'(0) = 1$ but we will keep it in the notation since \eqref{eq:def_Loewner_energy} holds more generally.)
\begin{thm} \label{thm:dual_Jordan_curve} Let  $\g$ be a Weil--Petersson quasicircle separating $0$ from $\infty$. Let $\rho^\g$ be the measure associated to $\g$. Then
  \[16 \,S(\rho^\g) = I^L(\g) - 2 \log |f'(0)/h'(\infty)| < \infty.\]
  Moreover, if $\rho \in \mc N$ generates $\g$ as a leaf, then $S(\rho^\g) \le S(\rho)$. 
\end{thm}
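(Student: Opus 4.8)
The plan is to establish the two assertions separately, using the whole-plane energy duality Theorem~\ref{thm:main0} together with the harmonicity of the winding function $\varphi$ associated with the equipotential foliation $\rho^\g$ established in Lemma~\ref{lem:harm_equi}. For the identity, the key observation is that $\mc D_{\m C}(\varphi)$ splits over the two components $D_0$ and $D_0^*$ of $\m C \smallsetminus \g$ (the curve $\g$ itself having zero area, and $\varphi$ being continuous across it in the $W^{1,2}$-sense since it has a global extension of finite Dirichlet energy by Theorem~\ref{thm:WP-leaf} applied to the finite-energy measure $\rho^\g$, whose finiteness is part of what we prove). By Lemma~\ref{lem:harm_equi}, $\varphi|_{D_0} = \vartheta[g_0] = \arg(z g_0'(z)/g_0(z))$ and $\varphi|_{D_0^*} = \vartheta[k_0]$. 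Using conformal invariance of the Dirichlet energy to transport to $\m D$ and $\m D^*$, I would write
\[
\mc D_{\m C}(\varphi) = \mc D_{\m D}\big(\vartheta[g_0]\circ f\big) + \mc D_{\m D^*}\big(\vartheta[k_0]\circ h\big) = \mc D_{\m D}\big(\vartheta[f]\big) + \mc D_{\m D^*}\big(\vartheta[h]\big),
\]
the last step using the chain rule \eqref{eq:theta_chain} together with $\vartheta[g_0]\circ f = -\vartheta[f]$ (and similarly for $h$, $k_0$). Then the main computational input is the standard identity relating $\mc D_{\m D}(\vartheta[f])$ to $\mc D_{\m D}(\arg f')$: since $\vartheta[f] = \arg f' + \arg(f(z)/z)$, expanding the square and using that the Grunsky-type cross term integrates (via the equality case of Lemma~\ref{lem:Grunsky_inequality}, the complement of $f(\m D)\cup h(\m D^*)$ having zero area here) to $2\log|h'(\infty)/f'(0)|$, one gets
\[
\mc D_{\m D}(\vartheta[f]) + \mc D_{\m D^*}(\vartheta[h]) = \mc D_{\m D}(\arg f') + \mc D_{\m D^*}(\arg h') + 2\log|f'(0)/h'(\infty)|.
\]
Combined with Theorem~\ref{thm:main0} giving $\mc D_{\m C}(\varphi) = 16\,S(\rho^\g)$ and the definition \eqref{eq:def_Loewner_energy} of $I^L(\g)$, this yields $16\,S(\rho^\g) = I^L(\g) - 2\log|f'(0)/h'(\infty)|$, which is finite since $\g$ is a Weil-Petersson quasicircle.

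For the minimality assertion, suppose $\rho \in \mc N$ generates a foliation in which $\g$ appears as a leaf, say $\g = \g_{t_0} = \partial D_{t_0}$; after a time shift I may take $t_0 = 0$ up to an additive constant in the energy coming from the conformal-radius normalization, which I should track carefully. Let $\varphi_\rho$ be the winding function of this foliation; by Theorem~\ref{thm:WP-leaf} and Theorem~\ref{thm:main0}, $\varphi_\rho$ has finite Dirichlet energy on $\m C$ and $\mc D_{\m C}(\varphi_\rho) = 16\,S(\rho)$ (if $S(\rho) = \infty$ there is nothing to prove). The key point is that $\varphi_\rho$ and $\varphi = \varphi^{\rho^\g}$ agree arclength-a.e.\ on $\g$: both equal the geometric winding $\vartheta[g_0]$ of the bounded-component uniformizing map of $\g$, restricted to $\g$, with the branch pinned by continuous deformation to $0$ along equipotentials inside $D_0$. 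Then I would apply the orthogonal decomposition \eqref{eq:orthogonal} with respect to the two domains $D_0$ and $D_0^*$: writing $\varphi_\rho = \varphi_\rho^0 + \varphi_\rho^h$ relative to $D_0$ (and similarly relative to $D_0^*$), the harmonic parts $\varphi_\rho^h$ in $D_0$ and in $D_0^*$ are determined by the boundary trace on $\g$, hence coincide with $\vartheta[g_0]$ and $\vartheta[k_0]$ respectively, which are exactly $\varphi$ on those domains. Pythagoras gives
\[
\mc D_{\m C}(\varphi_\rho) \ge \mc D_{D_0}(\varphi_\rho^h) + \mc D_{D_0^*}(\varphi_\rho^h) = \mc D_{D_0}(\varphi) + \mc D_{D_0^*}(\varphi) = \mc D_{\m C}(\varphi),
\]
so $16\,S(\rho) = \mc D_{\m C}(\varphi_\rho) \ge \mc D_{\m C}(\varphi) = 16\,S(\rho^\g)$, i.e. $S(\rho^\g) \le S(\rho)$.

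The main obstacle I anticipate is not the energy inequality itself but the bookkeeping around the two matching issues: first, verifying rigorously that the winding functions of two different foliations sharing $\g$ as a leaf really do have the \emph{same} arclength-a.e.\ boundary values on $\g$ (this requires the branch-of-argument consistency argument via equipotential deformation, exactly as in the proof of Theorem~\ref{thm:energy_rev}), and that arclength and harmonic measure being mutually absolutely continuous on the chord-arc curve $\g$ lets us conclude equality of the harmonic extensions; second, handling the conformal-radius normalization so that the additive constant $2\log|f'(0)/h'(\infty)|$ comes out correctly and the time-shift in the minimality argument does not change $S(\rho)$. Both are of the kind already handled earlier in the paper (Lemma~\ref{lem:beta_varphi}, Proposition~\ref{prop:unique_extension}, Theorem~\ref{thm:energy_rev}), so I expect them to go through with care but without new ideas; the Grunsky equality-case computation for the cross term is the one genuinely computational step.
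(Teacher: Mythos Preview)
Your plan for the identity is essentially the paper's: split $\mc D_{\m C}(\varphi)$ over $D_0$ and $D_0^*$, use Lemma~\ref{lem:harm_equi} and conformal invariance, expand $\vartheta[f]=\arg f'+\arg(z/f)$, and handle the cross terms. One genuine gap: the equality case of Lemma~\ref{lem:Grunsky_inequality} only gives you
\[
\frac{1}{\pi}\int_{\m D}\Big|\frac{f'}{f}-\frac{1}{z}\Big|^2\,\dd A+\frac{1}{\pi}\int_{\m D^*}\Big|\frac{h'}{h}-\frac{1}{z}\Big|^2\,\dd A
=2\log\Big|\frac{h'(\infty)}{f'(0)}\Big|,
\]
but you also need the separate identity
\[
\Re\Big[\int_{\m D}\frac{f''}{f'}\,\overline{\Big(\frac{f'}{f}-\frac{1}{z}\Big)}\,\dd A+\int_{\m D^*}\frac{h''}{h'}\,\overline{\Big(\frac{h'}{h}-\frac{1}{z}\Big)}\,\dd A\Big]
=\int_{\m D}\Big|\frac{f'}{f}-\frac{1}{z}\Big|^2\,\dd A+\int_{\m D^*}\Big|\frac{h'}{h}-\frac{1}{z}\Big|^2\,\dd A,
\]
which does \emph{not} follow from Lemma~\ref{lem:Grunsky_inequality}. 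The paper proves this as Lemma~\ref{lem:Grunsky} by computing $I^L(j(\g))$ in terms of $\tilde f=j\circ h\circ j$, $\tilde h=j\circ f\circ j$ and invoking M\"obius invariance $I^L(\g)=I^L(j(\g))$; you would need something equivalent.

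Your minimality argument is \emph{different} from the paper's and is a nice alternative. The paper argues by truncation and reversibility: replace $\rho_t$ by the uniform measure for $t\ge 0$ (dropping $S_{[0,\infty)}(\rho)\ge 0$), then invoke Theorem~\ref{thm:energy_rev} to do the same on the exterior side, arriving at $\rho^\g$. Your route---both winding functions have the same trace on $\g$, and $\varphi^{\rho^\g}$ is the harmonic extension on each side, hence Dirichlet-minimal by orthogonal decomposition---is more direct once Theorem~\ref{thm:main0} is in hand, and makes the ``equipotentials are optimal because harmonic extensions minimize Dirichlet energy'' mechanism transparent. Two small points to tighten: the decomposition \eqref{eq:orthogonal} in the paper is set up for bounded $D$, so for $D_0^*$ you should pull back by $h$ (or compose with $j$) to reduce to $\m D$; and the trace-matching on $\g$ from the \emph{exterior} side (that $\vartheta[k_0]|_\g=\vartheta[g_0]|_\g$) is exactly the branch argument from the proof of Theorem~\ref{thm:energy_rev}, as you note. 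With these in place your argument goes through.
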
 
\begin{proof}
By Lemma~\ref{lem:harm_equi}, the winding function associated to $\rho^\g$ satisfies
\begin{align*}
    \mc D_{\m C} (\varphi) & =  \frac{1}{\pi}\int_{D_0} \abs{\nabla \vartheta [g_0]}^2 \dd A (z) + \frac{1}{\pi} \int_{D_0^*} \abs{\nabla \vartheta [k_0]}^2 \dd A (z) \\
    & =  \frac{1}{\pi}\int_{\m D} \abs{\nabla \vartheta [f]}^2 \dd A (z) + \frac{1}{\pi} \int_{\m D^*} \abs{\nabla \vartheta [h]}^2 \dd A (z) \\
    & =\frac{1}{\pi} \int_{\m D} \abs{\frac{f''}{f'} - \frac{f'}{f} + \frac{1}{z}}^2 \dd A (z) +  \frac{1}{\pi} \int_{\m D*} \abs{\frac{h''}{h'} - \frac{h'}{h} + \frac{1}{z}}^2 \dd A (z) \\
 & =  \frac{1}{\pi} \int_{\m D} \abs{\frac{f''}{f'}}^2 \dd A (z)  + \frac{1}{\pi} \int_{\m D^*} \abs{\frac{h''}{h'}}^2 \dd A (z) -   \frac{1}{\pi} \int_{\m D} \abs{\frac{f'}{f} -\frac{1}{z}}^2  \dd A (z) \\&\qquad 
  - \frac{1}{\pi} \int_{\m D^*} \abs{\frac{h'}{h} -\frac{1}{z}}^2  \dd A (z) \\
& =  \frac{1}{\pi} \int_{\m D} \abs{\frac{f''}{f'}}^2 \dd A (z)  + \frac{1}{\pi} \int_{\m D^*} \abs{\frac{h''}{h'}}^2 \dd A (z) + 2 \log |f'(0)/h'(\infty)|\\
& = I^L(\g) - 2 \log |f'(0)/h'(\infty)|,
\end{align*}
where we used \eqref{eq:def_Loewner_energy} and Lemma~\ref{lem:Grunsky} proved just below.
Theorem~\ref{thm:main0} then implies the identity. 
The claim $S(\rho^\g) \le S(\rho)$ follows from the fact that the foliation formed by the equipotentials in $D$ is generated by the zero energy measure for $t \ge 0$, thus minimizes the energy among all foliations in $D$, and the reversibility Theorem~\ref{thm:energy_rev}.
\end{proof}

\begin{lemma}\label{lem:Grunsky}
  We have the identity
  \begin{align*}
    \Re & \left[ \int_{\m D}   \frac{f''}{f'}\left( \ad{\frac{f'}{f} - \frac{1}{z}}\right)\dd A (z) +\int_{\m D^*}  \frac{h''}{h'} \left(\ad{\frac{h'}{h} - \frac{1}{z}}\right) \dd A (z) \right] \\
      & =  \int_{\m D} \abs{ \frac{f'}{f }  - \frac{1}{z} }^2  \dd A (z)+  \int_{\m D^*} \abs{\frac{h'}{h }  -\frac{1}{z} }^2  \dd A (z)  = 2 \pi \log \abs{\frac{ h'(\infty)}{f'(0)}}. 
  \end{align*}
\end{lemma}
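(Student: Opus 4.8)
The plan is to establish the two asserted equalities separately. Consider first the second (right-hand) equality. The integrands $|f'/f - 1/z|^2$ and $|h'/h - 1/z|^2$ are precisely those appearing in the generalized Grunsky inequality recalled in Lemma~\ref{lem:Grunsky_inequality}, applied to the pair $(f,h)$. Since $\g$ is a Jordan curve, the omitted set $\m C \smallsetminus (f(\m D) \cup h(\m D^*))$ is exactly $\g$ itself, which has Lebesgue measure zero (a Weil-Petersson quasicircle is rectifiable, hence has zero area). Therefore the equality case in Lemma~\ref{lem:Grunsky_inequality} applies and gives directly
\[
\int_{\m D}\abs{\frac{f'}{f}-\frac 1z}^2\dd A(z)+\int_{\m D^*}\abs{\frac{h'}{h}-\frac 1z}^2\dd A(z)=2\pi\log\abs{\frac{h'(\infty)}{f'(0)}}.
\]
(One should note that finiteness of both integrals is guaranteed because $\g$ is Weil-Petersson, so the expressions are not merely formal; this is where the hypothesis that $\g$ is a Weil-Petersson quasicircle, as opposed to an arbitrary Jordan curve, is used.)

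For the first (left-hand) equality, the idea is to integrate by parts, or equivalently to recognize the left-hand integrand as (the real part of) an exact form. Write $a(z) := f'(z)/f(z) - 1/z$, which is holomorphic on $\m D$ (the apparent pole at $0$ cancels since $f(0)=0$, $f'(0)\neq 0$), and similarly $b(z) := h'(z)/h(z) - 1/z$ on $\m D^*$, holomorphic and vanishing at $\infty$ to second order. A direct computation gives $a'(z) = f''/f' \cdot (f'/f) - (f'/f)^2 + 1/z^2$, and after simplification one finds that $f''/f' \cdot \ad a$ differs from $|a|^2$ (up to conjugation and the $\bar\partial$ of something) precisely by terms that integrate to boundary contributions. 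More efficiently: since $\log(zf'(z)/f(z))$ has derivative $f''/f' + 1/z - f'/f = f''/f' - a$, and since $\vartheta[f] = \Im\log(zf'/f)$, we have $\nabla\vartheta[f]$ expressed through $f''/f' - a$; the computation displayed in the proof of Theorem~\ref{thm:dual_Jordan_curve} already expands $|\nabla\vartheta[f]|^2 = |f''/f' - a|^2 = |f''/f'|^2 - 2\Re(f''/f' \cdot \ad a) + |a|^2$, and likewise for $h$. Comparing the two ways of writing $\mc D_{\m C}(\varphi)$ in that proof — i.e. matching the line with the cross terms against the final line — forces the identity
\[
\Re\Bigl[\int_{\m D}\frac{f''}{f'}\,\ad a\,\dd A+\int_{\m D^*}\frac{h''}{h'}\,\ad b\,\dd A\Bigr]=\int_{\m D}|a|^2\dd A+\int_{\m D^*}|b|^2\dd A,
\]
which is the first equality. (To make this self-contained rather than circular one would instead verify it directly: integrate $\int_{\m D}\partial_z(\text{something})\,\dd A$ over $\m D$ and apply Green's/Stokes' theorem on the boundary, using that the boundary integrals over $S^1$ from the $\m D$ side and $\m D^*$ side match up because $f,h$ both extend to the Weil-Petersson welding on $S^1$; the regularity needed for this boundary manipulation is supplied by $\g$ being Weil-Petersson, so $f'', h''$ are in the relevant $L^2$ spaces by Lemma~\ref{thm_TT_equiv_T01}.)

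The main obstacle I anticipate is the justification of the integration by parts / boundary-term manipulation in the second equality at the level of regularity available: the conformal maps $f, h$ need not extend smoothly to $S^1$, only with the $L^2$-Dirichlet control of Lemma~\ref{thm_TT_equiv_T01}. The clean way around this is to avoid boundary integrals entirely and instead argue by approximation — approximate $\g$ by analytic Jordan curves $\g_n$ (e.g. equipotentials $f(r_n S^1)$ pushed forward, or smooth the welding in $T_0(1)$) for which all maps are smooth up to the boundary and the identity is a routine Stokes' theorem computation, then pass to the limit using continuity of all three quantities in the Weil-Petersson metric (the $L^2$ norms of $f''/f'$, $h''/h'$ and of $a$, $b$ are continuous along such approximations, and $\log|h'(\infty)/f'(0)|$ converges as well). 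This reduces the lemma to its smooth case plus a limiting argument, both of which are standard.
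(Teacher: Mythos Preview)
Your treatment of the second equality is the same as the paper's: invoke the equality case of Lemma~\ref{lem:Grunsky_inequality}, which applies because $\gamma$ has zero area.

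For the first equality, however, the paper takes a quite different route. Rather than integrating by parts and wrestling with boundary regularity, it exploits the M\"obius invariance of the Loewner energy under the inversion $j(z)=1/z$. Setting $\tilde f = j\circ h\circ j$ and $\tilde h = j\circ f\circ j$ (the normalized maps for the inverted curve $j(\gamma)$), one computes directly that
\[
\frac{\tilde f''}{\tilde f'}(z) = -\frac{1}{z^2}\Bigl(\frac{h''}{h'}(1/z) - \frac{2h'}{h}(1/z) + 2z\Bigr),
\]
and similarly for $\tilde h$. Expanding $I^L(j(\gamma))$ using these expressions and comparing with $I^L(\gamma)$ via the identity $I^L(j(\gamma))=I^L(\gamma)$ yields exactly the first equality, with no boundary terms ever appearing.

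Your approach (Stokes plus approximation by smooth curves) would also work, and is perhaps the more ``expected'' argument, but it is heavier: you have to set up the approximation carefully, verify convergence of all three quantities, and carry out the smooth-case computation. The paper's inversion trick sidesteps the regularity issue entirely, since M\"obius invariance of $I^L$ is already known and the algebraic manipulation takes place entirely on integrands that are \emph{a priori} in $L^2$ or $L^1$. What your approach buys is independence from the M\"obius invariance result (which itself is nontrivial); what the paper's approach buys is brevity and no need to touch the boundary.

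One small caution: your sketch (a), deducing the identity by ``matching lines'' in the proof of Theorem~\ref{thm:dual_Jordan_curve}, is indeed circular as you note, since that proof invokes the present lemma. Only your route (b) is a genuine alternative.
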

\begin{proof}
Consider $\tilde f : = j\circ h \circ j$ and $\tilde h : = j \circ f \circ j$, the conformal maps associated to $j (\g)$. We have $\tilde f' (0) = h'(\infty)^{-1}$  and 
$$\frac{\tilde f''(z)}{\tilde f' (z)} = - \frac{1}{z^2} \left( \frac{h''(1/z)}{h' (1/z)} - \frac{2 h'(1/z)}{h (1/z) }  +2z \right)$$
and similarly for $\tilde h$. We compute
\begin{align*}
     I^L(j (\g))  & =  \frac{1}{\pi}\int_{\m D} \abs{\frac{\tilde  f''}{\tilde f'}}^2 \dd A (z) + \frac{1}{\pi}\int_{\m D^*} \abs{\frac{\tilde h''}{\tilde h'}}^2 \dd A (z)  + 4 \log \abs{\frac{\tilde f'(0)}{\tilde  h'(\infty)}} \\
     & =  \frac{1}{\pi}\int_{\m D^*} \abs{ \frac{h''}{h'} - \frac{2 h'}{h }  + \frac{2}{z} }^2  \dd A (z) + \frac{1}{\pi}\int_{\m D} \abs{ \frac{f''}{f'} - \frac{2 f'}{f }  + \frac{2}{z} }^2  \dd A (z)  + 4 \log \abs{\frac{f'(0)}{h'(\infty)}}  \\
     & =  I^L(\g) -  \int_{\m D^*} 4 \Re \left[\frac{h''}{h'} \left(\ad{\frac{h'}{h} - \frac{1}{z}}\right) \right]\dd A (z) -   \int_{\m D} 4 \Re  \left[ \frac{f''}{f'}\left( \ad{\frac{f'}{f} - \frac{1}{z}}\right)\right]\dd A (z)  \\
      &\quad  + 4 \int_{\m D^*} \abs{\frac{h'}{h }  -\frac{1}{z} }^2  \dd A (z) + 4\int_{\m D} \abs{ \frac{f'}{f }  - \frac{1}{z} }^2  \dd A (z).
\end{align*}
Since the Loewner energy of a Jordan curve is M\"obius invariant, we have $I^L(\g) = I^L(j (\g))$ and we obtain the first equality. The second equality follows from Lemma~\ref{lem:Grunsky_inequality} since $\g$ has Lebesgue measure zero.
\end{proof}

Combining Theorem~\ref{thm:WP-leaf} and Theorem~\ref{thm:dual_Jordan_curve}, we obtain a new characterization of Weil--Petersson quasicircles. 
\begin{cor}\label{cor:WP-characterization}
  A Jordan curve $\g$ separating $0$ from $\infty$ is a Weil--Petersson quasicircle if and only if $\g$ can be realized as a leaf in the foliation generated by a measure $\rho$ with $S(\rho) < \infty$.
\end{cor}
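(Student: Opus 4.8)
The plan is to obtain the corollary by assembling the two main results already established, namely Theorem~\ref{thm:WP-leaf} and Theorem~\ref{thm:dual_Jordan_curve}, each of which supplies one implication; the proof will accordingly be short.

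For the ``if'' direction I would argue as follows. Suppose $\g$ can be realized as a leaf $\g_{t_0}$ in the foliation $(\g_t)_{t\in\m R}$ generated by some $\rho\in\mc N$ with $S(\rho)<\infty$. First note that a finite-energy measure does generate a genuine foliation, so that the word ``leaf'' is meaningful here: this is part of the content of Theorem~\ref{thm:WP-leaf} (obtained in the disk case in Corollary~\ref{cor:S_finite_foliation_WP} and transferred to the whole plane in Section~\ref{subsec:whole_plane}). Theorem~\ref{thm:WP-leaf} then says precisely that every leaf of the foliation generated by $\rho$ is a Weil-Petersson quasicircle, so in particular $\g=\g_{t_0}$ is one.

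For the ``only if'' direction, suppose $\g$ is a Weil-Petersson quasicircle separating $0$ from $\infty$. Following Section~\ref{subsec:jordan_curve} I would take the measure $\rho^\g\in\mc N$ associated with the foliation consisting of $\g$ together with the equipotentials on both sides of $\g$, with the time parametrization normalized so that the bounded complementary component of $\g_t$ has conformal radius $e^{-t}$. By construction $\g$ is a leaf of the foliation generated by $\rho^\g$. Theorem~\ref{thm:dual_Jordan_curve} then gives the identity $16\,S(\rho^\g)=I^L(\g)-2\log|f'(0)/h'(\infty)|$, and the right-hand side is finite because $I^L(\g)<\infty$ (Lemma~\ref{thm_TT_equiv_T01}) and $f'(0),h'(\infty)\in(0,\infty)$; hence $S(\rho^\g)<\infty$ and $\g$ is realized as a leaf of a finite-energy foliation.

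I do not expect any real obstacle, since both implications are immediate consequences of theorems proved earlier in the paper. The only points worth a brief remark are that Theorem~\ref{thm:dual_Jordan_curve} is stated for an arbitrary Weil-Petersson quasicircle separating $0$ from $\infty$ (the normalization of conformal radius $1$ used there being only cosmetic), so the construction of $\rho^\g$ applies to the given $\g$ verbatim, and that in the first direction one should invoke the fact, already contained in Theorem~\ref{thm:WP-leaf}, that a measure of finite energy does generate a foliation of $\m C\smallsetminus\{0\}$.
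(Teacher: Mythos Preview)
Your proof is correct and matches the paper's approach exactly: the corollary is stated immediately after the sentence ``Combining Theorem~\ref{thm:WP-leaf} and Theorem~\ref{thm:dual_Jordan_curve}, we obtain a new characterization of Weil-Petersson quasicircles,'' with no further argument given. Your write-up simply spells out the two implications that this sentence packages together.
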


\begin{cor}\label{cor:WP-LE-bound}
  The Loewner energies of the leaves generated by $\rho$ are uniformly bounded by $16\, S(\rho)$.
\end{cor}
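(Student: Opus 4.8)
The plan is to fix an arbitrary leaf $\g_{t_0}$ of the foliation generated by $\rho$ with $S(\rho) < \infty$, and to bound its Loewner energy $I^L(\g_{t_0})$ using the machinery just developed, in particular Theorem~\ref{thm:dual_Jordan_curve} applied to the curve $\g_{t_0}$ together with the fact that $\rho$ itself is one of the competitor measures in the variational characterization. Concretely, Theorem~\ref{thm:WP-leaf} first guarantees that each $\g_{t_0}$ is indeed a Weil-Petersson quasicircle, so $I^L(\g_{t_0}) < \infty$ and Theorem~\ref{thm:dual_Jordan_curve} applies: writing $\rho^{\g_{t_0}}$ for the equipotential measure associated to $\g_{t_0}$ and $f_{t_0}, h_{t_0}$ for the two normalized Riemann maps onto the components of $\Chat \smallsetminus \g_{t_0}$, we have
\[
I^L(\g_{t_0}) = 16\, S(\rho^{\g_{t_0}}) + 2 \log |f_{t_0}'(0)/h_{t_0}'(\infty)|.
\]

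Next I would use the two estimates from Theorem~\ref{thm:dual_Jordan_curve}: since $\rho$ generates a foliation having $\g_{t_0}$ as a leaf, the minimality statement gives $S(\rho^{\g_{t_0}}) \le S(\rho)$. It remains to control the derivative term $2\log|f_{t_0}'(0)/h_{t_0}'(\infty)|$. Here one normalizes so that $\g_{t_0}$ corresponds to time index $0$ (equivalently $t_0 = 0$, which is the setup of Section~\ref{subsec:jordan_curve}); then $f_{t_0}'(0) = 1$ by construction, so the term reduces to $-2\log h_{t_0}'(\infty)$. Since the bounded component $D_{t_0}$ has conformal radius $1$ while the union of all $D_t$ is $\m C$ and the hulls shrink to $\{0\}$, the complementary domain $D_{t_0}^* = \Chat \smallsetminus \ad{D_{t_0}}$ contains the point $\infty$ with some conformal radius $e^{c}$, $c \ge 0$ by the Schwarz lemma (its complement $\ad{D_{t_0}}$ contains the unit disk), i.e. $h_{t_0}'(\infty) \ge 1$, so $-2\log h_{t_0}'(\infty) \le 0$. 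Therefore
\[
I^L(\g_{t_0}) = 16\, S(\rho^{\g_{t_0}}) - 2\log h_{t_0}'(\infty) \le 16\, S(\rho^{\g_{t_0}}) \le 16\, S(\rho),
\]
and since $t_0$ was arbitrary this is the claimed uniform bound. For a general leaf not normalized to time index $0$ one applies the M\"obius-invariance of $I^L$ and of $S$ (the latter via Theorem~\ref{thm:main0} and conformal invariance of the Dirichlet energy, or simply by rescaling) to reduce to this case.

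The main obstacle I anticipate is making the sign of the derivative term rigorous in the non-normalized setting: one must be careful that ``realized as a leaf'' does not force the conformal radius of the bounded component to be any particular value, so the reduction either uses the scaling covariance of both sides under $z \mapsto \lambda z$ (which changes $f'(0), h'(\infty)$ and the foliation simultaneously but leaves $I^L$ and $S(\rho^\g)$ invariant), or one simply invokes that Corollary~\ref{cor:WP-characterization} and Theorem~\ref{thm:dual_Jordan_curve} are stated for general Weil-Petersson quasicircles separating $0$ from $\infty$ and that the displayed identity there, combined with $h'(\infty) \ge f'(0)$ whenever the bounded side contains the disk of radius $f'(0)$ (Schwarz lemma), yields $I^L(\g) \le 16\,S(\rho^\g)$. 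Everything else is a direct chaining of results already in hand.
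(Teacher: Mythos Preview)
Your overall strategy matches the paper's proof exactly: apply Theorem~\ref{thm:dual_Jordan_curve} to each leaf $\g_{t_0}$, use the minimality $S(\rho^{\g_{t_0}}) \le S(\rho)$, and then show that the derivative term $2\log|f_{t_0}'(0)/h_{t_0}'(\infty)| \le 0$. The gap is in this last step.

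You claim that when the bounded component $D_{t_0}$ has conformal radius $1$ (i.e.\ $f_{t_0}'(0)=1$), the closure $\ad{D_{t_0}}$ contains the unit disk, and then deduce $h_{t_0}'(\infty)\ge 1$ by Schwarz. But a normalized univalent map $f:\m D\to D$ with $f(0)=0,\ f'(0)=1$ need not have $\m D\subset D$; Koebe's theorem only gives $B(0,1/4)\subset D$. For instance $f(z)=z-z^2/2$ is univalent on $\m D$ with $f'(0)=1$, yet $f(1)=1/2$ lies on $\partial D$, so $D\not\supset\m D$. Running your Schwarz argument with the correct inclusion $B(0,1/4)\subset D$ only yields $h'(\infty)\ge 1/4$, hence $I^L(\g_{t_0})\le 16\,S(\rho)+2\log 4$, which is weaker than claimed.

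The inequality $|h'(\infty)|\ge |f'(0)|$ is nonetheless true for any pair of disjoint simply connected domains with $0\in D$, $\infty\in D^*$; the paper obtains it directly from Lemma~\ref{lem:Grunsky_inequality} (the generalized Grunsky inequality), whose left-hand side is nonnegative. Replacing your Schwarz step by this reference fixes the argument, and then your proof is identical to the paper's.
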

\begin{proof}
  For all $t \in \m R$,  Theorem~\ref{thm:main0} 
   and  Theorem~\ref{thm:dual_Jordan_curve} imply 
   $$16 \,S(\rho)  \ge 16 \,S(\rho^\g) =   I^L (\g_t) - 2 \log |f_t'(0)| + 2 \log |h_t'(\infty)|,$$
   where
   $f_t, h_t$ are the conformal maps associated to $\g_t$. On the other hand, Lemma~\ref{lem:Grunsky_inequality} shows that $$ 2 \log |h_t'(\infty)| - 2 \log |f_t'(0)|  \ge 0.$$
   Thus,
   $I^L(\g_t) \le 16 \, S(\rho) $
 as claimed.
\end{proof}

\begin{rem}
Corollary~\ref{cor:WP-LE-bound} provides a way to generate and simulate Weil--Petersson quasicircles of bounded Loewner energy using a measure with controlled Loewner--Kufarev energy. In fact, we obtain infinitely many such quasicircles for any given measure.
\end{rem}

\begin{rem}
The Loewner energy $I^L(\g)$ is invariant under M\"obius transformations, and is known to be a K\"ahler potential for the Weil--Petersson metric defined on a subspace $T_0 (1)$ of the universal Teichm\"uller space, see \cite[Thm.\,II.4.1]{TT06}. On the other hand, $\log |f'(0)/h'(\infty)|$ is only invariant under scaling and rotation which is consistent with the fact that the Loewner--Kufarev equation has two marked points $0$ and $\infty$ on the Riemann sphere. We also point out that $\log |f'(0)/h'(\infty)|$ is a K\"ahler potential for the Velling-Kirillov metric on the universal Teichm\"uller curve, a complex fiber bundle over $T_0(1)$, see \cite[Thm.\,I.5.3]{TT06}.    
\end{rem}

\subsection{Complex identity: Proof of Proposition~\ref{prop:complex_id}} \label{sec:complex}
Recall that a Weil--Petersson quasicircle $\g$ separating $0$ from $\infty$ is said to be compatible with $\varphi \in W^{1,2}_{\textrm{loc}}$ if the winding function of $\g$ coincides with the trace $\varphi|_{\g}$ arclength-a.e.  Since $\g$ by assumption is compatible with $\Im \psi$, 
   $$\vartheta[f](z) 
  = -\vartheta[f^{-1}] (f(z)) = -\Im \psi^{h}( f(z) ), \quad \forall z \in \m D,$$
   and Lemma~\ref{lem:harm_equi} also shows
    $$\vartheta[h](z) 
   = -\Im \psi^{h}( h(z) ), \quad \forall z \in \m D^*,$$
   where $\Im \psi^h + \Im \psi^0$ is the orthogonal decomposition of $\Im \psi$ with respect to $\m C \smallsetminus \g$ as in \eqref{eq:orthogonal}. 
   Hence we can write, 
   \begin{align*}
       \zeta  &= \left( \Re \psi \circ f + \log \abs{\frac{zf' (z)}{f(z)}}\right) + i  \left( \Im \psi \circ f + \vartheta[f](z) \right) = u + i  \Im \psi^0 \circ f; \\
       \xi &= v + i \Im \psi^0 \circ h,
   \end{align*}
   where 
   $$u : = \Re \psi \circ f + \log \abs{\frac{zf' (z)}{f(z)}}, \quad v : = \Re \psi \circ h + \log \abs{\frac{zh' (z)}{h(z)}}.$$ 
   Notice that $\log |zf' (z)/f(z)|$ is a harmonic conjugate of $\vartheta[f]$, so they have the same Dirichlet energy.
   Therefore
   \begin{align*}
\mc D_{\m D} (\zeta) + \mc D_{\m D^*} (\xi) & =  \mc D_{\m D}(u) + \mc D_{\m D}(v) + \mc D_{\m C} (\Im \psi^0) \\
& = \mc D_{\m C} (\Re \psi) + \mc D_{\m C} (\Im \psi^{h}) + \text{``cross-terms''} + \mc D_{\m C} (\Im \psi^0) \\
& = \mc D_{\m C} (\Re \psi) +\mc D_{\m C} (\Im \psi)   + \text{``cross-terms''} 
   \end{align*}
   where the ``cross-terms'' come from expanding the Dirichlet integrals of $u$
    and $v$ and equal $2$ times
    \begin{align}\label{eq:cross-terms}
    \int_{\m D} \bigg\langle \nabla \Re \psi \circ f,  \nabla & \log \abs{\frac{f' (z)z}{f(z)}} \bigg\rangle   \, \dd  A(z) 
     + \int_{\m D^*} \bigg\langle \nabla \Re \psi \circ h, \nabla \log \abs{\frac{h' (z)z}{h(z)}} \bigg\rangle \, \dd  A(z).
    \end{align}
    It suffices to show that \eqref{eq:cross-terms} vanishes.
   We will only prove it assuming that $\g$ is smooth. The general case of a Weil--Petersson quasicircle can be deduced using an approximation argument following exactly the same proof as \cite[Thm.\,3.1]{VW1}.
    
    Using Stokes' formula, the first term in \eqref{eq:cross-terms} equals
    \begin{align*}
      &   \int_{\partial \m D} \Re \psi \circ f (z) \partial_n \log  \abs{\frac{f' (z)z}{f(z)}} |\dd z| \\
          & =  \int_{\partial \m D} \Re \psi \circ f (z) \partial_n \log |f' (z)| |\dd z| + \Re  \left[\int_{\partial \m D} \Re \psi \circ f (z) \partial_n \left(\log  \frac{z}{f(z)} \right)|\dd z|\right] \\
          & = :I_1 + I_2
    \end{align*}
    where $\partial_n$ is the normal derivative in the outward pointing direction. Using the formula $\partial_n \log |f'(z)| = k_{\O}\circ f(z)|f'(z)| - k_{\m D} (z)  = k_{\O}\circ f(z)|f'(z)| - 1 $, where $k_{\O}$ is the geodesic curvature of $\partial \O$ (see, e.g., \cite[Appx.\,A]{W2}), we obtain
    \begin{align*}
    I_1 &=  \int_{\partial \O} \Re \psi (w) k_{\O}(w) |\dd w| - \int_{\partial \m D} \Re \psi \circ f (z)|\dd z|.        
    \end{align*}
Using $z |\dd z| = -i \dd z$, we have
    \begin{align*}
    I_2 &=  \Re \int_{\partial \m D} \Re \psi \circ f (z) z \left( - \frac{f'(z)}{f(z)} + \frac{1}{z} \right)|\dd z| \\
    &= \Re \int_{\partial \m D} i \Re \psi \circ f (z) \frac{f'(z)}{f(z)} \dd z +\int_{\partial \m D} \Re \psi \circ f (z) |\dd z| \\
    & = \Re \int_{\partial \O}  \frac{i  \Re \psi (w)}{w} \dd w +\int_{\partial \m D} \Re \psi \circ f (z) |\dd z|.
    \end{align*}
    The sum of $I_1$, $I_2$ and those integrals coming from the second term of \eqref{eq:cross-terms} vanishes since $k_\O (y) = - k_{\O^*} (y)$ and the contour integral in $I_2$ winds in the opposite direction for $\O$ and $\O^*$. This concludes the proof. \qed

\section{Conformal distortion formula}\label{subsec:variation_LK}

The goal of this section is to compute an explicit formula for the change of the Loewner--Kufarev energy under conformal transformation of the foliation and prove
Theorem~\ref{thm:conformal-distortion} which combines Proposition~\ref{prop:conformal-distortion} and Corollary~\ref{cor:E_rho_variation}.

We consider the following set up. Let $\rho \in \mc N_+$ be such that 
$$S_{[0,1]} (\rho)  = \int_0^1 L(\rho_t)\,\dd t < \infty.$$ 
We write as before $\rho_t = \nu_t^2 (\t ) \dd \t$.
Suppose $\psi$ is a conformal transformation that maps $K_1$ onto another compact hull $\psi(K_1)=\tilde{K}_1 \subset \ad{\m D}$, and is defined on a neighborhood $U$ of $K_1$
in $\ad {\m D}$ 
which is mapped onto a neighborhood $\tilde U$ of $\tilde K_1$ in $\ad {\m D}$. Note that we always have $S^1 \subset K_1$ by definition.
See Figure~\ref{fig:distortion}. The image of the hulls $(\tilde  K_t: = \psi (K_t))$, is driven by a measure $t\mapsto \tilde \rho_t$. Note that $\tilde \rho_t$ may not be a probability measure since $\tilde D_t := \m C \smallsetminus \tilde K_t$ in general does not have conformal radius $e^{-t}$.

\begin{figure}[h]
    \centering
    \includegraphics[scale=0.55]{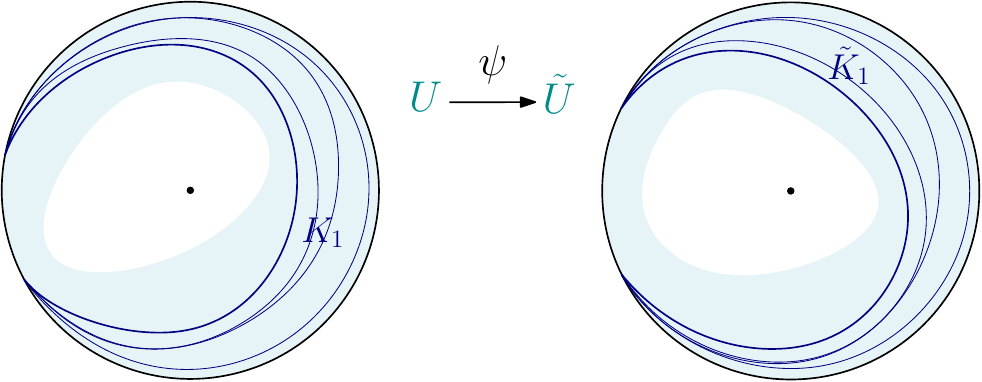}
    \caption{Conformal distortion of a foliation. The mapping $\psi$ is conformal in a neighborhood of the hull $K$ generated by $\rho$, which is mapped to another hull $\tilde K$, generated by a measure $\tilde \rho$. Proposition~\ref{prop:conformal-distortion} computes the difference of the Loewner--Kufarev energies of $\rho$ and $\tilde \rho$.}
    \label{fig:distortion}
\end{figure}

Let $(g_t = f_t^{-1})_{t \le 1}$ and $(\tilde g_t = \tilde f_t^{-1})_{t \le 1}$ be the corresponding uniformizing Loewner chains.   Let $\psi_t : = \tilde g_t \circ \psi \circ f_t$. Then $\psi_t$ is a conformal map of $U_t : = g_t (U \smallsetminus K_t)$ onto $\tilde U_t : = \tilde g_t (\tilde U \smallsetminus \tilde K_t)$. By the Schwarz reflection principle, $\psi_t$ extends to a holomorphic function in a neighborhood of $S^1$.

\begin{lemma}\label{lem:nu_transform}
   For a.e. $t \in [0,1]$, $\tilde \rho_t \ll \dd \t$. If we write $\tilde \rho_t = \tilde \nu_t^2(\t) \dd \t$, 
  then
 \begin{equation}\label{eq:rho_tilde_rho}
 \tilde \nu_t^2 (\t_t) = |\psi_t'(e^{i\t})| \nu_t^2(\t),
 \end{equation}
  where $\theta_t \in [0,2\pi]_{/0\sim 2\pi}$ satisfies $e^{i \theta_t} = \psi_t (e^{i\t})$. 
\end{lemma}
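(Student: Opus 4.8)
The plan is to relate the Herglotz integrals $H_t$ and $\tilde H_t$ of $\rho_t$ and $\tilde\rho_t$ by conjugating the respective Loewner equations by $\psi$, and then extract the statement about boundary densities by taking radial limits. First I would record the Loewner-Kufarev equation for the distorted chain. Since $\tilde K_t = \psi(K_t)$ and $\psi$ is holomorphic (hence time-independent) in a fixed neighborhood $U$ of $K_1 \supset S^1$, differentiating the identity $\tilde g_t \circ \psi = \psi_t \circ g_t$ (valid on $U \smallsetminus K_t$, where $\psi_t = \tilde g_t \circ \psi \circ g_t^{-1}$) in $t$ and using $\partial_t g_t(z) = g_t(z) H_t(g_t(z))$ together with the analogous equation $\partial_t \tilde g_t(w) = \tilde g_t(w)\tilde H_t(\tilde g_t(w))$ for the distorted chain, I obtain a relation of the form
\[
\psi_t(\zeta)\,\tilde H_t(\psi_t(\zeta)) = \psi_t'(\zeta)\,\zeta\, H_t(\zeta) + (\partial_t \psi_t)(\zeta)
\]
for $\zeta = g_t(z)$ in an annular neighborhood of $S^1$ inside $\m D$. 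Taking real parts and recalling $\Re H_t = 2\pi P_{\m D}[\rho_t]$, this is essentially a transformation rule for the Poisson densities. To turn it into a statement on $S^1$, one sends $\zeta = r e^{i\t} \to e^{i\t}$: the key point is that since $\psi_t$ extends holomorphically and invertibly across $S^1$ (by Schwarz reflection, as noted in the excerpt), $\psi_t$ maps $S^1$ onto $S^1$ and maps $\m D$-side to $\m D$-side near the boundary, so $\Re H_t$ and $\Re \tilde H_t$ can be identified with their boundary densities $2\pi\nu_t^2$ and $2\pi\tilde\nu_t^2$.

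The computation of the real part near the boundary is where the change-of-variables factor $|\psi_t'|$ appears. On $S^1$ we have $|\psi_t(e^{i\t})| \equiv 1$, so writing $\psi_t(e^{i\t}) = e^{i\theta_t}$ and differentiating $\theta_t$ with respect to $\theta$ gives $\theta_t'(\theta) = e^{i\t}\psi_t'(e^{i\t})/(i\, i e^{i\theta_t}) \cdot (\text{phase})$, and one checks $d\theta_t/d\theta = |e^{i\t}\psi_t'(e^{i\t})| = |\psi_t'(e^{i\t})|$ using that $\psi_t$ is conformal and preserves $S^1$. Since Poisson densities push forward under boundary homeomorphisms arising from conformal maps of the disk exactly by the Jacobian of the boundary map — more precisely, $P_{\m D}[\rho_t] \circ \psi_t^{-1}$ compared with $P_{\m D}[\tilde\rho_t]$ — matching the $\partial_t\psi_t$ term against the $\tilde g_t$-equation yields precisely $\tilde\nu_t^2(\theta_t)\,d\theta_t = \nu_t^2(\theta)\,d\theta$ after accounting for the (possibly non-unit) total mass, i.e.\ $\tilde\nu_t^2(\theta_t) = |\psi_t'(e^{i\t})|\nu_t^2(\theta)$. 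The absolute continuity $\tilde\rho_t \ll d\theta$ for a.e.\ $t$ follows because $L(\rho_t) < \infty$ for a.e.\ $t \in [0,1]$ forces $\nu_t \in W^{1,2}(S^1)$, hence $\nu_t$ (and then $\nu_t^2$) is continuous, and $|\psi_t'|$ is real-analytic and positive on $S^1$, so the right-hand side of \eqref{eq:rho_tilde_rho} is a bona fide continuous density.

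The main obstacle I anticipate is handling the regularity carefully: a priori $\tilde\rho_t$ is only a measure, so the identity $\partial_t \tilde g_t = \tilde g_t \tilde H_t(\tilde g_t)$ and the manipulation of $\tilde H_t$ must be justified in the annular region where $\psi_t$ is genuinely conformal (strictly inside the reflection neighborhood), and only afterwards passed to the boundary. One should first establish the \emph{interior} conjugation identity for the Herglotz functions on an annulus $\{1-\epsilon < |z| < 1\}$, where everything is smooth, then invoke that $\Re H_t$ extends continuously to $S^1$ with boundary value $2\pi\nu_t^2$ (using $\nu_t \in W^{1,2} \subset C^0$) and that $\psi_t$ is a boundary diffeomorphism, to legitimately take the limit $r \to 1^-$ and read off \eqref{eq:rho_tilde_rho}. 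The bookkeeping of the non-normalized total mass $|\tilde\rho_t| = \int_{S^1}|\psi_t'(e^{i\t})|\nu_t^2(\theta)\,d\theta$, which reappears in Theorem~\ref{thm:conformal-distortion}, is straightforward once the pointwise density identity is in hand.
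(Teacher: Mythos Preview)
Your approach is correct and leads to the same identity, but by a genuinely different route from the paper. The paper argues geometrically: it computes the \emph{normal velocity} of the equipotential $\gamma_t^r = f_t(rS^1)$ and of its image $\psi(\gamma_t^r)$ at corresponding points, once via $\psi$ applied to the $f_t$-flow and once via the $\tilde f_t$-flow. Equating the two real-valued normal speeds gives an interior identity between $\Re H_t$ and $\Re \tilde H_t$ in which no time-derivative of $\psi_t$ ever appears, and letting $r\to 1^-$ yields \eqref{eq:rho_tilde_rho}. Your route is the algebraic conjugation of the $g_t$- and $\tilde g_t$-ODEs, producing
\[
\psi_t(\zeta)\,\tilde H_t(\psi_t(\zeta)) \;=\; \psi_t'(\zeta)\,\zeta\, H_t(\zeta) \;+\; (\partial_t \psi_t)(\zeta),
\]
valid on an annulus in $\m D$. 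This is sound, and after dividing by $\psi_t(\zeta)$ and taking real parts one indeed recovers the claim. The paper's normal-velocity computation sidesteps the extra $\partial_t\psi_t$ term entirely; your approach is shorter to set up but must dispose of that term at the boundary.

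That is exactly where your exposition has a gap. You write that ``matching the $\partial_t\psi_t$ term against the $\tilde g_t$-equation'' yields the result, but this is not the mechanism. The correct reason the term drops out is that $\psi_t$ maps $S^1$ to $S^1$ for every $t$ (by Schwarz reflection), so $|\psi_t(e^{i\theta})|\equiv 1$, whence $\Re\bigl(\partial_t\psi_t/\psi_t\bigr)=\partial_t\log|\psi_t|=0$ on $S^1$. Similarly, on $S^1$ the factor $\zeta\psi_t'(\zeta)/\psi_t(\zeta)$ equals the real positive quantity $|\psi_t'(e^{i\theta})|=\partial\theta_t/\partial\theta$, so taking the real part of the divided identity and sending $\zeta\to e^{i\theta}$ gives $2\pi\tilde\nu_t^2(\theta_t)=|\psi_t'(e^{i\theta})|\cdot 2\pi\nu_t^2(\theta)$ directly. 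With this observation made explicit, your argument is complete; the rest of your plan (continuity of $\nu_t$ from $L(\rho_t)<\infty$, and the conclusion $\tilde\rho_t\ll\dd\theta$ from identifying the Poisson integral with a continuous boundary function) is fine.
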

\begin{proof}
 Let $\tilde H_t$ be the Herglotz integral of $\tilde \rho_t$. Let $0<r < 1$ and consider for $t \in [0,1]$ the curve $\g_t^r = f_t (r S^1)$ passing through $f_t(re^{i\t})=:w$. 
 We can compute the normal velocity of $\psi \circ \g_t^r$ at $\psi(w)$ in two ways. First starting from the velocity at $w$ using that $\psi$ is conformal: this gives $r|\psi'(w)||f'_t(re^{i\t})| \mathrm{Re}  \, H_t(re^{i\t})$; and second, directly from the Loewner--Kufarev equation driven by $\tilde \rho$, which gives  
 $$|\tilde f'_t(\psi_t(re^{i\t}))|\mathrm{Re}   \,\overline{e^{i\t}\psi_t'(re^{i\t})}|\psi'_t(re^{i\t})|^{-1}\psi_t(re^{i\t}) \tilde H_t(\psi_t(re^{i\t})).$$ Indeed, to see why these formulas hold, first note that a unit normal at $w$ is
\[
 \mathbf{n}(\g_t^r(e^{i\t}))= -e^{i\t} \frac{f'_t(re^{i\t})}{|f'_t(re^{i\t})|}, \quad \t \in [0, 2\pi).
\]
Using the Loewner equation, we see that the normal velocity with respect to the curve $\g_t^r(\t)$ at time $t$ at the point $w$ is 
\begin{align*}
- \mathrm{Re}  \frac{\partial_t f_t(re^{i\t}) \overline{e^{i\t} f'_t(re^{i\t})}}{|f'_t(re^{i\t})|}  & =   \mathrm{Re}   \frac{re^{i\t} f_t'(re^{i\t}) H_t(re^{i\t})  \overline{e^{i\t} f'_t(re^{i\t})}}{|f'_t(re^{i\t})|}  \\
& = r|f'_t(re^{i\t})| \mathrm{Re} \, H_t(re^{i\t}).
\end{align*}

Next, by definition $\psi_t  = \tilde g_t \circ \psi \circ f_t$, so $\tilde{f}_t \circ \psi_t = \psi \circ f_t$. Since $w = f_t(re^{i\t})$ we have $\psi'(w) f'_t(re^{i\t}) = \tilde{f}_t'(\psi_t(r e^{i\t})) \psi_t'(re^{i\t})$ and the normal velocity of the image curve at $\psi(w)$ is
\begin{align*}
 - \Re  &  \frac{(\partial_t \tilde{f}_t)(\psi_t(re^{i\t})) \overline{e^{i\t} \psi'(w) f'_t(re^{i\t}) }}{|\psi'(w)||f'_t(re^{i\t})| } \\
&  = \Re \frac{ \psi_t(re^{i\t}) \tilde{f}_t'(\psi_t(re^{i\t})) \tilde H_t(\psi_t(re^{i\t})) \overline{e^{i\t} \psi'(w) f'_t(re^{i\t}) }}{|\psi'(w)||f'_t(re^{i\t})| } \\
& = \Re \frac{ \psi_t(re^{i\t}) \tilde{f}_t'(\psi_t(re^{i\t})) \tilde H_t(\psi_t(re^{i\t})) \overline{e^{i\t} \tilde{f}_t'(\psi_t(r e^{i\t})) \psi_t'(re^{i\t}) }}{|\tilde{f}_t'(\psi_t(r e^{i\t})) \psi_t'(re^{i\t})| } \\
& = |\tilde{f}_t'(\psi_t(re^{i\t})) |\Re \frac{ \psi_t(re^{i\t}) \tilde H_t(\psi_t(re^{i\t})) \overline{e^{i\t}  \psi_t'(re^{i\t}) }}{|\psi_t'(re^{i\t})| }
\end{align*}
We get 
\begin{align*}
& r|\psi'(w)||f'_t(re^{i\t})| \mathrm{Re}  \, H_t(re^{i\t}) =|\tilde f'_t(\psi_t(re^{i\t}))|\mathrm{Re} \left( \,\frac{\overline{e^{i\t}\psi_t'(re^{i\t})}}{|\psi'_t(re^{i\t})|}\psi_t(re^{i\t}) \tilde H_t(\psi_t(re^{i\t})) \right).
\end{align*}
Note that $H_t$ is continuous on $\overline{\m D}$ and $\psi_t$ extends to be holomorphic on a neighborhood of $S^1$. Moreover, as $r \to 1-$, $\overline{e^{i\t}\psi_t'(re^{i\t})}\psi_t(re^{i\t})  /|\psi'_t(re^{i\t})|\to 1$. 
 Since $\Re \tilde H_t = 2\pi P_{\m D}[\tilde \rho_t]$, we obtain that $\tilde \rho_t \ll \dd \t$ and \eqref{eq:rho_tilde_rho} by letting $r \to 1-$ and using the definition $\psi_t  = (\tilde f_t)^{-1} \circ \psi \circ f_t$ and the chain rule.
\end{proof}

For the next statement, recall that 
$\mc S f $ 
denotes the Schwarzian derivative of $f$.
\begin{prop}\label{prop:conformal-distortion}
 We have
 $$L(\tilde \rho_t) - L(\rho_t) =  \frac{1}{4} \int_{S^1} e^{2i\t} \mc S \psi_t (e^{i\t})   \dd \rho_t (\t) + \frac{1}{8}\left( |\tilde \rho_t| - | \rho_t| \right).$$
\end{prop}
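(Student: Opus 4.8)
The plan is to reduce everything, via Lemma~\ref{lem:nu_transform}, to a one–variable computation on $S^1$: to compute $L(\tilde\rho_t)$ by changing variables under the circle diffeomorphism induced by $\psi_t$, and then to recognize the terms produced by this change of variables as the Schwarzian term together with the mass-defect term.

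First I would set up the change of variables. By Schwarz reflection $\psi_t$ is conformal in a neighborhood of $S^1$ and maps $S^1$ onto $S^1$, hence restricts to an orientation-preserving $C^\infty$ diffeomorphism $\t\mapsto\t_t$ of $S^1$ with $e^{i\t_t}=\psi_t(e^{i\t})$; differentiating once gives $e^{i\t}\psi_t'(e^{i\t})=\t_t'(\t)\,e^{i\t_t}$, so $\t_t'(\t)=|\psi_t'(e^{i\t})|=:j(\t)>0$. For a.e.\ $t\in[0,1]$ one has $\nu_t\in W^{1,2}(S^1)$ (as $S_{[0,1]}(\rho)<\infty$), so by \eqref{eq:rho_tilde_rho} the function $N(\t):=\tilde\nu_t(\t_t(\t))=j(\t)^{1/2}\nu_t(\t)$ lies in $W^{1,2}(S^1)$. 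Substituting $\phi=\t_t(\t)$ in $L(\tilde\rho_t)=\tfrac12\int_{S^1}\tilde\nu_t'(\phi)^2\,\dd\phi$, using $\tilde\nu_t'(\t_t)=N'(\t)/j(\t)$ and $\dd\phi=j\,\dd\t$, expanding the square and integrating by parts on $S^1$ (legitimate since $j\in C^\infty$ is positive and $\nu_t^2\in W^{1,1}(S^1)$), I expect to obtain, with $u:=\log j$,
\[
L(\tilde\rho_t)-L(\rho_t)=\int_{S^1}\Big(\tfrac18 u'(\t)^2-\tfrac14 u''(\t)\Big)\,\nu_t^2(\t)\,\dd\t ,
\]
while the same substitution gives $|\tilde\rho_t|=\int_{S^1}j(\t)^2\nu_t^2(\t)\,\dd\t$, hence $|\tilde\rho_t|-|\rho_t|=\int_{S^1}(j^2-1)\nu_t^2\,\dd\t$.

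Next I would prove the pointwise identity $\tfrac18 u'(\t)^2-\tfrac14 u''(\t)=\tfrac14\,e^{2i\t}\,\mc S\psi_t(e^{i\t})+\tfrac18(j(\t)^2-1)$ on $S^1$. Writing $F:=\log\psi_t'$ (holomorphic near $S^1$), the relation $e^{i\t}\psi_t'(e^{i\t})=j(\t)e^{i\t_t}$ shows $F(e^{i\t})=u(\t)+i(\t_t(\t)-\t)$, so $\Re F(e^{i\t})=u$ and $\tfrac{d}{d\t}\Im F(e^{i\t})=j-1$. Differentiating $\t\mapsto F(e^{i\t})$ once and twice and solving for $e^{i\t}F'(e^{i\t})$ and $e^{2i\t}F''(e^{i\t})$ in terms of $u',u'',j,j'$, then substituting into $\mc S\psi_t=F''-\tfrac12(F')^2$ and using $j'=ju'$, I expect the imaginary part to cancel and the real identity $e^{2i\t}\mc S\psi_t(e^{i\t})=-u''-(j-1)-\tfrac12(j-1)^2+\tfrac12 u'^2$ to emerge (this reality of $e^{2i\t}\mc S\psi_t$ on $S^1$ also follows from the reflection symmetry $\mc S\psi_t(1/\bar z)=\bar z^{-4}\,\overline{\mc S\psi_t(z)}$, which is why no real part is needed in the statement). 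Rearranging and using $\tfrac14(j-1)+\tfrac18(j-1)^2=\tfrac18(j^2-1)$ gives the pointwise identity; multiplying by $\nu_t^2$, integrating over $S^1$, and inserting the two formulas from the previous step then yields the asserted formula.

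The calculation is elementary throughout, so the main obstacle is simply careful bookkeeping: keeping track of real versus imaginary parts in the second step — in particular spotting the cancellation $j'=ju'$ that makes $e^{2i\t}\mc S\psi_t$ real on $S^1$ — and checking that the weak regularity available ($\nu_t\in W^{1,2}$ for a.e.\ $t$, and $\psi_t$ smooth near $S^1$) suffices to justify the change of variables and the integration by parts.
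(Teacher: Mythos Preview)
Your proposal is correct and follows essentially the same route as the paper: both change variables via Lemma~\ref{lem:nu_transform}, expand and integrate by parts to reach $L(\tilde\rho_t)-L(\rho_t)=\int_{S^1}\big(\tfrac18 u'^2-\tfrac14 u''\big)\nu_t^2\,\dd\t$ with $u=\log|\psi_t'|$, and then verify the pointwise identity relating $\tfrac18 u'^2-\tfrac14 u''$ to $e^{2i\t}\mc S\psi_t$ and $|\psi_t'|^2-1$. The only cosmetic difference is that the paper derives the Schwarzian identity by writing $|\psi_t'(z)|=z\psi_t'(z)/\psi_t(z)$ on $S^1$ and differentiating this holomorphic expression directly, whereas you split $\log\psi_t'(e^{i\t})$ into real and imaginary parts; both computations yield the same formula.
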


Note that by conjugating $\psi_t$ by a M\"obius transformation mapping $\m D$ to $\m H$, one sees that $e^{2i\t} \mc S \psi_t (e^{i\t}) \in \m R$.

\begin{proof}
 We use the same notation as in Lemma~\ref{lem:nu_transform}. 
Differentiating \eqref{eq:rho_tilde_rho}
with respect to $\t$, we obtain using $\partial \t_t /\partial \t = |\psi_t'(e^{i\t})|$ that
 $$\tilde \nu_t'(\t_t) \sqrt {\abs{\psi_t'(e^{i\t})}} = \nu_t'(\t) + \frac{\partial_\t \abs{\psi_t'(e^{i\t})}}{2\abs{\psi_t'(e^{i\t})}} \nu_t (\t).$$
 
 Plugging this into the expression for $L (\tilde \rho_t)$,  we get
 \begin{align*}
   L (\tilde \rho_t) & = \frac{1}{2} \int_{S^1} \tilde \nu_t'(\t)^2 \,\dd \t =  \frac{1}{2} \int_{S^1} \tilde \nu_t' (\psi_t (e^{i\t}))^2 \abs{\psi_t'(e^{i\t})}  \, \dd \t  
   = \frac{1}{2} \int_{S^1} \left(\nu_t' + \frac{\partial_\t\abs{\psi_t'}}{2\abs{\psi_t'}} \nu_t \right)^2  \dd \t   \\
   & = L (\rho_t) + \frac{1}{2}\int_{S^1}  \frac{\partial_\t |\psi_t'| }{|\psi_t'|} \nu_t' \, \nu_t \, \dd\t + \frac{1}{8} \int_{S^1}  \left(\frac{\partial_\t |\psi_t'|}{|\psi_t'|}\right)^2 \nu_t^2 \, \dd \t.
 \end{align*}
 
 Integrating 
 \begin{align*}
 \partial_\t \left[\nu_t^2  \frac{\partial_\t |\psi_t'|}{ |\psi_t'|}\right] & = 2 \nu_t' \,\nu_t \frac{\partial_\t |\psi_t'|}{ |\psi_t'|} + \nu_t^2 \left[ \frac{\partial_\t^2 |\psi_t'|}{ |\psi_t'|} - \left(\frac{\partial_\t |\psi_t'|}{|\psi_t'|}\right)^2\right],
 \end{align*}
 over $S^1$ against $\dd \t$ gives $0$. It follows that $L(\tilde \rho_t) - L(\rho_t)$ equals
 \begin{align*}
 & \frac{1}{2}\int_{S^1}  \frac{\partial_\t |\psi_t'| }{|\psi_t'|} \nu_t' \, \nu_t \, \dd\t + \frac{1}{8} \int_{S^1}  \left(\frac{\partial_\t |\psi_t'|}{|\psi_t'|}\right)^2 \nu_t^2 \, \dd \t   =  - \frac{1}{4}\int_{S^1} \nu_t^2\left[\frac{\partial_\t^2 |\psi_t'|}{|\psi_t'|} - \frac{3}{2} \left(\frac{\partial_\t |\psi_t'|}{|\psi_t'|}\right)^2 \right] \,  \dd\t.
 \end{align*}

Using $|\psi_t' (z)| = \psi_t ' (z) z /\psi_t(z)$ and $\partial_\t = iz \partial_z$, we compute
\begin{align*}
\frac{\partial_\t |\psi_t' (z)|}{ |\psi_t' (z)| } & = iz \partial_z \log |\psi_t' (z)| = iz \left(\frac{\psi_t''}{\psi_t'}  - \frac{\psi_t'}{\psi_t} + \frac{1}{z}\right)
\end{align*}
and
\begin{align*}
-\frac{1}{2} \left( \frac{\partial_\t |\psi_t' (z)|}{ |\psi_t' (z)| }\right)^2 &= \frac{z^2}{2} \left[ \left(\frac{\psi_t''}{\psi_t'}\right)^2
+ \left(\frac{\psi_t'}{\psi_t}\right)^2 + \frac{1}{z^2} - 2 \frac{\psi_t''}{\psi_t} - 2 \frac{\psi_t'}{z\psi_t} + 2 \frac{\psi_t''}{z\psi_t'}
\right] \\
& = \frac{z^2}{2}  \left(\frac{\psi_t''}{\psi_t'}\right)^2
+ \frac{z^2}{2}\left(\frac{\psi_t'}{\psi_t}\right)^2 + \frac{1}{2} - z^2 \frac{\psi_t''}{\psi_t} -  \frac{z \psi_t'}{\psi_t} + \frac{z\psi_t''}{\psi_t'}.
\end{align*}
Moreover,
\begin{align*}
    \partial_\t \left(\frac{\partial_\t |\psi_t' (z)|}{ |\psi_t' (z)| } \right) & = iz \left[ i  \left(\frac{\psi_t''}{\psi_t'}  - \frac{\psi_t'}{\psi_t} + \frac{1}{z}\right) + iz \left(\left(\frac{\psi_t''}{\psi_t'}\right)'  - \left(\frac{\psi_t'}{\psi_t}\right)' - \frac{1}{z^2}\right) \right] \\
    & = - z^2 \left(\frac{\psi_t''}{\psi_t'}\right)' - z \left(\frac{\psi_t''}{\psi_t'}  - \frac{\psi_t'}{\psi_t} + \frac{1}{z}\right) + z^2 \left(\frac{\psi_t''}{\psi_t} - \frac{(\psi_t')^2}{\psi_t^2}\right) + 1.
\end{align*}
We obtain
\begin{align*}
    \frac{\partial_\t^2 |\psi_t'|}{|\psi_t'|} - \frac{3}{2} \left(\frac{\partial_\t |\psi_t'|}{|\psi_t'|}\right)^2 & = \partial_\t \left(\frac{\partial_\t |\psi_t' |}{ |\psi_t' | } \right) -\frac{1}{2} \left( \frac{\partial_\t |\psi_t'|}{ |\psi_t'| }\right)^2\\  
    & = - z^2 \mc S \psi_t - \frac{z^2}{2}\left(\frac{\psi_t'}{\psi_t}\right)^2 +\frac{1}{2} = -z^2 \mc S \psi_t +\frac{1 - |\psi_t'|^2}{2}. 
\end{align*}
Combining these computations,
 we get
 \begin{align*}
     L(\tilde \rho_t) - L(\rho_t) 
     & = - \frac{1}{4}  \int_{S^1} \nu_t^2 (\t)\left[\frac{\partial_\t^2 |\psi_t'|}{|\psi_t'|} - \frac{3}{2} \left(\frac{\partial_\t |\psi_t'|}{|\psi_t'|}\right)^2 \right]\dd\t \\
     & = \frac{1}{4}  \int_{S^1} \nu_t^2 (\t)\left[e^{2i\t} \mc S \psi_t +\frac{|\psi_t'|^2-1}{2} \right] \,\dd\t  \\
     & = \frac{1}{4} \int_{S^1} \nu_t^2 (\t) e^{2i\t} \mc S \psi_t (e^{i\t}) \,\dd\t + \frac{1}{8}\left( \int_{S^1}  \tilde \nu_t^2 (\t) \,\dd\t - \int_{S^1} \nu_t^2 (\t) \,\dd\t \right),
 \end{align*}
 where we used \eqref{eq:rho_tilde_rho} in the last equality and
 $$\int_{S^1} \nu_t^2 (\t) |\psi_t'|^2 \,\dd\t = \int_{S^1} \tilde \nu_t ^2 (\t_t) \, |\psi_t'| \,\dd\t = \int_{S^1} \tilde \nu_t ^2 (\t) \,\dd\t $$
 which completes the proof.
\end{proof}

\begin{cor} \label{cor:E_rho_variation} We have
  \begin{equation}\label{eq:integrated_distortion}
  S_{[0,1]} (\tilde \rho) - S_{[0,1]} (\rho) = \frac{1}{4} \int_0^1  \int_{S^1}  e^{2i\t} \mc S \psi_t (e^{i\t}) \rho_t (\t) \,\dd\t\dd t +
  \frac{1}{8}\left( \log \tilde g_1'(0) - 1 \right).
  \end{equation}
\end{cor}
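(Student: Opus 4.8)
The plan is to integrate the pointwise formula from Proposition~\ref{prop:conformal-distortion} over $t \in [0,1]$ and identify the time-integral of the boundary term $\tfrac18(|\tilde\rho_t| - |\rho_t|)$ with the expression $\tfrac18(\log \tilde g_1'(0) - 1)$ appearing on the right-hand side of \eqref{eq:integrated_distortion}. Since $S_{[0,1]}(\tilde\rho) - S_{[0,1]}(\rho) = \int_0^1 (L(\tilde\rho_t) - L(\rho_t))\,\dd t$ by definition of the Loewner-Kufarev energy, and since the Schwarzian term is already in integrated form, everything reduces to computing $\int_0^1 (|\tilde\rho_t| - |\rho_t|)\,\dd t$. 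Here $|\rho_t| = 1$ for a.e.\ $t$ because $\rho \in \mc N_+$ has probability disintegration measures, so $\int_0^1 |\rho_t|\,\dd t = 1$, which already accounts for the ``$-1$'' in \eqref{eq:integrated_distortion}. Hence the whole statement follows once we show $\int_0^1 |\tilde\rho_t|\,\dd t = \log \tilde g_1'(0)$.

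The key step is therefore the identity $\int_0^1 |\tilde\rho_t|\,\dd t = \log \tilde g_1'(0)$. I would derive this from the Loewner-Kufarev ODE for the derivative at $0$ of the (non-normalized) uniformizing chain $\tilde g_t$. Recall from Section~\ref{sect:Loewner-Kufarev-Equation} that $\partial_t \log f_t'(0) = -H_t(0) = -|\rho_t|$, and dually $\partial_t \log g_t'(0) = H_t(g_t(0)) = H_t(0) = |\rho_t|$ when $g_t(0) = 0$; the same computation applied to the chain $(\tilde g_t)$ driven by $\tilde\rho$ gives $\partial_t \log \tilde g_t'(0) = \tilde H_t(0) = |\tilde\rho_t|$ for a.e.\ $t$ (using that $\tilde H_t$ is the Herglotz integral of $\tilde\rho_t$ and that $\tilde g_t$ fixes $0$, which holds since $\psi$ and all the conformal maps in the construction fix $0$ — or, if $\psi$ does not fix $0$, one normalizes so that it does, as the construction of $\psi_t = \tilde g_t\circ\psi\circ g_t^{-1}$ in the statement of Theorem~\ref{thm:conformal-distortion} implicitly assumes). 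Integrating from $0$ to $1$ and using $\tilde g_0 = \Id$, so $\log \tilde g_0'(0) = 0$, gives $\int_0^1 |\tilde\rho_t|\,\dd t = \log \tilde g_1'(0)$, as required. Substituting into the integrated version of Proposition~\ref{prop:conformal-distortion},
\[
S_{[0,1]}(\tilde\rho) - S_{[0,1]}(\rho) = \frac14 \int_0^1\int_{S^1} e^{2i\t}\,\mc S\psi_t(e^{i\t})\,\rho_t(\t)\,\dd\t\,\dd t + \frac18\Big(\log\tilde g_1'(0) - 1\Big),
\]
which is exactly \eqref{eq:integrated_distortion}.

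The only real subtlety — and the step I expect to require the most care — is measurability and integrability: one must check that $t \mapsto L(\tilde\rho_t)$ and $t \mapsto |\tilde\rho_t|$ are measurable and integrable on $[0,1]$ so that the integral of the pointwise identity is legitimate, and that Proposition~\ref{prop:conformal-distortion} indeed holds for a.e.\ $t \in [0,1]$ (which it does by construction, being stated for a.e.\ $t$). Measurability of $t\mapsto\tilde\rho_t$ follows because $(\tilde K_t)$ is a Loewner evolution and $\tilde\rho$ is its driving measure in $\mc N$-type sense; integrability of $|\tilde\rho_t|$ follows from $\int_0^1|\tilde\rho_t|\,\dd t = \log\tilde g_1'(0) < \infty$ (the hull $\tilde K_1$ being compact in $\m D$ means $\tilde g_1'(0)$ is finite), and integrability of $L(\tilde\rho_t)$ then follows from the pointwise bound $L(\tilde\rho_t) \le L(\rho_t) + \tfrac14\|\mc S\psi_t\|_{L^\infty(S^1)} + \tfrac18(|\tilde\rho_t| + |\rho_t|)$ together with local boundedness of the Schwarzians $\mc S\psi_t$ (uniform on $S^1$ for $t \in [0,1]$ since the $\psi_t$ extend holomorphically across $S^1$ with locally controlled domains, by compactness of the relevant hulls and continuity in $t$). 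Once these routine points are in place the corollary is immediate.
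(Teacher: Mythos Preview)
Your proposal is correct and follows the same approach as the paper: integrate Proposition~\ref{prop:conformal-distortion} over $t\in[0,1]$ and use $\int_0^1|\rho_t|\,\dd t=1$ together with $\int_0^1|\tilde\rho_t|\,\dd t=\log\tilde g_1'(0)$. One small correction: your parenthetical justification for $\tilde g_t(0)=0$ is off --- $\psi$ is only defined on a neighborhood of $K_1$ in $\overline{\m D}$, and since $0\in D_1$ this neighborhood does \emph{not} contain $0$, so $\psi$ neither fixes nor is defined at $0$; rather, $\tilde g_t(0)=0$ holds simply by the normalization of the uniformizing Loewner chain $\tilde g_t:\m D\smallsetminus\tilde K_t\to\m D$, independently of $\psi$.
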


\begin{proof}
The formula follows by integrating \eqref{eq:rho_tilde_rho}, using $\int_0^1 |\rho_t|\dd t = 1$ and  $\int_0^1 |\tilde \rho_t|\dd t = 
 \log \tilde g_1 '(0)$.
\end{proof}
\begin{rem}
The Brownian loop measure is a conformally invariant $\sigma$-finite measure on Brownian loops in the plane \cite{LSW_CR_chordal, LW2004loopsoup}. The conformal distortion formula of 
Theorem~\ref{thm:conformal-distortion} 
can be interpreted in terms of Brownian loop measures: namely, $S_+(\rho) - S_+(\tilde \rho)$ also equals the difference between the Brownian loop measure of those loops in $\m D$ that intersect both $K_1$ and $\m D \smallsetminus U$ and the measure of those loops in $\m D$ that intersect both $\tilde K_1$ and $\m D\smallsetminus \tilde U$. We will prove this in a forthcoming paper with Lawler. For now we just remark that this interpretation immediately implies that the energy difference depends only on the hulls at time $1$ and not on the foliation, a fact that is not immediately apparent from  \eqref{eq:integrated_distortion}.
\end{rem}

\section{Further comments and open problems}
\label{sec:further}

We will now indicate further implications and interpretations as well as open problems suggested by our results.

\vspace{10pt}

\textbf{Random conformal geometry and mating-of-trees.} We begin by discussing connections to ideas in random conformal geometry, see in particular \cite{IG4, MoT}, which served as inspiration for the formulation of our main result, Theorem~\ref{thm:main0}. We will also speculate on how to formulate stochastic versions of some of the results obtained in this paper but we do not make rigorous statements here.

Let  $(B_t)_{t\in \m R}$ be the rotation-invariant two-sided Brownian motion on $S^1$ and  
suppose $\k \ge 0$. Whole-plane 
SLE$_\k$ is the random (whole-plane) Loewner chain generated by the measure 
$\rho(\dd \t \dd t)  =  \d_{B_{\k t}} (\t) \dd t \in \mc N$, where $\d_{B_{\k t}}$ is the Dirac mass at $B_{\k t}$ and $\dd t$ is Lebesgue measure on $\m R$. When $\k \ge 8$, the hull is  generated by a space-filling curve growing in $\m C$ from $\infty$ towards $0$, see \cite{Rohde_Schramm}. The Loewner--Kufarev energy is infinite in the case of SLE, 
since a Dirac mass is not an absolutely continuous measure. However, an easy generalization of the main result of \cite{APW} shows that the Loewner--Kufarev energy is the large deviation rate function of whole-plane SLE$_{\infty}$. Roughly speaking, 
as $\k \to \infty$,
\begin{align}\label{eq:heurstic_infty}
\mathbb{P} \big\{\text{Whole-plane } \SLE_{\k}  \text{ domains stay close to }   & (D_t)_{t\in \m R}\big\}  
\approx \exp \left(- \k S(\rho)\right)
\end{align}
where $(D_t)_{t \in \m R}$ is the family of domains of any deterministic whole-plane Loewner chain with driving measure $\rho$. See \cite[Thm.\,1.2]{APW} for a precise statement in the unit disk setup. On the other hand, the Loewner energy $I^L$ of a Jordan curve can be expressed in terms of Dirichlet energies, see \eqref{eq:loop_LE_def}, and it is believed to be the large deviation rate function of the $\SLE_{0+}$ loop: as $ \kappa \to 0\!+\!$,
\begin{equation} \label{eq:heuristic_0}
 \mathbb{P} \left\{ \SLE_\k \text{ loop stays close to } \g \right\}  \approx \exp \left(- I^L(\g)/\k\right)
 \end{equation}
 where $\g$ is a given deterministic Jordan curve. See \cite{W1,peltola_wang} for precise statements in chordal settings. 
Furthermore, it is well-known that the Dirichlet energy is the large deviation rate function 
for the Gaussian free field, see \cite[Thm.\,3.4.12]{Deuschel-strook}.

 SLE processes enjoy a duality property with respect to replacing $\kappa$ by $16/\kappa$ 
 \cite{Dub_duality,Zhan_duality,IG1}. Roughly speaking, an $\SLE_{\k}$ curve describes locally the outer boundary of an $\SLE_{16/\k}$ hull when  $\k < 4$.
 The mating-of-trees theorem \cite{MoT} further explores this duality and the interplay with an underlying Liouville quantum gravity field. (See also, e.g., \cite{ang2020conformal} and the references therein for some recent progress in this direction.) An impressionistic picture is as follows. A pair of ``mated'' space-filling trees whose branches are formed by $\SLE_{\k}$-like curves are constructed as flowlines of an Gaussian field and a coupled whole-plane chordal space-filling SLE$_{16/\k}$ curve from $\infty$ to $\infty$ traces the interface between the pair of trees.  
 One can speculate that the union of the pair of trees degenerate to a foliation as $\k \to 0\!+$ (and $16/\kappa \to +\infty$) and the mating-of-trees coupling suggests that the large deviation rate functions of the coupled processes should match in this limit. Combining this with the heuristic formulas \eqref{eq:heurstic_infty} and \eqref{eq:heuristic_0} led us to guess Theorem~\ref{thm:main0} where the factor $16$ is consistent with the SLE $\k \leftrightarrow 16/\k$ duality.

 However, the setup of \cite{MoT} uses whole-plane chordal Loewner evolution so the space-filling SLE there runs from $\infty$ to $\infty$, whereas the whole-plane radial SLE runs from $\infty$ to $0$. 
A coupling of radial SLE with the Gaussian free field is described in \cite{IG4}, but we are not aware of results similar to the mating-of-trees theorem in the current literature for the setting we work in. 
Our results, in particular Theorem~\ref{thm:WP-leaf}, Theorem~\ref{thm:main0}, and Proposition~\ref{prop:complex_id} provide analytical evidence for a radial mating-of-trees theorem.

Using the dictionary we outlined in \cite[Sec.\,1.3 and 3.4]{VW1}, we may speculate that the following statements should hold. For small $\k > 0$, run a space filling whole-plane SLE$_{16/\k}$ on an appropriate ``quantum sphere'' assumed to be independent of the SLE. (A quantum sphere can be described by a Gaussian free field with additional logarithmic singularities and an attached transformation law.)
If the SLE process is run up to time $t$, the unvisited part and visited part of the quantum sphere form two independent ``quantum disks'' (which are also defined starting from a Gaussian free field) conformally welded along the frontier of the whole-plane SLE$_{16/\k}$, which itself is an SLE$_\k$-type loop. 
The two ``quantum disks'' are each decorated with an independent radial SLE$_{16/\k}$ curve, and the two SLE$_{16/\k}$ curves are independent conditionally on the position of the tip at time $t$. Varying $t$, we expect the separating SLE$_\k$-type loops to form a  (fractal) foliation-like family that sweeps out the twice punctured sphere. This foliation-like process also encodes the whole-plane SLE$_{16/\k}$ evolution. The real part of the complex field in Proposition~\ref{prop:complex_id} reflects the metric/measure structure of the quantum sphere, whereas the imaginary part encodes the fractal foliation-like process, hence the trajectory of the space filling SLE.

\vspace{10pt}

\textbf{Whole-plane radial SLE Reversibility.} Let us  next comment on  the reversibility of the Loewner--Kufarev energy, Theorem~\ref{thm:main_rev}.
An analogous result about the reversibility of the Loewner energy can be explained (and proved) by SLE$_{0+}$ large deviations considerations combined with the fact that chordal SLE is 
reversible \cite{Zhan_rev} for small $\k$, see  \cite{W1}.  Chordal SLE is however not reversible for $\kappa > 8$, and it is not known whether whole-plane SLE$_\k$ for $\k > 8$ is  reversible or not.
(For $\k \le 8$, reversibility was established in \cite{zhan_rev_whole,IG4}.) Therefore Theorem~\ref{thm:main_rev} cannot be predicted from the SLE point of view given currently known results, but it does on the other hand suggest that reversibility for whole-plane radial SLE might hold for large $\kappa$ as well.

\emph{Update:} In the time since our paper first appeared, proofs of large $\kappa$ reversibility for whole-plane SLE and a radial mating-of-trees theorem have appeared \cite{ang2023, ang232}.

\vspace{10pt}

\textbf{Whole-plane chordal Loewner--Kufarev energy.} One can ask about a version of our results in chordal settings. The most natural one is the whole-plane chordal version, where the family of curves all pass through $\infty$ and foliate the plane $\m C$ as $t$ ranges from $-\infty$ to $\infty$, as in the mating-of-trees theorem.
 When $\k \to \infty$, the whole-plane chordal SLE$_\k$ Loewner chain converges to the constant identity map for all time $t$. Therefore, renormalization is needed to obtain both a non-trivial limit and a meaningful large deviation result. (This is one reason to work in the radial setup here as well as in \cite{APW}.)
One way to proceed is to conformally map the two punctures ($0$ and $\infty$) in our whole-plane (radial) setup to $y$ and $\infty$ then let $y \to \infty$. The third complex degree of freedom (ranging in a non compact space) in the choice of conformal automorphism of the Riemann sphere needs to be chosen carefully to obtain a clean statement.

\vspace{10pt}

\textbf{Foliation loops in Weil--Petersson Teichm\"uller space.} Recall that any quasicircle $\g$ separating $0$ from $\infty$ can be identified with an element of universal Teichm\"uller space
 $T(1) \simeq \mob(S^1)\backslash \QS(S^1)$ via (the equivalence class of) its welding homeomorphism $\phi_\g  = h^{-1} \circ f|_{S^1}$. The subspace $T_0(1)$ corresponding to Weil--Petersson quasicircles 
 has an infinite dimensional K\"ahler-Einstein manifold structure when equipped with the Weil--Petersson metric, see \cite{TT06}.
Theorem~\ref{thm:WP-leaf} shows that any $\rho$ with $S(\rho) < \infty$ generates a foliation $(\g_t)_{t \in \m R}$ of Weil--Petersson quasicircles which can be considered as elements of $T_0(1)$ via their welding homeomorphisms. So the Loewner evolution of $\rho$ generates a dynamical process on $T_0(1)$. Theorem~\ref{thm:main-jordan-curve} shows that there exists such a family (obtained by interpolating by equipotentials) passing through any given element of $T_0(1)$. It is not too hard to show that it corresponds to a continuous loop $t \mapsto [\phi_{\g_t}]$ in $T_0(1)$ starting and ending at the origin $[\operatorname{Id_{S^1}}]$. 
We believe the class of loops in $T_0(1)$ coming from measures with $S(\rho)<\infty$ may be of interest to study. 
 For instance, a natural question concerns how the length of a loop is related to $S(\rho)$. The example in Section~\ref{sect:examples} shows that one can have $S(\rho) > 0$ while the corresponding loop is trivial, so one can only hope for an upper bound in terms of $S(\rho)$. One can further ask for properties of the minimal energy (equipotential) path to a given element.
Another interesting question concerns how transformations on $\rho$ affects a path in $T_0(1)$, and vice versa. 

\vspace{10pt}

\textbf{``Foliations'' in hyperbolic $3$-space.} Since M\"obius transformations of $\Chat$ extend to isometries of  the hyperbolic $3$-space $\m H^3$ (whose boundary at $\infty$ is identified with the Riemann sphere $\Chat$) and being a Weil--Petersson quasicircle is a M\"obius invariant property, it is natural to try to relate our foliations by Weil--Petersson quasicircles to objects in $\m H^3$.
In \cite{Anderson}, it is shown that every Jordan curve bounds at least one minimal disk in $\m H^3$, and \cite{bishop-WP} shows that a Jordan curve is a Weil--Petersson quasicircle if and only if  any such minimal disk in $\m H^3$ has finite total curvature. For example, when $\g$ is a circle, the unique minimal surface is the totally geodesic surface, namely the hemisphere bounded by $\g$. 
Although the minimal disk for a given boundary curve may not be unique in general,  \cite[Thm.\,B]{Seppi} and a bound on the quasiconformal constant of the Weil--Petersson quasicircle together imply uniqueness when the Loewner energy of $\g$ is small enough. 
 Hence, for sufficiently small $S(\rho)$, Theorem~\ref{thm:WP-leaf} and Corollary~\ref{cor:WP-LE-bound} imply that the foliation $(\g_t)_{t \in \m R}$ uniquely determines  a family of minimal disks of finite total curvature $(\Sigma_t)_{t \in \m R}$ in $\m H^3$, where $\Sigma_t$ is bounded by the leaf $\g_t$. We believe that the family $(\Sigma_t)_{t \in \m R}$ forms a smooth foliation of $\m H^3$ in this case. Such families of minimal surfaces seem interesting to study in their own right and, by embedding into a dynamical family, could be useful in the analysis of minimal surfaces in $\m H^3$ bounded by Weil--Petersson quasicircles and in deriving a rigorous AdS$_3/$CFT$_2$ holographic principle.

\bibliographystyle{abbrv}
\bibliography{ref}

\end{document}